\DeclareMathOperator{\Lie}{Lie}\DeclareMathOperator{\im}{im}
\DeclareMathOperator{\ad}{ad}
\DeclareMathOperator{\soc}{Soc}
\DeclareMathOperator{\Der}{Der}
\begin{document}

\newcommand{\cmark}{\ding{51}}%
\newcommand{\xmark}{\ding{55}}%

\newcounter{rownum}
\setcounter{rownum}{0}
\newcommand{\ab}{\addtocounter{rownum}{1}\arabic{rownum}}

\newcommand{\x}{$\times$}
\newcommand{\bb}{\mathbf}

\newcommand{\Ind}{\mathrm{Ind}}
\newcommand{\Char}{\mathrm{char}}
\newcommand{\hra}{\hookrightarrow}
\newtheorem{lemma}{Lemma}[section]
\newtheorem{theorem}[lemma]{Theorem}
\newtheorem*{TA}{Theorem A}
\newtheorem*{TB}{Theorem B}
\newtheorem*{TC}{Theorem C}
\newtheorem*{CorC}{Corollary C}
\newtheorem*{TD}{Theorem D}
\newtheorem*{TE}{Theorem E}
\newtheorem*{PF}{Proposition E}
\newtheorem*{C3}{Corollary 3}
\newtheorem*{T4}{Theorem 4}
\newtheorem*{C5}{Corollary 5}
\newtheorem*{C6}{Corollary 6}
\newtheorem*{C7}{Corollary 7}
\newtheorem*{C8}{Corollary 8}
\newtheorem*{claim}{Claim}
\newtheorem{cor}[lemma]{Corollary}
\newtheorem{conjecture}[lemma]{Conjecture}
\newtheorem{prop}[lemma]{Proposition}
\newtheorem{question}[lemma]{Question}
\theoremstyle{definition}
\newtheorem{example}[lemma]{Example}
\newtheorem{examples}[lemma]{Examples}
\newtheorem{algorithm}[lemma]{Algorithm}
\newtheorem*{algorithm*}{Algorithm}
\theoremstyle{remark}
\newtheorem{remark}[lemma]{Remark}
\newtheorem{remarks}[lemma]{Remarks}
\newtheorem{obs}[lemma]{Observation}
\theoremstyle{definition}
\newtheorem{defn}[lemma]{Definition}

  \def\hal{\unskip\nobreak\hfil\penalty50\hskip10pt\hbox{}\nobreak
  \hfill\vrule height 5pt width 6pt depth 1pt\par\vskip 2mm}

\renewcommand{\labelenumi}{(\roman{enumi})}
\newcommand{\Hom}{\mathrm{Hom}}
\newcommand{\Int}{\mathrm{int}}
\newcommand{\Ext}{\mathrm{Ext}}
\newcommand{\opH}{\mathrm{H}}
\newcommand{\D}{\mathcal{D}}
\newcommand{\SO}{\mathrm{SO}}
\newcommand{\Sp}{\mathrm{Sp}}
\newcommand{\SL}{\mathrm{SL}}
\newcommand{\GL}{\mathrm{GL}}
\newcommand{\OO}{\mathcal{O}}
\newcommand{\Y}{\mathbf{Y}}
\newcommand{\FF}{\mathcal{F}}
\newcommand{\X}{\mathbf{X}}
\newcommand{\diag}{\mathrm{diag}}
\newcommand{\End}{\mathrm{End}}
\newcommand{\tr}{\mathrm{tr}}
\newcommand{\Stab}{\mathrm{Stab}}
\newcommand{\red}{\mathrm{red}}
\newcommand{\Aut}{\mathrm{Aut}}
\renewcommand{\H}{\mathcal{H}}
\renewcommand{\u}{\mathfrak{u}}
\newcommand{\Ad}{\mathrm{Ad}}
\newcommand{\N}{\mathcal{N}}
\newcommand{\Z}{\mathbb{Z}}
\newcommand{\la}{\langle}\newcommand{\ra}{\rangle}
\newcommand{\gl}{\mathfrak{gl}}
\newcommand{\g}{\mathfrak{g}}
\newcommand{\F}{\mathbb{F}}
\newcommand{\m}{\mathfrak{m}}
\renewcommand{\b}{\mathfrak{b}}
\newcommand{\p}{\mathfrak{p}}
\newcommand{\q}{\mathfrak{q}}
\renewcommand{\l}{\mathfrak{l}}
\newcommand{\del}{\partial}
\newcommand{\h}{\mathfrak{h}}
\renewcommand{\t}{\mathfrak{t}}
\renewcommand{\k}{\mathfrak{k}}
\newcommand{\Gm}{\mathbb{G}_m}
\renewcommand{\c}{\mathfrak{c}}
\renewcommand{\r}{\mathfrak{r}}
\newcommand{\n}{\mathfrak{n}}
\newcommand{\s}{\mathfrak{s}}
\newcommand{\Q}{\mathbb{Q}}
\newcommand{\C}{\mathbb{C}}
\newcommand{\z}{\mathfrak{z}}
\newcommand{\pso}{\mathfrak{pso}}
\newcommand{\so}{\mathfrak{so}}
\renewcommand{\sl}{\mathfrak{sl}}
\newcommand{\psl}{\mathfrak{psl}}
\renewcommand{\sp}{\mathfrak{sp}}
\newcommand{\Ga}{\mathbb{G}_a}

\newenvironment{changemargin}[1]{%
  \begin{list}{}{%
    \setlength{\topsep}{0pt}%
    \setlength{\topmargin}{#1}%
    \setlength{\listparindent}{\parindent}%
    \setlength{\itemindent}{\parindent}%
    \setlength{\parsep}{\parskip}%
  }%
  \item[]}{\end{list}}

\parindent=0pt
\addtolength{\parskip}{0.5\baselineskip}

\subjclass[2010]{17B45}
\title{The Jacobson--Morozov theorem and complete reducibility of Lie subalgebras}
\author{David I. Stewart}
\address{University of Newcastle, UK} \email{dis20@cantab.net {\text{\rm(Stewart)}}}

\author{Adam R. Thomas*} \thanks{*Supported by an LMS 150th Anniversary Postdoctoral Mobility Grant 2014-2015 Award}
\address{School of Mathematics, University of Bristol, Bristol, BS8 1TW, UK, and the Heilbronn Institute for Mathematical Research, Bristol, UK.} 
\email{adamthomas22@gmail.com {\text{\rm{(Thomas)}}}}
\pagestyle{plain}
\begin{abstract}In this paper we determine the precise extent to which the classical $\sl_2$-theory of complex semisimple finite-dimensional Lie algebras due to Jacobson--Morozov and Kostant can be extended to positive characteristic. This builds on work of Pommerening and improves significantly upon previous attempts due to Springer--Steinberg and Carter/Spaltenstein. Our main advance arises by investigating quite fully the extent to which subalgebras of the Lie algebras of semisimple algebraic groups over algebraically closed fields $k$ are $G$-completely reducible, a notion essentially due to Serre. For example, if $G$ is exceptional and $\mathrm{char}\ k=p\geq 5$, we classify the triples $(\h,\g,p)$ such that there exists a non-$G$-completely reducible subalgebra of $\g=\Lie(G)$ isomorphic to $\h$. We do this also under the restriction that $\h$ be a $p$-subalgebra of $\g$. We find that the notion of subalgebras being $G$-completely reducible effectively characterises when it is possible to find bijections between the conjugacy classes of $\sl_2$-subalgebras and nilpotent orbits and it is this which allows us to prove our main theorems. 

For absolute completeness, we also show that there is essentially only one occasion in which a nilpotent element cannot be extended to an $\sl_2$-triple when $p\geq 3$: this happens for the exceptional orbit in $G_2$ when $p=3$.
\end{abstract}
\maketitle

\section{Introduction}
The Jacobson--Morozov theorem is a  fundamental result in the theory of complex semisimple Lie algebras, due originally to Morozov, but with a corrected proof by Jacobson. One way to state it is to say that for any complex semisimple Lie algebra $\g=\Lie(G)$, there is a surjective map \[\{\text{conjugacy classes of }\sl_2\text{-triples}\}\longrightarrow\{\text{nilpotent orbits in $\g$}\}\tag{*},\] induced by sending the $\sl_2$-triple $(e,h,f)$ to the nilpotent element $e$. That is, any nilpotent element $e$ can be embedded into some $\sl_2$-triple. In \cite{Kos59}, Kostant showed that this can be done uniquely up to conjugacy by the centraliser $G_e$ of $e$; i.e. that the map (*) is actually a bijection. Much work has been done on extending this important result to the modular case, that is where $\g=\Lie(G)$ for $G$ a reductive algebraic group over an algebraically closed field $k$ of characteristic $p>0$. We mention some critical contributions. In \cite{Pom80}, Pommerening showed that under the mild restriction that $p$ is a good prime, one can always find an $\sl_2$-subalgebra containing a given nilpotent element, but this may not be unique; in other words, the map (*) is still surjective, but not necessarily injective.  
If $h(G)$ denotes the Coxeter number\footnote{In case the root system of $G$ is trivial, we define $h(G)$ to be $0$.} of $G$, then in \cite{SpSt70} Springer and Steinberg prove that the uniqueness holds whenever $p\geq 4h(G)-1$ and in his book \cite{Car93}, Carter uses an argument due to Spaltenstein to establish the result under the weaker condition $p>3h(G)-3$; both proofs go essentially  by an exponentiation argument. One major purpose of this article is to finish this project and improve these bounds on the characteristic optimally, thus to state precisely when the bijection (*) holds. 

\begin{theorem}\label{jmthm} Let $G$ be a connected reductive group  over an algebraically closed field $k$ of characteristic $p>2$\footnote{Note that if $p=2$ then $\sl_2$ is non-simple and the question of finding subalgebras containing given nilpotent elements becomes murky since one might consider it proper to consider the non-simple non-isomorphic subalgebras $\mathfrak{pgl}_2$ in addition.} and let $\g$ be its Lie algebra. Then (*) is a bijection if and only if $p>h(G)$.\footnote{Interestingly, this theorem gives an optimal answer as to when the secondary demands of \cite[Hypothesis 4.2.5]{DeB02} are met; however it is known to the authors that   \cite[Hypothesis 4.2.4]{DeB02} on which it is dependent holds for every nilpotent orbit only under strictly worse conditions.}\end{theorem}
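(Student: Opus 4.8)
The plan is to prove the two implications separately, after reducing to the case that $G$ is simple. Since nilpotent orbits, $\sl_2$-triples and the Coxeter number all behave compatibly with direct products and are insensitive to the central torus (and for the various isogeny types one may choose whichever is convenient), we may assume $G$ simple. Well-definedness of (*) on conjugacy classes is immediate, so the two tasks are: \emph{(forward)} if $p>h(G)$ then (*) is a bijection; \emph{(converse)} if $2<p\le h(G)$ then it is not. I record at the outset that $p>h(G)$ forces $p$ to be a good prime, since a direct check shows $h(G)$ exceeds every bad prime in each type. Hence, by Pommerening's theorem, surjectivity of (*) is automatic once $p>h(G)$, and the entire content of the forward implication is the \emph{injectivity} of (*): for a fixed nilpotent $e$, all $\sl_2$-triples $(e,h,f)$ should lie in a single $C_G(e)$-orbit.

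For this injectivity I would pass from triples to the $\sl_2$-subalgebras they generate and exploit $G$-complete reducibility. The engine is the assertion that \emph{if $p>h(G)$ then every $\sl_2$-subalgebra of $\g$ is $G$-completely reducible}, which I would establish first as the crux of the whole theorem. Granting it, fix $e$ and a triple $(e,h,f)$. Passing from $h$ to the grading it defines, the hypothesis $p>h(G)$ together with the $G$-complete reducibility of $\la e,h,f\ra$ lets us realise this grading by a cocharacter $\tau\colon\Gm\to G$ with $\Ad(\tau(t))\,e=t^{2}e$ that is moreover \emph{associated} to $e$, in the sense that $e$ is distinguished in a Levi subgroup $L$ with $\tau(\Gm)\subseteq[L,L]$. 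Since associated cocharacters for a fixed nilpotent are unique up to conjugacy by $C_G(e)^{\circ}$, after conjugating we may assume two given triples share the same $h$; the remaining ambiguity in $f$ (a torsor under a unipotent group, controlled once $p>h(G)$) is then removed by a further conjugation, so the two triples are $C_G(e)$-conjugate. With surjectivity this proves the forward implication.

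For the converse I would show that $p\le h(G)$ (with $p>2$) always destroys the bijection, mirroring the forward strategy. By the characterisation above, once $p\le h(G)$ there exists an $\sl_2$-subalgebra of $\g$ that is \emph{not} $G$-completely reducible; since $G$-complete reducibility is a conjugacy invariant, such a subalgebra is never conjugate to the $G$-cr $\sl_2$ sitting over the same nilpotent orbit (whose existence is furnished by Pommerening whenever $p$ is good), and we obtain two distinct classes in the source of (*) mapping to one orbit, breaking injectivity. The decisive case is $p=h(G)$, where a non-$G$-cr $\sl_2$ must be exhibited explicitly—typically attached to the principal or a subregular nilpotent, or living inside a small subsystem such as $A_2$ at $p=3$—with smaller primes handled a fortiori. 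The one place where (*) fails instead through \emph{non-surjectivity} is the exceptional orbit of $G_2$ at $p=3$, as flagged in the abstract, and this isolated exception must be treated by hand.

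The main obstacle is the engine itself: proving that $p>h(G)$ forces every $\sl_2$-subalgebra to be $G$-completely reducible, and, symmetrically, producing an explicit non-$G$-cr $\sl_2$ for each simple $G$ and each prime $2<p\le h(G)$. For the classical groups this can be organised through the module theory of $\sl_2$ and the combinatorics of the natural representation, but for the exceptional groups at small primes it requires a careful orbit-by-orbit analysis of $\sl_2$-triples and their centralisers, which is where the computational heart of the argument lies.
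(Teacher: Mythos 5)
Your outline reproduces the paper's strategy in broad strokes (reduction to $G$ simple; \cite{Pom80} for surjectivity once $p>h(G)$ is observed to be good; $G$-complete reducibility of $\sl_2$-subalgebras as the engine, which is the $\sl_2$ case of Theorem \ref{ssgcrcorollary}; associated cocharacters to normalise $h$), but it elides the two steps where the real work lies, and at the second of these your argument as stated would fail. First, a small but necessary point: to apply the associated-cocharacter machinery (Proposition \ref{findingcoch}) you need $h$ to be \emph{toral}, which the paper obtains by first showing the triple spans a $p$-subalgebra (Lemma \ref{psubalgebraornongcr}, using $p>h$ to exclude Levi factors of type $A_{rp-1}$). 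The serious gap is your claim that the remaining ambiguity in $f$ is ``a torsor under a unipotent group, controlled once $p>h(G)$'' and ``removed by a further conjugation.'' For fixed $(e,h)$ the possible completions $f$ differ by elements of $\bigoplus_{r>0}\g_e(-2+rp)$, and in $\g$ this space can be non-zero even for a triple through a non-distinguished $e$ with $p>h$; indeed the whole of the paper's proof of Theorem \ref{sl2subalgebrabij} for $b(G)<p\leq h$ (Tools (a) and (b)) exists precisely because this space does not vanish in general, and there the ambiguity is sometimes removed by conjugation and sometimes produces genuinely new triples. What makes $p>h$ work is not a conjugation but a vanishing statement, and to get it the paper must first prove that the \emph{entire triple} --- in particular $f$ --- lies in a Levi subalgebra $\l$ in which $e$ is distinguished. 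That is the content of the hard equivalence ``$\h$ is $L$-irreducible in $\l$ if and only if $e$ is distinguished in $\l$,'' proved by a delicate module-theoretic fixed-vector argument (the uniserial module $k|L(p-2)$, Premet's structure theory of non-projective indecomposable $\sl_2$-modules, and a dualisation to find a trivial submodule). Only then does the computation $f-f'\in\bigoplus_{r>0}\l_e(-2+rp)=\l_e(-2+p)=0$ go through, using that the grading is bounded by $2h-2$ and that $\l_e(i)=0$ for odd $i$ when $e$ is distinguished in $\l$. Your sketch never places $f$ inside that Levi, so the decisive vanishing is unavailable.

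Your converse also has a gap. You pair a non-$G$-cr $\sl_2$-subalgebra against ``the $G$-cr $\sl_2$ sitting over the same nilpotent orbit (whose existence is furnished by Pommerening whenever $p$ is good).'' Two problems: \cite{Pom80} furnishes \emph{some} triple over each orbit, not a $G$-cr one, so a priori every triple over your chosen orbit could be conjugate to the non-$G$-cr one and injectivity over that orbit would be intact; and Pommerening requires $p$ good, so the bad primes with $2<p\leq h$ (for instance $p=5$ for $E_8$) are not covered by your existence claim at all. The paper's Lemma \ref{a1nongcr} sidesteps both issues: reducing the complex regular $\sl_2$-triple modulo $p$ yields an explicit triple $(e,h,f)$ with $e$ regular in $\g$ and $f$ regular in a proper Levi subalgebra $\l$, valid for every $2<p\leq h$; flipping it gives $(f,-h,e)$, while a second application inside $\l$ gives $(f,-h',e')\subseteq\l$, and these two triples over the common nilpotent $f$ are non-conjugate simply because $e$ is regular in $\g$ whereas $e'$, lying in a proper Levi, is not. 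Exhibiting a non-$G$-cr $\sl_2$ at $p=h$, as you propose, is in the right spirit but does not by itself manufacture two non-conjugate triples over a single orbit, which is what the failure of (*) requires.
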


In fact, we will do even more than this, also determining when there is a bijection 
\[\{\text{conjugacy classes of }\sl_2\text{-subalgebras}\}\longrightarrow\{\text{nilpotent orbits in $\g$}\}\tag{**},\]
and when a bijection exists, we will be able to realise it in a natural way. The equivalence of bijections (*) and (**) is easily seen in large enough characteristics by exponentiation, but there are quite a few characteristics where there exists a bijection (**), but not (*). To state our result, we define for any reductive group the number $b(G)$ as the largest prime $p$ such that the Dynkin diagram of $G$ contains a subdiagram of type $A_{p-1}$ or $p$ is a bad prime for $G$. Alternatively, $b(G)$ is the largest prime which is not very good for some Levi subgroup of $G$. If $G$ is classical of type $(A_n,B_n,C_n,D_n)$ we have $b(G)$ is the largest prime which is no larger than $(n+1,n,n,n)$ and if $G$ is exceptional of type $(G_2,F_4,E_6,E_7,E_8)$ then $b(G)=(3,3,5,7,7)$. If $G$ is reductive then $b(G)$ is the maximum over all simple factors and is $0$ in case the root system of $G$ is trivial.

\begin{theorem}\label{sl2subalgebrabij} Let $G$ be a connected reductive group  over an algebraically closed field $k$ of characteristic $p>2$ and let $\g$ be its Lie algebra. Then the number of conjugacy classes of $\sl_2$-subalgebras and nilpotent orbits is the same if and only if $p>b(G)$. Moreover, when $p>b(G)$, there is a natural bijection (**) realised by sending an $\sl_2$-subalgebra $\h$ to the nilpotent orbit of largest dimension that intersects $\h$ non-trivially.\end{theorem}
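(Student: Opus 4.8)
The plan is to factor the proposed map (**) through the Bala--Carter correspondence and the theory of $G$-complete reducibility, reducing the global statement to a comparison between $L$-irreducible $\sl_2$-subalgebras and distinguished nilpotent orbits inside Levi subalgebras. First I would check that the map is well defined. For an $\sl_2$-subalgebra $\h\cong\sl_2$ (recall $p>2$), the elements of $\h$ that are nilpotent in $\g$ are exactly the $\h$-nilpotent ones — a semisimple element of $\h$ is toral, not nilpotent, in $\g$ — so they form the nilpotent cone of $\sl_2$, an irreducible $2$-dimensional variety. Hence $x\mapsto\dim(G\cdot x)$ attains its maximum on a dense open subset, and the orbit $\OO_\h$ of a generic nilpotent $x\in\h$ is the unique largest $G$-orbit meeting $\h$. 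The content of (**) is that $\h\mapsto\OO_\h$ is a bijection precisely when $p>b(G)$.

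For the forward implication I would argue as follows. Since $p>b(G)$ forces $p$ good for $G$, Pommerening's theorem gives surjectivity of (*) and the Bala--Carter correspondence holds, identifying nilpotent orbits with conjugacy classes of pairs $(L,\OO^{\mathrm{dist}})$, where $L$ is a Levi subgroup and $\OO^{\mathrm{dist}}$ a distinguished nilpotent orbit in $\Lie(L)$. I then match this on the subalgebra side in two steps. \emph{Step 1 ($G$-complete reducibility):} show that when $p>b(G)$ every $\sl_2$-subalgebra of $\g$ is $G$-completely reducible; equivalently, every triple $(\h,\g,p)$ admitting a non-$G$-cr $\sl_2$-subalgebra has $p\le b(G)$. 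Granting this, each $\sl_2$-subalgebra $\h$ lies in a unique minimal Levi $L=L(\h)$ in which it is $L$-irreducible, so conjugacy classes of $\sl_2$-subalgebras biject with pairs $(L,\h^{L\text{-irr}})$. \emph{Step 2 (distinguished bijection):} for each Levi $L$ — where $p>b(L)$ automatically, as $b(L)\le b(G)$ — show that sending an $L$-irreducible subalgebra to the $L$-orbit of its generic nilpotent is a bijection onto the distinguished orbits of $\Lie(L)$. An $L$-irreducible subalgebra must have $L$-distinguished generic nilpotent, for otherwise its nilpotent cone meets a proper Levi and one conjugates $\h$ into a proper parabolic, contradicting irreducibility; this gives injectivity and the correct target, while surjectivity follows by embedding a distinguished $e$ in a triple via Pommerening and checking $L$-irreducibility of the span. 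Composing Steps 1 and 2 with Bala--Carter realises (**) as a bijection, and since it only records the subalgebra, several inequivalent triples spanning $G$-conjugate copies of a fixed $\h$ are identified, repairing the injectivity that (*) may lack in the range $b(G)<p\le h(G)$ of Theorem~\ref{jmthm}.

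For the reverse implication I would prove the contrapositive: if $p\le b(G)$ the two counts differ. By definition of $b(G)$ there is a Levi $L$ for which $p$ is not very good — either $p$ is bad for $L$ or the diagram of $L$ has an $A_{p-1}$ factor. In this critical Levi one exhibits the discrepancy directly: in the $A_{p-1}$ case the regular orbit of $\sl_p$ carries no $\sl_2$-triple (the principal grading is obstructed mod $p$), so a distinguished orbit has no matching $L$-irreducible subalgebra; at a bad prime one instead produces a non-$G$-cr $\sl_2$-subalgebra yielding an extra class. Propagating through the pairs-decomposition, these local failures alter the global count, so equality forces $p>b(G)$. Ruling out accidental cancellation between missing orbits and extra subalgebras is where I would lean on the explicit classification — partition combinatorics for the classical types and a case-by-case (computer-assisted) analysis for the exceptional types with $p\ge 5$.

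The main obstacle is Step 1: proving that $p>b(G)$ forces every $\sl_2$-subalgebra to be $G$-completely reducible, equivalently classifying all non-$G$-cr $\sl_2$-subalgebras and confirming they occur only for $p\le b(G)$. This is the substantial input: for classical $\g$ it amounts to controlling how an $\sl_2$-module structure can fail to respect a destabilising parabolic, while for exceptional $\g$ it requires the full determination of the triples $(\h,\g,p)$ advertised in the abstract. Everything else — well-definedness, the distinguished bijection, and the counting for the converse — is comparatively formal once this classification is available.
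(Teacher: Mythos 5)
Your architecture rests on Step 1, and Step 1 is not merely the hard part --- it is false. The threshold for all $\sl_2$-subalgebras (indeed all semisimple subalgebras) to be $G$-completely reducible is $p>h(G)$, not $p>b(G)$: Theorem \ref{ssgcrcorollary} gives this equivalence, and Lemma \ref{mtfora1a} explicitly constructs a non-$G$-cr $\sl_2$-subalgebra (through a regular nilpotent element) for \emph{every} $p\leq h(G)$. Since $b(G)<h(G)$ in general (e.g.\ $b(E_8)=7$ while $h(E_8)=30$), the whole range $b(G)<p\leq h(G)$ --- exactly where the theorem asserts the bijection (**) while (*) fails --- is populated by non-$G$-cr $\sl_2$-subalgebras. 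A concrete classical instance: for $G=\SO(V)$ or $\Sp(V)$ with $p<\dim V<2p$, an $\sl_2$ acting with a uniserial self-dual summand $L(i)|L(p-2-i)|L(i)$ is not completely reducible on $V$, hence non-$G$-cr by Lemma \ref{gcrOnNatural}. Such a subalgebra lies in a parabolic but in no Levi of it, so it corresponds to no pair $(L,\h^{L\text{-irr}})$, and your factorisation through Bala--Carter cannot even enumerate it. The genuine content of the theorem is precisely to control these non-$G$-cr subalgebras: classically, by proving that the indecomposable self-dual module of each dimension $r$ with $p<r<2p$ is \emph{unique up to conjugacy} (Lemmas \ref{uniquerep} and \ref{2types}), so the non-$G$-cr classes still match orbits one-to-one; in exceptional types, by showing $h$ is toral (Lemma \ref{historal}), fixing an associated cocharacter grading, and proving case-by-case (with GAP, via the paper's Tools (a) and (b)) that any non-standard completion $f$ of $(e,h,\cdot)$ lies in a strictly \emph{larger} orbit, so extra triples never produce extra subalgebra classes. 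That is how (**) survives where (*) fails, and no complete-reducibility reduction can substitute for it.

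Your converse also contains a false step: the regular nilpotent orbit of $\sl_p$ \emph{does} carry an $\sl_2$-triple. The module $L(p-1)$ is restricted irreducible of dimension $p$, and explicitly $e=\sum_{i}e_{\alpha_i}$, $f=\sum_i -i^2 e_{-\alpha_i}$ satisfies $[e,f]=\diag(p-1,p-3,\dots,-p+1)$; compare Theorem \ref{jm}, by which the only failure of extension for $p\geq 3$ is $(G_2,3,\tilde A_1^{(3)})$. The actual mechanism for $p\leq b(G)$ is the opposite of a missing orbit: there are \emph{too many} subalgebras. The paper perturbs the triple above by $f_0=\sum_i i\,e_{-\alpha_i}$, obtaining $\la e, f+\lambda f_0\ra\cong\sl_2$ with $[e,f+\lambda f_0]=h+\lambda I$ for each $\lambda$ with $\lambda^p\neq\lambda$; these give infinitely many pairwise non-conjugate $\sl_2$-subalgebras inside an $A_{p-1}$ (pseudo-)Levi (the pseudo-Levi $A_2$ handles $G_2$, $p=3$), against finitely many nilpotent orbits. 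This disposes in one stroke of your ``accidental cancellation'' worry, which your counting argument otherwise leaves open and which your proposed case analysis would have had to resolve by hand.
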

(To emphasise our improvement, \cite[Thm.~5.5.11]{Car93} gives the existence of such a bijection for $E_8$ when $p>87$, whereas we require just $p>7$.) 

For many applications, the Kempf--Rousseau theory of optimal cocharacters (whose consequences were worked out in \cite{Pre03}) is a sufficient replacement for much of the $\sl_2$-theory one would typically employ when working over $\C$---indeed, this paper uses cocharacter theory quite extensively. But it should not be a surprise that the unique smallest simple Lie algebra over $k$ should continue to play a role in modular Lie theory. We are aware of at least one example where our results are likely to be used: on considering a maximal subgroup $H$ of a finite group of Lie type $G(q)$, one frequently discovers the existence of a unique $3$-dimensional submodule on the adjoint module that must correspond to an $\sl_2$-subalgebra of $\g$. Then Theorem \ref{sl2subalgebrabij} promises certain useful properties of this subalgebra which can be exploited to show that $H$ lives in a positive-dimensional subgroup of the ambient algebraic group $G$; typically this implies it is not maximal.

The question of the existence of bijections (*) and (**) turns out to be intimately connected to J.-P.~Serre's notion of $G$-complete reducibility \cite{Ser05}. Say a subgroup $H$ of $G$ is $G$-completely reducible if whenever $H$ is contained in a parabolic subgroup $P$ of $G$, then $H$ is contained in a Levi subgroup $L$ of $P$. The notion is inspired by a general philosophy of Serre, Tits and others to generalise concepts of representation theory by replacing homomorphisms of groups $H\to\GL(V)$ with homomorphisms of groups $H\to G$, where $G$ is any reductive algebraic group. Indeed, when $G=\GL(V)$, using  the description of the parabolic subgroups and Levi subgroups of $G$ as  stabilisers of flags of subspaces of $V$, the idea that a subgroup $H$ is $G$-completely reducibly recovers the usual idea of $H$ acting completely reducibly on a representation $V$. There is a remarkably widespread web of connections between $G$-complete reducibility and other areas of mathematics, such as geometric invariant theory, the theory of buildings and the subgroup structure of algebraic groups, amongst other things. In our proofs of Theorem \ref{jmthm} and \ref{sl2subalgebrabij} we will find yet another connection with Serre's notion, this time with the study of modular Lie algebras. 

The natural extension of Serre's idea to Lie algebras is due to McNinch, \cite{McN07} and is developed further in \cite{BMRT13}. We say a subalgebra $\h$ of $\g$ is $G$-completely reducible (or $G$-cr) if whenever $\h$ is contained in a parabolic subalgebra $\p$ of $\g$, then $\h$ is in a Levi subalgebra of that parabolic. (Recall that a parabolic subalgebra is by definition $\Lie(P)$ for $P$ a parabolic subgroup and a Levi subalgebra is $\Lie(L)$ for $L$ a Levi subgroup of $P$.) We will establish the following result, crucial for our proof of Theorem \ref{jmthm}.
\begin{theorem}\label{ssgcrcorollary} Let $G$ be a connected reductive algebraic group over an algebraically closed field $k$ of characteristic $p>2$. Then all semisimple subalgebras of $\g$ are $G$-completely reducible if and only if $p>h(G)$.\end{theorem}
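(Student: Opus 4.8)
The plan is to prove Theorem~\ref{ssgcrcorollary} by establishing the two implications separately, with the forward direction (sufficiency of $p>h(G)$) following from more general cohomological arguments and the reverse direction (necessity) via explicit construction of a non-$G$-cr semisimple subalgebra when $p\leq h(G)$.

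For the direction ``$p>h(G) \Rightarrow$ all semisimple subalgebras are $G$-cr'', I would first reduce to the case where $\h$ is contained in a parabolic subalgebra $\p=\Lie(P)$ with $P$ minimal among such parabolics; writing $\p = \l \oplus \u$ with $\l=\Lie(L)$ and $\u=\Lie(R_u(P))$, the obstruction to $\h$ being $L$-conjugate into $\l$ lives in the cohomology $\opH^1(\h,\u)$ under the projection $\h\to\l$. The key point is that $\u$ carries a filtration by $L$-modules whose subquotients are built from root spaces $\g_\alpha$, and the relevant bound $p>h(G)$ guarantees these modules have no nontrivial $\h$-cohomology. Concretely, since $\h$ is semisimple and $p>h(G)$, one knows (by the adjacent results on $\sl_2$-theory, or by the self-splitting/linear reductivity phenomena available in this range) that semisimple subalgebras behave cohomologically like their characteristic-zero counterparts: $\opH^1(\h,V)=0$ for the relevant modules $V$ appearing in $\gr \u$. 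I would assemble these vanishing statements along the filtration to conclude that the projection splits, hence $\h$ is conjugate into $\l$, contradicting minimality unless $\p=\g$ already, i.e.\ $\h$ is $G$-cr.

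For the converse, ``$p\leq h(G)\Rightarrow$ some semisimple subalgebra is \emph{not} $G$-cr'', the task is to exhibit an explicit non-$G$-cr semisimple $\h$. The natural candidate is an $\sl_2$-subalgebra: the failure of the bijection (*) or (**) exactly at the Coxeter number suggests that when $p\leq h(G)$ there is a nilpotent element $e$ whose $\sl_2$-triple $(e,h,f)$ generates a subalgebra lying in a proper parabolic $\p$ but not conjugate into any Levi. I would look at the specific nilpotent orbit whose weighted Dynkin diagram forces $e$ into a parabolic $\p=\p(\lambda)$ determined by the associated cocharacter, and check that when $p$ divides one of the structure constants or when a weight becomes ``too large'' relative to $p$ (precisely at $p=h(G)$, where the highest root height equals $h-1$), the element $f$ fails to lie in any $L$-conjugate of $\l$. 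This produces an $\sl_2$ witnessing non-$G$-complete reducibility.

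The main obstacle I anticipate is the cohomological vanishing $\opH^1(\h,V)=0$ in the forward direction: controlling the first cohomology of an arbitrary semisimple subalgebra $\h$ acting on the subquotients of $\u$ requires knowing that $p$ being larger than the Coxeter number forces the weights of $\h$ on $\u$ to stay within a range where no nontrivial extensions occur. This is delicate because $\h$ need not be an $\sl_2$ nor conjugate to a nicely-positioned subalgebra, so one cannot simply cite $\sl_2$-representation theory; instead I expect to need a uniform argument bounding the heights of roots by $h(G)-1$ and invoking a general linear-reductivity or semisimple-cohomology-vanishing result valid precisely in the range $p>h(G)$. Pinning down exactly why $h(G)$ and not some smaller or larger bound is the sharp threshold---matching the necessity construction---is where the real work lies.
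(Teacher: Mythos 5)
Your high-level architecture for the forward direction (minimal parabolic, obstruction in $\opH^1(\h,\q)$ along the filtration of $\q$ by $L$-modules) is exactly the paper's mechanism (Propositions \ref{h1moduleiszero} and \ref{h1ofqis0}), but the vanishing statement you lean on does not exist, and this is a genuine gap rather than a citation you haven't located. First, in characteristic $p$ a semisimple subalgebra need not be a sum of Lie algebras of algebraic groups: the Witt algebra $W(1;\underline{1})$ is simple and occurs as a subalgebra of the exceptional Lie algebras even for $p=h(\l)+1$ with $\l$ a Levi, so subalgebras can exist with $p>h(G)$ whose representation theory is nothing like characteristic zero. The paper must first invoke the classification of simple subalgebras (Theorem \ref{hsw1orclassical}, Lemma \ref{W1s}, plus unpublished Premet--Stewart input for semisimple non-simple sums), and the $W_1$ cases are disposed of not by cohomology vanishing but by explicit GAP computations showing containment in a Levi (Lemma \ref{mtforw1}). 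Second, even for $\h\cong\sl_2$ there is no ``linear reductivity'' in any range of $p$: one has $\opH^1(\sl_2,L(p-2))\neq 0$ for \emph{every} $p$. What must be proved --- and what the paper proves case by case --- is that the bad modules never occur as composition factors of $\q$ when $p>h$. Your proposed uniform bound via root heights fails precisely here: an $L$-irreducible $\sl_2$ inside an $E_6$- or $E_7$-Levi acts on $V_{27}$ or $V_{56}$ with weights not controlled by heights of roots, and the exclusion of an $L(p-2)$ factor in Lemma \ref{mtfora1b} requires comparing Jordan block sizes of nilpotent elements on these minimal modules against the tables of \cite{SteComp} (e.g.\ the largest block on $V_{56}$ is $28$, versus $p-1\geq 30$ for $E_8$). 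The remaining vanishing rests on the dimension bound $\leq 64$ for composition factors of $\q$ (Corollary \ref{dimofparabs}) and the classification of small modules with $\opH^1(G_1,V)\neq 0$ (Proposition \ref{h1sless64}); i.e.\ the ``key point'' you defer is the whole of Section \ref{sec:proofofmain}. Note also that for classical $G$ the paper bypasses the cohomological route entirely (composition factors of $\q$ have unbounded dimension there), instead using Lemma \ref{gcrOnNatural} together with Strade's theorem \cite{Str73} and \cite[Cor.~8.12]{HS14} ($p>\dim V$ forces semisimplicity on the natural module) in Proposition \ref{thm:gcrclassical}.

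Your necessity direction is in the spirit of the paper's Lemma \ref{mtfora1a} (reduce the characteristic-zero regular $\sl_2$-triple mod $p$; indeed $p<h$ divides a coefficient of $f$), but two essential points are missing. The mechanism for concluding non-$G$-cr is not a direct check that $f$ avoids all Levis: the paper uses Lemma \ref{psubalgebraornongcr} --- since $p<h$ forces $e^{[p]}\neq 0$, the triple spans a non-$p$-subalgebra, and a $G$-cr restrictable simple subalgebra must be a $p$-subalgebra unless it is $L$-irreducible in a Levi with an $A_{rp-1}$ factor, which is excluded because $\h$ contains a regular nilpotent element. More seriously, your construction fails at the boundary $p=h$, which happens exactly for type $A_{p-1}$ (all other Coxeter numbers are even): the mod-$p$ reduction of the regular triple in $\sl_p$ has $f$-coefficients $i(p-i)\equiv -i^2\not\equiv 0$, so it acts irreducibly on the natural module and is $G$-irreducible, hence $G$-cr. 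The paper handles this case with a different, non-restricted embedding of $\sl_2$ acting as the indecomposable module $k|L(p-2)$ of dimension $p$, which is non-$G$-cr by Lemma \ref{gcrOnNatural}. Without these two ingredients your converse does not close.
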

The proof of Theorem \ref{ssgcrcorollary} reduces easily to the case where $G$ is simple. Then work of S.~Herpel and the first author in \cite{HS14} on complete reducibility of representations of semisimple Lie algebras can be adapted to prove the theorem when $G$ is classical. The bulk of the work involved is showing the result when $G$ is exceptional. Let then $G$ be an exceptional algebraic group. At least thanks to \cite{HSMax} and some work of A.~Premet together with the first author, the isomorphism classes of semisimple Lie subalgebras of the exceptional Lie algebras are known in all good characteristics.\footnote{Up to isomorphism, one gets only Lie algebras coming from algebraic groups and the first Witt algebra $W(1;\underline{1})$ of dimension $p$, together with some semisimple subalgebras which are not the direct sum of simple Lie algebras, existing only when $p=5$ or $7$ and $\g$ is of type $E_7$ or $E_8$. An example of such a Lie algebra is the semidirect product $\sl_2\otimes (k[X]/\la X^p\ra) + 1\otimes W(1;\underline{1})$ where the factor $W(1;\underline{1})$ commutes with the $\sl_2$ factor but acts by derivations on the truncated polynomial ring $k[X]/\la X^p\ra$.} Our following theorem gives, for $p\geq 5$ (in particular, for $p$ a good prime) a full description of when a simple subalgebra in one of those known isomorphism classes can be non-$G$-cr.
\begin{theorem}\label{maintheorem}Suppose $\h$ is a simple subalgebra of $\g=\Lie(G)$ for $G$ a simple algebraic group of exceptional type over an algebraically closed field $k$ of characteristic $p\geq 5$.  Then either $\h$ is $G$-cr or one of the following holds:
\begin{enumerate}\item $\h$ is of type $A_1$ and $p<h(G)$;
\item $\h$ is of type $W(1;\underline{1})$, $p=7$ and $G$ is of type $F_4$; or $p=5$ or $7$ and $G$ is of type $E_6$, $E_7$ or $E_8$;
\item Up to isomorphism we have $(G,\h,p)=(E_7,G_2,7)$, $(E_8,B_2,5)$ or $(E_8,G_2,7)$.
\end{enumerate}
Moreover, for each exception $(G,\h,p)$ above, there exists a non-$G$-cr subalgebra of the stated type.
\end{theorem}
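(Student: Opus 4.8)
The plan is to detect failure of $G$-complete reducibility through Lie algebra cohomology, following the complements-to-the-nilradical philosophy of \cite{McN07,BMRT13}. Suppose $\h$ is not $G$-cr and choose a parabolic $\p=\Lie(P)$ minimal among those containing $\h$, with Levi decomposition $\p=\l\oplus\u$ where $\l=\Lie(L)$ and $\u=\Lie(R_u(P))$. Since $\h$ is simple and $\u$ is a nilpotent ideal of $\p$, the projection to $\l$ is injective on $\h$: otherwise $\h\cap\u$ is a nonzero ideal of $\h$, forcing $\h\subseteq\u$, which is impossible as $\u$ is nilpotent. Thus $\h$ is a complement to $\u$ inside $\bar{\h}\ltimes\u$, where $\bar{\h}$ is the image of $\h$ in $\l$, and minimality of $\p$ forces $\bar{\h}$ to be $L$-irreducible. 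The $R_u(P)$-conjugacy classes of such complements are parametrised by $H^1(\bar{\h},\u)$, and $\h$ is $G$-cr exactly when it is $G$-conjugate to the Levi copy $\bar{\h}\subseteq\l$. Hence the governing invariant is $H^1(\bar{\h},\u)$, computed as $\bar{\h}$-module cohomology (restricted cohomology in the $p$-subalgebra variant), together with a genuine-non-conjugacy check when it is nonzero.

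Next I would cut the problem down to a finite verification. By Theorem \ref{ssgcrcorollary} every semisimple subalgebra is $G$-cr once $p>h(G)$, so a non-$G$-cr example forces $p\leq h(G)$; together with $p\geq 5$ this leaves only finitely many primes for each of $G_2,F_4,E_6,E_7,E_8$. The isomorphism types of simple $\h$ are known from the classification cited in the introduction (\cite{HSMax} and the Premet--Stewart work): for $p\geq 5$ they are the Lie algebras of simple algebraic groups together with the Witt algebra $W(1;\underline{1})$. So for each such $G$, each prime $5\leq p\leq h(G)$, each admissible type of $\h$, and each Levi $\l$ admitting an $L$-irreducible copy $\bar{\h}$, I would decompose $\u$ as a $\bar{\h}$-module and compute $H^1(\bar{\h},\u)$. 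Enumerating the $L$-irreducible embeddings is precisely what the subalgebra tables of \cite{HSMax} provide, and the root-space decomposition of $\u$ relative to the cocharacter of the embedding makes the module structure explicit, so the (in practice machine-assisted) computation is finite.

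The computations then split by the type of $\h$. For $\h\cong\sl_2$ I would use the cohomology of $\sl_2$-modules in characteristic $p$: $H^1(\sl_2,V)$ is nonzero only when $V$ has composition factors of highest weight in a narrow $p$-dependent range, and reading off which composition factors of $\u$ fall in that range shows the obstruction is nonzero precisely when the embedding is large relative to $p$, concretely when $p\leq h(G)$ — equivalently $p<h(G)$, since $h(G)$ is never prime here — matching Theorem \ref{ssgcrcorollary} and giving exactly case (1). The Coxeter number enters because the principal cocharacter has top weight $2(h(G)-1)$ on $\g$, so the relevant high-weight factors survive into $\u$ exactly in this range. For $\h\cong W(1;\underline{1})$, which embeds only for $p=5,7$, I would feed the known simple $W(1;\underline{1})$-modules and their first cohomology into the same decomposition to locate the nonzero classes, yielding case (2). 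For the sporadic triples in case (3) I would pin down the specific $L$-irreducible embedding — $G_2$ inside an $E_6$-Levi of $E_7$ or $E_8$, and $B_2$ inside a suitable Levi of $E_8$ — and exhibit $H^1(\bar{\h},\u)\neq 0$ directly from the $\bar{\h}$-module structure of the corresponding nilradical.

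For the ``moreover'' clause, whenever $H^1(\bar{\h},\u)\neq 0$ a nonzero class produces a complement $\h'$ not $R_u(P)$-conjugate to the Levi copy; I would then confirm $\h'$ is genuinely non-$G$-cr, i.e. not conjugate into any Levi of $G$ at all, since nonvanishing of $H^1$ only guarantees non-conjugacy under $R_u(P)$ rather than under all of $G$ (distinct complements can be fused by $N_G(\l)$ or by a larger parabolic). The main obstacle is the exhaustiveness of the classification direction: to prove that outside cases (1)--(3) the subalgebra is always $G$-cr, one must correctly enumerate \emph{every} $L$-irreducible embedding of every simple $\h$ into every Levi of every exceptional $G$ for every relevant small prime, and verify vanishing of $H^1$ in all of them. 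The error-prone points are the cohomology of the non-algebraic $W(1;\underline{1})$ and the sporadic triples, where the $\bar{\h}$-module structure of $\u$ must be computed exactly, and the fusion check distinguishing true non-$G$-cr-ness from mere nonvanishing of $H^1$.
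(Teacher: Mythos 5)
Your skeleton matches the paper's: minimal parabolic $\p=\l+\q$, injectivity of the projection and $L$-irreducibility of $\bar\h$, the complement/cocycle dictionary (the paper's Propositions \ref{h1moduleiszero} and \ref{h1ofqis0}, where the non-abelian $\q$ is handled by induction on the level filtration rather than by a literal parametrisation of complements by $\opH^1(\bar\h,\q)$), the dimension bound $\dim V\leq 64$ (Corollary \ref{dimofparabs}), the table of small modules with $\opH^1(G_1,V)\neq 0$ (Proposition \ref{h1sless64}), and the restriction to finitely many primes via Theorem \ref{ssgcrcorollary}. However, there are two genuine gaps. First, your classification direction rests on enumerating \emph{every} $L$-irreducible embedding of every simple $\h$ into every Levi, which you attribute to the tables of \cite{HSMax}; those tables classify \emph{maximal} subalgebras and no classification of $L$-irreducible $\sl_2$-, $A_2$- or $B_2$-subalgebras of $E_6$- and $E_7$-Levis at $p=5,7$ is available. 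This matters because Table~\ref{tHs} does \emph{not} cleanly cut the case list down to (1)--(3): it leaves live candidates --- $A_2$ with $V=L(3,1)$ at $p=5$, $A_2$ with $L(5,1)$ at $p=7$, $B_2$ inside $E_7$ at $p=5$, and ranks $3$--$6$ ($A_3,A_4,A_6$) --- and the paper eliminates these not by computing $\opH^1(\bar\h,\q)$ but by showing no $L$-irreducible $\bar\h$ can act on $\q$ with such a composition factor, using Jordan-block tables of nilpotent elements on $V_{27}$ and $V_{56}$ from \cite{SteComp}, the classification of reachable nilpotent elements \cite{PreSteNilp}, and the delicate Lemma \ref{a2max} pinning a hypothetical $A_2$ through the $A_4{+}A_1$ orbit to the Lie algebra of the maximal $A_2\subset E_7$. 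Your plan has no substitute for this machinery, and "verify vanishing of $\opH^1$ in all of them" cannot even be set up, since the embeddings are themselves the unknowns.

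Second, the existence direction cannot be finished from composition-factor cohomology alone: nonvanishing of $\opH^1(\bar\h,V)$ for a factor $V$ of $\q$ does not imply $\opH^1(\bar\h,\q)\neq 0$, as classes can die against the filtration of $\q$. The paper uses the purely cohomological construction only in the single case where $\q$ is $\l'$-irreducible ($\bar\h=G_2$ maximal in the $E_6$-Levi of $E_7$ with $\q\cong V_{27}\downarrow\bar\h=L(2,0)$, Lemma \ref{mtforg2}); everywhere else it builds explicit subalgebras: reduction mod $p$ of the $\Z$-defined principal $\sl_2$-triple, certified non-$G$-cr because $e^{[p]}\neq 0$ makes it a non-$p$-subalgebra (Lemmas \ref{mtfora1a} and \ref{psubalgebraornongcr}) --- not via $\opH^1$ at all; the tilting embedding $B_2\hookrightarrow D_8\subset E_8$ via $T(2,0)+k$, detected by a $95$-dimensional indecomposable summand of $\g$; and explicit $W_1$ generators verified by MeatAxe socle computations. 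Relatedly, your treatment of $W_1$ through "the same decomposition" cannot work as stated, since $W_1$ is not the Lie algebra of an algebraic group and the reduction to $G_1$-cohomology (the source of Table~\ref{tHs}) is unavailable; the paper instead exempts $W_1$ from statement (\ref{VinTable}) and handles it in \S\ref{sec:w1s} via Lemma \ref{W1s} and GAP. One point in your favour: the fusion worry you raise at the end is already settled by Lemma \ref{hnongcr} (from \cite{BMRT13}) --- a complement that is $Q$-conjugate into $\l$ must be $Q$-conjugate to $\bar\h$ itself, since $Q$-conjugation preserves the projection to $\l$, so non-triviality of the cocycle class does certify non-$G$-cr-ness; no separate $N_G(\l)$-fusion check is needed.
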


Since we consider Lie algebras $\g=\Lie(G)$ for $G$ an algebraic group, $\g$ inherits a $[p]$-map arising from the Frobenius morphism on the group. Then a subalgebra $\h$ of $\g$ is a \emph{$p$-subalgebra} if and only if it is closed under the $[p]$-map. Asking when $p$-subalgebras are $G$-cr gives a slightly different answer, with an important connection to the existence of the bijection (**). 
\begin{theorem}\label{psubalgebrathm}Suppose $\h$ is a simple $p$-subalgebra of $\g=\Lie(G)$ for $G$ a simple algebraic group of exceptional type over an algebraically closed field $k$ of characteristic $p\geq 5$. Then either $\h$ is $G$-cr or one of the following holds:
\begin{enumerate}\item $\h$ is of type $A_1$ and $p \leq b(G)$;
\item $\h$ is of type $W(1;\underline{1})$, $p=7$ and $G$ is of type $F_4$; or $p=5$ or $7$ and $G$ is of type $E_6$, $E_7$ or $E_8$;
\item Up to isomorphism we have $(G,\h,p)=(E_7,G_2,7)$, $(E_8,B_2,5)$ or $(E_8,G_2,7)$.
\end{enumerate}
Moreover, for each exception, there exists a non-$G$-cr $p$-subalgebra of the stated type.
\end{theorem}


To appreciate fully the connection of Theorems \ref{ssgcrcorollary} and \ref{maintheorem} with Theorem \ref{jmthm} we will see that the failure of the uniqueness part of the Jacobson--Morozov theorem to hold in characteristics less than or equal to the Coxeter number $h(G)$ comes exactly from the failure of some subalgebras isomorphic to $\sl_2$ to be $G$-cr. (And this is precisely how we construct examples of extra conjugacy classes of $\sl_2$ subalgebras when $p\leq h(G)$.) Moreover, so long as $G$ contains neither a factor of type $G_2$ when $p=3$ nor a factor of type $A_{p-1}$, then the bijection (**) in Theorem \ref{sl2subalgebrabij} exists precisely when there is an equivalence \[H \text{ is $G$-completely reducible }\iff H\text{ is reductive}\] for all connected reductive subgroups $H$ of $G$. 


Another result concerns a connection between Seitz's idea of subgroups of type $A_1$ being good with the study of modular Lie algebras. Recall from \cite{Sei00} that a closed subgroup $H$ of type $A_1$ of an algebraic group $G$ is \emph{good} if it has weights no bigger than $2p-2$ on the adjoint module. Again, this idea forms part of the philosophy of generalising concepts of representation theory from $\GL(V)$ to other reductive groups. This time, Seitz's notion gives us the correct generalisation of the notion of a restricted representation of $H:=\SL_2$: If $H$ acts with weights less than $p$ on $V$, then it gives a good $A_1$-subgroup of $\GL(V)$, since $H$ will have weights no more than $2p-2$ on $\gl(V)|_H\cong V\otimes V^*$. In {\it ibid.} Seitz proves in particular that all unipotent elements of order $p$ have a good $A_1$-overgroup and that any two such overgroups are conjugate; this itself connects to questions raised in Serre's fundamental paper \cite{Ser05} by providing a solution to finding overgroups of unipotent elements which are so-called `saturated'. Our result is as follows.

\begin{theorem}\label{gooda1s}Let $G$ be a connected reductive algebraic group over an algebraically closed field of characteristic $p>2$. Then every $\sl_2$ subalgebra of $G$ is $\Lie(H)$ for $H$ a good $A_1$ if and only if $p>h(G)$.\end{theorem}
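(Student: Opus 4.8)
The plan is to pass between $\sl_2$-subalgebras, $\sl_2$-triples and good $A_1$-subgroups, using the bijection (*) of Theorem \ref{jmthm} in the forward direction and $G$-complete reducibility in the converse. A reduction to $G$ simple is routine, since $h(G)$ is the maximum of the Coxeter numbers of the simple factors and an $\sl_2$-subalgebra projects into the factors (indeed the forward direction below applies verbatim to connected reductive $G$). The one computation used throughout is the following weight bound: if $e$ is nilpotent with associated cocharacter $\lambda$, whose weighted Dynkin diagram has labels in $\{0,1,2\}$, then the largest weight of $\Ad\lambda$ on $\g$ is $\la\tilde\alpha,\lambda\ra \le 2\sum_i c_i = 2(h(G)-1)$, where $\tilde\alpha=\sum_i c_i\alpha_i$ is the highest root; thus no $\sl_2$-triple can display a weight exceeding $2h(G)-2$ on $\g$, with equality precisely for the principal triple.

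Suppose first that $p>h(G)$, and let $\h$ be an $\sl_2$-subalgebra with $\sl_2$-triple $(e,h,f)$. The weight bound gives $2h(G)-2<2p-2$, so the unipotent element corresponding to $e$ has order $p$, and by Seitz \cite{Sei00} it lies in a good $A_1$-subgroup $H$ with $e\in\Lie(H)$; here $\Lie(H)$ is itself an $\sl_2$-subalgebra through $e$, spanned by a triple $(e,h',f')$. Since $p>h(G)$, Theorem \ref{jmthm} makes (*) a bijection, so $(e,h,f)$ and $(e,h',f')$ are conjugate by some $g\in C_G(e)$. Then $\Ad(g)\Lie(H)=\Lie(gHg^{-1})$ contains $\la e,h,f\ra=\h$, hence equals it by dimension, and $gHg^{-1}$ is again a good $A_1$. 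Thus $\h=\Lie(gHg^{-1})$, and every $\sl_2$-subalgebra is the Lie algebra of a good $A_1$.

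For the converse I argue contrapositively: assuming $p\le h(G)$ I produce an $\sl_2$-subalgebra that is $\Lie(H)$ for no good $A_1$. The key is that such Lie algebras are $G$-completely reducible. Indeed a good $A_1$ is $G$-cr, and the defining condition that its weights on $\g$ are at most $2p-2$ is exactly the separability hypothesis of \cite{BMRT13}, which transfers $G$-complete reducibility from $H$ to $\Lie(H)$. On the other hand, the failure of injectivity of (*) in the range $p\le h(G)$ (Theorem \ref{jmthm}) is produced precisely by non-$G$-cr $\sl_2$-subalgebras---this is how the surplus conjugacy classes of $\sl_2$-subalgebras arise---so for $p\le h(G)$ there is always a non-$G$-cr $\sl_2$-subalgebra, and it can be $\Lie(H)$ for no good $A_1$. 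This completes the contrapositive and hence the theorem.

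The main obstacle is this last input: the existence of a non-$G$-cr $\sl_2$-subalgebra for every $p\le h(G)$, and in particular the tight boundary $p=h(G)$ which decides ``$p>h(G)$'' rather than ``$p\ge h(G)$''. For an odd prime the equality $p=h(G)$ forces a simple factor of type $A_{p-1}$, and there one exhibits the copy $\sl_2=\la\partial,X\partial,X^2\partial\ra$ inside the Witt algebra $W(1;\underline 1)$ acting on $k[X]/\la X^p\ra\cong k^p$: here the nilpositive and nilnegative elements $X^2\partial$ and $\partial$ have different Jordan types on the natural module, so the action is not semisimple and the subalgebra is not $G$-cr. For $p<h(G)$ the corresponding non-$G$-cr $\sl_2$-subalgebras are supplied by the constructions underlying Theorems \ref{ssgcrcorollary} and \ref{maintheorem}; assembling these uniformly across all $p\le h(G)$ and all types, and checking in each case that the resulting subalgebra is genuinely not the Lie algebra of a good $A_1$, is the crux of the argument.
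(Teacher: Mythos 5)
Your proposal is correct and follows essentially the same route as the paper: the forward direction is exactly the paper's argument (all unipotent elements have order $p$ when $p>h$, Seitz's \cite[Props.~4.1 \& 4.2]{Sei00} give a good $A_1$-overgroup whose Lie algebra contains $e$, and the uniqueness in Theorem \ref{jmthm} identifies it with $\h$ up to conjugacy), while your converse is the paper's implicit one, namely the non-$G$-cr $\sl_2$-subalgebras of Lemma \ref{mtfora1a} for every $p\leq h(G)$ (your $A_{p-1}$ boundary example via $\la\del,X\del,X^2\del\ra\subset W_1$ is a correct instance of this) against the $G$-complete reducibility of Lie algebras of good $A_1$-subgroups. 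The only blemish is attribution: the transfer of $G$-complete reducibility from a good $A_1$-subgroup $H$ to $\Lie(H)$ is not a ``separability hypothesis'' from \cite{BMRT13} but is \cite[Thm.~1]{McN07} together with \cite[Prop.~7.2]{Sei00}, exactly as the paper invokes in the proof of Theorem \ref{psubalgebrathm}.
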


Lastly, for completeness we have checked the following, improving the Jacobson--Morozov theorem itself optimally, using the classification of nilpotent orbits in characteristic $p\geq 3$.
\begin{theorem}\label{jm}Let $G$ be a simple algebraic group over an algebraically closed field of characteristic $p\geq 3$. Then any nilpotent element $e \in \g = \Lie(G)$ belonging to the orbit $\mathcal O$ can be extended to an $\sl_2$-triple if and only if $(G,p,\mathcal O)\neq (G_2,3,\tilde A_1^{(3)})$.\end{theorem}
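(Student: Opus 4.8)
The plan is to cut the statement down to a finite list of cases and then verify each using the classification of nilpotent orbits. When $p$ is good for $G$, Pommerening's theorem \cite{Pom80} already embeds every nilpotent element in an $\sl_2$-triple, so the interesting direction is settled outside the bad primes. Since $p\geq 3$ and the only bad prime of a classical simple group is $2$, the pairs $(G,p)$ left to consider are $p=3$ for $G$ of type $G_2,F_4,E_6,E_7,E_8$ and $p=5$ for $G=E_8$; as the question reduces at once to $G$ simple, this is all that must be examined.

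Next I would use a Levi reduction. If $\mathcal O$ is of Bala--Carter type, so that a representative $e$ is distinguished in a proper Levi subalgebra $\l=\Lie(L)$, and if $p$ is good for $L$, then Pommerening applied to $L$ produces an $\sl_2$-triple through $e$ inside $\l\subseteq\g$. Because the Levi factors of the simply-laced groups $E_6,E_7,E_8$ are again of type $A,D,E$, this gives an induction: at $p=3$ the proper Levis for which $3$ is bad are only the sub-systems of type $E_6$ inside $E_7,E_8$ and of type $E_7$ inside $E_8$, while $G_2,F_4$ have no bad proper Levi; at $p=5$ every proper Levi of $E_8$ has $5$ good. After the induction the orbits still requiring attention are exactly the distinguished orbits of each group at its bad prime, together with the \emph{exotic} orbits peculiar to the bad prime (those not of Bala--Carter type, such as $\tilde A_1^{(3)}$ in $G_2$). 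For each of these I would fix a Chevalley $\Z$-form, take the representative $e$ and the element $h=d\lambda(1)$ of a cocharacter $\lambda$ associated to $e$ (so $[h,e]=2e$), and solve the linear system $[e,f]=h$ for $f$ in the weight space $\g(\lambda;-2)$; reduced modulo $p$ this is a finite check, and one verifies that a solution exists in every case but one.

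The genuine content, and the main obstacle, is proving non-existence for $(G_2,3,\tilde A_1^{(3)})$. Write $\alpha$ for the long and $\beta$ for the short simple root of $G_2$. The orbit $\tilde A_1^{(3)}$ is an exotic short-root orbit special to characteristic $3$: its representative may be taken to be $e=e_\beta+e_{\alpha+2\beta}$, and the point is that the structure constant $N_{\beta,\alpha+2\beta}=\pm 3$ vanishes modulo $3$, so these two short-root vectors commute and $e$ is not $G$-conjugate to the ordinary short-root element of $\tilde A_1$. Any grading element with $[h,e]=2e$ must satisfy $\beta(h)=(\alpha+2\beta)(h)=2$, hence $\alpha(h)=-2$; chasing the relation $[e,f]=h$ through the weight space $\g(\lambda;-2)=\langle e_{-\beta},e_{-(\alpha+2\beta)},e_{-(2\alpha+3\beta)}\rangle$ then forces the coefficient of $e_{-\beta}$ in $f$ to vanish (the relevant structure constant $N_{\alpha+2\beta,-\beta}=\pm2$ is a unit modulo $3$), which in turn makes $h$ a multiple of the coroot of $\alpha+2\beta$ and so forces $\alpha(h)=0\neq -2$. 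This contradiction rules out the natural candidate; to complete the argument one must also eliminate every other $h$ with $[h,e]=2e$ (allowing $h-d\lambda(1)\in\g_e$, and allowing that $h$ need not be $\ad$-semisimple in bad characteristic), which for the $14$-dimensional $\g$ amounts to a finite system of equations over $\F_3$ with no solution.

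The delicate points are therefore twofold: first, carrying out the existence verifications for all distinguished and exotic orbits of the exceptional groups at $p=3$ (and of $E_8$ at $p=5$), where the bulk of the casework lies; and second, making the non-existence argument for $\tilde A_1^{(3)}$ airtight against all choices of $h$ rather than only the associated-cocharacter candidate. The conceptual signal behind both the failure and its isolation is the divisibility of the $G_2$ structure constants by $3$, reflecting the special isogeny of $G_2$ in characteristic $3$.
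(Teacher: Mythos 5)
The fatal gap is in the non-existence half. Your representative $e=e_\beta+e_{\alpha+2\beta}$ does not lie in the orbit $\tilde A_1^{(3)}$. In characteristic $3$ the span $\m$ of the six short root spaces together with $h_\beta$ is a $7$-dimensional ideal of $\g$: the structure constants $N_{\gamma,\delta}$ for short $\gamma,\delta$ with $\gamma+\delta$ long are $\pm 3\equiv 0$, the short coroots reduce mod $3$ to multiples of $\beta^\vee$ (e.g.\ $(\alpha+2\beta)^\vee=3\alpha^\vee+2\beta^\vee$), and $\langle\gamma,\beta^\vee\rangle\equiv 0\pmod 3$ for $\gamma$ long. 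Since $\m$ is $G$-stable, any nilpotent orbit either lies inside $\m$ or misses it entirely. Your $e$ lies in $\m$, whereas the standard representative of $\tilde A_1^{(3)}$ --- $e_{2\alpha_1+\alpha_2}+e_{3\alpha_1+2\alpha_2}$, as in the paper and \cite{LS12} --- has a nonzero long-root component and does not; hence $\tilde A_1^{(3)}$ is disjoint from $\m$, and your $e$ belongs to one of the orbits for which the theorem asserts a triple \emph{does} exist, so a complete search over admissible $(h,f)$ would terminate in a triple rather than a contradiction. Moreover, even granting a correct representative, your computation only treats toral $h\in\t$, and your weight space is misidentified: since $\alpha(h)=-2$, the $-2$ space also contains $e_\alpha$, which you omit. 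You acknowledge having to eliminate ``every other $h$'', but this is not ``a finite system of equations over $\F_3$'': $h$ and $f$ range over the algebraically closed field $k$, so one must show a variety is empty. The missing idea --- and the paper's entire proof of this case --- is that any triple $(e,h,f)$ satisfies $(\ad e)^2f=[e,[e,f]]=[e,h]=-2e$, so as $p\neq 2$ it suffices to verify the single linear-algebra fact that $e\notin\im(\ad e)^2$; this quantifies over all $h$ and $f$ simultaneously, with no torality or semisimplicity assumptions.

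On the existence half your route is genuinely different from the paper's and viable in principle, but nearly all of its content is deferred. The paper handles every orbit whose label is defined over $\C$ uniformly by $\Z$-forms: partition combinatorics for classical $G$; Testerman's $\Z$-defined $\sl_2$-triples \cite{Tes95} for exceptional $e$ not contained in a maximal-rank subalgebra; and induction through \emph{maximal-rank} subalgebras (not merely Levis), with \cite{LS12} guaranteeing that reduction mod $p$ lands in the orbit with the same label. The only residual existence computation is the explicit GAP triple for $A_7^{(3)}$ in $E_8$ at $p=3$. Your Levi-only induction instead leaves every distinguished orbit of $G_2$, $F_4$, $E_6$, $E_7$, $E_8$ at $p=3$ and of $E_8$ at $p=5$ --- roughly three dozen orbits --- to a case-by-case linear solve that you do not carry out, and it leans on associated cocharacters at bad primes, where the theory of \cite{Pre95} invoked in the paper is developed only for good $p$; you would need the labelled diagrams of \cite{LS12} or explicit choices of $h$ instead. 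In sum: the plan could be completed, but as written the crucial non-existence case is argued for the wrong element by a method that does not exclude all $(h,f)$, and the bulk of the existence casework is asserted rather than done.
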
 

\subsection*{Acknowledgements} We thank Sasha Premet for some discussion and the referee for helpful suggestions for improvement.

\section{Preliminaries} \label{sec:prelims}
\subsection{Notation}
In the following $G$ will be a reductive algebraic group over an algebraically closed field $k$ of characteristic $p>2$, and $\g$ will be its Lie algebra. 

Throughout the paper we use the terms classical and exceptional when referring to both simple algebraic groups and simple Lie algebras. When we say a simple Lie algebra is classical (or of classical type) we mean that it is of type $A$--$G$. However, for a simple algebraic group, we use the term classical to mean of type $A$--$D$, and exceptional otherwise. 

All notation unless otherwise mentioned will be consistent with \cite{Jan03}. In particular, all our reductive groups are assumed to be connected. The root system $R$ contains a simple system $S$ whose elements will be denoted $\alpha_i$, with corresponding fundamental dominant weight $\varpi_i$. We shall denote roots in $R$ by their coefficients in $S$ labelled consistently with Bourbaki. For a dominant weight $\lambda = a_1 \varpi_1 + a_2 \varpi_2 + \cdots + a_n \varpi_n$ we write $\lambda = (a_1, a_2, \dots, a_n)$ and write $L(\lambda) = L(a_1, a_2, \dots, a_n)$ to denote the irreducible module of highest weight $\lambda$. Given modules $M_1, \ldots, M_k$, the notation $V = M_1 | \dots | M_k$ denotes a module with a socle series as follows: $\mathrm{Soc}(V) \cong M_k$ and $\mathrm{Soc}(V / \sum_{j \geq i}M_j) \cong M_{i-1}$ for $k \geq i > 1$. We write $M_1 + M_2$ for $M_1\oplus M_2$. We also write $T(\lambda)$ for a tilting module of high weight $\lambda$ for an algebraic group $G$. In small cases, the structure of these is easy to write down. For example, when $V(\lambda)\cong L(\lambda)|L(\mu)$, we have $T(\lambda)\cong L(\mu)|L(\lambda)|L(\mu)$. The module $V_{E_6}(\varpi_1)$ will be denoted $V_{27}$ and the module $V_{E_7}(\varpi_7)$ denoted $V_{56}$.  

When $G$ is simple and simply-connected, we choose root vectors in $\g$ for a torus $T\subseteq G$ and a basis for $\t = \Lie(T)$ coming from a basis of subalgebras isomorphic to $\sl_2$ corresponding to each of the simple roots. We write these elements as $\{ e_\alpha : \alpha \in R\}$ and $\{ h_\alpha : \alpha \in S\}$ respectively. As $\g=\Lie(G)$, we have that $\g$ inherits a $[p]$-map $x\mapsto x^{[p]}$, making it a restricted Lie algebra; see \cite[I.7.10]{Jan03}.

Recall also the first Witt algebra $W(1;\underline{1}):=\Der_k(k[X]/X^p)$, henceforth denoted $W_1$. The Lie algebra $W_1$ is $p$-dimensional with basis $\{\del,X\del,\dots,X^{p-1}\del\}$ and commutator formula $[X^i\del,X^j\del]=(j-i)X^{i+j-1}\del$. In \S\ref{sec:w1s} we use a little of the representation theory of $W_1$. All that we need is contained in \cite{BNW09} for example. In particular, $\Der_k(k[X]/X^p)$ is a module with structure $S|k$ where $S$ is an irreducible module of dimension $p-1$.

\subsection{Parabolic subalgebras}\label{sec:parabs}
Let $P = LQ$ be a standard parabolic subgroup of an exceptional algebraic group $G$ with unipotent radical $Q$ and Levi complement $L$, corresponding to a subset $J$ of $S$. In particular, letting $R_J = R \cap \Z J$, we have $P = \langle U_\alpha, \, T \, | \, \alpha \in R^+ \cup R_J \rangle$. In this section we discuss the structure of $Q$ in terms of the action of $L$. Forgetting the Lie algebra structure on $\q:=\Lie(Q)$, we obtain a module for $\l:=\Lie(L)$. We will see that if $\h$ is a subalgebra of $\l$ such that $\q$ has no $\h$-composition factors $V$ with non-trivial first cohomology (which we will recall shortly) then all complements to $\q$ in the semidirect product of Lie algebras $\h+\q$ are conjugate to $\h$ by elements of $Q$, hence all are $G$-cr.

The unipotent radical has (by \cite{ABS90}) a well-known central filtration $Q=Q_1\geq Q_2\geq \dots$ with successive factors $Q_i/Q_{i+1}$ isomorphic to the direct product of all root groups corresponding to the set $\Phi_i$ of roots of level $i$, where the level of a root $\alpha = \sum_{i \in S} c_{i} \alpha_i$ is $\sum_{i \in S \setminus J} c_{i}$, via the multiplication map $\pi:\Ga\times\dots\times\Ga\to G$; $(t_1,\dots,t_n)\mapsto\prod_{\alpha\in\Phi_i}x_\alpha(t_i)$. The filtration $\{Q_i\}$ is $L$-stable and the quotients have the structure of $L$-modules. That is, they are $L$-equivariantly isomorphic to the $L$-module $\Lie(Q_i/Q_{i+1})=\Lie(Q_i)/\Lie(Q_{i+1})$, as is verified in \cite{ABS90}. Moreover it is straightforward to compute the $L$-composition factors of each subquotient; see  \cite[Lem.~3.1]{LS96}. One observes all of the high weights are restricted when $p$ is good for $G$ (and for $p=5$ when $G=E_8$). We may therefore immediately conclude by differentiating the $L$-modules concerned that the same statement is true of the $\l$-composition factors of the $\l$-module $\Lie(Q_i)/\Lie(Q_{i+1})$. The following lemma records this.

\begin{lemma} \label{compsofparabs}
Let $\g$ be the Lie algebra of a simple exceptional algebraic group $G$ in good characteristic (or $p=5$ when $G=E_8$) and let $\p = \l+\q$ be a parabolic subalgebra of $\g$. The $\l$-composition factors within $\q$ have the structure of high weight modules for $\l$. If $\l_0=\Lie(L_0)$ for $L_0$ a simple factor of $L$, then the possible high weights $\lambda$ of non-trivial $\l_0$-composition factors are as follows: 
\begin{enumerate}
 \item $\l_0 = A_n$: $\lambda = 2 \varpi_1$, $2\varpi_n$, $3 \varpi_1$, $\varpi_j$ or $\varpi_{n+1-j}$ $(j =1, 2, 3$) (note that $2 \varpi_1$, $2 \varpi_n$ only occur if $\g = F_4$ and $n \leq 2$ and $3 \varpi_1$ only if $\g = G_2$ and $n=1$); 
 \item $\l_0 = B_n$ or $C_n$ ($n=2$ or $3$, $\g = F_4$): $\lambda = \varpi_1, \varpi_2$ or $\varpi_3$;
 \item $\l_0 = D_n$: $\lambda = \varpi_1, \varpi_{n-1}$ or $\varpi_n$;
 \item $\l_0 = E_6$: $\lambda = \varpi_1$ or $\varpi_6$; 
 \item $\l_0 = E_7$: $\lambda = \varpi_7$. 
\end{enumerate}
 
\end{lemma}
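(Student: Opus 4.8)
The plan is to deduce the statement directly from the level filtration of $Q$ recalled above, the only genuine content being the enumeration of high weights. First I would fix a simple factor $L_0$ of $L$, with $\l_0 = \Lie(L_0)$, and observe that it suffices to determine, for each level $i$, the $L_0$-high weights occurring in the $L$-module $\Lie(Q_i)/\Lie(Q_{i+1})$, since these subquotients exhaust the $L$-composition factors of $\q$. By \cite{ABS90} this module is $L$-equivariantly the span of the root spaces with $\alpha$ of level $i$, and a root vector $e_\alpha$ is an $L_0$-highest weight vector precisely when $\alpha + \alpha_j$ is not a root for every simple root $\alpha_j$ lying in the component of $J$ defining $L_0$; the resulting high weight is the restriction $\lambda = \alpha|_{T_0}$, recorded by the Cartan integers $\la \alpha, \alpha_j^\vee\ra$. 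These are exactly the group-level high weights tabulated in \cite[Lem.~3.1]{LS96}.

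Next I would pass from $L_0$-modules to $\l_0$-modules by differentiation, as already indicated in the discussion preceding the lemma. The key point is that every weight $\lambda$ appearing is restricted under the stated hypotheses: a weight is restricted for $L_0$ exactly when $\la \lambda, \alpha_j^\vee \ra < p$ for all simple roots $\alpha_j$ of $L_0$, and since $\lambda$ is the restriction of a root of an exceptional $G$, these coefficients are bounded by the small coefficients $\la\alpha,\alpha_j^\vee\ra$ available in exceptional root systems, the maximum being $3$ (attained only by $3\omega_1$ for $\l_0 = A_1$ inside $G_2$ and by $2\omega_1$ for an $A_2$-factor inside $F_4$). As $p \geq 5$ in good characteristic for each exceptional type, and $p = 5 > 3$ covers the extra $E_8$ case, all these weights are restricted. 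By the standard theory of restricted representations (see \cite{Jan03}), a restricted irreducible $L_0$-module $L(\lambda)$ remains irreducible on restriction to $\l_0$ with unchanged high weight, so the non-trivial $\l_0$-composition factors of $\q$ carry exactly the same high weights as the $L_0$-composition factors, giving the displayed structure.

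It then remains to read off the explicit list, and here the work is a finite but lengthy bookkeeping exercise. I would run through the simple factors $L_0$ that can occur inside a Levi of an exceptional group --- these are of type $A_n$ (small $n$), $B_2$, $B_3$, $C_3$, $D_n$, $E_6$ or $E_7$ --- and for each, use the boundedness of root coefficients to limit the candidate high weights. Since $\lambda = \alpha|_{T_0}$ and the coefficient of $\lambda$ on a node $\alpha_j$ of $L_0$ equals $\la\alpha,\alpha_j^\vee\ra$, only fundamental weights, together with $2\omega_1,2\omega_n,3\omega_1$ in the exceptional low-rank situations noted, can arise; the parenthetical restrictions ($2\omega_1,2\omega_n$ only for $\g=F_4$ with $n\leq 2$, and $3\omega_1$ only for $\g=G_2$ with $n=1$) are checked case by case. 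The main obstacle is not conceptual but organisational: ensuring that every sub-diagram arising as a Levi factor and every level has been accounted for, which is most cleanly achieved by cross-referencing the tables of \cite{LS96}. A secondary subtlety is the boundary case $p=5$ for $E_8$, where, although $5$ is a bad prime and roots of level up to $6$ occur, one must confirm that the coefficients seen by any simple factor $\l_0$ never exceed $3$, so that restrictedness --- and hence the differentiation argument --- still applies.
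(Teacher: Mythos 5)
Your proposal is correct and follows essentially the same route as the paper: the paper likewise deduces the lemma from the \cite{ABS90} level filtration of $Q$, reads off the $L$-composition factors from \cite[Lem.~3.1]{LS96}, observes that all high weights are restricted when $p$ is good (or $p=5$ for $E_8$), and differentiates to pass to $\l$-composition factors. Your additional details --- identifying highest weight vectors via root strings and invoking Curtis's theorem for the irreducibility of restricted $L(\lambda)$ upon restriction to $\l_0$ --- merely make explicit what the paper leaves implicit in the discussion preceding the lemma.
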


We therefore find the following restrictions on the dimensions of $\l$-composition factors of $\q$ (hence also on the $\h$-composition factors of $\q$).

\begin{cor} \label{dimofparabs}
With the hypothesis of the lemma, let $V$ be an $\l$-composition factor of $\q$. Then $\dim V \leq 64$. 
\end{cor}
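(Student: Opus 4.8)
The plan is to read off the possible high weights $\lambda$ from Lemma \ref{compsofparabs} and, for each simple type of Levi factor $\l_0$, bound the dimension of the corresponding irreducible $L_0$-module $L(\lambda)$, then take the maximum. Since the characteristic is good for $G$ (or $p=5$ for $E_8$) and all the high weights occurring are restricted, the dimensions $\dim L(\lambda)$ are no larger than the Weyl module dimensions $\dim V(\lambda)$, which are given by the Weyl dimension formula and are characteristic-independent; so it suffices to compute these Weyl dimensions. One caveat is that a composition factor of $\q$ as an $\l$-module is an outer tensor product over the simple factors of $\l$ (together with the action of the central torus), so strictly one bounds $\dim V$ as the product of the $\dim L(\lambda_i)$ over the simple factors; but since $\q$ sits inside $\g$ and $G$ is of rank at most $8$, the relevant cases are constrained and a direct check of the largest such products is feasible.

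First I would list, type by type, the maximal $\dim V(\lambda)$ arising from the allowed weights. For $\l_0 = A_n$ the weights are $\omega_1,\omega_2,\omega_3$ (and their duals), $2\omega_1$, and the special low-rank cases $3\omega_1$ (only $G_2$, $n=1$) and $2\omega_1,2\omega_n$ (only $F_4$, $n\leq 2$): here $\omega_j$ gives $\binom{n+1}{j}$, $2\omega_1$ gives $\binom{n+2}{2}$, and these grow with $n$, so the binding constraint is the largest $A_n$-Levi occurring in an exceptional group, namely $A_7$ inside $E_8$, where $\omega_3$ (or $\omega_4$, dual $\omega_5$) gives the largest value. I would compute these explicitly; the fundamental representations of $A_7$ have dimensions $8,28,56,70,56,28,8$, so $\omega_4 = (0,0,0,1,0,0,0)$ gives $70$ — but one must check whether $\omega_4$ actually occurs (Lemma \ref{compsofparabs} only permits $\omega_j$ for $j\le 3$), in which case the relevant $A_7$ value is $\binom{8}{3}=56$ and $2\omega_1$ gives $\binom{9}{2}=36$. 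For $B_n,C_n$ (only $n=2,3$ in $F_4$), $D_n$, $E_6$ ($\omega_1,\omega_6$ of dimension $27$) and $E_7$ ($\omega_7$ of dimension $56$) the weights are few and the dimensions small; the largest single-factor value among all these is $56$, attained by $A_6\subset E_7$ at $\omega_3$ (dimension $\binom{7}{3}=35$) — no, rather by $E_7$ at $\omega_7$ and by $D_n$/$A_n$ for the top-rank cases, which I would tabulate carefully.

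The key remaining step is to handle the outer tensor products: a composition factor could in principle be $L(\lambda_1)\boxtimes L(\lambda_2)$ for two simple factors acting non-trivially, and then $\dim V$ is the product. However, inside a parabolic of an exceptional group the Levi factors and their actions on $\q$ are tightly constrained, and I would argue — using the explicit composition factor data of \cite{LS96} referenced before the lemma — that whenever two simple factors act non-trivially on a given factor the individual dimensions are small (typically natural modules of classical factors), so the product stays at most $64$. I would then verify $64$ is attained (it is the dimension of the relevant spin/half-spin or high-weight factor, e.g.\ $D_6$ at $\omega_6$ inside $E_7$ gives a half-spin module of dimension $32$, and the binding example producing exactly $64$ I would identify precisely), confirming the bound is sharp and cannot be lowered.

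The main obstacle I anticipate is the bookkeeping around the tensor-product factors and the exact identification of which weights in Lemma \ref{compsofparabs} genuinely occur for the largest-rank Levi subgroups, rather than any conceptual difficulty: the statement is essentially a finite tabulation, so the work lies in making sure no admissible $(\l_0,\lambda)$ combination or outer tensor product exceeds $64$, and in pinning down the single case that realises equality.
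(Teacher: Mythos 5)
Your plan is correct and is essentially the paper's own proof: a finite tabulation of the admissible weights from Lemma \ref{compsofparabs}, together with a check of the outer tensor products across the simple factors of $\l'$ using the explicit composition-factor data of \cite{LS96}. One point worth noting is that the single genuinely dangerous case you defer to a ``direct check'' is the $A_1A_6$ Levi of $E_8$, where the lemma's list alone would permit $L(1)\otimes L(\omega_3)$ of dimension $70>64$ --- the paper resolves it exactly as you propose, computing the actual $\l'$-composition factors to be $L(1)\otimes L(\omega_2)$, $L(\omega_4)$, $L(1)\otimes L(\omega_6)$ and $L(\omega_1)$ (largest dimension $42$); it also shortcuts your tabulation for $\g\neq E_8$ by bounding $\dim V\leq\dim\q$ by the number of positive roots, and the sharp value $64$ is attained by the $D_7$ half-spin module inside $E_8$, not by anything in $E_7$.
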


\begin{proof}
This follows from the lemma if $\l'$ is simple. Moreover, if $\g \not = E_8$ then the number of positive roots is at most 56 and the result follows. So suppose $\g = E_8$. The product of the dimensions of the possible simple factors is at most 64 in all cases, except for $\l'$ of type $A_1 A_6$ for which a module $L(1) \otimes L(\varpi_3)$ has dimension $2 \times 35 = 70$. However, an easy calculation shows the actual $\l'$-composition factors are $L(1) \otimes L(\varpi_2)$, $L(\varpi_4)$, $L(1) \otimes L(\varpi_6)$ and $L(\varpi_1)$. Hence the largest dimension of any $\l'$-composition factor is $42$. 
\end{proof}

We recall a concrete description of the $1$-cohomology of Lie algebras; see \cite[\S7.4]{Wei94}. Let $\h$ be a Lie algebra and $V$ an $\h$-module. A $1$-cocycle is a map $\varphi:\h\to V$ such that \begin{equation}\varphi([x,y])=x(\varphi(y))-y(\varphi(x)).\label{cocyclecondition}\end{equation} Let $Z^1(\h,V)$ denote the set of $1$-cocycles. For $v\in V$ the map $\h\to V:x\mapsto x(v)$ is a $1$-cocycle called a $1$-coboundary; denote these by $B^1(\h,V)$. Two $1$-cocycles are equivalent if they differ by a $1$-coboundary; explicitly $\varphi\sim\psi$ if there is some $v\in V$ such that $\varphi(x)=\psi(x)+x(v)$ for all $x\in\h$. In this case we say $\varphi$ and $\psi$ are \emph{conjugate} by $v$. One then has $\opH^1(\h,V)=Z^1(\h,V)/B^1(\h,V)$. 

Note that $V$ can be considered as a Lie algebra with trivial bracket. Then one may form the semidirect product $\h+V$. A complement to $V$ in the semidirect product $\h+V$ is a subalgebra $\h'$ such that $\h'\cap V=0$ and $\h'+V=\h+V$. Just as for groups, one has a vector space isomorphism \[Z^1(\h,V)\longleftrightarrow \left\{\text{complements to }V\text{ in }\h+V\right\},\] by $\varphi\mapsto \{x+\varphi(x):x\in\h\}$.

We wish to realise the conjugacy action of $V$ on $\h+V$ in terms of a group action. Suppose $\dim V=n$. For our purposes it will do no harm to identify $\h$ with its image in $\gl_n$.  Furthermore, it will be convenient to embed $V$ into $\gl_{n+1}$ as strictly upper-triangular matrices with non-zero entries only in the last column, {\it viz.}
\[V:=\left(\begin{matrix}0&&&&*\\
&0&&&*\\
&&\ddots&&\vdots\\
&&&0&*\\
&&&&0\end{matrix}\right).\]
Then the action of $\GL_n$ on $V$ is realised as conjugation of the block diagonal embedding of $\GL_n$ in $\GL_{n+1}$ via $x\mapsto \diag(x,1)$. Clearly adding the identity endomorphism of $V$ commutes with the action of $\GL_n$. Hence  the group $Q:=1+V$ is $\GL_n$-equivariant to its Lie algebra $V$. Now suppose $\h'$ is a complement to $\h$ in $\h+V$ given as $\h'=\{x+\varphi(x):x\in\h\}\subset \gl_{n+1}$. If $q=1+v\in Q$ with $v\in V$ we have \[(x+\varphi(x))^q=x^q+\varphi(x)=(1-v)x(1+v)+\varphi(x).\]
Then since $x$ normalises $V$ and any two endomorphisms $v_1,v_2\in V\subseteq\End(V)$ satisfy $v_1v_2=0$, we see easily that $(x+\varphi(x))^q=x+[x,v]+\varphi(x)=x+x(v)+\varphi(x)$, showing that two cocycles in $Z^1(\h,V)$ are equivalent if and only if they are conjugate under the action $\varphi^q(x)=x(v)+\varphi(x)$, where $q=1+v$.

We have proven the following proposition:

\begin{prop}\label{h1moduleiszero}Let $\h\subseteq\gl(V)$ be a subalgebra with $\dim V=n$. Then realised as a subalgebra of $\gl_{n+1}$ as above, all complements to $V$ in $\h+V$ are conjugate to $\h$ via elements of $1+V$ if and only if $\opH^1(\h,V)=0$.\end{prop}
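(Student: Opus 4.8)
The plan is to assemble the two dictionaries set up in the paragraphs above. First I would use the linear bijection $Z^1(\h,V)\leftrightarrow\{\text{complements to }V\text{ in }\h+V\}$ sending a cocycle $\varphi$ to the subalgebra $\h_\varphi:=\{x+\varphi(x):x\in\h\}$. Under this bijection the given subalgebra $\h$ is exactly $\h_0$, the complement attached to the zero cocycle, so the assertion ``all complements are conjugate to $\h$ via $1+V$'' translates into the assertion that every complement lies in the single $Q$-orbit of $\h_0$, where $Q=1+V$.

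For the second dictionary I would record precisely the matrix computation carried out above: for $q=1+v\in Q$ one has $(x+\varphi(x))^q=x+x(v)+\varphi(x)$, using that $x$ normalises $V$ and that any product of two elements of $V$ vanishes in $\gl_{n+1}$. Reading off entries, the conjugate complement $\h_\varphi^{\,q}$ is therefore $\h_\psi$ with $\psi(x)=\varphi(x)+x(v)$; that is, $\psi=\varphi+dv$ where $dv\colon x\mapsto x(v)$ is the coboundary attached to $v$. Thus $Q$ acts on $Z^1(\h,V)$ by translation through the subspace $B^1(\h,V)$, and $\h_\varphi$ is $Q$-conjugate to $\h_\psi$ if and only if $\varphi-\psi\in B^1(\h,V)$.

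It then remains only to draw the formal conclusion. Specialising to $\psi=0$, the complement $\h_\varphi$ is $Q$-conjugate to $\h=\h_0$ exactly when $\varphi\in B^1(\h,V)$. Hence every complement is $Q$-conjugate to $\h$ if and only if every cocycle is a coboundary, i.e.\ $Z^1(\h,V)=B^1(\h,V)$, which is precisely the condition $\opH^1(\h,V)=0$. Since this is a chain of equivalences it yields both implications of the proposition simultaneously.

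The only genuinely delicate step is the matrix identity $(x+\varphi(x))^q=x+x(v)+\varphi(x)$, and in particular the vanishing of the cross terms $vxv$ and $[\varphi(x),v]$. These rely on the specific embedding of $V$ as matrices supported in the last column, which forces $V\cdot V=0$, together with the fact that $x=\diag(x_0,1)$ preserves the span of the first $n$ coordinate vectors; once these are checked the identification $[x,v]=x(v)$ of the matrix commutator with the module action makes everything else pure bookkeeping with $\opH^1$.
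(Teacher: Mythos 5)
Your proposal is correct and is essentially identical to the paper's own proof: the paper establishes exactly your two dictionaries (the linear bijection $Z^1(\h,V)\leftrightarrow\{\text{complements}\}$ and the matrix computation $(x+\varphi(x))^q=x+x(v)+\varphi(x)$ for $q=1+v$, using $V\cdot V=0$ in $\gl_{n+1}$) in the paragraphs preceding the statement, and then records the proposition with the words ``We have proven the following proposition''. Your concluding chain of equivalences, identifying $Q$-conjugacy of complements with cocycles differing by a coboundary, is precisely how the paper intends the statement to follow.
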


%
%

We also require the following crucial result, generalising the above proposition to the case where $\q$ is non-abelian. This is one of  our main tools in proving Theorem \ref{maintheorem}:

\begin{prop}\label{h1ofqis0}Suppose $P=LQ$ is a parabolic subgroup of a reductive algebraic group $G$ with unipotent radical $Q$ and Levi factor $L$. Let $\h$ be a subalgebra of $\Lie(L)$ and suppose that $\opH^1(\h,\Lie(Q_i/Q_{i+1}))=0$ for each $i\geq 1$. Then all complements to $\q$ in $\h+\q$ are $Q$-conjugate to $\h$.\end{prop}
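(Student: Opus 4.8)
The plan is to reduce the non-abelian case to the abelian one already handled in Proposition \ref{h1moduleiszero} by filtering $Q$ through its central series and inducting on the length of that filtration. The hypothesis gives us vanishing of $\opH^1(\h,\Lie(Q_i/Q_{i+1}))$ for each graded piece, and the key point is that the graded pieces \emph{are} abelian $\h$-modules, so Proposition \ref{h1moduleiszero} applies to each of them individually; the content of the proof is to stitch these together.

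Concretely, I would argue by downward induction on $i$ using the central filtration $Q=Q_1\geq Q_2\geq\cdots\geq Q_{m+1}=1$. Let $\h'=\{x+\varphi(x):x\in\h\}$ be an arbitrary complement to $\q$ in $\h+\q$, where $\varphi:\h\to\q$ is the corresponding $1$-cocycle (the non-abelian analogue of the correspondence recalled before Proposition \ref{h1moduleiszero}). The plan is first to use the abelian case to conjugate $\h'$ by an element of $Q$ so that its image in $\Lie(\h+\q)/\Lie(Q_2)$ coincides with that of $\h$; that is, after this first conjugation $\varphi$ takes values in $\Lie(Q_2)$. For this I compose $\varphi$ with the projection $\q\to\Lie(Q_1/Q_2)$, observe that this composite is a $1$-cocycle valued in the abelian module $\Lie(Q_1/Q_2)$, and invoke the hypothesis $\opH^1(\h,\Lie(Q_1/Q_2))=0$ together with Proposition \ref{h1moduleiszero} to find $q_1\in Q_1$ realising the required coboundary correction. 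Replacing $\h'$ by $(\h')^{q_1}$ pushes the cocycle into $\Lie(Q_2)$.

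Iterating, at the $i$th stage I have a complement whose cocycle lands in $\Lie(Q_i)$; projecting to $\Lie(Q_i/Q_{i+1})$ gives a $1$-cocycle into that abelian piece, whose class vanishes by hypothesis, so I may conjugate by an element $q_i\in Q_i$ to push the cocycle into $\Lie(Q_{i+1})$. Since the filtration terminates at $Q_{m+1}=1$, after finitely many steps the cocycle is identically zero and the complement has been conjugated to $\h$ itself. Composing the conjugating elements $q_m\cdots q_2 q_1\in Q$ gives the single element of $Q$ witnessing the conclusion.

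The main obstacle, and the step requiring genuine care, is verifying that the conjugation action of $Q_i$ on a complement really does induce the \emph{coboundary} action on the projected cocycle valued in $\Lie(Q_i/Q_{i+1})$ — that is, that the commutator-based conjugation on the group level differentiates, modulo $Q_{i+1}$, to the $\h$-module action on $\Lie(Q_i/Q_{i+1})$ used in defining $B^1$. This is where the centrality of the filtration is essential: because $[Q_i,Q]\subseteq Q_{i+1}$, conjugation by $q_i\in Q_i$ acts trivially on $\Lie(Q_j/Q_{j+1})$ for $j<i$, so the corrections made at earlier stages are not disturbed, and the action on the relevant graded piece is exactly the linear $L$-module (hence $\h$-module) action recorded in \S\ref{sec:parabs}. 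Once this compatibility is pinned down, each step is a direct application of Proposition \ref{h1moduleiszero} and the induction closes cleanly.
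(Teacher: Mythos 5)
Your proposal is correct and takes essentially the same approach as the paper: both arguments induct along the ABS central filtration of $Q$, reduce each step to a genuine linear $1$-cocycle valued in the abelian graded piece $\Lie(Q_i/Q_{i+1})$ (the quadratic term $[\varphi(x),\varphi(y)]$ dying modulo the next filtration layer), and then combine the vanishing hypothesis with Proposition \ref{h1moduleiszero} to realise the coboundary correction by conjugation by an element of $Q_i$. The only difference is bookkeeping: the paper inducts upward through the quotients $\h+\q/\q_i$, proving $Q/Q_i$-conjugacy of complements at each stage, whereas you refine a single complement inside $\h+\q$ by pushing its cocycle successively deeper into the filtration --- the same successive-approximation argument in a different presentation, with the compatibility of group conjugation and the coboundary action (which you correctly flag as the delicate point) being exactly the step the paper handles via the factorisation through $\gl(\q_{i-1}/\q_i)$ and the equivariance from \S\ref{sec:parabs}.
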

\begin{proof}Let $Q=Q_1\geq Q_2\geq \dots $ denote the filtration of $Q$ by $L$-modules described at the beginning of \S2.2 and let $\q_i:=\Lie(Q_i)$. We prove inductively that all complements to $\q/\q_i$ in $\h+\q/\q_i$ are $Q/Q_i$-conjugate to $\h$. If $i=1$ this is trivial, so assume all complements to $\q$ in $\h+\q/\q_{i-1}$ are $Q/Q_{i-1}$-conjugate to $\h$. Take a complement $\h'$ to $\q/\q_i$ in $\h+\q/\q_{i}$. Then by the inductive hypothesis, we may replace $\h'$ by a $Q/Q_i$-conjugate so that $\h+\q_{i-1}/\q_i=(\h+\q_{i-1})/\q_i=(\h'+\q_{i-1})/\q_i$. Thus $\h'$ is a complement to $\q_{i-1}/\q_i$ in the semidirect product $\h+\q_{i-1}/\q_i$. We have $\h'=x+\gamma(x)$ for some cocycle $\gamma:\h\to\q_{i-1}/\q_i$. But since $\opH^1(\h,\q_{i-1}/\q_i)=0$, we may write $\gamma(x)=[x,v]$ for some $v\in\q$. Now $\gamma|_{\c_\h(\q)}$ is identically zero. Thus $\gamma$ factorises through $\h\to\gl(\q_{i-1}/\q_{i})$, so it suffices to consider the image of $\h$ in $\gl(\q_{i-1}/\q_{i})$.  
Now by Proposition \ref{h1moduleiszero} the image of $\h'$ is $Q_{i-1}/Q_i$-conjugate to $\h$. It follows that $\h'$ is $Q_{i-1}/Q_i$-conjugate to $\h$.
\end{proof}

\subsection{Cohomology for $G$, $G_1$ and $\g$}\label{sec:h1s}

In this section, $G$ will be simple and simply-connected. In the subsequent analysis, it will be necessary to know the ordinary Lie algebra cohomology groups $\opH^1(\g,V)$ for $V$ a simple restricted $\g$-module of small dimension. We will first reduce our considerations to the restricted Lie algebra cohomology. In this section, $G_1$ denotes the first Frobenius kernel of $G$---we warn the reader that we sometimes use subscripts to index groups, but it will be clear from the context our intent. We may identify restricted cohomology for $\g$ with the usual Hochschild cohomology for $G_1$. Recall the exact sequence \cite[I.9.19(1)]{Jan03}:\begin{align}\label{seq}0&\to \opH^1(G_1,V)\to \opH^1(\g,V)\to\Hom^s(\g,V^\g)\nonumber\\
&\to \opH^2(G_1,V)\to \opH^2(\g,V)\to \Hom^s(\g,\opH^1(\g,V)).\end{align}

From this it follows that if $V^\g=0$ (i.e. $\g$ has no fixed points on $V$) then $\opH^1(G_1,V)\to \opH^1(\g,V)$ is an isomorphism. This happens particularly in the case that $V$ is simple and non-trivial. The main result of this section is Proposition \ref{h1sless64} below, which gives every instance of a simple restricted $G$-module $V$ of dimension at most $64$ such that the groups $\opH^1(G_1,V)\cong\opH^1(\g,V)$ are non-trivial. In order to prove this proposition, we will need some auxiliary results. The following useful result relates to weights in the closure of the lowest alcove, $\bar{C}_{\Z}=\{\lambda\in X(T)\mid 0\leq\la \lambda+\rho,\beta^\vee\rangle\leq p\text{ for all }\beta\in R^+\}$. It is immediate from \cite[Cor.~5.4 B(i)]{BNP02}.

\begin{lemma} \label{lowalcove}
Let $G$ be simple and simply connected and suppose $L = L(\mu)$ with $\mu \in \bar{C}_{\Z}$ and $p \geq 3$. Then we have $\opH^1(G_1,L) = 0$ unless $G$ is of type $A_1$ and $L=L(p-2)$. 
\end{lemma}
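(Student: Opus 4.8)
The plan is to reduce the statement to the computation of $\opH^1(G_1,H^0(\mu))$ and then read off the vanishing from the known explicit description of that group. First I would use that $\mu\in\bar C_\Z$ lies in the closure of the lowest alcove, so by the Jantzen sum formula the Weyl module $V(\mu)$ is already simple; hence $L(\mu)=V(\mu)=H^0(\mu)$ (with $H^0$ the induced module in the conventions of \cite{Jan03}) and $\opH^1(G_1,L(\mu))=\opH^1(G_1,H^0(\mu))$. The latter is precisely the object treated by Bendel--Nakano--Pillen: viewed through the Frobenius isomorphism $G\cong G/G_1$, it is a direct sum of Frobenius twists of induced modules $H^0(\nu_\alpha)^{[1]}$ indexed by the simple roots $\alpha$, where the $\alpha$-summand is non-zero exactly when $\mu$ meets a particular affine wall attached to $\alpha$ and the associated shifted weight $\nu_\alpha$ is dominant and integral.

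I would then extract the two structural constraints that cut down the possibilities. Since $\opH^1(G_1,L(\mu))=\Ext^1_{G_1}(k,L(\mu))$, the $G_1$-linkage principle \cite[II.9]{Jan03} shows it can be non-zero only if $\mu$ is linked to $0$ under the dot-action of the affine Weyl group $W_p$ modulo $pX(T)$. For $\mu\in\bar C_\Z$ this already forces $\mu$ onto an upper wall, i.e.\ $\langle\mu+\rho,\beta^\vee\rangle=p$ for some $\beta\in R^+$, and restricting to the relevant rank-one subsystem pins down $\langle\mu,\alpha^\vee\rangle=p-2$ for some simple $\alpha$: this is the only restricted $A_1$-weight with non-vanishing first cohomology, by the classical computation $\opH^1((\SL_2)_1,L(p-2))\cong L(1)^{[1]}$ (here $p\geq 3$ is what kills the otherwise competing weight $0$, since $\opH^1((\SL_2)_1,k)=0$ for odd $p$).

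The decisive step is the integrality-and-dominance condition on the twisted weight $\nu_\alpha$. For the $\alpha$-summand to survive, $\tfrac1p(\mu-\text{(the }\alpha\text{-string correction)})$ must be a genuine dominant integral weight; dividing by $p$ forces the coefficients of $\mu$ along every fundamental weight other than $\omega_\alpha$ to vanish and pins $\mu=(p-2)\omega_\alpha$, while dominance relative to the remaining walls bounding $\bar C_\Z$ can be met only when $R$ has rank one. Consequently, for $G\neq A_1$ every simple-root summand vanishes and $\opH^1(G_1,L(\mu))=0$; for $G=A_1$ the single surviving case is $\mu=p-2$, where $\nu_\alpha=\omega$ and $\opH^1(G_1,L(p-2))\cong L(1)^{[1]}\neq 0$, exactly as claimed.

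The main obstacle is the combinatorial bookkeeping of this last step: one must establish the Bendel--Nakano--Pillen summand formula (or, for a self-contained argument, reprove it via translation functors and the $G_1T$-block decomposition) and then verify that the simultaneous integrality and dominance conditions on $\nu_\alpha$ genuinely fail for \emph{every} simple root once the rank exceeds one. I expect the linkage reduction and the $A_1$ base case to be routine; the real content lies in the uniform vanishing of all simple-root contributions for higher-rank $G$, for which I would invoke \cite[Cor.~5.4 B(i)]{BNP02} directly.
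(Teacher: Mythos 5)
Your overall route is sound and, in substance, it is a fleshed-out version of what the paper does: the paper's entire proof of this lemma is the single sentence that it is ``immediate from \cite[Cor.~5.4 B(i)]{BNP02}'' --- the very corollary you name in your last sentence as your fallback. Your expanded derivation (use $L(\mu)=H^0(\mu)$ for $\mu\in\bar{C}_{\Z}$, then read off $\opH^1(G_1,H^0(\mu))$ from \cite[Thm.~3A]{BNP04-Frob}) is a legitimate independent reconstruction; indeed the paper itself invokes exactly that Theorem 3A identity, displayed as equation (2.2) in \S\ref{sec:h1s}, when it handles weights \emph{outside} the lowest alcove in the proof of Proposition \ref{h1sless64}. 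So the two approaches genuinely overlap, with yours replacing a black-box citation by the mechanism behind it.

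However, one clause in your combinatorial step is wrong as written and would fail if executed literally. You claim that the integrality condition on the twisted weight ``forces the coefficients of $\mu$ along every fundamental weight other than $\omega_\alpha$ to vanish and pins $\mu=(p-2)\omega_\alpha$.'' It does not: the condition from \cite[Thm.~3A]{BNP04-Frob} is $\mu=p\omega_\alpha-\alpha$, and expanding $\alpha=\sum_\beta\langle\alpha,\beta^\vee\rangle\omega_\beta$ gives $\mu=(p-2)\omega_\alpha+\sum_{\beta\neq\alpha}(-\langle\alpha,\beta^\vee\rangle)\omega_\beta$, which is dominant in \emph{every} rank and has non-zero coefficients on the neighbours of $\alpha$. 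For example in type $A_2$ with $\alpha=\alpha_1$ one gets $\mu=(p-2,1)$, which really does have $\opH^1(G_1,L(\mu))\neq 0$ --- it appears in the paper's own Table~\ref{tHs}. The correct reason higher rank dies is not dominance of $\nu_\alpha$ (one always has $\nu_\alpha=\omega_\alpha$, dominant) but that $p\omega_\alpha-\alpha$ leaves $\bar{C}_{\Z}$: pairing with the highest coroot $\beta_0^\vee$ gives $\langle p\omega_\alpha-\alpha+\rho,\beta_0^\vee\rangle = pc_\alpha-\langle\alpha,\beta_0^\vee\rangle+h^\vee-1 \geq p-1+(h^\vee-1)\geq p+1$ once the rank is at least $2$ (where $c_\alpha\geq 1$, $\langle\alpha,\beta_0^\vee\rangle\leq 1$ since $\alpha\neq\beta_0$, and $h^\vee\geq 3$), while in rank one it equals $p-1\leq p$. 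With that clause repaired your argument closes correctly; you should also confirm that the $p=3$ cases of the lemma are actually covered by the version of the Bendel--Nakano--Pillen computation you cite, since the paper only ever applies its equation (2.2) for $p\geq 5$, whereas its direct citation \cite[Cor.~5.4 B(i)]{BNP02} does cover $p\geq 3$.
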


In the next proposition and elsewhere, we use repeatedly the linkage principle for $G_1$, \cite[II.9.19]{Jan03}. This states that if $\Ext_{G_1}^i(L(\lambda),L(\mu))\neq 0$ for any $i\geq 0$ then $\lambda\in W_p\cdot \mu+pX(T)$, where $\cdot$ denotes the usual action of the affine Weyl group $W_p$ on $X(T)$, shifted by $\rho$.

If $V$ is a $G$-module, then as $G_1$ is a normal subgroup scheme of $G$, the cohomology group $\opH^1(G_1,V)$ inherits a $G$-module structure \cite[I.6.6]{Jan03}; since $G_1$ acts trivially on this, such an action must factor through the Frobenius morphism, hence can be untwisted to yield a $G$-module $\opH^1(G_1,V)^{[-1]}$. Of course, any simple module for $G_1$ can always be given a $G$-structure in a unique way and is associated to a unique highest weight $\lambda\in X_1(T)$, where $X_1(T)$ denotes the $p$-restricted dominant weights; see \cite[II.3.15]{Jan03}.

\begin{prop}\label{h1sless64}Let $G$ be a simple algebraic group of rank no more than $7$, let $p\geq 5$ and suppose $V$ is a restricted irreducible $G$-module of dimension at most $64$. Then either $\opH^1(G_1,V)=0$ or $(G,V,p)$ is listed in Table~\ref{tHs} (up to taking duals).\end{prop}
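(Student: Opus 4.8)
The plan is to reduce the computation of $\opH^1(G_1,V)$ for restricted simple modules $V$ of dimension at most $64$ to a finite, checkable list by combining the linkage principle with dimension bounds. First I would organise the search by the rank and type of $G$: since $G$ is simple of rank at most $7$ and $p\geq 5$, the classification of restricted irreducible modules of dimension $\leq 64$ is a finite problem, and such modules can be enumerated using standard tables of small-dimensional representations (Lübeck's tables being the natural reference). For each such $(G,V)$, the goal is to decide whether $\opH^1(G_1,V)$ vanishes.

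The main tool would be Lemma~\ref{lowalcove}: whenever the highest weight $\mu$ of $V=L(\mu)$ lies in the closure $\bar C_{\Z}$ of the lowest alcove, cohomology vanishes except in the single $A_1$ case $L=L(p-2)$. So the first reduction is to separate the candidate modules into those whose highest weight lies in $\bar C_{\Z}$ and those that do not. For $p\geq 5$ and restricted weights of bounded dimension, most small modules will have their highest weight inside $\bar C_{\Z}$, so Lemma~\ref{lowalcove} immediately disposes of the overwhelming majority of cases, leaving only finitely many ``boundary'' or ``outside-the-alcove'' weights to examine by hand. For those remaining weights I would invoke the linkage principle for $G_1$ (\cite[II.9.19]{Jan03}): a necessary condition for $\opH^1(G_1,L(\mu))\neq 0$ is that the trivial module (or more precisely the relevant weight governing $\opH^1$) be linked to $\mu$ under the affine Weyl group action $W_p\cdot{}$ shifted by $\rho$ and translated by $pX(T)$. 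This linkage constraint is extremely restrictive for small $p$ and bounded $\mu$, and it will eliminate most surviving candidates outright.

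For the genuinely delicate cases that survive both the alcove and linkage filters, I would compute $\opH^1(G_1,V)$ more explicitly, typically by identifying $V$ with a known module whose $G_1$-cohomology appears in the literature (for instance via \cite{BNP02}), or by analysing the relevant Weyl module structure and the maps in the linkage class directly. The surviving nonzero instances are exactly those recorded in Table~\ref{tHs}. The hardest part will be ensuring the enumeration is genuinely exhaustive and that no small module is overlooked, particularly for the larger-rank exceptional-embedding-relevant groups and for weights sitting precisely on alcove walls where Lemma~\ref{lowalcove} does not apply; these wall and near-wall cases require case-by-case cohomological computation rather than a uniform vanishing argument. A secondary subtlety is keeping track of duals, since $\opH^1(G_1,V)\cong\opH^1(G_1,V^*)^*$-type symmetries mean it suffices to list each pair once, as the statement indicates by working ``up to taking duals''.
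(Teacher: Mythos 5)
Your proposal follows essentially the same route as the paper's proof: enumerate the restricted irreducible modules of dimension at most $64$ from L\"ubeck's tables, dispose of lowest-alcove weights via Lemma~\ref{lowalcove}, prune the survivors with the $G_1$-linkage principle, and finish the remaining cases individually. The paper's only extra specificity in that last step is citing complete $G_1$-cohomology calculations for types $A_1$, $A_2$, $B_2$, $G_2$ and, for the other cases, combining the formula for $\opH^1(G_1,H^0(\lambda))$ from \cite{BNP04-Frob} with the two-composition-factor structure of the relevant induced modules and the long exact sequence---precisely the ``Weyl module structure'' analysis you sketch.
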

\begin{table}\begin{center}\begin{tabular}{l l l l l}
$G$ & $p$ & $V$ & $\dim V$ & $\opH^1(G_1,V)^{[-1]}$\\\hline
$A_1$ & $p<64$ & $L(p-2)$ &$p-1$& $L(1)$\\
$A_2$ & $p = 5,7$ & $L(p-2,1)$ & $18, 33$ & $L(1,0)$ \\
& $p=5$ & $L(3,3)$ & $63$ & $k$ \\
$B_2$ & $p = 5,7$ & $L(p-3,0)$ & $13, 54$ & $k$ \\ 
& $p = 5$ & $L(1,3)$ & $52$ & $L(0,1)$ \\
$G_2$ & $p = 7$ & $L(2,0)$ & $26$ & $k$ \\
$A_3$ & $p=5$ & $L(3,1,0)$ & $52$ & $L(1,0,0)$ \\
$A_4$ & $p=5$ & $L(1,0,0,1)$ & $23$ & $k$ \\
$A_6$ & $p=7$ & $L(1,0,0,0,0,1)$ & $47$ & $k$ \\
\end{tabular}\end{center}\caption{Non-trivial $G_1$-cohomology of irreducible modules of dimension at most $64$.}\label{tHs}\end{table}
\begin{proof}
The values of $\opH^1(G_1,V)$ where $G$ is of type $A_1$ are well-known and can be found, for example in \cite[Prop.~2.2]{SteSL2}. The statement in the cases where $G$ is of type $A_2$ or $B_2$, or type $G_2$ when $p\geq 7$ is immediate from, respectively, \cite{Yeh82} (see \cite[Prop.~2.3]{SteSL3}), \cite[Table~2]{Ye90} and \cite[Table~2]{LY93}. 

Consider the remaining cases. A list of all non-trivial restricted modules of dimension at most $64$ is available from \cite{Lub01}. We then use the $G_1$-linkage principle to remove any modules $L(\lambda)$ such that $\lambda$ is not $G_1$-linked to $\mu = 0$. Explicitly, one may calculate $w\cdot 0$ for any $w\in W$ (a finite list) and add an appopriate (uniquely defined) element of $pX(T)$ to produce a collection of $p$-restricted weights in $X_1(T)$; since we assume $\opH^1(G_1,V)\neq 0$ we know that $\lambda$ is one in this collection. Furthermore, we remove any modules $L(\lambda)$ such that $\lambda$ is in the lowest alcove, since $\opH^1(G_1,L(\lambda)) = 0$ by Lemma \ref{lowalcove}. This reduces the list of possibilities considerably. For any cases still remaining, we appeal to \cite[Thm.~3A]{BNP04-Frob} (case $r=1$), recalling $H^0(\lambda):= \Ind^G_B(\lambda)$. This implies that \begin{equation}\opH^1(G_1,H^0(\lambda))=\begin{cases}H^0(\omega_\alpha)^{[1]} &\text{ if }\lambda=p\omega_\alpha-\alpha \text{ for } \alpha \in S\\
0 &\text{ otherwise}\end{cases}.\label{bnp}\end{equation} 

Let us take $G = A_3$ and explicitly give the details. Using \cite[6.7]{Lub01}, we see that the following is a complete list of the high weights of all non-trivial restricted modules $V = L(\lambda)$ such that $\dim V \leq 64$ (up to taking duals): $(1,0,0)$, $(0,1,0)$, $(2,0,0)$, $(1,0,1)$, $(1,1,0)$, $(0,2,0)$, $(3,0,0)$, $(2,0,1)$, $(1,1,1)$, $(0,3,0)$, $(4,0,0)$, $(5,0,0)$, $(1,2,0)$ and when $p=5$ the weight $(3,1,0)$. A weight $\lambda = (a,b,c)$ is in $\bar{C}_{\Z}$ if and only if $a + b + c \leq p - 3$. So if $p \geq 11$, all weights are in $\bar{C}_{\Z}$, if $p=7$ only $(5,0,0)$ is not in $\bar{C}_{\Z}$ and if $p=5$ the first six weights in the list are in $\bar{C}_{\Z}$. 

We may reduce this list using the linkage principle for $G_1$. In our case, this implies that the only restricted weights $G_1$-linked to $(0,0,0)$ up to duals are $(p-2,p-2,p-2)$, $(p-2,1,0)$, $(p-2,p-3,0)$, $(0,p-3,2)$, $(p-4,1,p-2)$, $(p-4,0,0)$, $(p-2,2,p-2)$, $(0,p-4,0)$, $(1,p-4,1)$, $(1,p-2,1)$, $(p-3,0,p-3)$, $(p-3,0,1)$, $(p-3,p-2,1)$, $(p-2,p-2,2)$, $(p-3,2,p-3)$. By comparison with the list above, we may discount the possibility that $p=7$ and  the list of possible modules $V$ with $\opH^1(G_1,V) \neq 0$ to just $L(1,1,1)$, $L(2,0,1)$ and $L(3,1,0)$ when $p=5$.  

We now use (\ref{bnp}) to find that $\opH^1(G_1,H^0(\lambda)) = 0$ for $\lambda = (1,1,1)$, $(2,0,1)$ and $\opH^1(G_1,H^0(\lambda)) = L(1,0,0)^{[1]}$ for $\lambda = (3,1,0)$. Now, the structure of the induced modules $H^0(\lambda)$ can be deduced: each is the indecomposable reduction modulo $p$ of a certain lattice in the simple module $L_\mathbb C(\lambda)$; by comparing the weight multiplicities   in \cite{Lub01}, one finds there are just two composition factors, in each case. Since $L(\lambda)$ is the socle of $H^0(\lambda)$ one gets  $H^0(1,1,1) = L(0,1,0) | L(1,1,1)$, $H^0(2,0,1) = L(1,0,0) | L(2,0,1)$ and $H^0(3,1,0) = L(2,0,1) | L(3,1,0)$. Consider the following exact sequence for a $2$-step indecomposable $G_1$-module $M = M_1 | M_2$. 
\begin{align*}\label{seq}0&\to \opH^0(G_1,M_2)\to \opH^0(G_1,M)\to \opH^0(G_1,M_1)\nonumber\\
&\to \opH^1(G_1,M_2)\to \opH^1(G_1,M)\to \opH^1(G_1,M_1) \to \dots \end{align*}
Applying this to $H^0(\lambda)$ for $\lambda = (1,1,1)$, $(2,0,1)$ yields that $\opH^1(G_1,L(1,1,1)) = \opH^1(G_1,L(2,0,1)) = 0$. Moreover, applying the sequence to $H^0(3,1,0)$ and using the fact $\opH^1(G_1,L(2,0,1)) = 0$, it follows that $\opH^1(G_1,L(3,1,0)) \cong \opH^1(G_1,H^0(3,1,0)) \cong L(1,0,0)^{[1]}$. 
\end{proof}

Finally, we record the following result, which is presumably well-known. We were unable to locate a proof in the literature, so we give one.

\begin{lemma}\label{extA1}
Let $G$ be a simple connected algebraic group of type $A_1$, let $p>2$ and $0\leq a,b\leq p-1$. Then $\Ext^1_{G_1}(L(a),L(b)) \cong \Ext^1_{\g}(L(a),L(b))$ unless $a=b=p-1$. Moreover, $\Ext^1_{G_1}(L(a),L(b))  \neq 0$ if and only if $a = p-2-b$ and $a,b \neq p-1$, and $\Ext^1_{\g}(L(p-1),L(p-1))\cong (\g^*)^{[1]}$. 
\end{lemma}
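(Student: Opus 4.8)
The plan is to compute $\Ext^1_{G_1}(L(a),L(b))$ completely from the known module theory of $G_1$ for type $A_1$, read off the stated vanishing criterion, and then transfer the computation to $\Ext^1_{\g}$ using the exact sequence (\ref{seq}). Throughout write $V=\Hom_k(L(a),L(b))\cong L(a)\otimes L(b)$, using self-duality of $A_1$-modules, so that $\Ext^1_{G_1}(L(a),L(b))=\opH^1(G_1,V)$ and $\Ext^1_{\g}(L(a),L(b))=\opH^1(\g,V)$.

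For the restricted computation I would invoke the classical description of the projective indecomposable $G_1$-modules $Q_1(r)$ for $G$ of type $A_1$: for $0\leq r\leq p-2$ the module $Q_1(r)$ is uniserial of length three with structure $L(r)\,|\,L(p-2-r)\,|\,L(r)$, while $Q_1(p-1)=L(p-1)$ is the Steinberg module, which is simple, projective and injective for $G_1$. Since $\dim\Ext^1_{G_1}(L(a),L(b))$ equals the multiplicity of $L(b)$ in $\mathrm{rad}(Q_1(a))/\mathrm{rad}^2(Q_1(a))$, reading off the head of the radical gives $\Ext^1_{G_1}(L(a),L(b))\neq 0$ precisely when $a\leq p-2$ and $b=p-2-a$; equivalently $a=p-2-b$ with $a,b\neq p-1$, and then the group is one-dimensional. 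In particular there are no self-extensions, since $a=p-2-a$ forces $2a=p-2$, which is impossible for $p$ odd.

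To obtain the isomorphism $\Ext^1_{G_1}(L(a),L(b))\cong\Ext^1_{\g}(L(a),L(b))$ the easy case is $a\neq b$: then $L(a)$ and $L(b)$ are non-isomorphic simple $\g$-modules, so $V^{\g}=\Hom_{\g}(L(a),L(b))=0$ by Schur's lemma, and the remark following (\ref{seq}) shows the edge map $\opH^1(G_1,V)\to\opH^1(\g,V)$ is an isomorphism. In particular the PIM computation transports to give $\Ext^1_{\g}(L(a),L(p-2-a))\neq 0$ for $a\leq p-2$, establishing the nonvanishing half of the lemma for $\g$ as well.

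The delicate case, and the one I expect to be the main obstacle, is $a=b$: here $V^{\g}=k\neq 0$, so the semilinear term $\Hom^s(\g,k)$ in (\ref{seq}) is non-zero and the sequence no longer forces the edge map to be an isomorphism on the nose. I would instead argue that $\Ext^1_{\g}(L(a),L(a))=\opH^1(\g,\mathrm{End}_k L(a))=0$ directly, matching the vanishing of $\Ext^1_{G_1}(L(a),L(a))$ already established. The key observation is a parity argument: $\mathrm{End}_k L(a)\cong L(a)\otimes L(a)$ has only even weights, whereas the exceptional weight $p-2$ is odd, so no composition factor $L(c)$ of $\mathrm{End}_k L(a)$ has $c=p-2$. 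For every composition factor one then has $\opH^1(\g,L(c))=0$: for $c\neq 0$ this follows from $\opH^1(\g,L(c))\cong\opH^1(G_1,L(c))$ (again $V^{\g}=0$) together with the $A_1$-criterion $\opH^1(G_1,L(c))\neq 0\iff c=p-2$, while for $c=0$ it follows from $\opH^1(\g,k)=(\g/[\g,\g])^{*}=0$ since $\sl_2$ is perfect. A short dévissage along a composition series of $\mathrm{End}_k L(a)$, using the long exact sequences in $\opH^*(\g,-)$, then yields $\opH^1(\g,\mathrm{End}_k L(a))=0$, completing the proof.
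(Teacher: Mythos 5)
You are proving a statement the paper itself records without proof (``we record the following well-known result''), so there is no paper argument to compare against; judged on its own terms, your proposal contains two genuine errors.

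First, your description of the restricted PIMs is wrong. For $0\leq r\leq p-2$ the $G_1$-projective cover $Q_1(r)$ is not uniserial of length three: it has dimension $2p$, with Loewy layers $L(r)\,|\,L(p-2-r)\oplus L(p-2-r)\,|\,L(r)$. (Your candidate $L(r)|L(p-2-r)|L(r)$ has dimension $p+r+1$, and a dimension count $\sum_r \dim L(r)\dim Q_1(r)=p^3$ already rules it out; that uniserial module is in fact the tilting/non-restricted module appearing in Lemma \ref{uniquerep} of the paper, not a $G_1$-PIM.) Consequently $\Ext^1_{G_1}(L(a),L(p-2-a))$ is \emph{two}-dimensional, not one-dimensional as you assert --- compare Table \ref{tHs}, where $\opH^1(G_1,L(p-2))^{[-1]}\cong L(1)$, and the proof of Lemma \ref{historal}, which uses $\opH^1(\sl_2,L(p-2))\cong k^2$. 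Since the head of $\mathrm{rad}\,Q_1(a)$ is still $L(p-2-a)$-isotypic, the stated vanishing criterion survives this correction, but the structural claim and the dimension are false as written.

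Second, and more seriously, your parity argument in the case $a=b$ has a gap that is unfixable at $a=b=p-1$. Parity of weights is a $G$-notion, not a $\g$-notion: a $G$-composition factor $L(\mu)$ of $L(a)\otimes L(a)$ with $\mu$ even but $\mu\geq p$ restricts to $\g$ as $L(\mu-p)^{\oplus 2}$, and $\mu-p$ is \emph{odd}. For $a\leq p-2$ this can be patched, since then $\mu\leq 2p-4$ forces $\mu-p\leq p-4\neq p-2$, so your d\'evissage does go through after this extra observation. But for $a=p-1$ the factor $L(2p-2)$ of $L(p-1)\otimes L(p-1)$ restricts to $L(p-2)^{\oplus 2}$, which has non-vanishing $\opH^1(\g,-)$, and in fact the conclusion you are aiming for is false there: $\mathrm{End}_k(L(p-1))$ is $G_1$-projective, so in the sequence (\ref{seq}) both $\opH^1(G_1,-)$ and $\opH^2(G_1,-)$ vanish and exactness yields
\begin{equation*}
\Ext^1_{\g}\bigl(L(p-1),L(p-1)\bigr)\;\cong\;\Hom^s(\g,k)\;\cong\;k^3\;\neq\;0\;=\;\Ext^1_{G_1}\bigl(L(p-1),L(p-1)\bigr).
\end{equation*}
(Concretely, $L(p-1)$ is projective over $u(\g)=U(\g)/\m U(\g)$ for $\m$ the maximal ideal of the Frobenius centre at $\chi=0$, hence lifts to $U(\g)/\m^2$, producing non-split self-extensions over $U(\g)$.) So the isomorphism in the lemma must be read as excluding the pair $(a,b)=(p-1,p-1)$ --- the lemma is mildly overstated, though the paper only ever invokes it for pairs where this degenerate case does not arise --- and your proof of the $a=b$ case should be restricted to $a\leq p-2$ and repaired as indicated.
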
 
\begin{proof}We prove the second statement about $\Ext^1_{G_1}(L(a),L(b))$ first. Since $w\cdot a$ is either $a$ or $-a-2$, the linkage principle for $G_1$ implies that $\Ext^1_{G_1}(L(a),L(b))=0$ unless $b=a$ or $b=p-2-a$. If $b=a$ then $\Ext^1_{G_1}(L(a),L(b))=0$ by \cite[II.12.9]{Jan03} so we may now assume $b=p-2-a$. But now $\Ext^1_{G_1}(L(a),L(p-2-a))\cong\Ext^1_{G_1}(k,L(a)\otimes L(p-2-a))\cong\Ext^1_{G_1}(k,L(p-2)\oplus L(p-4)\oplus\dots\oplus L(0))$. Then the only term which survives in this expression is $\Ext^1_{G_1}(k,L(p-2))=\opH^1(G_1,L(p-2))=\opH^1(G_1,H^0(p-2))$ whose structure as a $G$-module can be read off from \cite[II.12.15]{Jan03} or \cite[Thms.~3(A-C)]{BNP04-Frob}.

Now, in sequence (\ref{seq}), put $V=L(b)\otimes L(a)^*$. Then we have an isomorphism $\Ext^1_{G_1}(L(a),L(b)) \cong \Ext^1_{\g}(L(a),L(b))$ if we can show that $\Hom^s(\g,V^\g)=0$. But if $V^\g$ were non-zero then we must have $L(a)\cong L(b)$ and $V^\g\cong k$. Thus we now assume $a=b$.

The assignation of $V$ to the sequence (\ref{seq}) is functorial, thus, associated to the $G$-map $k\to V$, there is a commutative diagram
\[\begin{CD}0@>>> \opH^1(\g,k)=0 @>>> \Hom^s(\g,k^\g)\cong(\g^*)^{[1]} @>\cong>> \opH^2(G_1,k)\\
@. @VVV @V\cong VV @V\theta VV \\
0 @>>> \opH^1(\g,V) @>>> \Hom^s(\g,V^\g)\cong(\g^*)^{[1]} @>\zeta >> \opH^2(G_1,V)\end{CD},\] where the natural isomorphism $k^\g\to V^\g$ induces the middle isomorphism. We want to show that $\zeta$ is injective, since then it would follow that $\opH^1(\g,V)=0$. To do this it suffices to show that $\theta$ is an injection $(\g^*)^{[1]}\to \opH^2(G_1,V)$ and for this, it suffices to show that the simple $G$-module $(\g^*)^{[1]}$ does not appear as a submodule of $\opH^1(G_1,V/k)$. If $(\g^*)^{[1]}$ did appear as a submodule, there must be a composition factor $L(\nu)$ of $V/k$ such that $\opH^1(G_1,L(\nu))\cong (\g^*)^{[1]}$. Writing $L(\nu)=L(\nu_0)\otimes L(\nu_1)^{[1]}$ using Steinberg's tensor product theorem, we have $\opH^1(G_1,L(\nu))\cong \opH^1(G_1,L(\nu_0))\otimes L(\nu_1)^{[1]}$ and the latter is non-zero only when $\nu_0=p-2$. Since the weights of $V$ are all even and bounded above by $2p-2$, with equality if and only if $a=b=p-1$, we must have $\nu_0=p-2$, $\nu_1=1$ and indeed, $a=b=p-1$. But then $\opH^2(G_1,V)\cong \Ext_{G_1}^2(L(p-1),L(p-1))$ which is zero since $L(p-1)$ is the projective Steinberg module. Thus from the bottom line of the diagram we have an isomorphism $\opH^1(\g,V)\cong (\g^*)^{[1]}$ as required. 
\end{proof}

\subsection{Nilpotent orbits}

At various points we use the theory of nilpotent orbits, particularly the results of \cite{Pre95}. Everything we need can be found in \cite[\S1-5]{Jan04}. We particularly use the fact that a nilpotent element $e$ has an associated cocharacter; that is a homomorphism $\tau:\Gm\to G$ such that under the adjoint action, we have $\tau(t)\cdot e=t^2e$ and $\tau$ evaluates in the derived subgroup of the Levi subgroup in which $e$ is distinguished. Recall that an $\sl_2$-triple is a triple of elements $(e,h,f)\in \g\times\g\times\g$ such that $[h,e]=2e$, $[h,f]=-2f$ and $[e,f]=h$. In the case that a nilpotent element $e$ is included in an $\sl_2$-triple in $\g$, the theory of associated cocharacters can be used to prove the following useful result.

\begin{prop}[cf.~{\cite[Prop.~3.3(iii)]{HSMax}}]\label{findingcoch}
Suppose the nilpotent element $e$ is not in an orbit containing a factor of type $A_{p-1}$, and that $h$ is a toral element in the image of $\ad e$ with $[h,e]=2e$. Then there is a cocharacter $\tau$ associated to $e$ with $\Lie(\tau(\Gm))=\la h\ra$.
\end{prop}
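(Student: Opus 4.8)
The plan is to reduce everything to a single conjugacy statement and then resolve it using the grading furnished by an associated cocharacter.

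First I would fix, by the theory of associated cocharacters (\cite[\S5]{Jan04}, after \cite{Pre95}), a cocharacter $\tau_0$ associated to $e$ and set $h_0=d\tau_0(1)$; this is toral with $[h_0,e]=2e$, and the $\tau_0$-grading $\g=\bigoplus_j\g(j)$ has the key property that $\c_\g(e)=\ker(\ad e)$ is concentrated in non-negative degrees, its degree-zero part being $\c_\l(e)$ where $\l=\g(0)=\Lie(L)$, $L=C_G(\tau_0(\Gm))$ is the Levi in which $e$ is distinguished. Since $[h,e]=[h_0,e]=2e$, the difference $h-h_0$ lies in $\c_\g(e)$. The crucial reduction is this: if $h$ and $h_0$ are conjugate under $C_G(e)$, say $\Ad(g)h_0=h$ with $g\in C_G(e)$, then $\tau:=g\tau_0g^{-1}$ is again associated to $e$ — conjugation by a centraliser element preserves $\Ad(\tau(t))e=t^2e$ and sends the distinguished Levi witnessing the second axiom to another such — while $d\tau(1)=\Ad(g)h_0=h$, so $\Lie(\tau(\Gm))=\la h\ra$ as required. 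Thus everything reduces to proving that $h$ and $h_0$ are $C_G(e)$-conjugate.

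Before attacking this I would record that the hypotheses secretly encode an $\sl_2$-triple, which both clarifies the role of $h\in\im(\ad e)$ and explains the exclusion of $A_{p-1}$. As $h$ is toral, $\ad h$ is semisimple with eigenvalues in $\F_p$; writing $h=[e,f']$ (possible since $h\in\im(\ad e)$) and replacing $f'$ by its component in the $(-2)$-eigenspace of $\ad h$ yields $f$ with $[e,f]=h$ and $[h,f]=-2f$, so that $(e,h,f)$ is an $\sl_2$-triple. The $\ad h$-grading this produces must be reconciled with the $\tau_0$-grading, and this is exactly where one needs $e$ to lie in no orbit with an $A_{p-1}$-factor: it is precisely in such a factor that the associated cocharacter acquires a weight as large as $2p-2\equiv-2\pmod p$, so that the $\ad h_0$-eigenspace for eigenvalue $2$ can fail to coincide with $\g(2)$. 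Ruling this out (via the orbit classification of \cite{Pre95}, \cite{Jan04}) guarantees that the eigenvalue-$2$ datum carried by $h$ genuinely records the $\tau$-weight-$2$ condition, which is what ultimately makes the resulting cocharacter associated.

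The remaining conjugacy $h\sim_{C_G(e)}h_0$ is the heart of the argument and the step I expect to be hardest. I would run it inside the parabolic $P=P(\tau_0)$: writing $h=h_0+z$ with $z\in\c_\g(e)$ non-negatively graded, conjugation by elements of $R_u(C_G(e))=C_G(e)\cap R_u(P)$ fixes the $\g(0)$-component of $h$ (since $\Ad(u)h-h\in\bigoplus_{j\geq1}\g(j)$) and lets one peel off the strictly positive part degree by degree, arranging at each level that the required correction lies in $\c_\g(e)$ so that the conjugating element still centralises $e$. This moves $h$ to a toral $h'\in\g(0)$ with $z_0:=h'-h_0\in\c_\l(e)$ and $[h',e]=2e$; as $h_0$ is central in $\l$, the elements $h'$ and $h_0$ commute and $z_0$ is semisimple, hence lies in the central torus $\z(\l)$ since $e$ is distinguished in $\l$. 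Finally $h'=h_0$ (equivalently $z_0=0$) is forced by using $h\in\im(\ad e)$ a second time: $h'$ inherits membership in $\im(\ad e)$, and on the degree-zero part this is incompatible with a nonzero central shift. The main obstacles are making the degree-by-degree straightening rigorous \emph{within} $C_G(e)$ rather than the full $R_u(P)$ — i.e. verifying invertibility of the relevant graded operators on $\c_\g(e)$ — and the concluding identification in $\g(0)$; both are controlled by the non-negativity of the grading of $\c_\g(e)$ together with the bound on $\tau_0$-weights supplied by the $A_{p-1}$-free hypothesis.
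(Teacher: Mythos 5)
The paper does not actually prove Proposition \ref{findingcoch}: it is imported from \cite[Prop.~3.3(iii)]{HSMax}, so your attempt has to be judged on its own merits. Your opening reduction is correct and is the right frame: by \cite{Pre95} the cocharacters associated to $e$ form a single orbit under $C_G(e)$, conjugating $\tau_0$ by $g\in C_G(e)$ preserves associatedness, and $d(g\tau_0g^{-1})(1)=\Ad(g)h_0$, so everything does reduce to showing $h$ and $h_0=d\tau_0(1)$ are $C_G(e)$-conjugate; the extraction of an $\sl_2$-triple from $h\in\im\ad e$ by projecting onto the $(-2)$-eigenspace of $\ad h$ is also fine. But there is a structural slip that propagates: $C_G(\tau_0(\Gm))$ is \emph{not} the Levi in which $e$ is distinguished --- $e\in\g(2)$ does not even lie in $\g(0)$; $e$ is distinguished in a different Levi $L'$ with $\tau_0$ valued in $(L')'$. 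Consequently $\c_\g(e)(0)$ is the Lie algebra of the reductive part $C_G(e)\cap C_G(\tau_0(\Gm))$, which can be large (for $e$ regular in an $A_1$-Levi of $E_8$ it has type $E_7$), so your inference ``$z_0$ semisimple, hence central in $\l$'' is unfounded as stated.

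More decisively, you have misdiagnosed where the $A_{p-1}$ hypothesis acts, and the one step where it must act is asserted rather than proved. The congruence $2p-2\equiv-2\pmod p$ and the resulting spaces $\g_e(-2+rp)$ govern the uniqueness of $f$ given $(e,h)$ --- that is exactly how they are used in the proofs of Theorem \ref{jmthm} and Lemma \ref{a2max} --- not the uniqueness of $h$. The genuine failure mode in type $A_{p-1}$ is that the Levi $\l'$ in which $e$ is distinguished acquires a nonzero \emph{toral centre meeting} $\im\ad e$: in $\sl_p$ the identity $I$ spans $\z(\sl_p)$, is toral, and lies in $[e,\g(-2)]$ since $\ad e\colon\g(-2)\to\g(0)$ is surjective, producing the pencil $h_1+\lambda I$ ($\lambda\in\F_p$) of admissible toral elements --- precisely the ambiguity the paper exploits inside the proof of Lemma \ref{a2max}. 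So your concluding sentence (``incompatible with a nonzero central shift'') is not a proof but a restatement of what must be shown: that no nonzero toral element of $\z(\l')$ lies in $\im\ad e$ when no $A_{p-1}$-factor is present --- for instance via nondegeneracy of the invariant trace form on the relevant factors, which degenerates exactly on $\sl_p$. Nothing in your sketch supplies this. Finally, the degree-by-degree straightening inside $C_G(e)$ is also not secured: the operator you need to invert on the graded piece $\c_\g(e)(j)$ is $\ad h=\ad(h_0+z_0+\cdots)$, whose eigenvalues are $j$ shifted by those of $\ad z_0$, so invertibility modulo $p$ is not automatic. A cleaner route avoids it altogether: $h$ is toral, hence semisimple, hence lies in the Lie algebra of a maximal torus of the group $\tau_0(\Gm)\ltimes C_G(e)$; conjugating that torus to $\tau_0(\Gm)\cdot T_e$ with $T_e\subseteq C_G(e)\cap C_G(\tau_0(\Gm))$ places $h$ in $h_0+\Lie(T_e)$ in one stroke, reducing everything to the degree-zero statement you still need to prove.
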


We also need to use the Jordan block structure of nilpotent elements on the adjoint and minimal modules in good characteristic. For the adjoint modules we may use \cite{Law95}, and see \cite{PreSteNilp} which provides the validity of these tables for the nilpotent analogues of the unipotent elements considered there. For the minimal modules we may use \cite[Thm~1.1]{SteComp}. 

When referring to nilpotent orbits of $\Lie(G)$ for a simple algebraic group $G$, we use the labels defined in \cite{LS12}. In particular, when $G$ is of exceptional type these labels are described in Chapter 9 of [loc. cit].  

At certain points we make use of the notion of a reachable nilpotent element. A (nilpotent) element $e\in\g$ is said to be \emph{reachable} if it is contained in the derived subalgebra of its centraliser. That is $e\in[\g_e,\g_e]$. The reachable elements of $\g$ have all been classified in \cite{PreSteNilp}. Then the main point is that long root elements in simple subalgebras are almost always reachable in those subalgebras, hence are reachable elements of $\g$. We will need the following result. 

\begin{lemma}\label{reachlem}Let $\h=\Lie(H)$ for $H$ a simple algebraic group, not of type $A_1$. Then any long root element $e$ is reachable, except possibly if $H$ is of type $C_n$ and $p=2$.\end{lemma}
\begin{proof} Since all long root elements are in a single $H$-orbit, it suffices to prove the lemma in the case $e=e_{\tilde\alpha}$ for $\tilde\alpha$ the highest root. Then it suffices to find two root elements $\alpha$ and $\beta$ with $\alpha+\beta=\tilde\alpha$ such that $[e_\alpha,e_\beta]\neq 0$.

This is a simple case-by-case check of the root systems. In all cases except $C_n$, one can take $\alpha$ and $\beta$ to be long roots. For a Chevalley basis, we have $[e_\alpha,e_\beta]=\pm e_{\tilde\alpha}$ and so we are done. In case $C_n$, one may take two short roots $\alpha$ and $\beta$, and one has $[e_\alpha,e_\beta]=\pm 2\cdot e_{\tilde\alpha}$, which is non-zero provided $p\neq 2$.
\end{proof}

\section{Irreducible and completely reducible subalgebras of classical Lie algebras. The proof of Theorem \ref{ssgcrcorollary}.}\label{sec:gcrclassical}

For the time being, assume $p>2$. In this section we show that Theorem \ref{ssgcrcorollary} holds in the case $G$ is simple and classical; this is Proposition \ref{thm:gcrclassical} below. Let $G$ be a simple, simply-connected algebraic group of classical type and let $\h$ be a subalgebra of $\g=\Lie(G)$. We first give a condition for $\h$ to be $G$-irreducible. That is, that $\h$ is in no proper parabolic subalgebra of $\g$. This is given in terms of the action of $\h$ on the natural module $V$ for $G$, as it is in the group case---see \cite[Lem.~2.2]{LT04}. 

\begin{prop}\label{girclassical}The algebra $\h$ is $G$-irreducible if and only if one of the following holds:
\begin{enumerate}\item $\g=\sl(V)$ and $\h$ acts irreducibly on $V$;
\item $\g=\sp(V)$ or $\so(V)$ and $\h$ stabilises a decomposition $V\cong V_1\perp V_2\perp \dots \perp V_n$, where the $V_i$ are a set of mutually orthogonal, non-degenerate and non-isomorphic $\h$-irreducible submodules of $V$.
\end{enumerate}\end{prop}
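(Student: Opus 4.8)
The plan is to identify the parabolic subalgebras of a classical $\g$ with stabilisers of flags and thereby translate $G$-irreducibility into a condition on invariant subspaces of the natural module $V$, mirroring the group-case argument of \cite[Lem.~2.2]{LT04}. First I would record that, for $\g=\sl(V)$, every proper parabolic subalgebra is the stabiliser $\{x\in\g : xW_i\subseteq W_i \text{ for all } i\}$ of a proper flag $(W_i)$, and that a maximal one is the stabiliser of a single proper nonzero subspace; hence $\h$ lies in a proper parabolic subalgebra if and only if $\h$ fixes such a subspace, which is exactly the failure of irreducibility. This gives (i) at once. For $\g=\sp(V)$ or $\so(V)$ the proper parabolics are the stabilisers of flags of nonzero totally isotropic subspaces (using $p>2$, so that totally singular and totally isotropic coincide), so $\h$ is $G$-irreducible precisely when it fixes no nonzero totally isotropic subspace of $V$. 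It remains to show this is equivalent to the decomposition in (ii).

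For the forward implication I would exploit that $\h\subseteq\sp(V)$ or $\so(V)$ acts skew with respect to the form, so that $W^\perp$ is an $\h$-submodule whenever $W$ is, and $W\cap W^\perp$ is an $\h$-invariant totally isotropic subspace. Therefore $G$-irreducibility forces $W\cap W^\perp=0$, i.e.\ every $\h$-submodule of $V$ is non-degenerate. Choosing an irreducible $\h$-submodule $V_1$ (necessarily non-degenerate) gives $V=V_1\perp V_1^\perp$, and the same hypothesis passes to $V_1^\perp$ (whose $\h$-submodules are $\h$-submodules of $V$, hence non-degenerate, and on which the form is non-degenerate since $V_1$ is); inducting yields $V=V_1\perp\dots\perp V_n$ with each $V_i$ a non-degenerate $\h$-irreducible submodule.

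The main work, and the step I expect to be the genuine obstacle, is establishing that the $V_i$ are pairwise non-isomorphic. Suppose instead that $\phi:V_i\to V_j$ is an $\h$-isomorphism with $i\neq j$. Since $V_i$ is non-degenerate it is self-dual, so by Schur's lemma (using that $k$ is algebraically closed) the space of $\h$-invariant bilinear forms on $V_i$ is one-dimensional; hence the form pulled back from $V_j$ along $\phi$ equals $c$ times the form on $V_i$ for some $c\neq 0$. The graph $U=\{v+\lambda\phi(v):v\in V_i\}$ is then an $\h$-submodule on which the form restricts to $(1+\lambda^2 c)$ times the non-degenerate form on $V_i$; choosing $\lambda$ with $\lambda^2=-c^{-1}$ makes $U$ a nonzero totally isotropic $\h$-invariant subspace, contradicting $G$-irreducibility. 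So the $V_i$ are pairwise non-isomorphic, completing the forward direction.

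For the converse I would note that a decomposition $V=V_1\perp\dots\perp V_n$ into non-isomorphic $\h$-irreducibles makes $V$ a multiplicity-free semisimple $\h$-module, so every $\h$-submodule is $\bigoplus_{i\in I}V_i$ for some subset $I$. A nonzero $\h$-submodule thus contains some $V_i$, on which the form is non-degenerate, so it cannot be totally isotropic; hence $\h$ fixes no nonzero totally isotropic subspace and is $G$-irreducible. Throughout, the only non-formal inputs are Schur's lemma over $k=\bar{k}$ and the standard flag description of parabolic subalgebras, so no delicate case analysis is required.
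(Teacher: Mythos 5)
Your proposal is correct and follows essentially the same route as the paper: identify parabolic subalgebras with stabilisers of (isotropic) flags, deduce that $G$-irreducibility means $\h$ fixes no nonzero totally isotropic subspace, build the orthogonal decomposition inductively via non-degeneracy of invariant subspaces, and rule out isomorphic summands with an isotropic graph. The one substantive point you assert but do not prove --- and which the paper spends half its proof on --- is the ``standard flag description of parabolic subalgebras''. Since a parabolic subalgebra is \emph{by definition} $\Lie(P)$ for $P$ a parabolic subgroup, the implication ``$\h$ stabilises a flag $\Rightarrow$ $\h$ lies in a proper parabolic subalgebra'' requires that the infinitesimal stabiliser $\{x\in\g : xW_i\subseteq W_i \text{ for all } i\}$ be no larger than $\Lie(P)$, i.e.\ that the flag-stabiliser scheme be smooth; in characteristic $p$ this is not automatic for scheme-theoretic stabilisers. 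The paper verifies it: $\Lie(\Stab(\FF^\bullet))$ contains $\Lie(P)$, hence has maximal rank and is spanned by $\t$ together with root spaces, and each root space $\la e_\alpha\ra$ it contains exponentiates via $t\mapsto\exp(t\,e_\alpha)$ into $\Stab(\FF^\bullet)(k)=P$, forcing $\Lie(\Stab(\FF^\bullet))=\Lie(P)$. You should supply this argument (or a direct matrix verification for the three classical forms with $p>2$); everything else you wrote goes through.

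On the remaining steps you are, if anything, more careful than the paper. The paper simply assumes the isomorphism $\phi:V_i\to V_j$ is an isometry and takes the graph $\{v+i\phi(v)\}$ with $i^2=-1$; your Schur's-lemma normalisation (the pulled-back form is $c$ times the given one, then choose $\lambda$ with $\lambda^2=-c^{-1}$) justifies why this reduction is legitimate, and the cross-terms in your computation do vanish because $V_i\perp V_j$. Similarly, the paper dispatches the converse with ``stabilises no totally isotropic subspaces by definition'', whereas your multiplicity-freeness argument (every submodule of the semisimple, multiplicity-free module $V$ is a sub-sum of the $V_i$, hence non-degenerate) is exactly the missing justification --- note this is where the non-isomorphy hypothesis is genuinely used, since isotropic diagonal submodules would otherwise exist.
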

\begin{proof} By \cite[Prop~12.13]{MT}, the (proper) parabolic subgroups of $G$ are precisely the stabilisers of (non-trivial) flags $\FF^\bullet$ of totally isotropic subspaces of $V$ (where $G = \SL(V)$ preserves the $0$-form on $V$). Let $\FF^\bullet$ be a flag of subspaces such that the $k$-points of its stabiliser is a parabolic subgroup $P$. We claim that $\Stab(\FF^\bullet)$ is smooth. We certainly have $\Lie(\Stab(\FF^\bullet))$ contains $\Lie(P)$ so that $\Lie(\Stab(\FF^\bullet))$ is maximal rank, thus corresponds to a subsystem of the root system. Any root space $\langle e_{\alpha}\rangle$ contained in $\Lie(\Stab(\FF^\bullet))$ gives rise to a root subgroup of $\Stab(\FF^\bullet)(k) = P$, via $t \mapsto \exp(t.e_{\alpha})$. Thus $\Lie(\Stab(\FF^\bullet)(k)) = \Lie(\Stab(\FF^\bullet))$, as required.   

The case $G=\SL(V)$ is now clear since $\h$ fixes a non-trivial subspace of $V$ if and only if it is contained in a parabolic subalgebra. 

Now suppose $G=\Sp(V)$ or $\SO(V)$. Firstly, let $\h$ be a $G$-irreducible subalgebra of $\g$ and suppose $V_1$ is a minimal non-zero $\h$-invariant subspace of $V$, so $V_1$ is an $\h$-irreducible submodule of $V$. Then $V_1$ must be non-degenerate or else $\h$ would stabilise a non-zero totally isotropic subspace and hence be contained in a proper parabolic subalgebra. We then use an inductive argument applied to $V_1^\perp$ to see that $\h$ stabilises a decomposition $V\cong V_1\perp V_2\perp \dots \perp V_n$ of non-degenerate, mutually orthogonal, $\h$-irreducible submodules. If $V_i|_\h\cong V_j|_\h$ by an isometry $\phi:V_i\to V_j$ for $i\neq j$ then $\h$ preserves the totally isotropic subspace $\{v_i+i\phi(v_j)\}\subset V_i\oplus V_j$ (where $i^2=-1$). Thus the $V_i$ are pairwise non-isomorphic. Finally, it remains to note that that any subalgebra $\h$ preserving such a decomposition as in (ii) is $G$-irreducible since it stabilises no totally-isotropic subspaces of $V$ by definition. 
\end{proof}

Since the Levi subalgebras of classical groups are themselves classical one gets the following, using precisely the same argument as in \cite[Ex.~3.2.2(ii)]{Ser05}. (We remind the reader of our assumption that $p >2$.)

\begin{lemma}\label{gcrOnNatural} The subalgebra $\h$ of $\g$ is $G$-cr if and only if it acts completely reducibly on the natural module $V$ for $\g$.\end{lemma}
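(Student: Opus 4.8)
The plan is to run the argument of \cite[Ex.~3.2.2(ii)]{Ser05} essentially verbatim, with Proposition \ref{girclassical} playing the role of the group-theoretic irreducibility criterion and the flag description of parabolic subalgebras obtained in its proof replacing the usual flag dictionary for groups. First I would invoke the standard reduction from the general theory of complete reducibility for Lie algebras (\cite{BMRT13}): $\h$ is $G$-cr if and only if $\h$ is $L$-irreducible, where $L$ is a minimal Levi subgroup with $\h\subseteq\l:=\Lie(L)$, meaning $\h$ lies in no proper parabolic subalgebra of $\l$. Since Levi subalgebras of classical groups are again classical, $\l$ is, up to its central torus, a direct sum of general linear Lie algebras $\gl(W_1)\oplus\dots\oplus\gl(W_r)$ together with one further classical Lie algebra acting on a nondegenerate space $V_0$, and correspondingly $V|_L\cong W_1\oplus\dots\oplus W_r\oplus W_1^*\oplus\dots\oplus W_r^*\oplus V_0$; here the $W_i$ and $W_i^*$ are totally isotropic and dually paired, while $V_0$ carries the induced nondegenerate form. (When $\g=\sl(V)$ the duals and the heart $V_0$ are absent.)

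For the forward implication, note that a subalgebra of a direct sum is $L$-irreducible exactly when each of its projections to a factor is irreducible there, the parabolic subalgebras of a direct sum being precisely the sums of parabolic subalgebras of the factors. Applying Proposition \ref{girclassical} factor by factor, $L$-irreducibility of $\h$ forces $\h$ to act irreducibly on each $W_i$ (hence on each $W_i^*$) and to stabilise an orthogonal decomposition of $V_0$ into nondegenerate $\h$-irreducibles. Assembling these pieces, $V|_\h$ is a direct sum of $\h$-irreducible modules, so it is semisimple; this shows every $G$-cr subalgebra acts completely reducibly on $V$.

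For the converse, suppose $V|_\h$ is semisimple and let $\p=\Lie(\Stab(\FF^\bullet))$ be any parabolic subalgebra containing $\h$, where $\FF^\bullet$ is a flag (totally isotropic in the $\sp/\so$ cases) stabilised by $\h$; I want an $\h$-stable Levi subalgebra of $\p$, which verifies the definition of $G$-cr directly. When $\g=\sl(V)$ this is immediate: semisimplicity lets one refine $\FF^\bullet$ to an $\h$-invariant grading $V=\bigoplus_i U_i$ with $F_j=\bigoplus_{i\leq j}U_i$, and the stabiliser of this grading is a Levi of $\p$ containing $\h$. When $\g=\sp(V)$ or $\so(V)$ one peels the flag off one term at a time using the standard fact that a semisimple $\h$-module $W$ carrying a nondegenerate $\h$-invariant form, together with an isotropic $\h$-submodule $U$, admits an $\h$-invariant orthogonal splitting $W=(U\oplus U')\perp W_0$ with $U'$ isotropic and dually paired to $U$ and $W_0$ nondegenerate; iterating yields the required $\h$-stable Levi.

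I expect the main obstacle to be exactly the $\sp/\so$ case of the converse: one must choose the complements along the isotropic flag so that they are simultaneously isotropic and orthogonally compatible with the form, which is the content of the orthogonal-splitting fact above and genuinely uses semisimplicity rather than mere existence of complements. This is precisely the point Serre treats in \cite[Ex.~3.2.2(ii)]{Ser05}, and since Proposition \ref{girclassical} already supplies the Lie-algebra irreducibility dictionary, no input beyond transcribing that argument should be needed.
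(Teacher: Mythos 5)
Your proposal is correct and follows essentially the same route as the paper, whose proof consists precisely of transcribing the argument of \cite[Ex.~3.2.2(ii)]{Ser05} to Lie algebras using the fact that Levi subalgebras of classical groups are classical, with Proposition \ref{girclassical} and the flag description of parabolic subalgebras supplying the dictionary, exactly as you do. The one point worth making explicit is that your orthogonal-splitting step (correcting an $\h$-complement of $U^{\perp}$ to an isotropic one paired with $U$) requires dividing by $2$, which is where the standing hypothesis $p>2$ --- the only thing the paper's proof explicitly reminds the reader of --- is used.
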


To prove the next proposition, we use Lemma \ref{gcrOnNatural} together with the following non-trivial result.
\begin{theorem}[{\cite[Cor.~8.12]{HS14}}]\label{liealgcompred}Let $G$ be a semisimple algebraic group and let $V$ be a $\g$-module with $p>\dim V$.
Assume that $\g = [\g,\g]$. Then $V$ is semisimple.\end{theorem}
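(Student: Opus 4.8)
The plan is to convert semisimplicity of $V$ into the vanishing of first cohomology and then to extract the bound $p>\dim V$ from the linkage principle for the Frobenius kernel $G_1$. First I would reduce to showing that $\Ext^1_\g(S,S')=0$ for every pair of simple composition factors $S,S'$ of $V$; a straightforward induction on composition length, splitting off the socle, then forces $V$ to be semisimple. The point is that in each instance needed for this induction the two factors occur in $V$ with $\dim S+\dim S'\le\dim V<p$: distinct factors each contribute their dimension to $\dim V$, and a factor used in a self-extension occurs with multiplicity at least two. It therefore suffices to prove the Lie-theoretic implication
\[\Ext^1_\g(S,S')\neq 0\ \Longrightarrow\ \dim S+\dim S'\ge p \tag{$\dagger$}\]
for simple modules $S,S'$. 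Using $\g=[\g,\g]$, I would decompose $G$ as an almost-direct product of its simple factors, so that $\g=\bigoplus_i\g^{(i)}$ with each $\g^{(i)}$ perfect; since $k$ is algebraically closed, each simple module is an outer tensor product of simple modules $S_i$ for the factors $\g^{(i)}$, and a Künneth decomposition of $\Ext^1$ (together with the bounds $\dim S_i+\dim S'_i\le\dim S+\dim S'$) reduces $(\dagger)$ to the case $G$ simple. Finally, every simple module of dimension $<p$ is restricted --- a module with non-zero $p$-character has dimension divisible by $p$ --- so all the $S,S'$ occurring are $G_1$-modules.

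The second step is to replace ordinary cohomology by restricted cohomology. For $S\not\cong S'$ one has $\Hom(S,S')^\g=0$, so the exact sequence \cite[I.9.19(1)]{Jan03} yields an isomorphism $\Ext^1_\g(S,S')\cong\opH^1(G_1,\Hom(S,S'))=\Ext^1_{G_1}(S,S')$. For self-extensions the correction terms in that sequence are semilinear maps on $\g$ that factor through $\g/[\g,\g]$, hence vanish because $\g=[\g,\g]$; in particular $\Ext^1_\g(k,k)=(\g/[\g,\g])^*=0$. Thus $(\dagger)$ is equivalent to the same implication for $\Ext^1_{G_1}$.

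The heart of the matter, and the step I expect to be the real obstacle, is the $G_1$-version of $(\dagger)$: if $\Ext^1_{G_1}(L(\lambda),L(\mu))\neq 0$ then $\dim L(\lambda)+\dim L(\mu)\ge p$. Here I would apply the linkage principle for $G_1$ (\cite[II.9.19]{Jan03}) to constrain $\lambda$ and $\mu$ to a single linkage class, and use Lemma \ref{lowalcove} together with the structure of the induced modules $\opH^0(\lambda)$ (as exploited in the proof of Proposition \ref{h1sless64}) to locate the composition factors that can carry a non-trivial extension. A non-split extension then forces the highest weights to be large enough that a Weyl-dimension lower bound gives $\dim L(\lambda)+\dim L(\mu)\ge p$. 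The decisive model is type $A_1$: by Lemma \ref{extA1}, a non-zero $\Ext^1_{G_1}(L(a),L(b))$ forces $a+b=p-2$, so that $\dim L(a)+\dim L(b)=p$ exactly, which is precisely what makes the hypothesis $p>\dim V$ sharp. The genuine difficulty is to make this dimension estimate uniform across all types and all linked pairs of restricted weights; the cohomological reductions above are formal, whereas controlling the interplay of linkage and module dimension is where the optimal bound is actually won.
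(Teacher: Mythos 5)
First, a point of calibration: the paper does not prove this statement at all --- it is imported verbatim from \cite[Cor.~8.12]{HS14} --- so the benchmark is Herpel--Stewart's proof, whose broad skeleton (simple modules of dimension $<p$ are restricted; pass from $\g$-cohomology to $G_1$-cohomology; a dimension bound for $\Ext^1_{G_1}$ between linked restricted simples) your reductions do follow. But two of your steps have genuine gaps. The self-extension reduction is wrong as justified: in the sequence \cite[I.9.19(1)]{Jan03} the term $\Hom^s(\g,V^{\g})$ denotes the Frobenius-\emph{semilinear} additive maps $\g\to V^{\g}$; there is no condition that they vanish on brackets, so they do not factor through $\g/[\g,\g]$, and perfectness of $\g$ does not kill them. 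Concretely, for $V=\Hom(S,S)$ one has $V^{\g}=k$ and $\Hom^s(\g,k)$ has dimension $\dim\g$ by definition; already for $\g=\sl_2$ and $V=k$ the sequence is consistent only because these semilinear maps inject into $\opH^2(G_1,k)\cong(\g^{*})^{[1]}\neq 0$, not because they vanish. Hence for $S\cong S'$ the sequence yields only an injection $\opH^1(G_1,\Hom(S,S))\hookrightarrow\Ext^1_{\g}(S,S)$ with cokernel lying inside $\Hom^s(\g,k)$: vanishing of the $G_1$-extension group does \emph{not} give vanishing of the $\g$-extension group, and this failure occurs exactly in the multiplicity-$\geq 2$ case your counting argument is designed to handle. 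Closing it requires showing the connecting map $\Hom^s(\g,k)\to\opH^2(G_1,\Hom(S,S))$ is injective, or a direct analysis of non-restricted length-two self-extensions; neither is formal.

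Second and more fundamentally, your implication $(\dagger)$ at the $G_1$-level --- that $\Ext^1_{G_1}(L(\lambda),L(\mu))\neq 0$ forces $\dim L(\lambda)+\dim L(\mu)\geq p$ --- is the entire non-formal content of the theorem, and your proposal does not prove it: ``linkage plus a Weyl-dimension lower bound'' names the desired conclusion rather than deriving it. The $G_1$-linkage principle only places $\mu$ in $W_p\cdot\lambda+pX(T)$, and linked pairs of restricted weights of small height exist in every type, so no uniform dimension bound falls out of linkage; the actual proof requires the detailed knowledge of $\opH^1(G_1,H^0(\lambda))$ and $G_1$-extensions (Jantzen, Bendel--Nakano--Pilyugin --- of which Proposition \ref{h1sless64} gives only a low-dimensional sample) together with alcove-by-alcove weight estimates, and this is precisely the hard part of Herpel--Stewart's argument. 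A smaller but real point in the same spirit: ``non-zero $p$-character implies $p$ divides the dimension'' is the weak form of the first Kac--Weisfeiler conjecture; for nilpotent $p$-characters this is a deep theorem of Premet with hypotheses on $G$ and $p$ (compare the paper's own appeal to \cite{Pre95KW}), not an assertion one may make in passing. In sum, your proposal correctly assembles the formal reductions, but both the self-extension case and the decisive inequality $(\dagger)$ --- which you yourself flag as the obstacle --- are left unestablished, so this is a plan rather than a proof.
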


\begin{prop}\label{thm:gcrclassical}Let $G$ be a simple algebraic group of classical type with $h(G)$ its Coxeter number and $\g$ its Lie algebra. If $p>h(G)$ then any semisimple subalgebra $\h$ of $\g$ is $G$-cr.\end{prop}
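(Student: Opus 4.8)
The plan is to reduce, via Lemma~\ref{gcrOnNatural}, the assertion that $\h$ is $G$-cr to the statement that the natural module $V$ of $\g$ is semisimple as an $\h$-module, and then to derive this semisimplicity from Theorem~\ref{liealgcompred} (applied to $\h$ acting on $V$, which is legitimate since $\h$ is perfect). Everything then hinges on comparing $\dim V$ with $p$. In type $A_n$ we have $\g=\sl(V)$ with $\dim V=h$, and in type $C_n$ we have $\g=\sp(V)$ with $\dim V=h$; so the hypothesis $p>h$ gives $p>\dim V$ and Theorem~\ref{liealgcompred} applies verbatim to show $V|_\h$ is semisimple. These two families are immediate.

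The orthogonal cases are harder because the natural module is slightly too large: $\dim V=2n+1=h+1$ in type $B_n$ and $\dim V=2n=h+2$ in type $D_n$, so $p>h$ need not force $p>\dim V$. The only possible boundary primes are $p=2n+1=\dim V$ for $B_n$ and $p=2n-1=\dim V-1$ for $D_n$; for every other admissible $p$ one still has $p>\dim V$ and Theorem~\ref{liealgcompred} again finishes the argument. To handle the boundary primes I would use two further facts: since $\h\subseteq\so(V)$, the module $V|_\h$ is self-dual; and, by the contrapositive of Theorem~\ref{liealgcompred}, any non-semisimple $\h$-module has dimension at least $p$. Thus if $V|_\h$ were not semisimple, a non-split length-two subquotient $A\,|\,B$ would satisfy $\dim A+\dim B\geq p$, leaving at most one further dimension for the remaining composition factors; comparing with $\dim V\in\{p,p+1\}$ forces $V|_\h$ to be essentially uniserial of the form $A\,|\,B$ with $A\cong B^{*}$. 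A short argument with perpendicular submodules then forces $\dim A=\dim B=\tfrac12\dim V$ and shows the socle $B$ is totally isotropic.

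For type $B_n$ this is already a contradiction, since $\dim V=2n+1$ is odd and cannot be twice an integer; hence $V|_\h$ is semisimple. The genuinely hardest point, which I expect to be the main obstacle, is type $D_n$ at $p=2n-1$: the configuration to exclude is a non-split self-dual extension whose socle $B$ is a maximal totally isotropic $\h$-submodule of dimension $n=\tfrac{p+1}{2}$. Here $\h$ lies in the Siegel parabolic, whose unipotent radical has abelian Lie algebra $\cong\wedge^2 B$, and the extension is non-split precisely when $\opH^1(\h,\wedge^2 B)\neq0$; the task is to prove this group vanishes.

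To dispose of this last case I would argue as follows. By the contrapositive of Theorem~\ref{liealgcompred} once more, any composition factor $W$ of $\wedge^2 B$ with $\opH^1(\h,W)\neq0$ must satisfy $\dim W\geq p-1$ and be linked to the trivial module; but since $B$ is an $n$-dimensional irreducible with ``half-sized'' highest weight, the highest weights occurring in $\wedge^2 B$ are too small for such a $W$ to appear. The type-$A_1$ model makes the mechanism transparent: there $B=L\!\left(\tfrac{p-1}{2}\right)$, the top weight of $\wedge^2 B$ is $p-3<p-2$, so the only candidate $L(p-2)$ that could contribute to $\opH^1$ is absent. The smallest ranks are covered directly by the isogenies $D_2\sim A_1A_1$ and $D_3\sim A_3$, whose $A$-type natural modules have dimension less than $p$, and I would then extend the weight bound uniformly to the remaining $D_n$.
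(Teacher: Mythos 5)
Your outer skeleton (reduce to the natural module by Lemma \ref{gcrOnNatural}, then quote Theorem \ref{liealgcompred}) is the same as the paper's, but the step you dismiss as routine---applying Theorem \ref{liealgcompred} to $\h$ itself, ``legitimate since $\h$ is perfect''---is exactly where the modular difficulty sits, and your justification fails. Theorem \ref{liealgcompred} is stated for $\g=\Lie(G)$ with $G$ a \emph{semisimple algebraic group}; perfection is a side condition, not the substance. In characteristic $p$ a semisimple Lie algebra need not be of this form: the Witt algebra $W_1$ is simple, hence perfect and semisimple, yet is not the Lie algebra of any reductive group, and such subalgebras really do occur under your hypotheses (e.g.\ $W_1\subseteq\sl_{p-1}$ via its $(p-1)$-dimensional irreducible, with $p>h(A_{p-2})=p-1$); moreover semisimple does not even imply perfect in characteristic $p$ (Block-type algebras such as $\sl_2\otimes k[X]/X^p+1\otimes k\del$). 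So every appeal you make to Theorem \ref{liealgcompred}, including the contrapositive uses in the boundary cases, is unjustified for precisely those $\h$ that make the modular statement non-trivial. The paper closes this hole with an input you are missing: it argues by contradiction, so that a non-$G$-cr $\h$ projects \emph{isomorphically} into a proper Levi subalgebra $\l$, whose simple factors have minimal faithful modules of dimension less than $p$ (this, rather than $p>\dim V$ for the natural module, is how $p>h$ enters), and then Strade's theorem \cite{Str73} forces $\h$ to be a direct sum of classical-type algebras, i.e.\ $\h\cong\Lie(H)$ for $H$ semisimple algebraic; only then is Theorem \ref{liealgcompred} available. Note that the contradiction set-up is essential to this fix: without assuming $\h$ non-$G$-cr there is no faithful $\h$-module of dimension less than $p$ in types $B$ and $D$, so Strade cannot be invoked, and your proposal contains no substitute. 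This gap is fatal as the argument stands.

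On the other hand, your arithmetic at the boundary primes is a fair observation: $p>h$ does not give $p>\dim V$ when $(G,p)=(B_n,2n+1)$ or $(D_n,2n-1)$, and the paper's one-sentence deduction that $\h$ acts completely reducibly on the natural module glosses over exactly these cases; your parity argument for $B_n$ (self-duality forces a totally isotropic simple socle $B$ with head $B^{*}$, so $\dim V=2\dim B$, impossible for $\dim V=p$ odd) is correct once algebraicity of $\h$ is in hand. But your type-$D$ case at $p=2n-1$ is a programme, not a proof. The assertion that every composition factor $W$ of $\wedge^2 B$ with $\opH^1(\h,W)\neq0$ has too small a highest weight is verified only in the $A_1$ model; ``extend the weight bound uniformly'' is the actual work, and it must run over all classical types of $\h$, all irreducible $B$ of dimension $(p+1)/2$ (including tensor-decomposable $B$, where $\wedge^2(B_1\otimes B_2)\cong\wedge^2B_1\otimes S^2B_2\oplus S^2B_1\otimes\wedge^2B_2$ and a K\"unneth reduction is needed, and keeping track of the $A_1$ exception $L(p-2)$ to Lemma \ref{lowalcove}), together with the configurations involving an extra trivial composition factor, which you wave at with ``essentially uniserial'' even though $\dim V=p+1$ leaves room for them. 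Until that case analysis is carried out, the $D_n$ boundary case is unproven; so the proposal has two genuine gaps, the missing Strade step being the decisive one.
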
 
\begin{proof}
Let $G$ be a simple algebraic group of classical type with $p>h(G)$. Now assume, looking for a contradiction, that $\h$ is a non-$G$-cr subalgebra of $\g$. Thus $\h$ is in a non-trivial parabolic subalgebra, projecting isomorphically to some proper Levi subalgebra $\l$ with Coxeter number $h_1<h(G)$. One checks that the condition $p>h(G)$ implies that $p>\dim V$ for $V$ a minimal-dimensional module for any simple factor of $\l$. Now \cite[Main Thm.]{Str73} implies that the projection of $\h$ to the simple factors of $\l$ are all direct products of classical-type Lie algebras, hence $\h$ itself is a direct product of classical-type Lie algebras, isomorphic to $\Lie(H)$ for $H$ some semisimple algebraic group. Thus by Theorem \ref{liealgcompred} we have that $\h$ acts completely reducibly on $V$. Hence by Lemma \ref{gcrOnNatural} we have that $\h$ is $G$-cr.\end{proof}

\section{$G$-complete reducibility in exceptional Lie algebras. Proof of Theorem \ref{maintheorem}.}\label{sec:proofofmain}
Let $G$ be reductive and $P$ be a parabolic subalgebra of $G$ with Levi decomposition $P=LQ$. We begin with some general results on $G$-complete reducibility of subalgebras. For our purposes, they will be used in order to generate examples of non-$G$-completely reducible subalgebras.

\begin{lemma}[\!{\cite[Lem.~4]{McN07}}] \label{levinongcr} Let $G$ be a reductive algebraic group and let $L$ be a Levi subgroup of $G$. Suppose $\h \subseteq \Lie(L)$ is a Lie subalgebra. Then $\h$ is $G$-cr if and only if $\h$ is $L$-cr. \end{lemma}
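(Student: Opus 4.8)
The plan is to prove the two implications separately, keeping $\h\subseteq\l:=\Lie(L)$ throughout and fixing a parabolic subgroup $P=LQ$ of $G$ with Levi factor $L$ and unipotent radical $Q$, so that $\p:=\Lie(P)=\l+\q$ with $\q:=\Lie(Q)$ an ideal and $\p/\q\cong\l$; write $\pi\colon\p\to\l$ for the resulting quotient map, which is a homomorphism of Lie algebras.

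First I would dispatch the implication that $G$-cr implies $L$-cr, which is elementary. Let $\p_\l$ be a parabolic subalgebra of $\l$ containing $\h$, say $\p_\l=\Lie(P_\l)$ for a parabolic subgroup $P_\l$ of $L$. Then $P_\l Q$ is a parabolic subgroup of $G$ and $\p':=\Lie(P_\l Q)=\p_\l+\q\subseteq\p$ is a parabolic subalgebra of $\g$ containing $\h$. By $G$-complete reducibility there is a Levi subalgebra $\m$ of $\p'$ with $\h\subseteq\m$. Applying $\pi$ and using $\pi(\h)=\h$ (as $\h\subseteq\l$) gives $\h\subseteq\pi(\m)$, so it remains to check that $\pi(\m)$ is a Levi subalgebra of $\p_\l$. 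Since all Levi subalgebras of $\p'$ are conjugate under $R_u(P_\l Q)=R_u(P_\l)\,Q$, and since $\pi\circ\Ad(q)=\pi$ for $q\in Q$ (because $Q$ acts trivially on $\p/\q$) while $\pi\circ\Ad(u)=\Ad(u)\circ\pi$ for $u\in R_u(P_\l)\subseteq L$, the image $\pi(\m)$ is an $R_u(P_\l)$-conjugate of the standard Levi subalgebra of $\p_\l$, hence itself a Levi subalgebra of $\p_\l$. Thus $\h$ is $L$-cr.

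The substantial direction is the converse, and the obstacle is that an arbitrary parabolic subalgebra $\p'$ of $\g$ containing $\h$ need not be comparable to $\p$, nor in good relative position with $L$: in general $\p'\cap\l$ fails to be a parabolic subalgebra of $\l$ (already visible for $G=\GL_3$ with $L$ a proper Levi and $\p'$ chosen transverse to it), so one cannot simply intersect and invoke $L$-cr. My plan is therefore to pass to the geometric invariant theory characterisation of complete reducibility for subalgebras, following \cite{McN07,BMRT13}: fixing a basis $X_1,\dots,X_n$ of $\h$ and setting $\mathbf X=(X_1,\dots,X_n)\in\g^{\oplus n}$, on which $G$ acts diagonally via $\Ad$, one has that $\h$ is $G$-cr if and only if the orbit $G\cdot\mathbf X$ is Zariski closed, and likewise $\h$ is $L$-cr if and only if $L\cdot\mathbf X$ is closed in $\l^{\oplus n}$ (equivalently in $\g^{\oplus n}$, since $\l^{\oplus n}$ is closed there).

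It then suffices to prove the two orbit-closure conditions are equivalent. Writing $L=C_G(S)$ for a torus $S\le G$, the point $\mathbf X$ lies in $(\g^{\oplus n})^S$, since $S$ centralises $\h\subseteq\l$. The equivalence is now exactly the standard descent of closed orbits through the centraliser of a torus: for a reductive group acting on an affine variety, a subtorus $S$, its centraliser $L=C_G(S)$, and an $S$-fixed point $v$, the orbit $G\cdot v$ is closed if and only if $L\cdot v$ is closed. Invoking this with $v=\mathbf X$ yields both implications of the lemma at once. I expect the main work — and the place where care is needed — to be in setting up this GIT dictionary correctly: verifying that the orbit-closure criterion is independent of the chosen basis of $\h$ and genuinely captures the parabolic/Levi definition of $G$-complete reducibility for subalgebras, after which the torus-descent statement is a black box that makes the converse immediate.
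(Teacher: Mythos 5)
Your proposal is correct and is essentially the same argument the paper relies on: the paper gives no proof of its own but cites \cite[Lem.~4]{McN07}, and McNinch's proof is exactly your route --- the closed-orbit criterion identifying $G$-complete reducibility of $\h$ with Zariski-closedness of the orbit of a basis tuple in $\g^{\oplus n}$, combined with the standard descent of orbit-closedness through $L=C_G(S)$ at $S$-fixed points. Your separate direct treatment of the forward implication via $P_{\l}Q$ is sound but redundant, since the torus-descent equivalence already yields both directions at once.
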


\begin{lemma}[\!{\cite[Thm.~5.26(i)]{BMRT13}}] \label{hnongcr} Let $G$ be a reductive algebraic group and let $P = LQ$ be a parabolic subgroup of $G$. Suppose $\h$ is a subalgebra of $\g$ contained in $\p = \Lie(P)$. Then if $\h$ is not $Q$-conjugate to a subalgebra of $\l = \Lie(L)$ then $\h$ is non-$G$-cr.\end{lemma}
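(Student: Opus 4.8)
The plan is to prove the contrapositive: supposing $\h$ is $G$-cr and contained in $\p=\Lie(P)$, I will show that $\h$ is $Q$-conjugate to a subalgebra of $\l$. Since $\p$ is by definition a parabolic subalgebra of $\g$ and $\h\subseteq\p$, the very definition of $G$-complete reducibility furnishes a Levi subgroup $M$ of $P$ with $\h\subseteq\Lie(M)$.

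The only structural input needed is the standard fact that any two Levi complements of a parabolic subgroup $P$ are conjugate by an element of its unipotent radical $Q$ (see e.g.~\cite{MT}). Hence $M=qLq^{-1}$ for some $q\in Q$, and differentiating the inner automorphism $\Int(q)\colon L\to M$ gives $\Lie(M)=\Ad(q)\l$. Therefore $\h\subseteq\Ad(q)\l$, equivalently $\Ad(q^{-1})\h\subseteq\l$, so that $\h$ is $Q$-conjugate to a subalgebra of $\l$. Reading this in contrapositive form yields exactly the stated implication. Note that only this one direction is asserted, so no converse need be addressed.

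The argument is essentially definitional once the conjugacy of Levi factors is in hand, and I expect no serious obstacle. The single point deserving a moment's care is the passage from the group-level statement ($M$ is $Q$-conjugate to $L$) to the Lie-algebra-level statement ($\Lie(M)$ is $\Ad(q)$-conjugate to $\l$); this is immediate from $\Ad(q)\Lie(L)=\Lie(qLq^{-1})$ obtained by differentiating $\Int(q)$. No smoothness or characteristic hypotheses enter here, since we merely compare the Lie algebras of the already-smooth Levi subgroups $L$ and $M$.
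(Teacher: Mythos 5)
Your argument is correct. The paper gives no proof of this lemma at all --- it simply cites \cite[Thm.~5.26(i)]{BMRT13} --- and your two ingredients, namely the definition of $G$-complete reducibility for subalgebras (which directly supplies a Levi subgroup $M$ of $P$ with $\h\subseteq\Lie(M)$) together with the standard fact that any two Levi subgroups of $P$ are conjugate by an element of $R_u(P)=Q$, so that $\Lie(M)=\Ad(q)\l$, constitute exactly the expected justification behind that citation. Your care over the passage from group-level to Lie-algebra-level conjugacy via $\Ad(q)\Lie(L)=\Lie(qLq^{-1})$ is well placed and unproblematic in any characteristic.
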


The following lemma provides a strong connection between the structure of modular Lie algebras and the notion of $G$-complete reducibility and will be used very often.

\begin{lemma}\label{psubalgebraornongcr}Let $G$ be a reductive algebraic group and suppose $p$ is a good prime for $G$.  Suppose further that $\h$ is a simple $G$-cr subalgebra of $\g$ which is restrictable as a Lie algebra. Then either $\h$ is a $p$-subalgebra of $\g$ or $\h$ is $L$-irreducible in a Levi subalgebra $\l = \Lie(L)$ of $\g$ with a factor of type $A_{rp-1}$ for some $r\in\mathbb{N}$.\end{lemma}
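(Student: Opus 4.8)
The plan is to work with the $p$-envelope of $\h$ in $\g$ and to translate the failure of $[p]$-closure into the existence of a nonzero toral element centralising $\h$. Since $\h$ is simple it is perfect and has trivial centre, and restrictability of $\h$ provides an intrinsic $[p]$-operation $x\mapsto x^{[p]_0}$. Comparing it with the ambient map, the assignment $\theta\colon x\mapsto x^{[p]}-x^{[p]_0}$ takes values in the centraliser $\c_\g(\h)$ and is $p$-semilinear; the restricted subalgebra of $\g$ generated by $\h$ is then $\h\oplus\z$, where $\z\subseteq\c_\g(\h)$ is the toral subalgebra spanned by the $[p]$-powers of the elements $\theta(x)$. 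The first observation is that $\h$ is a $p$-subalgebra of $\g$ precisely when $\z=0$: if $\z\neq 0$ the envelope strictly contains $\h$, while if $\z=0$ then $x^{[p]}=x^{[p]_0}\in\h$ for all $x$. So I may assume $\z\neq 0$ and fix a nonzero toral element $t\in\z$, i.e. $t^{[p]}=t$, centralising $\h$.

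Next I would reduce to the $G$-irreducible case. As $\h$ is $G$-cr it lies in a Levi subalgebra $\l=\Lie(L)$ in which it is $L$-irreducible, and by Lemma \ref{levinongcr} nothing is lost: subalgebras of $\l$ are $G$-cr if and only if $L$-cr, and since $\l$ is itself a $p$-subalgebra of $\g$ (being the Lie algebra of a subgroup) both the toral element $t$ and the centralising condition survive the passage to $\l$. Renaming, I assume $\h$ is $G$-irreducible; it then suffices to prove that $\z(\g')\neq 0$ for $\g'=[\g,\g]$ and to identify the offending simple factor as being of type $A_{rp-1}$.

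The core of the argument is the claim that a nonzero toral element $t\in\c_\g(\h)$ must lie in the centre of $\g$ once $\h$ is $G$-irreducible. Indeed, $\h$ centralises $t$, so $\h\subseteq\c_\g(t)$; writing $\c_\g(t)=\Lie(C_G(t)^\circ)$ with $C_G(t)^\circ$ reductive of maximal rank, $G$-irreducibility of $\h$ forbids $\c_\g(t)$ from lying in a proper parabolic subalgebra, hence (as the root system of $C_G(t)^\circ$ is closed and symmetric) in a proper Levi. Equivalently, $C_G(\h)^\circ$ can contain no nontrivial torus $S$, since otherwise $\h\subseteq\c_\g(S)=\Lie(C_G(S))$ would place $\h$ in the proper Levi $C_G(S)$. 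Thus the only toral elements available in $\c_\g(\h)$ are those coming from the (possibly non-smooth) centre of $G$, forcing $t\in\z(\g)$. Finally, in good characteristic the centre $\z(\g')$ of the derived subalgebra of a Levi is nonzero exactly when some simple factor of $\g'$ has type $A_{rp-1}$ (for all other simple types the centre of the Lie algebra vanishes when $p$ is good and $p>2$); so such a factor exists and $\h$ is $L$-irreducible in the Levi $L$ carrying it, as required.

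The step I expect to be the main obstacle is pinning down the toral element $t$: precisely, controlling the gap between the Lie-algebra centraliser $\c_\g(\h)$ and the group centraliser $C_G(\h)$, so as to guarantee that a non-central nonzero toral element really does manufacture a proper Levi overgroup of $\h$ and thereby contradict $G$-irreducibility. This requires the good-characteristic structure theory of centralisers of toral elements (to ensure $\c_\g(t)=\Lie(C_G(t)^\circ)$ and to handle the Borel--de Siebenthal pseudo-Levi possibilities, which must be excluded or recursively reduced to a genuine Levi carrying an $A_{rp-1}$ factor). Once this is in place, the remaining bookkeeping—identifying the simple factor whose Lie algebra has nonzero centre as $A_{rp-1}$—is the routine determination of $\z(\g)$ for simple $\g$ in good characteristic.
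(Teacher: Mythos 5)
Your opening coincides with the paper's first move: both pass to the $p$-envelope $\h_p=\h\oplus J$ (your $\z$ is the central ideal $J$ furnished by \cite[Thm.~2.5.8]{SF88}, which you reconstruct by hand via $\theta$), and both reduce to a minimal Levi $\l$ in which $\h$ is $L$-irreducible. The genuine gap comes immediately afterwards: you assert that $\z$ is \emph{toral} and ``fix a nonzero toral element $t\in\z$''. Nothing in the construction gives this, and it is false in general: $\theta(x)=x^{[p]}-x^{[p]_0}$ is merely central in $\h_p$, and it can perfectly well be $p$-nilpotent. Concretely, for the $\sl_2=\la e,h,f\ra$ through a regular nilpotent element with $p<h(G)$ (the situation of Lemma \ref{mtfora1a}) one has $e^{[p]_0}=0$ but $e^{[p]}\neq 0$, so $\theta(e)=e^{[p]}$ is a nonzero nilpotent element whose iterated $[p]$-powers eventually vanish; there $\z\neq 0$ contains no nonzero toral element at all. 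That example is non-$G$-cr, so it does not contradict the lemma, but it shows your structural claim cannot be assumed: the $p$-nilpotent possibility is precisely how non-restricted subalgebras arise, and your argument never addresses it. The paper sidesteps the dichotomy entirely: it takes an \emph{arbitrary} nonzero element of $J\subseteq\l'$ and uses that in good characteristic the centraliser of any noncentral element of the semisimple algebra $\l'$ lies in a proper parabolic of $\l'$ --- covering nilpotent and mixed elements uniformly --- whereupon $G$-complete reducibility drops $\h$ into a proper Levi of $\l'$, contradicting minimality of $L$ unless $p$ fails to be very good for $\l'$, i.e.\ unless an $A_{rp-1}$ factor is present. Your missing case could be patched in the same spirit (a nonzero $p$-nilpotent $t\in\c_{\l}(\h)$ is noncentral in $\l$, so $\h\subseteq\c_{\l}(t)$ lies in a proper parabolic, against $L$-irreducibility), but as written the step is simply absent.

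Within the toral case your argument also has a flaw you only half-acknowledge. The reformulation ``$C_G(\h)^\circ$ contains no nontrivial torus, hence every toral element of $\c_\g(\h)$ is central'' is invalid because $C_G(\h)$ need not be smooth: the Lie algebra of the scheme-theoretic centraliser is $\c_\g(\h)$, but the reduced group can be far smaller, and precisely in the exceptional configuration the lemma records ($\h$ acting irreducibly inside an $A_{rp-1}$ factor) the scalar toral element lies in $\c_\g(\h)$ while the reduced centraliser is finite --- so toral elements of $\c_\g(\h)$ do not integrate to tori of $C_G(\h)^\circ$. Your other route, $\h\subseteq\c_\g(t)=\Lie(C_G(t)^\circ)$, is the right one, but the pseudo-Levi possibility you flag as ``the main obstacle'' is left unresolved. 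It can in fact be closed in good characteristic: if the subsystem $\{\alpha: d\alpha(t)=0\}$ had full rank with $t$ noncentral, then $p$ would divide the index of the corresponding root lattice, forcing $p$ to divide a coefficient of the highest root, i.e.\ to be bad; so $C_G(t)^\circ$ has deficient semisimple rank, its connected centre is a torus $S$ not central in $G$, and $\h\subseteq\Lie(C_G(S))$ with $C_G(S)$ a proper Levi, contradicting $G$-irreducibility. Finally, a small but real omission: after the reduction you need $t\in\l'$ rather than merely $t\in\z(\l)$, since $\z(\l)$ contains $\Lie(Z(L)^\circ)\neq 0$ for any proper Levi and such $t$ would say nothing about $\z(\l')$; this follows because $\h\subseteq\l'$ and $\l'=\Lie(L')$ is a $p$-subalgebra, so the whole envelope $J$ lies in $\l'$ --- a point the paper makes explicitly and you gloss over.
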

\begin{proof}If $\h$ is not a $p$-subalgebra of $\g$ then its $p$-closure, $\h_p$, is a $p$-envelope of $\h$ strictly containing $\h$. Since $\h$ is restrictable, by \cite[Thm.~2.5.8]{SF88} $\h_p$ has structure $\h_p=\h\oplus J$ for $J$ an ideal of $\h_p$ centralised by $\h$. Now suppose $L$ is chosen minimally such that $\h\subseteq \l$; as $\h$ is $G$-cr, it follows that $\h$ is $L$-irreducible. If $p$ is a very good prime for $\l$, then $\l=\l'\oplus \z(\l)$ where $\l'$ is the direct sum of simple Lie algebras and a central torus $\z(\l)$, both of which are $p$-subalgebras. Since $\h$ is simple, it has trivial projection to $\z(\l)$ and so $J\subseteq \l'$. But since $p$ is good, the centraliser of an element of the semisimple Lie algebra $\l'$ is in a proper parabolic of $\l'$ and $\h$ is $G$-cr; thus $\h$ is in a proper Levi subalgebra of $\l'$ and so $L$ was not minimal, a contradiction. It follows that $p$ is not a very good prime for $\l'$, but this precisely means that $L$ contains a factor of type $A_{rp-1}$.\end{proof} 

\begin{cor}\label{kwcor} Let $\g=\Lie(G)$ be an exceptional Lie algebra in good characteristic. Suppose $\h=\Lie(H)$ for $H$ a simple algebraic group not of type $A_1$ and that $\h$ is a non-$G$-cr subalgebra of $\g$. Choose $\p=\l+\q$ minimal subject to containing $\h$. Then the projection $\bar\h$ of $\h$ to $\l$  is a $p$-subalgebra.\end{cor}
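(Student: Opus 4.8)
The plan is to transfer everything to the projection $\bar\h=\pi(\h)$, where $\pi:\p=\l+\q\to\l$ is the projection modulo the nilradical $\q$, and then to feed $\bar\h$ into Lemma~\ref{psubalgebraornongcr}. First I would check that $\pi$ is injective on $\h$: since $\h$ is simple and $\q=\Lie(Q)$ is a nilpotent ideal of $\p$, the intersection $\h\cap\q$ is an ideal of $\h$ lying in a nilpotent algebra, hence is $0$; thus $\bar\h\cong\h$ is again simple and restrictable. Next I would show $\bar\h$ is $L$-irreducible: if $\bar\h$ lay in a proper parabolic subalgebra $\p_L$ of $\l$, then $\p_L+\q$ would be a parabolic subalgebra of $\g$ properly contained in $\p$ and still containing $\h$ (as $x=\pi(x)+(x-\pi(x))\in\p_L+\q$ for every $x\in\h$), contradicting the minimality of $\p$. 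In particular $\bar\h$ is $L$-cr, hence $G$-cr by Lemma~\ref{levinongcr}.

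Now $\bar\h$ is a simple, restrictable, $G$-cr subalgebra, so Lemma~\ref{psubalgebraornongcr} applies: either $\bar\h$ is a $p$-subalgebra (the desired conclusion), or $\bar\h$ is $L$-irreducible in a Levi subalgebra with a factor of type $A_{rp-1}$. It remains to exclude the second alternative using the hypothesis that $H$ is not of type $A_1$. Suppose, for contradiction, that $\bar\h$ is not a $p$-subalgebra. Then its $p$-envelope is $\bar\h_p=\bar\h\oplus J$ with $0\neq J$ centralising $\bar\h$ (as in the proof of Lemma~\ref{psubalgebraornongcr}), and I pick $0\neq j\in J$.

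Because $\bar\h$ is $L$-irreducible, its projection to each simple factor of $\l$ is irreducible on that factor's natural module, so a Schur's-lemma argument — exactly the mechanism already isolated in the proof of Lemma~\ref{psubalgebraornongcr} — shows that $j$ can project nontrivially only into a factor of type $A_{rp-1}=\sl_{rp}$, where the centraliser of $\bar\h$ is the central line $kI$ of trace-free scalars (scalars in $\so$, $\sp$, very good $\sl$, or exceptional factors vanish). Restricting to such a factor, the $rp$-dimensional natural module $W$ becomes an irreducible $\bar\h\cong\Lie(H)$-module whose $p$-character is nonzero, precisely because the scalar $cI\in kI$ picked out by $j$ records the failure of $(x_W)^p$ to land in the image of $\bar\h$.

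The main obstacle is the closing numerical contradiction, which is where $H\neq A_1$ enters decisively. Since $A_{rp-1}$ is a Levi factor of an exceptional group its rank $rp-1$ is at most $8$, forcing $r=1$ and $\dim W=p$ (recall $p\geq5$). On the other hand $W$ has nonzero $p$-character, so by the Kac--Weisfeiler divisibility (Premet's theorem) its dimension is divisible by $p^{d}$ with $d=\tfrac12\dim\OO$ for $\OO$ the associated coadjoint orbit; as $H$ is simple of rank at least $2$ its minimal nonzero orbit has dimension at least $4$, whence $p^{2}\mid\dim W=p$, which is absurd. This excludes the second alternative and proves $\bar\h$ is a $p$-subalgebra. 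The delicate point is pinning down that the $p$-envelope obstruction localises to the scalars $kI$ of an $A_{rp-1}$ factor, since this is exactly what separates the forbidden case $H=A_1$ (where $\dim W=p$ is compatible with a nonzero $p$-character) from the case at hand.
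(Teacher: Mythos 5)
Your proposal is correct and follows essentially the same route as the paper: reduce to Lemma \ref{psubalgebraornongcr} (after noting $\bar\h\cong\h$ is $L$-irreducible, hence $G$-cr by Lemma \ref{levinongcr}) and kill the $A_{rp-1}$ alternative by observing that the rank bound forces $r=1$, so $\h$ would have a non-restricted irreducible module of dimension $p$, contradicting the Kac--Weisfeiler theorem (Premet), which for $\h$ not of type $A_1$ forces dimension at least $p^2$. The paper's proof is just a compressed version of yours, citing the proof of Lemma \ref{psubalgebraornongcr} and \cite{Pre95KW}; your extra steps (injectivity of the projection on $\h$, the Schur-type localisation of the $p$-envelope obstruction to the scalars $kI$ of the $A_{p-1}$ factor, and the nonvanishing of the $p$-character of $W$) are exactly the details the paper leaves implicit.
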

\begin{proof}By the proof of Lemma \ref{psubalgebraornongcr}, if $\bar\h$ is not a $p$-subalgebra then $\l$ is of type $A_{p-1}$ and $\bar\h$ projects to a subalgebra of $\sl_p$ acting irreducibly on the natural module. Since $\h$ is not of type $A_1$, the Kac-Weisfeiler conjecture (which is true for this situation by \cite{Pre95KW}) implies the only non-restricted representations of $\h$ have dimension at least $p^2$. This is a contradiction.\end{proof}

Unless otherwise mentioned, for the rest of this section $G$ will denote an exceptional algebraic group over an algebraically closed field $k$ of characteristic $p\geq 5$ and we will let $\g = \Lie(G)$. Our strategy to prove Theorem \ref{maintheorem} is as follows---we use the opportunity to fix notation for the remainder of the paper as we explain this: 

Suppose there exists a non-$G$-cr simple subalgebra $\h$ of $\g$ of a given type. Then $\h$ must be contained in a parabolic subalgebra $\p=\l+\q=\Lie(P)=\Lie(LQ)$, which \emph{from now on, we assume is minimal subject to containing $\h$}. It follows that the projection $\bar\h$ of $\h$ to $\l$ is $L$-irreducible. Since $\h$ is not conjugate to a subalgebra of $\l$, Proposition \ref{h1ofqis0} implies that $\q$ contains an $\h$-composition factor $V$ for which $\opH^1(\h,V)\neq 0$. If $\h$ is not isomorphic to $\psl_p$ or $W_1$ then $\h\cong\Lie(H)$ for some simple simply-connected algebraic group $H$ of the same type and the remarks at the beginning of \S\ref{sec:h1s} imply that $\opH^1(H_1,V)\neq 0$ for $H_1$ the first Frobenius kernel of $H$. In fact the same isomorphism holds with $H=\SL_p$ and $\h=\psl_p$ as any $\psl_p$-module can be lifted to a module for $\sl_p$ by allowing the centre to act trivially; then one may apply the exact sequence of \cite[7.5.3]{Wei94}. By Lemma \ref{dimofparabs} we must have that the dimension of $V$ is less than $64$. By Proposition \ref{h1sless64} it now follows that: \begin{equation}\label{VinTable}\text{Either $\h\cong W_1$ or $V$ appears in Table~\ref{tHs}.}\end{equation} By analysing the structure of $\q$ closely, we will find that in most cases no such $V$ appears as a composition factor of $\q$ so that these cases are ruled out: see Lemmas \ref{nobadthingsinclassicalLevis} and \ref{rankatleast3} below. 

One set of cases requires more work: If $\l$ is a Levi subalgebra of $\g$ of type $E_6$ or $E_7$, then we  investigate all possible actions of $\h$ on the smallest dimensional modules for $E_6$ and $E_7$ (of dimensions $27$ and $56$, respectively). We will see that in any such action, a regular nilpotent element of $\h$ does not act consistently with the Jordan block sizes of nilpotent elements on the relevant modules, as described in \cite{SteComp}. Having reduced the possible cases as above, we show that the remaining cases of $(G,\h,p)$ do indeed give rise to non-$G$-cr subalgebras, recorded in Lemmas \ref{mtfora1a}, \ref{mtforb2}, \ref{mtforg2} and \ref{mtforw1}.

\begin{lemma} \label{nobadthingsinclassicalLevis}
Suppose $\h$ is a simple non-$G$-cr subalgebra not of type $A_1$ or $W_1$ and that $\p = \l + \q$ is a minimal parabolic containing it.  Then $(G,\h,p)$ is $(E_8,B_2,5)$, $(E_7, G_2,7)$ or ($E_8, G_2,7)$; or $\l'$ has a simple factor of type $E_6$ or $E_7$.  
\end{lemma}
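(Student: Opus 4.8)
The plan is to combine the reductions already set up with the cohomology data in Table~\ref{tHs}. Let $\h$ be a simple non-$G$-cr subalgebra, not of type $A_1$ or $W_1$, sitting in a minimal parabolic $\p=\l+\q$. By the discussion culminating in (\ref{VinTable}), the fact that $\h$ is not $Q$-conjugate into $\l$ forces (via Proposition~\ref{h1ofqis0}) the module $\q$ to contain an $\h$-composition factor $V$ with $\opH^1(\h,V)\neq0$, and by Corollary~\ref{dimofparabs} we have $\dim V\leq 64$. Since $\h\not\cong W_1$, Proposition~\ref{h1sless64} tells me that $V$ must be one of the modules listed in Table~\ref{tHs}. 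The strategy is therefore to go through the entries of that table and, for each, ask whether such a $V$ can actually arise as an $\l_0$-composition factor of $\q$ for a simple factor $\l_0$ of $\l$ acting via the projection $\bar\h$.

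First I would record which pairs $(\l_0,\lambda)$ are even permitted as high weights of composition factors of $\q$: this is exactly Lemma~\ref{compsofparabs}. The crucial observation is that the list there is very restrictive---for each classical or exceptional factor $\l_0$ the permissible high weights are among $\omega_1,\omega_2,\omega_3$ (and their duals), together with the handful of exceptional small-rank cases noted in Lemma~\ref{compsofparabs}(i). I then cross-reference these with the high weights appearing in Table~\ref{tHs}. Most of the table entries have high weights of the form $L(p-2,\dots)$, $L(3,3)$, $L(1,3)$, etc., which are \emph{not} of the shape allowed by Lemma~\ref{compsofparabs}; so these modules simply cannot occur inside $\q$ no matter which Levi we are in. This eliminates the bulk of the table in a single stroke, since $\bar\h$ acts on $V$ through the ambient $\l_0$-action, and an $\h$-composition factor with $\opH^1\neq0$ must sit inside an $\l_0$-composition factor whose restriction to $\h$ still admits that cohomology.

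Having pruned the table, the surviving possibilities should pin $\l_0$ down to a short list. I expect the entries that survive to be precisely those whose underlying group is small enough (type $B_2$, $G_2$, or $A_n$ with $n$ small, and $E_6$, $E_7$) that the relevant module can genuinely appear as a subquotient of a unipotent radical---matching the three sporadic triples $(E_8,B_2,5)$, $(E_7,G_2,7)$, $(E_8,G_2,7)$, or else forcing $\l'$ to contain a factor of type $E_6$ or $E_7$, which is the alternative conclusion of the lemma. For the classical and $G_2$-factor cases I would argue directly that the only way $\opH^1(\h,V)\neq0$ with $V$ of Table~\ref{tHs} type appears in $\q$ is when $(G,\h,p)$ is one of the three listed exceptional triples; the $B_2$ at $p=5$ and $G_2$ at $p=7$ rows of the table are what produce these. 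Everything else is ruled out, leaving $\l'$ with an $E_6$ or $E_7$ factor as the only remaining option.

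The main obstacle will be the bookkeeping in the cross-referencing step: I must be careful that an $\h$-composition factor $V$ with nonzero cohomology need not equal an $\l_0$-composition factor but only be a constituent of one upon restriction along $\bar\h\hookrightarrow\l_0$, so I cannot merely match high weights naively. The genuine work is verifying, using Lemma~\ref{compsofparabs} together with the explicit $L$-module structure of $\q$ from \cite{ABS90,LS96}, that no $\l_0$-composition factor of $\q$ restricts to contain a Table~\ref{tHs} module as an $\h$-summand except in the enumerated cases; this amounts to checking dimensions and weights factor-by-factor across the $E_6,E_7,E_8,F_4$ Levi subalgebras, deferring the detailed analysis of the $E_6$- and $E_7$-factor Levis (the remaining hard case) to the subsequent lemmas on the $27$- and $56$-dimensional modules.
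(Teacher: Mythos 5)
Your setup (minimal parabolic, Proposition~\ref{h1ofqis0}, Corollary~\ref{dimofparabs}, Proposition~\ref{h1sless64}, hence statement (\ref{VinTable})) is the same as the paper's, but the engine of your argument fails. The ``single stroke'' elimination compares the high weights in Table~\ref{tHs}, which are weights for $\h$, with the high weights in Lemma~\ref{compsofparabs}, which are weights for a simple factor $\l_0$ of the Levi; these live in different weight lattices and cannot be matched against each other. A table entry such as $L(1,3)$ for $\h$ of type $B_2$ at $p=5$ can perfectly well occur inside an $\l_0$-composition factor whose $\l_0$-high weight is one of the innocuous $\omega_j$'s permitted by Lemma~\ref{compsofparabs}, by restricting, say, a natural or spin module of a factor of type $D_7$ or $A_3A_4$ along $\bar\h\hookrightarrow\l_0$ --- and indeed this is exactly how the exceptional triples of the lemma arise. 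You concede this point yourself in your final paragraph, but once conceded, your first elimination evaporates, and what you call the ``genuine work'' --- deciding which allowed $\l_0$-factors of $\q$ restrict to $\bar\h$ with a Table~\ref{tHs} constituent --- is the entire content of the lemma, for which you give no method. A priori $\bar\h$ is merely a simple Lie subalgebra projecting $L_i$-irreducibly to each classical factor (this irreducibility coming from minimality of $\p$), and enumerating all such subalgebras together with their actions on $\q$ is not a finite bookkeeping exercise as you have stated it.

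The paper's key idea, absent from your proposal, is to \emph{lift to the group}: in the case that $\l'$ has no $E_6$ or $E_7$ factor, every simple factor $L_i$ is classical of rank at least $2$ (here the hypothesis $\h\not\cong\sl_2$ enters), and by Corollary~\ref{kwcor} --- the Kac--Weisfeiler input --- the projection $\h_i$ is a $p$-subalgebra, so by Proposition~\ref{girclassical} its $L_i$-irreducible restricted action on the natural module is the differential of the action of an algebraic subgroup $H_i\leq L_i$. Hence $\h=\Lie(H)$ for $H$ a diagonal subgroup of $H_1\cdots H_r$, and the $\h$-composition factors of $\q$ are obtained by differentiating the $H$-composition factors of the levels of $Q$, which are available from the group-theoretic literature (Liebeck--Seitz). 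It is this computation, not a weight-lattice comparison, that reduces (\ref{VinTable}) to $(H,p,\lambda)=(B_2,5,(2,0))$, $(B_2,5,(1,3))$ or $(G_2,7,(2,0))$, after which one identifies the only Levi factors admitting these constituents: $A_3A_4$ and $D_7$ for $B_2$, forcing $G=E_8$, and $A_6$ and $D_7$ for $G_2$, forcing $G=E_7$ or $E_8$. Without the lifting step your plan has no mechanism to enumerate the possible $\bar\h$-actions on $\q$, so the proposal as written contains a genuine gap precisely where the proof's real work lies.
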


\begin{proof}
Without loss of generality we may assume $G$ is simply-connected. Suppose $P=LQ$ with $\Lie(P)=\p$ and let $L'=L_1\dots L_r$ be an expression for the derived subgroup of $L$ as a central product of simple factors not containing any exceptional factors. As each $L_i$ is simply connected, we may write $\l' = \l_1\oplus \dots\oplus \l_r$ with $\l_i=\Lie(L_i)$. Since $P$ was minimal, we must have that the projection of $\h$ to each $\l_i$ is $L_i$-irreducible; call this $\h_i$. 

Since $\h$ is not of type $A_1$, all of the $L_i$ factors have rank at least $2$. As $L_i$ is classical by assumption, it has a natural module $V_i$ and it follows from Lemma \ref{girclassical} and Corollary \ref{kwcor} that there exists an $L_i$-irreducible restricted subgroup $H_i$ whose action on $V_i$ differentiates to that of $\h_i$. The action of $\h_i$ on $V_i$ determines it up to $L_i$-conjugacy, except if $L_i$ is of type $D_n$ and there are two conjugacy classes interchanged by a graph automorphism (or up to three if $L_i$ is of type $D_4$). Thus we may write $\h=\Lie(H)$ for $H$ a diagonal subgroup of $H_1 \dots H_r$. One may now compute a list of possible $\h$-factors of $\q$ by differentiating the $H$-factors on $Q$; the latter are available from \cite[Lem.~3.4]{LS96}. By (\ref{VinTable}) this implies $(H,p,\lambda) = (B_2,5,(2,0))$, $(B_2,5,(1,3))$ or $(G_2,7,(2,0))$. 

Suppose $H$ is a subgroup of type $B_2$. The only Levi factors for which $L(2,0)$ or $L(1,3)$ occur as an $H$-composition factor are $A_3 A_4$ and $D_7$. Hence $G$ is of type $E_8$. Similarly, if $H$ is a subgroup $G_2$ then the only Levi factors for which $L(2,0)$ occurs as an $H$-composition factor are $A_6$ and $D_7$, and so $G$ is of type $E_7$ or $E_8$. 
\end{proof}

The next lemma reduces the proof of Theorem \ref{maintheorem} to considering all simple subalgebras of rank at most $2$.

\begin{lemma}\label{rankatleast3}Suppose $\h$ is a simple subalgebra of rank at least $3$. Then $\h$ is $G$-cr.\end{lemma}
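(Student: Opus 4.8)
The plan is to argue by contradiction. Suppose $\h$ is a simple subalgebra of $\g$ of rank at least $3$ that is not $G$-cr, and fix a parabolic subalgebra $\p=\l+\q$ minimal subject to containing $\h$, so that the projection $\bar\h$ of $\h$ to $\l$ is $L$-irreducible. Following the general strategy of this section, since $\h$ is non-$G$-cr, Proposition~\ref{h1ofqis0} forces $\q$ to contain an $\h$-composition factor $V$ with $\opH^1(\h,V)\neq 0$; by Corollary~\ref{dimofparabs} we have $\dim V\leq 64$, and $\h\not\cong W_1$ (the latter having rank $1$), so by (\ref{VinTable}) the module $V$ occurs in Table~\ref{tHs}. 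Inspecting that table, the only entries of rank at least $3$ are of type $A_3$ (with $p=5$), $A_4$ (with $p=5$) and $A_6$ (with $p=7$); hence $\h$ is of one of these three types, the simple subalgebra being $\sl_4$, $\psl_5$ and $\psl_7$ respectively. Note the latter two are exactly the $\psl_p$ cases singled out in the general strategy, so the identification $\opH^1(\h,V)\cong\opH^1(H_1,V)$ used to reach Table~\ref{tHs} is legitimate.

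First I would apply Lemma~\ref{nobadthingsinclassicalLevis}. As $\h$ has rank at least $3$ it is not of type $A_1$ or $W_1$, and each of the three exceptions $(E_8,B_2,5)$, $(E_7,G_2,7)$, $(E_8,G_2,7)$ has $\h$ of rank $2$; these are therefore excluded, and the lemma forces $\l'$ to contain a simple factor $\l_0=\Lie(L_0)$ of type $E_6$ or $E_7$. Since $\bar\h$ is $L$-irreducible, its projection $\bar\h_0$ to $\l_0$ is $L_0$-irreducible: if $\bar\h_0$ lay in a proper parabolic of $\l_0$ then $\bar\h$ would lie in a proper parabolic of $\l$. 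Moreover the kernel of $\h\to\l_0$ is an ideal of the simple algebra $\h$ and cannot be all of $\h$ (the zero subalgebra lies in a proper parabolic of the nontrivial $\l_0$), so $\bar\h_0\cong\h$. Thus $\h$ is realised, up to isomorphism, as an $L_0$-irreducible subalgebra of $\l_0$ with $L_0$ of type $E_6$ or $E_7$.

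The substance of the proof is to rule this out, and here I would pass to the minimal modules. Take $V_{27}$ when $L_0$ is of type $E_6$ and $V_{56}$ when $L_0$ is of type $E_7$; each is faithful for $\l_0$, hence for $\bar\h_0\cong\h$. Using the known restricted irreducibles of $\sl_4$, $\psl_5$ and $\psl_7$ in the relevant characteristic, I would enumerate the finitely many $\h$-module structures this minimal module can carry subject to $L_0$-irreducibility and to the dimension being $27$ (resp.\ $56$). For each such structure, a regular nilpotent element $e$ of $\h$ determines a nilpotent element $e_0$ of $\l_0$, whose Jordan block pattern on the minimal module is read off from $V_{27}|_\h$ (resp.\ $V_{56}|_\h$); by \cite[Thm.~1.1]{SteComp} this pattern must coincide with that of some nilpotent orbit of $\l_0$. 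I expect that no admissible decomposition produces a pattern occurring in those tables, so that no such $L_0$-irreducible $\h$ exists. This contradiction shows $\h$ is $G$-cr.

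The hard part will be this last step: determining the actual $\h$-module structure of $V_{27}$ and $V_{56}$—not merely their composition factors, since extensions between factors can change the Jordan type of $e_0$—for each of the three types, taking care that for $A_4$ with $p=5$ and $A_6$ with $p=7$ the acting algebra is $\psl_5$, $\psl_7$ and the relevant modules are the restricted ones, and then matching the resulting regular-nilpotent Jordan types against the orbit tables of \cite{SteComp} without error. The enumeration is finite and essentially mechanical, since a regular nilpotent of a type-$A$ simple Lie algebra acts on each irreducible constituent through its principal $\sl_2$, so its blocks on each constituent are determined by the weights; the genuine subtlety is the interaction of extensions and of characteristic-$p$ effects (constituents of dimension exceeding $p$) with the block computation, which must be controlled before the comparison with \cite{SteComp} is conclusive.
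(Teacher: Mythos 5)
Your reduction to types $A_3$, $A_4$, $A_6$ via (\ref{VinTable}) and the application of Lemma~\ref{nobadthingsinclassicalLevis} to force a simple factor of type $E_6$ or $E_7$ both match the paper. But there is a genuine gap at the core step: you have dropped the constraint that actually drives the contradiction. Statement (\ref{VinTable}) does not merely restrict the isomorphism type of $\h$; it asserts that the \emph{specific} module $V$ from Table~\ref{tHs} ($L(3,1,0)$ for $A_3$, $L(1,0,0,1)$ for $A_4$, $L(\omega_1+\omega_6)$ for $A_6$, up to duals) occurs as an $\bar\h$-composition factor of $\q$. Since the only non-trivial $\l'$-composition factors of $\q$ are $V_{27}$ or $V_{56}$ (and duals), this pins down the $\bar\h$-composition factors of the minimal module: e.g.\ $L(1,0,0,1)/k^4$ on $V_{27}$ in the $A_4$ case, and $L(\omega_1+\omega_6)/k^9$ on $V_{56}$ in the $A_6$ case. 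It also disposes of $A_3$ by pure counting, with no Jordan blocks at all: $\dim L(3,1,0)=52$ forces $\l'=E_7$, and self-duality of $V_{56}$ would then require both $L(3,1,0)$ and its dual as factors, impossible since $104>56$. Your substitute constraint --- $L_0$-irreducibility plus the dimension being $27$ or $56$ --- cannot deliver the contradiction you ``expect'': you would in effect be proving that \emph{no} $L_0$-irreducible simple subalgebra of these types exists in $E_6$ or $E_7$, a far stronger claim than the lemma needs and one the method cannot establish, because for any subalgebra that genuinely exists the Jordan pattern of each of its nilpotent elements tautologically matches some orbit in the tables of \cite{SteComp}. The comparison with \cite{SteComp} only has teeth once the composition factors are forced by the Table~\ref{tHs} module.

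The second gap is in the Jordan-block computation itself, which you flag as ``the hard part'' but do not resolve. A regular nilpotent element of $\h$ acts through the principal $\sl_2$, whose weights on these factors exceed $p$ in the cases $(\psl_5,p=5)$ and $(\psl_7,p=7)$ --- for instance up to $8$ on $L(1,0,0,1)$ --- so even on a single irreducible constituent the blocks are not read off from weights, and extensions between constituents obstruct the computation further. The paper sidesteps this entirely by restricting instead to a Levi (long-root) $\sl_2$-subalgebra $\s$ of $\bar\h$: all relevant constituents restrict to $\s$ with high weights at most $2<p$ and the restrictions are completely reducible, so the Jordan blocks of a non-zero nilpotent element of $\s$ are determined by the composition factors alone (e.g.\ $3+2^6+1^{12}$ on $V_{27}$ in the $A_4$, $\l'=E_6$ case), and the comparison with \cite{SteComp} is then immediate. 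To repair your argument you should reinstate the Table~\ref{tHs} factor as a constraint on $V_{27}$, respectively $V_{56}$, and replace the regular nilpotent by a root $\sl_2$; as written, the plan both aims at a false target and leaves its decisive computation uncontrolled.
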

\begin{proof}
Statement (\ref{VinTable}) above implies that we are done unless $\h$ is of type $A_3$, $A_4$ or $A_6$. Firstly, suppose $\h$ is of type $A_3$ and is a non-$G$-cr subalgebra of $\g$. Then (\ref{VinTable}) tells us $p=5$ and either $V:=L(3,1,0)$ or its dual is an $\bar\h$-composition factor of $\q$. Now $\dim V=52$ and so we are forced to conclude that $G$ is of type $E_8$. By Lemma \ref{nobadthingsinclassicalLevis} and dimensions again, we must have $\l'$ of type $E_7$. But the only non-trivial factor of $\q$ has dimension $56$. If $V$ were a composition factor of the self-dual module $V_{56}$, then so would be its dual; a contradiction by dimensions.

Now suppose $\h$ is of type $A_4$. Statement (\ref{VinTable}) tells us $p=5$ and $\q$ contains an $\bar\h$-composition factor $V:=L(1,0,0,1)$. By Lemma \ref{nobadthingsinclassicalLevis} we may also assume that $G$ is of type $E_7$ or $E_8$ with $\l'$ chosen minimally of type $E_6$ or $E_7$. If $\l'$ is of type $E_6$, the non-trivial $\l'$-composition factors of $\q$ are either $V_{27}$ or its dual. Since $V$ appears amongst $\q\downarrow\bar\h$, the $\bar\h$-composition factors of $V_{27}$ are $L(1,0,0,1) / k^4$. Since the restriction of a natural module $L(1,0,0,0)$ for $\h$ to a Levi $\sl_2$-subalgebra has composition factors $L(1)/k^3$, and $L(1,0,0,1)$ is a composition factor of $L(1,0,0,0)\otimes L(0,0,0,1)$, we may calculate that the restriction to a Levi $\sl_2$-subalgebra of $V_{27}$ is a completely reducible module with composition factors $L(2)/L(1)^6/k^{12}$. A non-zero nilpotent element of this subalgebra acts with Jordan blocks $3+2^6+1^{12}$, though this is impossible by \cite[Table~4]{SteComp}. In case $\l'$ is of type $E_7$, we see that $V_{56}$ contains a composition factor $L(1,0,0,1)$ and the remaining composition factors must have dimension $33$ or less, and if not self-dual, must have dimension $16$ or less. Up to duals, the possible composition factors together with their restrictions to a Levi subalgebra $\s$ of type $\sl_2$ are in the following table.
\begin{center}\begin{tabular}{l l}
$\lambda$ & $L(\lambda)\downarrow\s$\\\hline
$(1,0,0,1)$ & $L(2)+L(1)^6+k^8$\\
$(1,0,0,0)$ & $L(1)+k^3$\\
$(0,1,0,0)$ & $L(1)^3+k^4$\end{tabular}\end{center}
The restriction of any resulting module to $\s$ is completely reducible, and so the Jordan blocks of a non-zero nilpotent element $e\in\s$ are determined by the $\bar\h$-composition factors on $V_{56}$. It is easily checked that there is no way of combining these composition factors compatibly with the possibilities in \cite[Tables~2,~3]{SteComp}.

Finally, suppose that $\h$ is of type $A_6$. Arguing in a similar fashion, we find that $G$ is of type $E_8$ with $p=7$; the type of $\l'$ is $E_7$; and the $\bar\h$-composition factors on $V_{56}$ are $L(\varpi_1 + \varpi_6)/k^9$. Restricting to a Levi $\sl_2$-subalgebra and comparing once again with \cite[Tables~2,~3]{SteComp} yields a contradiction.
\end{proof}

\addtocontents{toc}{\protect\setcounter{tocdepth}{1}}
\subsection{Subalgebras of type $A_1$}\label{sec:a1s}
In this section we show that Theorem \ref{maintheorem} holds in case $\h$ is of type $A_1$. The following also deals with one direction of Theorem \ref{ssgcrcorollary}.

\begin{lemma}\label{mtfora1a}Let $G$ be a reductive algebraic group and let $p>2$. Then whenever $p\leq h(G)$, there exists a non-$G$-cr $\sl_2$-subalgebra containing a regular nilpotent element $e$, a toral element $h$ and an element $f$ regular in a proper Levi subalgebra of $\g$.\label{a1nongcr}\end{lemma}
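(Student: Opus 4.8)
The plan is to produce the required subalgebra completely explicitly, by reducing the principal $\sl_2$-triple modulo $p$ and checking that when $p\le h(G)$ the nilnegative element degenerates into a proper Levi subalgebra while the nilpositive element stays regular. First I would reduce to the case that $G$ is simple: writing $\g=\z(\g)\oplus\bigoplus_i\g_i$ with $h(G)=\max_i h(G_i)$, a regular nilpotent of $\g$ is a sum of regular nilpotents $e_i\in\g_i$, each of which extends to an $\sl_2$-triple (e.g.\ by the very construction below); a triple in a factor $\g_{i_0}$ realising $h(G)$ with nilnegative part regular in a proper Levi then assembles with triples in the other factors into a triple of $\g$ whose nilnegative part lies in a proper Levi of $\g$, and whose non-$G$-cr-ness is inherited from the $\g_{i_0}$-component.

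So assume $G$ simple. Since $2\rho^\vee=\sum_{\gamma\in R^+}\gamma^\vee$ lies in the coroot lattice, writing $2\rho^\vee=\sum_{\alpha\in S}c_\alpha\,h_\alpha$ with $c_\alpha\in\Z$, its reduction $h:=\sum_\alpha \bar c_\alpha h_\alpha$ is a toral element with $\alpha(h)=2$ for every $\alpha\in S$. Put $e=\sum_{\alpha\in S}e_\alpha$ (a regular nilpotent) and $f=\sum_{\alpha\in S}\bar c_\alpha f_\alpha$; using $[e_\beta,f_\alpha]=\delta_{\alpha\beta}h_\alpha$ for simple $\alpha,\beta$ one checks directly that $(e,h,f)$ is an $\sl_2$-triple (the mod-$p$ reduction of the principal triple), with $h\ne 0$ as $p>2$. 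Setting $S'=\{\alpha\in S:\bar c_\alpha\ne 0\}$, the element $f$ is a regular (principal) nilpotent of the Levi subalgebra $\l_{S'}$.

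If $S'\subsetneq S$ we are done: $\h=\langle e,h,f\rangle$ lies in the proper parabolic $\p_{S'}=\l_{S'}+\q_{S'}$ (as $e\in\n^+\subseteq\p_{S'}$ and $h,f\in\l_{S'}$), and since a regular nilpotent is distinguished it lies in no proper Levi subalgebra; thus $\h$ is not $Q_{S'}$-conjugate into $\l_{S'}$, and Lemma~\ref{hnongcr} shows $\h$ is non-$G$-cr, with $e$ regular, $h$ toral and $f$ regular in the proper Levi $\l_{S'}$. The remaining task is to show some $\bar c_\alpha$ vanishes whenever $p\le h(G)$. I would read this off the explicit coordinates of $2\rho^\vee$: for $A_n$ one has $c_i=i(n+1-i)$, and type-by-type the coordinate lists show every prime $p\le h$ divides some $c_\alpha$, with the single exception of type $A_{p-1}$, where $p=h=n+1$ divides no $c_i=i(n+1-i)$ for $1\le i\le p-1$.

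The one genuine obstacle is precisely this exceptional case $A_{p-1}$ (and its reductive cousins), where the reduced principal $f$ remains regular. Here I would instead take $J=S\setminus\{\beta\}$ for an end node $\beta$, so that $\l_J$ has type $A_{p-2}$, and replace $h$ by $h':=2\rho_J^\vee=\sum_{\gamma\in R_J^+}\gamma^\vee$; since $p\mid n+1$ one computes $\beta(h')=-(h-2)\equiv 2\pmod p$, so $\alpha(h')=2$ still holds for all $\alpha\in S$, while the principal nilnegative $f'$ of $\l_J$ is supported on $J$ and regular in the proper Levi $\l_J$. The triple $(e,h',f')$ then has the required properties and $\h'=\langle e,h',f'\rangle$ is non-$G$-cr by the same argument. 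The crux is thus the uniform verification that $p\le h(G)$ forces the support of $f$ to drop to a proper subset --- equivalently, the number-theoretic statement that the primes dividing the coordinates of $2\rho^\vee$, together with the $A_{p-1}$ phenomenon, account for exactly the primes $\le h$; this is what pins the bound at the Coxeter number.
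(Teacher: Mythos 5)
Your proposal is correct, and at bottom it is the same construction as the paper's: your $(e,h,f)$ is exactly the mod-$p$ reduction of the principal $\sl_2$-triple, and the coefficients of $2\rho^\vee$ you use are precisely the coefficient lists the paper imports from Testerman (e.g.\ $(6,10)$ for $G_2$, $(22,42,30,16)$ for $F_4$). Even your $A_{p-1}$ patch reproduces the paper's explicit matrices, since $c'_i=i(p-1-i)\equiv -i(i+1)\pmod p$, which are the paper's $\lambda_i$; the paper packages this as the uniserial module $k|L(p-2)$ and concludes non-$G$-cr via Lemma \ref{gcrOnNatural}. Where you genuinely diverge is the mechanism for non-complete-reducibility in the main case: the paper observes $e^{[p]}\neq 0$, so $\h$ is a non-$p$-subalgebra, and invokes Lemma \ref{psubalgebraornongcr}; you instead argue that $e$, being regular in $\g$, lies in no conjugate of a proper Levi subalgebra, so $\h\subseteq\p_{S'}$ is not $Q_{S'}$-conjugate into $\l_{S'}$ and Lemma \ref{hnongcr} applies. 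Your route has real advantages: Lemma \ref{psubalgebraornongcr} carries a good-prime hypothesis that is not satisfied for all $p\leq h$, and the paper's step ``$\h$ is certainly not contained in a Levi subalgebra of type $A_{rp-1}$'' is delicate when $\g$ itself is of type $A_{rp-1}$ with $r\geq 2$ (your divisibility argument handles $A_{2p-1}$, etc., uniformly, since $c_p=p(n+1-p)\equiv 0$ whenever $p\leq n$). Conversely, the paper's route yields en passant the non-$p$-subalgebra property, which it exploits elsewhere. One should justify ``regular $\Rightarrow$ in no proper Levi'' in a way valid for all $p>2$: e.g.\ a nilpotent $x\in\l$ acts nilpotently on $\q\neq 0$, so $\dim\g_x\geq\rk\g+1$, or note $Z(L)^\circ$ centralises $x$; either is one line.

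Two soft spots. First, your reduction to $G$ simple opens with $\g=\z(\g)\oplus\bigoplus_i\g_i$, which is false for reductive $G$ in characteristic $p$ (for $\GL_p$ one has $kI\subseteq\sl_p$, so the sum is neither direct nor all of $\gl_p$; for $\SL_p$ the centre sits inside the simple factor). The assembly argument survives --- regular nilpotents, parabolic subalgebras and Levi subalgebras are governed factorwise via the central isogeny $Z(G)^\circ\times\prod G_i\to G$ --- but you should phrase it that way rather than through a direct-sum decomposition; note the paper is equally terse here. Second, the ``crux'' divisibility claim is only proven for type $A_n$; since your whole argument pivots on it, the classical cases must be recorded: for $B_n$ the coefficients are $c_i=i(2n+1-i)$ $(i<n)$, $c_n=n(n+1)/2$; for $C_n$, $c_i=i(2n-i)$, $c_n=n^2$; for $D_n$, $c_i=i(2n-1-i)$ $(i\leq n-2)$, $c_{n-1}=c_n=n(n-1)/2$; in each case the same trick as in type $A$ (take $i=p$, or $i$ with $2n+1-i$, $2n-i$, $2n-1-i\equiv 0$, and treat the end coefficients separately, using that $h$ is even so $p<h$) shows every odd prime $p\leq h$ kills some coefficient. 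Finally, do include the one-line verification that $h$ is toral: the $\bar c_\alpha$ lie in $\F_p$, the $h_\alpha$ are commuting toral elements, so $h^{[p]}=\sum\bar c_\alpha^{\,p}h_\alpha^{[p]}=h$. With these details supplied, your proof is complete and, in the non-$G$-cr step, cleaner than the original.
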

\begin{proof} It suffices to tackle the case where $G$ is simple. Let $\g_\Z$ be a lattice defined via a Chevalley basis in the simple complex Lie algebra $\g_\C$ of the same type as $\g$ and let $e$ be the regular element given by taking a sum of all simple root vectors in $\g_\Z$. Then there is an element $f\in\g_\Z$ which is a sum of negative root vectors such that $(e,h,f)$ is an $\sl_2$-triple. These are easily constructed in the case $G$ is classical and are given explicitly by  \cite[Lem.~4]{Tes92} in the case $G$ is exceptional:
\begin{enumerate}
\item $\g = G_2$, $f = 6 e_{-\alpha_1}  + 10 e_{-\alpha_2}$;
\item $\g = F_4$, $f = 22 e_{-\alpha_1} + 42 e_{-\alpha_2} + 30 e_{-\alpha_3} + 16 e_{-\alpha_4}$;
\item $\g = E_7$, $f = 34 e_{-\alpha_1} + 49 e_{-\alpha_2} + 66 e_{-\alpha_3} + 96 e_{-\alpha_4} + 75 e_{-\alpha_5} + 52 e_{-\alpha_6} + 27 e_{-\alpha_7}$;
\item $\g = E_8$, $f = 92 e_{-\alpha_1} + 136 e_{-\alpha_2} + 182 e_{-\alpha_3} + 270 e_{-\alpha_4} + 220 e_{-\alpha_5} + 168 e_{-\alpha_6} + 114 e_{-\alpha_7} + 58 e_{-\alpha_8}$.
\end{enumerate}

Reducing everything modulo $p>2$ gives an $\sl_2$-subalgebra $\h$ of $\g$.
Moreover, since $p < h(G)$ we have that $e^{[p]} \neq 0$, which follows from the description of the Jordan blocks of regular nilpotent elements in \cite[\S2]{Jan04} for $G$ classical, and is immediate from \cite[Table~D]{Law95} in case $G$ is exceptional. Therefore, in each case $\h$ is a non-$p$-subalgebra of type $A_1$ (noting that there is only one $p$-structure on $\h$ by \cite[Cor.~2.2.2(1)]{SF88}). Since $E_6$ contains $F_4$ as a $p$-subalgebra, the non-$p$-subalgebra $\h$ contained in $F_4$ is also a non-$p$-subalgebra of $E_6$. 

Suppose $G$ is not of type $A_{p-1}$. Then since $\h$ contains a regular nilpotent element of $\g$, it is certainly not contained in a Levi subalgebra of type $A_{rp-1}$, hence being non-$p$-subalgebras, these subalgebras are non-$G$-cr, by Lemma \ref{psubalgebraornongcr}. In particular each is in a proper parabolic subalgebra of $\g$. Thus $f$ is no longer regular in $\g$. This can be seen explicitly in the case $\g$ is exceptional as $p<h(G)$ implies that $p$ divides at least one of the coefficients of the root vectors of $f$. In the classical case, $e$ acts as a single Jordan block on the natural module in types $A$, $B$ and $C$ acting on standard (orthonormal) basis vectors as $e(e_i)=e_{i-1}$ whereas $f(e_i)=\lambda_i e_{i+1}$ for $\lambda_i$ the coefficient of the simple root vector $e_{-\alpha_i}$. Since $\h$ is in a proper parabolic subalgebra, it stabilises a subspace, meaning that some non-zero collection of the $\lambda_i$ must be congruent to $0$ modulo $p$. The remainder determine a regular element in some proper Levi subalgebra obtained by removing the appropriate nodes of the Dynkin diagram. In type $D_n$, the regular nilpotent element $e$ acts with Jordan blocks of size $2n-1$ and $1$ on the natural module, and only regular elements act in such a way. Since $V|_{\so_{2n-1}}\cong V'\oplus k$ for $V'$ the natural module for $\so_{2n-1}$, and a regular element of the latter acts as a full Jordan block on $V'$, we must have that $e$ is contained in subalgebra of type $\so_{2n-1}$. Hence we may embed $e$ in a regular $\sl_2$-subalgebra $\h$ in the $\so_{2n-1}$ subalgebra such that $h$ is toral and $f$ is regular in a Levi subalgebra of $\so_{2n-1}$. This implies that $f$ is regular in a Levi subalgebra of $\g=\so_{2n}$. Finally to check the theorem in case $G$ of type $A_{p-1}$, simply observe from Proposition \ref{h1sless64} that $\sl_2$ has indecomposable representations of dimension $p$ of the form $k|L(p-2)$, which can be used to embed $\h$ in $\g$ such that $\h$ does not act completely reducibly on the natural module for $G$ with $e$ being regular but $f$ not. The subalgebra $\h$ is then non-$G$-cr by Lemma \ref{gcrOnNatural}.

(Specifically, we may take 

\[e:=\left(\begin{array}{cccccccccc}
0&1&0&0&\dots&0&\\
&0&1&0&\ddots&0\\
&&0&1 & \ddots&0\\
&&&0&\ddots&0\\
&&&&\ddots&1&\\
&&&&&0\\
\end{array}\right),f:=\left(\begin{array}{cccccccccc}
0&0&0&0&\dots&1\\
\lambda_1&0&0&0&\ddots&0\\
&\lambda_2&0&0 & \ddots&0\\
&&\ddots&0&\ddots&0\\
&&&\lambda_{p-2}&\ddots&0&\\
&&&&0&0\\
\end{array}\right) \]
where $\lambda_i = -i(i+1) \mod p$. Thus $f$ is a regular nilpotent element of type $A_{p-2}$.)
\end{proof}

\begin{lemma}\label{mtfora1b}Theorem \ref{maintheorem} holds in the case $\h$ is of type $A_1$.\end{lemma}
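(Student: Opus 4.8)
The goal is to pin down exactly when a type-$A_1$ simple subalgebra $\h$ of an exceptional $\g$ (with $p\geq 5$) is $G$-cr, and to show the exceptions are precisely those in parts (1) and (3) of Theorem \ref{maintheorem}, namely $\h$ of type $A_1$ with $p<h(G)$, together with the sporadic triples $(E_7,G_2,7)$, $(E_8,B_2,5)$, $(E_8,G_2,7)$ (the latter three do not have $\h$ of type $A_1$, so for this lemma we focus on part (1)). The plan is to run the same strategy sketched before Lemma \ref{nobadthingsinclassicalLevis}, now with $\h\cong\sl_2$. Suppose $\h$ is non-$G$-cr and choose $\p=\l+\q$ minimal containing it, so the projection $\bar\h$ is $L$-irreducible and, by Proposition \ref{h1ofqis0}, $\q$ has an $\bar\h$-composition factor $V$ with $\opH^1(\h,V)\neq 0$. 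Since $\h$ is restrictable, $\opH^1(H_1,V)\neq 0$ and $\dim V\leq 64$ by Corollary \ref{dimofparabs}, so by Proposition \ref{h1sless64} (statement (\ref{VinTable})) the only possibility for $G$ of type $A_1$ is $V\cong L(p-2)$, forcing $p<64$ and more usefully $p\le h(G)$.

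First I would establish the ``if $p\ge h(G)$ then $G$-cr'' direction. Here the key input is that $L(p-2)$ must occur as an $\bar\h$-composition factor of some level $\q_i$, which by Lemma \ref{compsofparabs} means $L(p-2)$ must appear inside a restricted high-weight module for $\l$ of one of the listed types, restricted down the embedding $\sl_2\hra\l$. One translates the condition $p\le h(G)$ into a constraint on which Levi $\l$ can actually carry an $L(p-2)$ via its natural modules, using Lemma \ref{girclassical} to describe the $L_i$-irreducible $\sl_2$'s and hence the weights occurring. The reduction already carried out in Lemma \ref{rankatleast3} handles $\h$ of rank $\ge 3$, and Lemmas \ref{nobadthingsinclassicalLevis}, \ref{mtforb2}, \ref{mtforg2} handle the rank-$2$ simple subalgebras, so what remains genuinely new for the $A_1$ case is checking that whenever $p>h(G)$, no minimal parabolic $\p$ can house an $L$-irreducible $\sl_2$ whose $\q$ contains $L(p-2)$; this is a finite check across the Levi subalgebras of the exceptional groups, for each comparing $p$ against $h(G)$ and against the weights a regular nilpotent of $\bar\h$ produces.

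For the converse direction, showing the exceptions in (1) are genuinely realised, Lemma \ref{mtfora1a} already does the essential work: it produces, for every simple $G$ with $p\le h(G)$, an explicit non-$G$-cr $\sl_2$-subalgebra built by reducing Testerman's integral $\sl_2$-triple $(e,h,f)$ mod $p$, with $e$ regular and $f$ regular only in a proper Levi. So here I would simply invoke Lemma \ref{mtfora1a} to supply the required non-$G$-cr subalgebra of type $A_1$ whenever $p<h(G)$, and note that the excluded case $p=h(G)$ (together with the $A_{p-1}$-type Levi subtlety) is exactly the boundary where $e^{[p]}=0$ and the subalgebra becomes a $p$-subalgebra, hence $G$-cr. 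Combining the two directions gives the dichotomy: $\h$ of type $A_1$ is $G$-cr if and only if $p\ge h(G)$, which is precisely conclusion (1) of Theorem \ref{maintheorem}.

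The main obstacle I anticipate is the forward direction's bookkeeping: verifying, Levi subalgebra by Levi subalgebra and prime by prime, that the combinatorial condition ``$L(p-2)$ occurs among the $\bar\h$-composition factors of $\q$'' is incompatible with $p>h(G)$. The delicate point is that $\dim L(p-2)=p-1$ can be fairly large, so one must use the precise $\l$-module structure of the levels $\q_i/\q_{i+1}$ from Lemma \ref{compsofparabs} together with the list in Proposition \ref{h1sless64}, rather than a crude dimension count; in particular one must be careful about Levi factors of type $A_{p-1}$, where Lemma \ref{psubalgebraornongcr} and the Kac--Weisfeiler input (Corollary \ref{kwcor}) interact with the distinction between $\h$ being or not being a $p$-subalgebra. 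I expect the cleanest route is to fix $\h$ inside a minimal parabolic, extract from the embedding $\sl_2\hra\l$ the exact $\sl_2$-weights on each level $\q_i$, and read off that $L(p-2)$ can appear only when $p$ is no larger than $h(G)$, matching the threshold governed by the behaviour of a regular nilpotent element.
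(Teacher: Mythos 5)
Your overall strategy is the same as the paper's: Lemma \ref{mtfora1a} supplies the exceptions when $p\leq h(G)$, and for $p>h(G)$ one assumes $\h$ is non-$G$-cr, takes a minimal parabolic $\p=\l+\q$, and uses (\ref{VinTable}) to force an $\bar\h$-composition factor $L(p-2)$ of $\q$, which must then be ruled out Levi by Levi. But the decisive step --- the case analysis showing $L(p-2)$ cannot occur when $p>h(G)$ --- is only announced, and the one tool you name for it, Lemma \ref{girclassical}, classifies $L$-irreducible subalgebras only when the Levi factors are classical. That suffices for $G_2$, $F_4$ and $E_6$ (e.g.\ in $F_4$ only the $C_3$-Levi has a composition factor of dimension at least $12$, and the two $L$-irreducible $\sl_2$'s there act on $L(0,0,1)$ as $L(9)/L(3)$ and $L(5)/L(3)^2$, so no $L(p-2)$ appears for $p>12$), but it cannot touch the hardest cases, namely $\l'$ of type $E_6$ or $E_7$ inside $E_7$ and $E_8$, and the $A_1E_6$-Levi of $E_8$, where no enumeration of $L$-irreducible $\sl_2$'s is available. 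The paper closes these cases by a different mechanism, which your proposal gestures at (``weights a regular nilpotent produces'') but never pins down: an $L(p-2)$ factor on $V_{27}$ (resp.\ $V_{56}$) would force a regular nilpotent element of $\bar\h$ to act with a Jordan block of size at least $p-1\geq 18$ (resp.\ $\geq 30$), contradicting the bounds of at most $17$ on $V_{27}$ and at most $28$ on $V_{56}$ from \cite{SteComp}; in the $A_1E_6$ case one first observes that $\bar\h$ must then have a factor of high weight $p-2$ or $p-3$ on $V_{27}$. Without this Jordan-block input (or some substitute) your plan does not terminate, so this is a genuine gap rather than omitted routine bookkeeping.

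Separately, your treatment of the boundary $p=h(G)$ is incorrect as reasoning: you claim that there $e^{[p]}=0$, so $\h$ ``becomes a $p$-subalgebra, hence $G$-cr'', but being a $p$-subalgebra does not imply $G$-complete reducibility --- Theorem \ref{psubalgebrathm} exhibits non-$G$-cr $p$-subalgebras of type $A_1$ for all $5\leq p\leq b(G)$. Moreover Lemma \ref{mtfora1a} actually produces non-$G$-cr $\sl_2$-subalgebras for all $p\leq h(G)$, not only $p<h(G)$; the apparent mismatch with ``$p<h(G)$'' in Theorem \ref{maintheorem}(1) is vacuous for exceptional $G$ because $h(G)\in\{6,12,12,18,30\}$ is never prime, which is also the only reason your dichotomy ``$G$-cr iff $p\geq h(G)$'' agrees with the paper's ``iff $p>h(G)$''. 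The realisation direction is fine once you simply cite Lemma \ref{mtfora1a}; the justification you offer for the boundary case should be deleted, not repaired.
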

\begin{proof}The case $p\leq h(G)$ is supplied by Lemma \ref{a1nongcr}, so suppose $p > h(G)$ and $\h$ is a non-$G$-cr subalgebra of type $A_1$. By statement (\ref{VinTable}), there is an $\bar\h$-composition factor of $\q$ isomorphic to $L(p-2)$, of dimension $p-1$. In each case we will show this is impossible. 

When $\g$ is of type $G_2$, we have $h(G) = 6$ and the largest dimension $\l'$-composition factor occurring is {$4$-dimensional}. Hence $\g$ is not of type $G_2$.

When $\g$ is of type $F_4$, we have $h(G) = 12$. Only a Levi subalgebra of type $C_3$ has an $\l'$-composition factor of dimension $12$ or more; the composition factor is $L(0,0,1)$, which is $14$-dimensional. Using Proposition \ref{girclassical} we see that a Lie algebra of type $C_3$ contains two subalgebras of type $A_1$ not contained in parabolics when $p > 12$, acting either as $L(5)$ or $L(3) + L(1)$ on the natural module. Since $\bigwedge^3(L(1,0,0)) \cong L(0,0,1) + L(1,0,0)$ one calculates that the composition factors of such $\sl_2$ subalgebras on $L(0,0,1)$ are $L(9) / L(3)$ and $L(5) / L(3)^2$, respectively. In particular, neither has a composition factor $L(p-2)$ for $p > 12$ and so $\g$ is not of type $F_4$.

When $\g$ is of type $E_6$, we also have $h(G)=12$. Since all of the Levi subalgebras of $\g$ are of classical type, we use Proposition \ref{girclassical} to find the $L$-irreducible subalgebras of type $A_1$. As in the $F_4$ case, it is straightforward to check that none of them has a composition factor $L(p-2)$ when $p > 12$. (To find restrictions of the spin modules for Levi subalgebras of type $D$, one uses \cite[Prop.~2.13]{LS96}.)

When $\g$ is of type $E_7$, we have $h(G) = 18$. The same approach as used above rules out $\bar\h \subseteq \l$ for $\l$ consisting of classical Levi factors. Suppose $\l'$ is of type $E_6$ and that $\bar\h$ is a subalgebra of type $A_1$ with a composition factor $L(p-2)$ on $V_{27}$. Then the action of a regular nilpotent element of $\bar\h$ on $V_{27}$ has a Jordan block of size at least $p-1 \geq h(G) = 18$. This is a contradiction, since \cite[Table~4]{SteComp} shows that the Jordan blocks of the action of any nilpotent element of $E_6$ have size at most 17. 

For $\g$ of type $E_8$, we have $h(G) = 30$. As above, one similarly rules out the cases where $L$ is classical or type $E_6$. So suppose $L$ is of type $A_1 E_6$. Restricting to the first factor, the only composition factors are trivial or isomorphic to $L(1)$, while the composition factors for the second factor on $\q$ are trivial, or isomorphic to $V_{27}$ or its dual. Since $\bar\h$ has a composition factor of high weight $p-2$ on $\q$ it follows that $\bar\h$ acts on $V_{27}$ with a composition factor of high weight $p-3$ or $p-2$. Therefore, the action of a regular nilpotent element $e$ of $\bar\h$ on $V_{27}$ has a Jordan block of size at least $p-2 \geq h(G)-1 = 29$. This contradicts \cite[Table~4]{SteComp}, as before. Finally, if $\l'$ is of type $E_7$, then $V_{56}$ has an $\bar{\h}$-composition factor $L(p-2)$. The action of a regular nilpotent element $e$ of $\bar \h$ has a Jordan block of size at least $p-1 \geq h(G) = 30$ on $V_{56}$. Using \cite[Tables~2,~3]{SteComp}, we see this is a contradiction since the largest Jordan block of a nilpotent element acting on $V_{56}$ is $28$. \end{proof}
 
\subsection{Subalgebras of rank $2$}
In this section we prove Theorem \ref{maintheorem} holds for all simple subalgebras of $\g$ of rank $2$ in a series of lemmas, taking each isomorphism class of $\h$ in turn. First of all we turn to a very special case.

\begin{lemma}\label{a2max}Let $p=5$, $\g=E_7$ and $\h\cong\sl_3$ be a $p$-subalgebra of $\g$. Suppose the highest root element $e=e_{\tilde\alpha} \in \h$ is nilpotent of type $A_4+A_1$. Then $\h=\Lie(H)$ for $H$ a maximal closed connected subgroup of type $A_2$ in $G$.\end{lemma}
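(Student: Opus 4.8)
=== PLAN ===

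The plan is to exploit the rigidity of the hypotheses: we are told $\h\cong\sl_3$ is a $p$-subalgebra of $\g=E_7$ with $p=5$, and that the highest root vector $e_{\tilde\alpha}$ sits inside $\h$ as a nilpotent element lying in the orbit $A_4+A_1$. The key observation is that $e_{\tilde\alpha}$ is a \emph{minimal} (long root) nilpotent element of $\g$, and that inside $\sl_3$ the ambient orbit of $e$ is completely controlled by its Jordan type as an element of $\h$. Since $e$ is a highest root vector in $\g$ but belongs to the orbit $A_4+A_1$ when viewed in $\g$, I would first pin down exactly which nilpotent element of $\h\cong\sl_3$ it must be (regular, subregular, or minimal in $\sl_3$), using the fact that the restriction of the adjoint action and the compatibility of Jordan block data severely restricts the possibilities.

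First I would use the theory of associated cocharacters from \S2.4. Since $p=5$ and the orbit $A_4+A_1$ does not involve a factor of type $A_{p-1}=A_4$ in the sense that would obstruct Proposition~\ref{findingcoch}—here I would need to check carefully that $A_4+A_1$ is not excluded, or argue around it—I aim to produce a cocharacter $\tau$ associated to $e$ compatible with the toral element $h\in\h$ determined by the $\sl_2$-triple inside $\h$. The point of obtaining such a $\tau$ is that it realises $e$ in a distinguished way, and the grading it induces on $\g$ will let me compare the centraliser data of $e$ in $\g$ against the centraliser data of $e$ inside $\h=\sl_3$. The strategy is then to show that $\c_\g(\h)$ together with $\h$ generates a reductive subgroup whose Lie algebra is $\h$, forcing $\h=\Lie(H)$ for $H$ of type $A_2$, and then to upgrade $H$ to a maximal connected subgroup.

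The crucial step is to show $H$ is \emph{maximal}. Since $\h$ is a $p$-subalgebra, it is $\Lie(H)$ for a genuine closed subgroup $H$ of type $A_2$ (using restrictability and integrating). To see $H$ is maximal I would argue that $H$ cannot be contained in any proper reductive overgroup: if $H\subseteq M$ with $M$ reductive of positive dimension, then $e=e_{\tilde\alpha}$ would lie in $\Lie(M)$, but the $A_4+A_1$ orbit in $E_7$ meets the Lie algebra of very few reductive subgroups, and the requirement that $e$ be a \emph{regular} (or highest-root) element inside the $A_2$ together with the dimension of the orbit $A_4+A_1$ should eliminate all proper overgroups. In practice I expect to appeal to the known classification of maximal connected subgroups of $E_7$ and to the Jordan block data in \cite{SteComp}/\cite{Law95} to match the $A_4+A_1$ orbit uniquely to the regular $A_2$ inside a maximal $A_2$. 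The identification that $e_{\tilde\alpha}$ is regular in $\h$ (so that its $\g$-orbit $A_4+A_1$ is the image of the regular orbit of $\sl_3$) is what rules out $H$ lying in a Levi or smaller reductive subgroup, since a regular nilpotent of $\sl_3$ is not subregular or minimal.

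The main obstacle will be the maximality argument: ruling out all intermediate reductive subgroups $M$ with $H\subsetneq M\subsetneq G$. This requires knowing which subgroups of $E_7$ contain an $A_2$ acting so that its regular nilpotent lands in the $A_4+A_1$ class, and then checking that no such $M$ exists—this is where I would lean most heavily on the explicit tables of nilpotent orbits and their fusion from \cite{LS12} and the Jordan block tables of \cite{SteComp}. A secondary subtlety is confirming the hypotheses of Proposition~\ref{findingcoch} genuinely apply (the $A_{p-1}$ caveat), which may instead force a direct construction of the associated cocharacter by hand for this specific orbit.
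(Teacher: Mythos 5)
There is a genuine gap, and in fact two misreadings that would derail the argument. The central gap is your integration step: you assert that since $\h$ is a $p$-subalgebra it ``is $\Lie(H)$ for a genuine closed subgroup $H$ of type $A_2$ (using restrictability and integrating)''. No such integration theorem is available at $p=5$: a restricted subalgebra of $\g$ need not be algebraic, and proving $\h=\Lie(H)$ is precisely the content of the lemma, not a free first move. Assuming it makes your argument circular. The paper's proof never integrates anything; it proves a rigidity statement instead. Writing $e=[e_\alpha,e_\beta]$ with $e_\alpha,e_\beta\in\h_e\subseteq\g_e(\geq 0)$, it uses the graded centraliser data of \cite{LT11} to show that $e_\alpha$ and $e_\beta$ are homogeneous in $\g_e(1)$, solves the commutator equations $[e_\alpha,e_\beta]=e$ explicitly, normalises by elements of $G_e\cap G_h$, and then pins down $f_{\tilde\alpha}$ uniquely by imposing the $\sl_3$ relations $[[f_{\tilde\alpha},e_\alpha],e_\alpha]=[[f_{\tilde\alpha},e_\beta],e_\beta]=[[e_\alpha,f_{\tilde\alpha}],f_{\tilde\alpha}]=0$. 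This determines $\h$ completely up to conjugacy, and the result then follows by matching against \cite[Lem.~4.1.3]{LS04}, which identifies the Lie algebra of the maximal $A_2$ in $E_7$ as having root elements of type $A_4+A_1$. Maximality comes for free from this identification, so your proposed elimination of intermediate reductive overgroups $H\subsetneq M\subsetneq G$ attacks the wrong difficulty and cannot substitute for the missing uniqueness argument.

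The two misreadings: first, $e_{\tilde\alpha}$ is the highest-root element of $\h\cong\sl_3$, i.e.\ a root element of $\h$ (the minimal nilpotent of $\sl_3$, partition $(2,1)$), not a long-root element of $\g$ --- its $\g$-orbit is $A_4+A_1$ by hypothesis, not the minimal orbit --- and your closing claim that $e_{\tilde\alpha}$ ``is regular in $\h$'' contradicts the stated hypothesis outright. Second, the caveat you flag about Proposition \ref{findingcoch} is fatal rather than technical: with $p=5$ the orbit $A_4+A_1$ does contain a factor of type $A_{p-1}=A_4$, so the proposition fails to apply, and the toral element $h$ with $[h,e]=2e$ and $h\in\im\ad e$ is genuinely non-unique --- there is a family $h'=h_1+\lambda h_0$ with $h_0\in\z(\sl_5)$ and $\lambda\in\F_5$. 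A substantial part of the paper's proof is devoted to eliminating $\lambda\neq 0$ by a direct computation (for $\lambda\neq 0$ the weight-$1$ part of $\g_e(1)$ under $h'$ is spanned by two root vectors whose bracket is $e_{\alpha_6}$, which is not of type $A_4+A_1$). Your plan has no mechanism for this step, and without it the cocharacter-based comparison you outline cannot get started.
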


\begin{proof}One has in $\h$ the relation $[e_{\alpha},e_{\beta}]=e$ holds, with $e_{\alpha},e_{\beta}\in\h_{e}\subset\g_{e}\subseteq\g(\geq 0)$. Now one sees from \cite{LT11} that $\g_e(0)$ is toral, hence the nilpotence of $e_{\alpha}$ and $e_\beta$ imply that they are in fact contained in $\g_e(>0)$. The projections $\overline{e_{\alpha}},\overline{e_{\beta}}$ to $\g_e(1)$ are hence non-trivial and we must have $[\overline{e_{\alpha}},\overline{e_{\beta}}]=e$. Recall $e$ is contained in an $\sl_2$-triple $(e,h_{\tilde\alpha},f_{\tilde\alpha})\in\h\times\h\times\h$. Now $[h_{\tilde\alpha},e]=2e$ and $h_{\tilde\alpha}$ is in the image of $\ad e$. Since $e$ has a factor of type $A_{p-1}$ in $\g$ there is more than one element $h_{\tilde\alpha}$ satisfying these conditions. Indeed if $h_1\in\Lie(\tau(\Gm))$ has this property, then so does $h':=h_1+\lambda h_0$ for $h_0\in\z(\sl_5)$, where the projection of $e$ to its $A_4$ factor is regular in $\sl_5$ (for more details, see the argument in \cite[\S A.2]{HSMax}). The cases where $\lambda\in\F_5$ give all such instances where $h'$ is also a toral element, therefore we may restrict to these five cases. Furthermore, the cases where $\lambda$ is $1$ or $2$ respectively, are conjugate to the cases where $\lambda$ is $4$ or $3$ respectively, via a representative of an element of the Weyl group of $E_7$ inducing a graph automorphism on the $A_4$ factor, and centralising the $A_1$ factor (\!\!\cite[Table~10]{Car72}). Now it is an easy direct calculation using the elements given in \cite[Table 4]{HSMax} that if $\lambda\neq 0$ then a basis for the elements of $\g_e(1)$ on which $h'$ has weight $1$ is $\def\arraystretch{0.5} \arraycolsep=0pt\left\{e_{\Small \begin{array}{c c c c c c c}-0&0&0&0&0&1\\&&0\end{array}},e_{\Small\begin{array}{c c c c c c c}0&0&0&0&1&1\\&&0\end{array}}\right\}$. One checks that the commutator of these two is $e_{\alpha_6}$. The latter is not of type $A_4+A_1$. Hence we conclude $\lambda=0$.

As $h_{\tilde\alpha}\in\tau(\Gm)$, there is a standard $\sl_2$-triple $(e,h_{\tilde\alpha},f)$, that is a regular $\sl_2$-subalgebra of a Levi subalgebra of type $A_4A_1$. We have $f_{\tilde\alpha}-f\in\g_e\subseteq\g_e(\geq 0)$ so that $f_{\tilde\alpha}$ projects to $f$ in $\g(-2)$. Now $\tau^{-1}$ is associated to $f$ and hence $\ad f$ is injective on $\g_e(1+rp)$ for each $r\geq 1$. It follows that $e_{\alpha}$ and $e_{\beta}$ can have no non-zero component in $\g_e(1+rp)$, in other words they are homogeneous in $\g_e(1)$.

Looking at $\g_e(1)$ in \cite{LT11}, we conclude that $e_{\alpha}$ and $e_\beta$ are both of the form {\def\arraystretch{0.5} \arraycolsep=0pt\begin{align*} \lambda_1\cdot e_{\Small\begin{array}{c c c c c c c c}0&0&0&0&1&1\\&&0\end{array}}+
&4\cdot\lambda_2\cdot e_{\Small\begin{array}{c c c c c c c c}1&1&1&1&0&0\\&&1\end{array}}+
3\cdot\lambda_2\cdot e_{\Small\begin{array}{c c c c c c c c}1&1&1&1&1&0\\&&0\end{array}}+
4\cdot\lambda_2\cdot e_{\Small\begin{array}{c c c c c c c c}0&1&2&1&0&0\\&&1\end{array}}+
\lambda_2\cdot e_{\Small\begin{array}{c c c c c c c c}0&1&1&1&1&0\\&&1\end{array}}+\lambda_3\cdot e_{\Small\begin{array}{c c c c c c c c}-0&0&0&0&0&1\\&&0\end{array}}
\\
&+2\cdot\lambda_4\cdot e_{\Small\begin{array}{c c c c c c c c}-1&1&1&1&0&0\\&&0\end{array}}+
4\cdot\lambda_4\cdot e_{\Small\begin{array}{c c c c c c c c}-0&1&1&1&0&0\\&&1\end{array}}+
4\cdot\lambda_4\cdot e_{\Small\begin{array}{c c c c c c c c}-0&0&1&1&1&0\\&&1\end{array}}+
\lambda_4\cdot e_{\Small\begin{array}{c c c c c c c c}-0&1&1&1&1&0\\&&0\end{array}},\end{align*}}with $\lambda_i\in k$. If $e_\alpha$ arises from the coefficients $(\lambda_1,\lambda_2,\lambda_3,\lambda_4)$ and $e_\beta$ from $(\mu_1,\mu_2,\mu_3,\mu_4)$ then calculating the commutator and insisting that the answer be $e$ one sees that the equations \[\lambda_1\mu_2=\lambda_2\mu_1,\quad \lambda_4\mu_3=\lambda_3\mu_4,\quad \lambda_4\mu_2-\lambda_2\mu_4=1,\quad \lambda_1\mu_3-\lambda_3\mu_1=1,\tag{*}\]
must be satisfied. If $\mu_1\neq 0$ then by replacing $e_\alpha$ by $e_\alpha-\nu e_\beta$ for suitable $\nu$, we may assume that $\lambda_1=0$. Otherwise we may swap $e_\alpha$ and $e_\beta$ to assume $\lambda_1=0$. Then using the equations of $(*)$ we have $\lambda_3\mu_1=-1$, thus $\mu_1\neq 0$ and so $\lambda_2=0$. Subsequently $\lambda_4\mu_2=1$. Now replacing $e_\alpha$ by a multiple we can arrange $\lambda_3=1$, thus $\mu_1=-1$. Additionally, (using \cite{LT11}) one checks that the element $\def\arraystretch{0.5} \arraycolsep=0pt h_{1}(t^6)h_{2}(t^9)h_{3}(t^{12})h_{4}(t^{18})h_5(t^{15})h_6(t^{10})h_7(t^5)$ centralises $e$ and the element of $\g_e(1)$ corresponding to coordinates $(0,0,1,0)$, while acting as a non-trivial scalar on $(0,0,0,1)$. It follows that we may replace $e_\alpha$ with a conjugate such that its coordinates are $(0,0,1,1)$, that is, $\lambda_1=\lambda_2=0$ and $\lambda_3=\lambda_4=1$. Thus $\mu_2=1$ and $\mu_3=\mu_4$. Replacing now $e_\beta$ by $e_\beta-\mu_3e_\alpha$, we may assume $\mu_3=\mu_4=0$ so that the coordinates of $e_\beta$ are $(-1,1,0,0)$. Hence $e_\alpha$ and $e_\beta$ are completely determined.

Now we show that $f_{\tilde\alpha}$ is unique up to conjugacy by $G_{e}\cap G_{h_{\tilde\alpha}}\cap G_{e_\alpha}\cap G_{e_\beta}$. Again if $(e,h_{\tilde\alpha},f)$ is a standard $\sl_2$-triple, then $f_{\tilde\alpha}-f\in\g_e(0)$ and as $h_{\tilde\alpha}$ has weight $-2$ on it, in fact, $f_{\tilde\alpha}-f\in\bigoplus_{r>0}\g_e(-2+rp)=\g_e(3)\oplus\g_e(8)$. Checking \cite{LT11}, we have $f_{\tilde\alpha}$ is of the form

{\def\arraystretch{0.5}\arraycolsep=0pt\begin{align*}\lambda_1\cdot &e_{\Small\begin{array}{c c c c c c c c}1&1&1&0&0&0\\&&1\end{array}}+
\lambda_2\cdot e_{\Small\begin{array}{c c c c c c c c}1&1&1&1&1&0\\&&1\end{array}}+
\lambda_2\cdot e_{\Small\begin{array}{c c c c c c c c}0&1&2&1&1&0\\&&1\end{array}}
-\lambda_3\cdot e_{\Small\begin{array}{c c c c c c c c}1&2&3&2&1&1\\&&2\end{array}}+
\lambda_3\cdot e_{\Small\begin{array}{c c c c c c c c}1&2&3&2&2&1\\&&1\end{array}}+
e_{\Small\begin{array}{c c c c c c c c}-1&0&0&0&0&0\\&&0\end{array}}
+2\cdot e_{\Small\begin{array}{c c c c c c c c}-0&0&0&0&0&0\\&&1\end{array}}\\
&+
2\cdot e_{\Small\begin{array}{c c c c c c c c}-0&0&1&0&0&0\\&&0\end{array}}+
e_{\Small\begin{array}{c c c c c c c c}-0&0&0&0&1&0\\&&0\end{array}}
-\lambda_4\cdot e_{\Small\begin{array}{c c c c c c c c}-0&0&1&1&0&0\\&&1\end{array}}+
\lambda_4\cdot e_{\Small\begin{array}{c c c c c c c c}-0&1&1&1&0&0\\&&0\end{array}}+
\lambda_5\cdot e_{\Small\begin{array}{c c c c c c c c}-1&1&2&2&1&1\\&&1\end{array}}+
\lambda_5\cdot e_{\Small\begin{array}{c c c c c c c c}-0&1&2&2&2&1\\&&1\end{array}}\end{align*}}
In $\h$ we have the relation $[[f_{\tilde\alpha},e_\alpha],e_\alpha]=0$. This implies $\lambda_2=\lambda_3=\lambda_4=0$. Additionally, the relation $[[f_{\tilde\alpha},e_\beta],e_\beta]=0$ implies $\lambda_5=0$. Lastly, $[[e_\alpha,f_{\tilde\alpha}],f_{\tilde\alpha}]=0$ implies $\lambda_1=0$. Thus $f_{\tilde\alpha}$ is fully determined.

We obtain in addition $e_{-\alpha}=[f_{\tilde\alpha},e_{\beta}]$ and $e_{-\beta}=[e_{\alpha},f_{\tilde\alpha}]$ giving in total,

{\def\arraystretch{0.5}\arraycolsep=0pt\begin{align*}e_\alpha&:=e_{\Small\begin{array}{c c c c c c c c}-0&0&0&0&0&1\\&&0\end{array}}+
2\cdot e_{\Small\begin{array}{c c c c c c c c}-1&1&1&1&0&0\\&&0\end{array}}+
4\cdot e_{\Small\begin{array}{c c c c c c c c}-0&1&1&1&0&0\\&&1\end{array}}+
4\cdot e_{\Small\begin{array}{c c c c c c c c}-0&0&1&1&1&0\\&&1\end{array}}+
e_{\Small\begin{array}{c c c c c c c c}-0&1&1&1&1&0\\&&0\end{array}}\\
e_\beta&:=-1\cdot e_{\Small\begin{array}{c c c c c c c c}0&0&0&0&1&1\\&&0\end{array}}+
4\cdot e_{\Small\begin{array}{c c c c c c c c}1&1&1&1&0&0\\&&1\end{array}}+
3\cdot e_{\Small\begin{array}{c c c c c c c c}1&1&1&1&1&0\\&&0\end{array}}+
4\cdot e_{\Small\begin{array}{c c c c c c c c}0&1&2&1&0&0\\&&1\end{array}}+
e_{\Small\begin{array}{c c c c c c c c}0&1&1&1&1&0\\&&1\end{array}}\\
e_{-\alpha}&:=e_{\Small\begin{array}{c c c c c c c c}0&0&0&0&0&1\\&&0\end{array}}+
2\cdot e_{\Small\begin{array}{c c c c c c c c}1&1&1&1&0&0\\&&0\end{array}}+
4\cdot e_{\Small\begin{array}{c c c c c c c c}0&1&1&1&0&0\\&&1\end{array}}+
4\cdot e_{\Small\begin{array}{c c c c c c c c}0&0&1&1&1&0\\&&1\end{array}}+
4\cdot e_{\Small\begin{array}{c c c c c c c c}0&1&1&1&1&0\\&&0\end{array}}\\
e_{-\beta}&:=e_{\Small\begin{array}{c c c c c c c c}-0&0&0&0&1&1\\&&0\end{array}}+
4\cdot e_{\Small\begin{array}{c c c c c c c c}-1&1&1&1&0&0\\&&1\end{array}}+
2\cdot e_{\Small\begin{array}{c c c c c c c c}-1&1&1&1&1&0\\&&0\end{array}}+
e_{\Small\begin{array}{c c c c c c c c}-0&1&2&1&0&0\\&&1\end{array}}+
4\cdot e_{\Small\begin{array}{c c c c c c c c}-0&1&1&1&1&0\\&&1\end{array}}.
\end{align*}}
It is automatic from the Serre relations that these elements generate a subalgebra isomorphic to $A_2$. However if one chases through the proof of \cite[Lem.~4.1.3]{LS04}, we discover that the conjugacy class of maximal $A_2$ subgroups of $E_7$ have Lie algebras whose root elements are of type $A_4+A_1$. It follows that $\h=\Lie(H)$ for one of these subgroups.  
\end{proof}

\begin{lemma}\label{mtfora2}Theorem \ref{maintheorem} holds when $\h$ is of type $A_2$.\end{lemma}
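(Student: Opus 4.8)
The goal is to prove Theorem \ref{maintheorem} in the case $\h$ is of type $A_2$. The plan is to run the general strategy established at the start of \S\ref{sec:proofofmain}: suppose $\h\cong\sl_3$ (or $\psl_3$) is non-$G$-cr, take a minimal parabolic $\p=\l+\q$ containing $\h$, and use statement (\ref{VinTable}) to pin down which composition factors $V$ with $\opH^1(\h,V)\neq 0$ can appear in $\q$. Since $\h$ has rank $2$, Lemma \ref{rankatleast3} does not apply, so I must work directly. By Lemma \ref{nobadthingsinclassicalLevis}, the projection $\bar\h$ to $\l'$ is $L$-irreducible and $\l'$ must contain a factor of type $E_6$ or $E_7$ (the $B_2$ and $G_2$ exceptions of that lemma are for other isomorphism types). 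Consulting Table~\ref{tHs}, the relevant modules for $A_2$ are $L(p-2,1)$ (dimensions $18,33$ for $p=5,7$) and, when $p=5$, the $63$-dimensional $L(3,3)$ with $\opH^1(G_1,-)^{[-1]}=k$.

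First I would dispose of the cases where $\l'$ is purely classical or where the bad module is $L(3,3)$: since $\dim L(3,3)=63$ exceeds the dimension of any non-trivial $\q$-factor available inside $E_6$ or $E_7$ Levis (the minimal modules $V_{27}$ and $V_{56}$), and $L(p-2,1)$ for $p=7$ has dimension $33$, I can use the dimension and self-duality constraints exactly as in Lemma \ref{rankatleast3}. The core of the argument is then the nilpotent-Jordan-block obstruction: if $L(p-2,1)$ appears as an $\bar\h$-composition factor of $\q$, a regular nilpotent element $e\in\bar\h$ acts on $V_{27}$ or $V_{56}$ with a Jordan block that is too large, contradicting the tables of \cite{SteComp}. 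I would tabulate the possible $\bar\h$-composition factors on $V_{27}$ and $V_{56}$ (restricting further to a Levi $\sl_2$-subalgebra $\s\subset\h$ to read off Jordan block sizes, as done for $A_4$ and $A_6$), and check that no admissible combination is consistent with \cite{SteComp}. This rules out every case \emph{except} the genuinely exceptional one, and it is here that the preceding Lemma \ref{a2max} does the real work.

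The hard part is precisely the surviving case $(G,\h,p)=(E_7,\sl_3,5)$, where the Jordan-block test does \emph{not} immediately give a contradiction: the module $L(3,1)$ of dimension $18$ can sit inside $V_{27}$ with the remaining factors trivial, and the regular nilpotent block sizes happen to be admissible. This is exactly the configuration analysed in Lemma \ref{a2max}, where $e=e_{\tilde\alpha}$ is nilpotent of type $A_4+A_1$. I would invoke Lemma \ref{a2max} to conclude that any such $\h$ is $\Lie(H)$ for $H$ a \emph{maximal} $A_2$ subgroup of $E_7$. But a maximal connected subgroup is $G$-irreducible, hence $G$-cr by definition, and its Lie algebra is $G$-cr; this contradicts the assumption that $\h$ is non-$G$-cr. (I should also confirm the $\psl_3$ possibility, noting that a $p$-restricted analysis via the argument at the start of \S\ref{sec:proofofmain} reduces it to the $\sl_3$ case since $p=5\nmid 3$, so the centre is handled cleanly.) Having eliminated every candidate, I conclude no non-$G$-cr subalgebra of type $A_2$ exists for $p\geq 5$, which is exactly what Theorem \ref{maintheorem} asserts in this case.

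The main obstacle I anticipate is organising the finitely many $L$-irreducible embeddings $\bar\h\hookrightarrow\l'$ cleanly enough that the Jordan-block check is exhaustive rather than ad hoc: one must enumerate the $L$-irreducible $A_2$-subalgebras of the relevant Levis (using Proposition \ref{girclassical} for classical factors and explicit module restrictions for $E_6,E_7$), and for each compute the $\bar\h$-socle structure on $V_{27}$ or $V_{56}$ before restricting to $\s$. Once that bookkeeping is in place the contradictions with \cite{SteComp} are mechanical, and the single subtle case is absorbed entirely by Lemma \ref{a2max}.
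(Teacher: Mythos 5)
Your setup (minimal parabolic, statement (\ref{VinTable}), Lemma \ref{nobadthingsinclassicalLevis} forcing $\l'$ of type $E_6$ or $E_7$, and the dimension/self-duality eliminations of $L(3,3)$ at $p=5$ and $L(5,1)$ at $p=7$) matches the paper. But the mechanism you propose as the ``core of the argument'' fails for the remaining module $V=L(3,1)$ at $p=5$. Since the projection $\bar\h$ is a $p$-subalgebra (Corollary \ref{kwcor}) and $V_{27}$, $V_{56}$ restrict to restricted $\bar\h$-modules, every nilpotent element of $\bar\h$ -- regular or not -- acts with Jordan blocks of size at most $p=5$; no block is ever ``too large'', and plenty of nilpotent orbits of $E_6$ and $E_7$ act on $V_{27}$ and $V_{56}$ with blocks of size at most $5$, so a bare comparison with \cite{SteComp} eliminates nothing. (This is exactly why the rank-$\geq 3$ argument of Lemma \ref{rankatleast3} does not transfer: there $p$ was large relative to the weights involved.) The paper's actual proof needs substantially more: it works with a \emph{long root} element $e_{\tilde\alpha}$ of $\bar\h$, observes it is \emph{reachable} and uses the classification of reachable nilpotent elements in \cite{PreSteNilp} to pin down its orbit (to $2A_2+A_1$ when $\l'=E_6$; to $A_3+A_2+A_1$, $2A_2+A_1$ or $A_4+A_1$ when $\l'=E_7$), and then eliminates these via associated cocharacters (Proposition \ref{findingcoch}, the tables of \cite{LT11}) and a weight-multiplicity comparison of the toral element $h_{\tilde\alpha}$ against the restriction data in Table~\ref{a2resttab}. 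None of these ideas appears in your sketch, and without them the $E_6$-Levi case and two of the three $E_7$-Levi orbits are simply not ruled out.

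Your treatment of the surviving case also has two problems. First, you misplace it: in the paper the $A_4+A_1$ configuration handled by Lemma \ref{a2max} arises when $\l'$ is of type $E_7$ with $L(3,1)$ and $L(1,3)$ inside $V_{56}$ (so $G=E_8$), not as $L(3,1)\subset V_{27}$ with $G=E_7$; the $V_{27}$ branch is killed outright by the $2A_2+A_1$ weight analysis. Second, your closing step -- ``a maximal connected subgroup is $G$-irreducible, hence $G$-cr, and its Lie algebra is $G$-cr'' -- is unjustified: there is no general implication from $G$-complete reducibility of a subgroup $H$ to that of $\Lie(H)$ in small characteristic (the result \cite[Thm.~1]{McN07} invoked elsewhere in the paper is specific to optimal $\SL_2$-homomorphisms/good $A_1$s), and in any case the algebra to which Lemma \ref{a2max} applies is the projection $\bar\h$, which is already $L'$-irreducible, so no contradiction with non-$G$-complete reducibility of $\h$ arises along those lines. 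The paper instead derives a concrete module-theoretic contradiction: by \cite[Lem.~4.1.3]{LS04} the Lie algebra of the maximal $A_2$ in $E_7$ does \emph{not} have $L(3,1)$ or $L(1,3)$ among its composition factors on $V_{56}$, contradicting the requirement $\opH^1(\bar\h,\q)\neq 0$. So while your outline gets the scaffolding and the role of Lemma \ref{a2max} roughly right, the two load-bearing steps -- the elimination of the non-maximal configurations and the contradiction in the maximal one -- are both missing or incorrect as proposed.
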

\begin{proof}Suppose, for a contradiction, that $\h$ is a non-$G$-cr subalgebra of type $A_2$. By (\ref{VinTable}) we have that there is an $\bar\h$-composition factor $V$ of $\q$ with $V = L(\lambda)$ or $L(\lambda)^*$ where $\lambda$ is $(3,1)$ or $(3,3)$ when $p=5$, or $(5,1)$ when $p=7$. Furthermore, by Lemma \ref{nobadthingsinclassicalLevis} we may assume that $\l'$ is of type $E_6$ or $E_7$ and  that $\bar{\h}$ is a $p$-subalgebra by Corollary \ref{kwcor}.

Suppose $p=5$. Since $\dim L(3,3) = 63$, it cannot occur as an $\bar\h$-composition factor of $\q$. We will require more work to show that $L(3,1)$ or $L(1,3)$ cannot occur as an $\bar\h$-composition factor of $\q$. Since the argument is the same for each, we assume $V=L(3,1)$.

By Lemma \ref{reachlem}, any root vector of $\bar \h$, say $e_{\tilde\alpha}$ corresponding to the highest root $\tilde \alpha$, has the property that it is reachable. Then the possibilities are given by \cite{PreSteNilp}. Let also $h_{\tilde\alpha}$ be an element in the Cartan subalgebra of $\bar \h$ for which there is a nilpotent element $f_{\tilde\alpha}$ with $\s_{\tilde\alpha}=\la e_{\tilde\alpha},h_{\tilde\alpha},f_{\tilde\alpha}\ra\cong\sl_2$. 

We need certain data about the restrictions to $\s_{\tilde\alpha}$ of all irreducible restricted non-self-dual $\bar\h$-modules of dimension at most $10$ and all self-dual irreducible $\bar\h$-modules of dimension at most $20$.
\begin{table}\begin{center}\begin{tabular}{llll}
$\lambda$ & $\dim L(\lambda)$ & $L(\lambda)\downarrow\s_{\tilde\alpha}$ & weights\\\hline
$(2,2)$ & $19$ & $L(4)+L(3)^2+L(2)^2$ & $4^3/3^5/2^5/1^3/0^3$\\
$(3,1)$ & $18$ & $L(4)+L(3)^2+L(2)+L(1)$ & $4^4/3^4/2^4/1^4/0^2$\\
$(3,0)$ & $10$ & $L(3)+L(2)+L(1)+k$ & $4^2/3^2/2^2/1^2/0^2$\\
$(1,1)$ & $8$ & $L(2)+L(1)^2+k$ & $4^2/3/2/1^2/0^2$\\
$(2,0)$ & $6$ & $L(2)+L(1)+k$ & $4/3/2/1/0^2$\\
$(1,0)$ & $3$ & $L(1)+k$ & $4/1/0$
\end{tabular}\end{center}\caption{The restrictions of various $\sl_3$-modules}\label{a2resttab}\end{table}
We also note that $e_{\tilde\alpha}$ has a Jordan block of size $5$ on $V$.

Suppose first that $\l'$ is of type $E_6$. Then $V$ occurs as a $\bar\h$-composition factor of $V_{27}$ or its dual. One finds that in $E_6$ there is just one reachable element with a Jordan block of size at least $5$ on $V_{27}$, namely that with label $2A_2+A_1$.  Using Table~\ref{a2resttab}, we will now compare the weights of $h_{\tilde\alpha}$ on $V_{27}$ with those on various modules of dimension $27$ containing $V$ as a composition factor. It will be convenient to do this by considering the action of elements of $E_6$ on elements of $\q/[\q,\q]$ where $\q$ is the nilradical of an $E_6$ parabolic subalgebra of $E_7$. The tables in \cite{LT11} give a cocharacter $\tau$ associated to $2A_2+A_1$ in $E_7$. As $e_{\tilde\alpha}$ does not contain a factor of type $A_{p-1}$ then by Proposition \ref{findingcoch}, we may assume $h_{\tilde\alpha}\in\Lie(\tau(\Gm))$ hence it suffices to compute the weights of $\tau$ on $\q/[\q,\q]$, which is an easy computation in the root system of $E_7$ (and which we perform in GAP). We find the weights of $\tau$ on $V_{27}$ are $-4/-3^2/-2^4/-1^4/0^5/1^4/2^4/3^2/4$. Reduction modulo $5$ then gives the weights of $h_{\tilde\alpha}$ on $V_{27}$, namely $4^5/3^6/2^6/1^5/0^5$. Since one composition factor is $V=L(3,1)$ itself, we must find $4/3^2/2^2/1^3/0^3$ from the remaining composition factors of $V_{27}$. It is then a straightforward matter to compare this with the weights of the tables above to see that this is impossible.

Thus we must have $\l'$ of type $E_7$. Then $V$ and its dual occur on the self-dual module $V_{56}\downarrow\bar\h$. It follows that a subset of the composition factors of $V_{56}\downarrow \s_{\tilde\alpha}$ is $L(4)^2 / L(3)^4 / L(2)^2 / L(1)^2$. Hence a root element $e_{\tilde\alpha}$ of $\s_{\tilde\alpha}$ is represented on $V_{56}$ by a matrix whose $4$th power has rank at least two and $3$rd power has rank at least $6$. Since it is also reachable, by comparison with \cite{PreSteNilp} there are just three options for the nilpotent orbit containing $e_{\tilde\alpha}$, namely $A_3+A_2+A_1$, $2A_2+A_1$ and $A_4+A_1$. The case $A_3+A_2+A_1$ can be ruled out: in both $\sl_3$ and $\g$ there is a unique toral element $h\in \im\ad e_{\tilde\alpha}$ which has weight $2$ on a highest root element, and has weight $1$ on the simple root elements. As $\g_{e_{\tilde\alpha}}\subseteq \g_{e_{\tilde\alpha}}(\geq 0)$, it follows that $\g_{e_{\tilde\alpha}}(1)$ must be non-zero. But \cite{LT11} reveals that for $e_{\tilde\alpha}$ in $E_7$ of type $A_3+A_2+A_1$ the space $\g_{e_{\tilde\alpha}}(1)$ is zero. (The element $e_{\tilde\alpha}$ can in fact be found in $[\g_{e_{\tilde\alpha}}(0),\g_{e_{\tilde\alpha}}(2)]$.) This is a contradiction. Hence $e_{\tilde\alpha}$ is not of type $A_3+A_2+A_1$.

Suppose $e_{\tilde\alpha}$ is of type $2A_2+A_1$. Since $e_{\tilde\alpha}$ is not regular in a Levi with a factor of type $A_{p-1}$, Proposition \ref{findingcoch} implies that there is a unique $h_{\tilde\alpha}\in\h$ such that $h_{\tilde\alpha}$ has weight $2$ on $e_{\tilde\alpha}$ and $h_{\tilde\alpha} \in \im\ad e_{\tilde\alpha}$; we take $h_{\tilde\alpha}\in\Lie(\tau(\Gm))$ for $\tau$ an associated cocharacter to $e_{\tilde\alpha}$. The precise description of such a $\tau$ comes from \cite{LT11}. In an $E_7$-parabolic subalgebra of $E_8$,  the space $\q/[\q,\q]$ affords a representation $V_{56}$ for the Levi. Applying $\tau$ to the roots of $\q/[\q,\q]$ gives the following multiplicities for $\tau$:

\begin{center}\begin{tabular}{l|lllllllll}
wt & -4&-3&-2&-1&0&1&2&3&4\\\hline
$\dim$ & 2&4&8&8&12&8&8&4&2\end{tabular}\end{center}

Since $\ad e_{\tilde\alpha}$ kills the $L(4)$ weight space, on which $h_{\tilde\alpha}$ has weight $4$, we must have two composition factors isomorphic to $L(4)$. These afford weights $(4,2,0,-2,-4)$ on $\q/[\q,\q]$. Removing the weights of these composition factors and considering the remaining weights, we see inductively that the composition factors of $\s_{\tilde\alpha}$ are $L(4)^2/L(3)^4/L(2)^6/L(1)^4/k^4$. 

Both $V$ and its dual occur as composition factors of $\bar \h$ on $V_{56}$, so using Table~\ref{a2resttab}, we find that the restriction to $\s_{\tilde\alpha}$ of the remaining $\bar \h$-composition factors of $V_{56}$ has composition factors $L(2)^4 / L(1)^2 / k^4$. Again using Table~\ref{a2resttab}, we find that either $L(1,1)$ occurs or both $L(2,0)$ and $L(0,2)$ occur as $\bar \h$-composition factors of $V_{56}$. In the first case, the remaining $\s_{\tilde\alpha}$-composition factors are $L(2)^3 / k^3$. But no combination of modules in Table~\ref{a2resttab} have such restriction, a contradiction. So $L(2,0)$ and $L(0,2)$ both occur, and the remaining $\s_{\tilde\alpha}$-composition factors are $L(2)^2 / k^2$. Again we see that this is impossible. This rules out the case $e_{\tilde\alpha}$ of type $2A_2+A_1$.  

If $e_{\tilde\alpha}$ is of type $A_4+A_1$, then Lemma \ref{a2max} implies that $\h=\Lie(H)$ for $H$ a maximal $A_2$ subgroup of $G$. 
However from \cite[Lem.~4.1.3]{LS04} we have found the Lie algebra of the maximal connected subgroup of type $A_2$ in $E_7$. But this does not act on $V_{56}$ with a composition factor of high weight $(3,1)$ or $(1,3)$. This contradiction completes this case.

Finally, we consider the possibility that $V=L(5,1)$ when $p=7$. Since $\dim L(5,1) = 33>27$, we cannot have $\l'$ of type $E_6$; also the self-duality of $V_{56}$ implies that we would need $L(5,1)$ and $L(1,5)$ as composition factors of $V_{56}$, but $66>56$. This is a contradiction.
\end{proof}

\begin{lemma}\label{mtforb2}Theorem \ref{maintheorem} holds when $\h$ is of type $B_2$.\end{lemma}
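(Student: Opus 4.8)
The plan is to mirror the proof of Lemma~\ref{mtfora2}: first rule out everything except the asserted exception $(E_8,B_2,5)$, then construct a genuine non-$G$-cr subalgebra in that case.

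I would begin by assuming $\h$ is a non-$G$-cr subalgebra of type $B_2$ inside a minimal parabolic $\p=\l+\q$, so that its projection $\bar\h$ is $L'$-irreducible and, by (\ref{VinTable}) and Table~\ref{tHs}, $\q$ carries an $\bar\h$-composition factor $V$ lying (up to duality, though all these modules are self-dual) among $L(2,0)$ ($p=5$, $\dim 13$), $L(1,3)$ ($p=5$, $\dim 52$) and $L(4,0)$ ($p=7$, $\dim 54$). Lemma~\ref{nobadthingsinclassicalLevis} then leaves exactly two alternatives: either $(G,\h,p)=(E_8,B_2,5)$, the claimed exception (the only classical-Levi possibility, and with $\l'$ of type $A_3A_4$ or $D_7$), or $\l'$ contains a simple factor of type $E_6$ or $E_7$. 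So the work splits into eliminating the $E_6$/$E_7$ factors and exhibiting the exception; note in particular that the $p=7$ candidate $L(4,0)$ survives only in the $E_6$/$E_7$ analysis, and is killed at once for $E_6$ since $54>27$.

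To eliminate an $E_6$ factor, the only nontrivial $\l'$-composition factors of $\q$ are copies of $V_{27}$ and its dual, so on dimension grounds only $V=L(2,0)$ can occur and $V_{27}\downarrow\bar\h$ must be $L(2,0)$ together with a $14$-dimensional remainder built from small $B_2$-modules. Taking the highest (long) root element $e_{\tilde\alpha}$ of $\bar\h$ -- which one checks is reachable and so has a $\g$-orbit classified in \cite{PreSteNilp} -- together with a toral $h_{\tilde\alpha}$ and an $f_{\tilde\alpha}$ spanning an $\sl_2$-subalgebra $\s_{\tilde\alpha}$, I would invoke Proposition~\ref{findingcoch} to take $h_{\tilde\alpha}\in\Lie(\tau(\Gm))$ for an associated cocharacter $\tau$, compute the weight multiplicities of $\tau$ on $V_{27}$ in the root system of $E_7$ exactly as in Lemma~\ref{mtfora2}, read off the Jordan blocks of $e_{\tilde\alpha}$, and contradict \cite[Table~3]{SteComp}. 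The $E_7$ case is treated identically: self-duality of $V$ forces it into the self-dual $V_{56}$, so $V_{56}\downarrow\bar\h$ is (up to trivial factors) $L(2,0)+\cdots$, $L(1,3)+L(0,1)$, or $L(4,0)+k^2$; for each I would restrict to $\s_{\tilde\alpha}$ using a table of $B_2$-module restrictions analogous to Table~\ref{a2resttab} (each restriction being completely reducible with $\sl_2$-factors $L(m)$, $m\leq p-1$), determine the Jordan type of $e_{\tilde\alpha}$ on $V_{56}$, and contradict \cite[Table~2]{SteComp} together with the reachable-orbit list of \cite{PreSteNilp}.

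I expect this $E_7$ analysis to be the main obstacle. As in Lemma~\ref{mtfora2}, the delicate step is to pin down the full $\s_{\tilde\alpha}$-composition series of $V_{56}$: one must identify which reachable orbits of $E_7$ can support the large Jordan blocks forced by $L(1,3)$ or $L(4,0)$, compute the associated-cocharacter weight multiplicities on $\q/[\q,\q]\cong V_{56}$ (a root-system computation done in GAP), and then verify that the bookkeeping of $B_2$-composition factors against the admissible Jordan types is inconsistent in every case.

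Finally, for the exception $(E_8,B_2,5)$ I would take $L$ of type $A_3A_4$ in $E_8$ and the $L$-irreducible $B_2$-subalgebra $\bar\h$ acting as $L(0,1)$ and $L(1,0)$ on the two natural modules (its $L$-irreducibility being immediate from Proposition~\ref{girclassical}). Restricting $\q$ via \cite[Lem.~3.4]{LS96} and differentiating, I would verify that $V=L(2,0)$ occurs as an $\bar\h$-composition factor and that $\opH^1(\bar\h,\q)\neq0$, using $\opH^1(\bar\h,L(2,0))\neq0$ from Table~\ref{tHs}. Running the cocycle--complement correspondence underlying Propositions~\ref{h1moduleiszero} and \ref{h1ofqis0} in reverse, a nonzero class then produces a complement $\h$ to $\q$ in $\bar\h+\q$ that is not $Q$-conjugate to $\bar\h$; by Lemma~\ref{hnongcr} this $\h$ is a non-$G$-cr subalgebra of type $B_2$, which establishes the remaining exception and completes the proof.
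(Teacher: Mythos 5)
Your reduction (minimal parabolic, statement (\ref{VinTable}), Table~\ref{tHs}, Lemma~\ref{nobadthingsinclassicalLevis}) matches the paper, and your $p=7$ elimination is essentially the paper's (which is even more direct: $V_{56}\downarrow\bar\h=L(4,0)/k^2$ restricts to a Levi $\sl_2$ of $\bar\h$ completely reducibly with factors $L(4)^5/L(3)^4/L(2)^3/L(1)^2/k^3$, i.e. Jordan type $5^5\,4^4\,3^3\,2^2\,1^3$, matching no orbit in \cite{SteComp}). But your elimination of the $E_6$-Levi case at $p=5$ --- the $(E_7,B_2,5)$ case, which genuinely must be killed --- has a real gap. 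The sieve you propose (reachable-orbit list from \cite{PreSteNilp}, associated cocharacter weights via Proposition~\ref{findingcoch}, Jordan blocks versus \cite[Table~3]{SteComp}) does \emph{not} produce a contradiction: exactly one configuration survives it, namely long root elements of $\bar\h$ lying in the orbit $3A_1$ with $V_{27}\downarrow\bar\h=L(2,0)/L(1,0)^2/k^4$, and all weight and Jordan-block bookkeeping is consistent there. The paper then needs a second, qualitatively different step: it fixes a representative $\sl_2$-triple $\la e,h,f\ra$, uses the finitely many orbits of $C_e$ (reductive of type $A_2A_1$) on $\g_e(0)$ to pin down, up to conjugacy, a second commuting long-root nilpotent $e'$, and shows the subregular element $e_s=e+e'$ of $\bar\h$ has type $A_2+2A_1$ in $E_6$; the $L(4)$ factor in $L(2,0)\downarrow\s_s$ then contradicts the restriction of $V_{27}$ for that orbit. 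Your proposal contains no mechanism for this surviving case. A secondary structural point: for $p=5$ inside $E_8$ you set out to eliminate the $E_6$/$E_7$-Levi configurations as well, but this is unnecessary (and not done in the paper), since $(E_8,B_2,5)$ is an allowed exception and only existence is required there; the only eliminations the theorem needs are $(E_7,\l'=E_6,p=5)$ and $(E_8,\l'=E_7,p=7)$.

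Your existence construction for $(E_8,B_2,5)$ also has a gap. Running Propositions~\ref{h1moduleiszero} and \ref{h1ofqis0} ``in reverse'' from $\opH^1(\bar\h,L(2,0))\neq 0$ is not valid in the $A_3A_4$-parabolic: there $\q$ is non-abelian with five levels, a nonvanishing $\opH^1$ on a \emph{composition factor} need not survive to the level module $\Lie(Q_i/Q_{i+1})$, and even when it does, a non-conjugate complement exists a priori only in the truncation $\bar\h+\q/\q_{i+1}$; lifting it to a genuine subalgebra of $\bar\h+\q$ is an unaddressed splitting (second-cohomology-type) obstruction. Note that the paper's own reverse-direction argument of this kind (Lemma~\ref{mtforg2}) is confined to an abelian radical $\q\cong V_{27}$, where the cocycle--complement correspondence is exact. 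For $B_2$ the paper instead sidesteps cohomology entirely: it embeds $\h$ in the subsystem subalgebra of type $D_8$ via $T(2,0)+k=k|L(2,0)|k+k$ (so $\h$ lands in a $D_7$-parabolic by Lemma~\ref{girclassical}), computes $\bigwedge^2(T(2,0)+k)\cong T(2,0)+L(0,2)+T(2,2)$ as a summand of $\g\downarrow H$, and observes that the $95$-dimensional tilting summand $T(2,2)\cong L(2,0)|L(2,2)|L(2,0)$ --- the $H_1$-injective hull of $L(2,0)$, hence still indecomposable over $\h$ --- exceeds the $91$-dimensional bound on indecomposable summands available inside a $D_7$-Levi, so $\h$ is non-$G$-cr. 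To salvage your route you would have to compute the actual level modules of the $A_3A_4$-parabolic, verify nonvanishing of $\opH^1$ at a level (not merely a composition factor), and solve the lifting problem; none of this is in the proposal.
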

\begin{proof}Assume $\h$ is of type $B_2$ and non-$G$-cr. By (\ref{VinTable}) we have at least one of $L(2,0)$ and $L(1,3)$ occurs as an $\bar\h$-composition factor of $\q$ when $p=5$ or $L(4,0)$ occurs as an $\bar\h$-composition factor of $\q$ when $p=7$. Moreover, by Lemma \ref{nobadthingsinclassicalLevis} we have $\g$ is of type $E_7$ or $E_8$.  

First suppose $p=5$. When $\g$ is of type $E_8$ we construct an example of a non-$G$-cr subalgebra of type $B_2$. Let $\m$ be a maximal subsystem subalgebra of type $D_8$. We embed a subalgebra $\h$ of type $B_2$ into $\m$ via the representation $T(2,0) + k = k|L(2,0)|k + k$. (Note that $T(2,0)$ is self-dual and odd-dimensional, hence is an orthogonal representation for $\h$.) Therefore $\h$ is contained in a $D_7$-parabolic subalgebra of $\m$: by Lemma \ref{girclassical}, it is in some parabolic, and it stabilises a singular vector, the stabiliser of which is a $D_7$-parabolic. Then $\h$ is thus in a $D_7$-parabolic of $\g$. We know that a Levi subalgebra of type $D_7$ acts as $L(\varpi_1)^2 + L(\varpi_2) + L(\varpi_6) + L(\varpi_7) +0$ on $\g$. In particular, the largest summand is $91$-dimensional. Now consider $H$, a subgroup $B_2$ of $G =  E_8$ embedded into $D_8$ via $T(2,0) + k$. Then $\bigwedge^2(T(2,0) + k)$ occurs as direct summand of $\g \downarrow H$. For any $p>2$, we have that $\bigwedge^2 W$ is a direct summand of $W^{\otimes 2}$, hence if $W$ is tilting, $\bigwedge^2 W $ is also. In our case, this implies  $\bigwedge^2(T(2,0) + k) \cong T(2,0) + L(0,2) + T(2,2)$. Therefore, $H$ has a $95$-dimensional indecomposable summand $M\cong T(2,2)\cong L(2,0)|L(2,2)|L(2,0)$ on $\g$. We may identify $T(2,2)$ with the $H_1$-injective hull of $L(2,0)$, which restricts to $H_1$ indecomposably. As the category of representations for $H_1$ is equivalent to the category of $p$-representations for $\h$, this shows $\h$ has an indecomposable summand $M\downarrow \h$ of dimension at least $95$ on $\g$; thus $\h$ cannot live in a Levi subalgebra of type $D_7$, proving it is non-$G$-cr. 

Now suppose $\g$ is of type $E_7$. Then by Lemma \ref{nobadthingsinclassicalLevis}, we have $\l'$ is of type $E_6$ and $V_{27} \downarrow \bar\h$ has a composition factor $L(2,0)$ (we rule out $L(1,3)$ since it is $52$-dimensional). Using \cite[6.22]{Lub01}, the only irreducible $\bar \h$-modules of dimension at most $14$ are $L(2,0)$, $L(1,1)$, $L(0,2)$, $L(1,0)$, $L(0,1)$ and $k$. Let $\s$ denote a long root $\sl_2$-subalgebra of $\bar\h$. The following table lists the composition factors of the restrictions of the irreducible $\bar\h$-modules above to $\s$.
\begin{center}\begin{tabular}{l l l}$\lambda$ & $\dim L(\lambda)$ & $L(\lambda)\downarrow\s$\\\hline
$(0,0)$ & $1$ & $k$\\
$(0,1)$ & $4$ & $L(1)/k^2$\\
$(1,0)$ & $5$ & $L(1)^2/k$\\
$(0,2)$ & $10$ & $L(2)/L(1)^2/k^3$\\
$(1,1)$ & $12$ & $L(2)^2/L(1)^3$\\
$(2,0)$ & $13$ & $L(2)^3/L(1)^2$\end{tabular}\end{center}
A non-zero nilpotent element $e$ of $\s$ satisfies $e\in[\bar\h_e,\bar\h_e]$ i.e. it is reachable in $\g$. Thus from \cite{PreSteNilp} it is of type $A_1$, $2A_1$, $3A_1$, $A_2+A_1$, $A_2+2A_1$ or $2A_2+A_1$. As in Lemma \ref{mtfora2} we establish that the composition factors of $\s$ on $V_{27}$ must be as follows:
\begin{center}\begin{tabular}{l l}$\OO$ & $V_{27}\downarrow\s$\\\hline
$A_1$ & $L(1)^6/k^{15}$\\
$2A_1$ & $L(2)/L(1)^8/k^8$\\
$3A_1$ & $L(2)^3/L(1)^6/k^6$\\
$A_2+A_1$ & $L(3)/L(2)^4/L(1)^4/k^3$\\
$A_2+2A_1$ & $L(3)^2/L(2)^3/L(1)^4/k^2$\\
$2A_2+A_1$ & $L(4)/L(3)^2/L(2)^3/L(1)^2/k$\end{tabular}\end{center}
Comparing the above two tables, we find that there is just one possibility: $\OO$ is of type $3A_1$ and the $\bar\h$-composition factors of $V_{27}$ are $L(2,0)/L(1,0)^2/k^4$. Let $\la e,h,f\ra=\s\subset \bar\h$ be an $\sl_2$-triple. By Proposition \ref{findingcoch}, up to conjugacy by $G_e$ we have that $h\in\Lie(\tau(\Gm))$ for $\tau$ an associated cocharacter to $e$. Up to conjugacy then, we may assume $e=e_{\alpha_1}+e_{\alpha_3}+e_{\alpha_6}$, $h=h_{\alpha_1}+h_{\alpha_3}+h_{\alpha_6}$ and there is a nilpotent element $e'$ in an $\sl_2$-subalgebra $\s'$ of $\bar\h$ which commutes with $e$ and $h$. The subalgebra $\s'$ must also be a long root $\sl_2$-subalgebra of $\bar\h$, and so $e'$ is also reachable of type $A_1^3$. Now, from \cite{LT11} we see that $\g_e(0)=C_e$ for $C_e$ a reductive group of type $A_2A_1$. We have that $C_e$ has six nilpotent orbits on $\g_e(0)$ corresponding to partitions of $(3,2)$; {\it viz.} $\{3,2+1,1+1+1\}\times\{2,1+1\}$. These can be computed in GAP as having orbit types in $E_6$ with labels $2 A_2 A_1$, $2 A_2$, $3 A_1$, $2 A_1$, $A_1$, $0$. Up to conjugacy by $C_e$ then, there is just one possibility for $e'$, which may be taken as $e_{\alpha_4+\alpha_5}+e_{\alpha_5+\alpha_6}$. The element $e_s:=e+e'$ is then a subregular nilpotent element of $\bar\h$, which in $\g$ is checked to have type $A_2+2A_1$. A corresponding $\sl_2$ subalgebra, $\s_s$ say has composition factors on $L(2,0)$ which are $L(4)/L(2)^2/k^4$. But the existence of the $L(4)$ composition factor is already incompatible with the action of an $\s_s$ on $V_{27}$ from the table above. This is a contradiction. 

Now suppose $p=7$. Then by Proposition \ref{h1sless64}, we have $L(4,0)$ occurring as a $\bar\h$-composition factor of $\q$. Thus the $\bar\h$-composition factors on $V_{56}$ are $L(4,0)/k^2$. One computes that the restriction of $V_{56}$ to a Levi subalgebra $\sl_2$ is completely reducible with composition factors $L(4)^5/L(3)^4/L(2)^3/L(1)^2/k^3$. This action is inconsistent with any of the Jordan blocks in \cite[Tables~2,~3]{SteComp}.\end{proof}

\begin{lemma}\label{mtforg2}Theorem \ref{maintheorem} holds when $\h$ is of type $G_2$.\end{lemma}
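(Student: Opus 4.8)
The plan is to use the reduction already established and then to construct the two genuine exceptions. By (\ref{VinTable}) and Table~\ref{tHs}, a non-$G$-cr subalgebra $\h$ of type $G_2$ forces $p=7$ and requires the $26$-dimensional module $V:=L(2,0)$ to occur as an $\bar\h$-composition factor of the nilradical $\q$ of a minimal parabolic $\p=\l+\q$ containing $\h$. By Lemma~\ref{nobadthingsinclassicalLevis}, either $(G,\h,p)$ is one of $(E_7,G_2,7)$ or $(E_8,G_2,7)$, or else $\l'$ has a simple factor of type $E_6$ or $E_7$. In the latter case $\l$ is a proper Levi, so $G$ is itself of type $E_7$ or $E_8$ and $p=7$; thus we again land in one of the two triples. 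Since no proper Levi of $G_2$, $F_4$ or $E_6$ has a factor of type $A_6$, $D_7$, $E_6$ or $E_7$, none of these groups admits a non-$G$-cr subalgebra of type $G_2$, leaving precisely $(E_7,G_2,7)$ and $(E_8,G_2,7)$ to be realised.

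For the final clause I would build non-$G$-cr subalgebras of type $G_2$ for both triples. Take first $G=E_7$ and embed $\h$ into the Levi subalgebra $\l'=\sl_7$ of type $A_6$ through the $7$-dimensional module $L(1,0)$; by Proposition~\ref{girclassical} this copy $\bar\h$ is $L$-irreducible, hence $G$-cr. A count gives $\dim\q=42$, with $\l'$-composition factors $L(\omega_3)$ and $L(\omega_1)$, the first being $\bigwedge^3$ of the natural module. Restricting $\bigwedge^3 L(1,0)$ to $\h$ produces $L(2,0)$ among its composition factors (over $\C$, $\bigwedge^3$ of the $7$-dimensional module is $V(2,0)\oplus V(1,0)\oplus V(0)$ of dimension $27+7+1=35$, and $V(2,0)$ reduces modulo $7$ with an $L(2,0)$-factor). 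Hence $V=L(2,0)$ appears in $\q\downarrow\bar\h$, and since $\opH^1(\h,L(2,0))\neq0$ by Table~\ref{tHs} there is a cocycle whose class is non-zero; the associated complement $\h'$ to $\q$ in $\bar\h+\q$, which is abstractly of type $G_2$, is not $Q$-conjugate to $\bar\h$ and so is non-$G$-cr by Lemma~\ref{hnongcr}.

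For $(E_8,G_2,7)$ the quickest route is to recall that $E_7$ is a Levi subgroup of $E_8$, so the subalgebra just produced, viewed inside this $E_7$, remains non-$E_8$-cr by Lemma~\ref{levinongcr}. Alternatively one repeats the cohomological construction inside the $A_6$ or $D_7$ Levi of $E_8$.

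The step I expect to be the main obstacle is extracting a genuinely non-$Q$-conjugate complement from the non-vanishing of $\opH^1(\h,V)$, since the nilradical $\q$ of the $A_6$-parabolic of $E_7$ is not abelian (it carries two levels). To deal with this I would run the graded argument behind Proposition~\ref{h1ofqis0} in the reverse direction: the factor $L(2,0)$ lies in a single layer $\Lie(Q_i/Q_{i+1})$, on which the first cohomology of $\h$ is non-zero, and a non-trivial class in that top layer cannot be undone by $Q$-conjugacy, which acts there only through coboundaries. Should this layer-by-layer bookkeeping become awkward, the tilting-module construction used for $(E_8,B_2,5)$ in Lemma~\ref{mtforb2} provides a clean alternative: embed $\h$ into a classical subsystem subalgebra via a tilting module and display an indecomposable summand of $\g\downarrow\h$ whose dimension exceeds that of any summand available inside a proper Levi.
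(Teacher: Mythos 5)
Your reduction (VinTable plus Lemma~\ref{nobadthingsinclassicalLevis} forcing $p=7$ and $G$ of type $E_7$ or $E_8$) and your $E_8$ step (push the $E_7$ example into an $E_7$-Levi via Lemma~\ref{levinongcr}) agree with the paper. But your construction for $(E_7,G_2,7)$ contains a genuine gap, and it is not the one you flagged. The fatal step is the inference from ``$L(2,0)$ is a composition factor of a layer of $\q$ and $\opH^1(\h,L(2,0))\neq 0$'' to ``there is a cocycle with non-zero class, hence a non-conjugate complement.'' Non-vanishing of $\opH^1$ on a composition factor does not give non-vanishing on the layer itself, and in your chosen $A_6$-parabolic it demonstrably fails. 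Since the natural module $L(1,0)$ for $\bar\h$ is tilting at $p=7$, the level-one layer $\bigwedge^3 L(1,0)$ is tilting as well (the same fact the paper uses in Lemma~\ref{mtforb2}), and its character forces $\bigwedge^3 L(1,0)\cong T(2,0)\oplus L(1,0)$ with $T(2,0)=k|L(2,0)|k$ --- not the semisimple module $L(2,0)+L(1,0)+k^2$ your weight count over $\C$ suggests. A short long-exact-sequence computation kills the cohomology: from $0\to k\to V(2,0)\to L(2,0)\to 0$ and $\opH^1(\h,k)=0$ one gets $\opH^1(\h,V(2,0))\hookrightarrow \opH^1(\h,L(2,0))\cong k$; from $0\to V(2,0)\to T(2,0)\to k\to 0$, since $T(2,0)^{\h}=\soc T(2,0)\subseteq V(2,0)$, the connecting map $\opH^0(\h,k)\to\opH^1(\h,V(2,0))$ is injective, hence an isomorphism onto the (at most one-dimensional) target, whence $\opH^1(\h,T(2,0))=0$. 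The level-two layer restricts to $L(1,0)$, which also has vanishing $\opH^1$ (it does not appear in Table~\ref{tHs}). So $\opH^1(\bar\h,\Lie(Q_i/Q_{i+1}))=0$ for both levels, and Proposition~\ref{h1ofqis0} says \emph{every} complement to $\q$ in $\bar\h+\q$ is $Q$-conjugate to $\bar\h$: the restricted $G_2\subset\sl_7$ inside the $A_6$-parabolic produces only $G$-cr subalgebras. Your worry about the two-level bookkeeping was therefore moot --- there is no non-trivial class on any layer to propagate.

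The paper avoids this trap by choosing a different parabolic and a different Levi subalgebra: it takes the $E_6$-parabolic of $E_7$, whose nilradical is a single (abelian) layer isomorphic to $V_{27}$ as an $\l'$-module, and restricts along the \emph{maximal} $G_2$ subalgebra $\bar\h$ of $\Lie(E_6)$ at $p=7$, for which $V_{27}\downarrow\bar\h$ involves $L(2,0)$ in such a way that $\opH^1(\bar\h,\q)\cong k$ genuinely; Lemma~\ref{hnongcr} then yields the non-$G$-cr subalgebra (non-conjugacy to $\bar\h$ suffices, since $Q$-conjugation fixes the projection to $\l$). The moral is that the module structure of the layer --- not its composition factors --- decides everything: your $A_6$-layer packages $L(2,0)$ inside a tilting module, where the trivial factors exactly cancel the cohomology, while the paper's $E_6$-layer does not. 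Your fallback suggestion of imitating the tilting-summand argument of Lemma~\ref{mtforb2} is only a sketch and would need a concrete indecomposable summand of $\g\downarrow\h$ too large for any Levi; as written, the proposal does not establish the existence half of the lemma.
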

\begin{proof}
Assume that $\h$ is a non-$G$-cr subalgebra of type $G_2$. By (\ref{VinTable}), we have $p=7$ and $\g$ is of type $E_7$ or $E_8$. In both cases, we will construct a non-$G$-cr subalgebra of type $G_2$.  

Let $\g$ be of type $E_7$ and $p=7$. Then $\g$ contains a parabolic subalgebra $\p = \l + \q$, with $\l'$ of type $E_6$ and the $\l'$-composition factor of $q$ is $V_{27}$. Now, $\l'$ contains a maximal subalgebra $\bar{\h}$ of type $G_2$ and $V_{27} \downarrow \bar{\h} = L(2,0)$. By Proposition \ref{h1sless64}, we have $\opH^1(\bar{\h}, \q) \cong k$. Thus, by Lemma \ref{hnongcr}, there must exist a non-$G$-cr subalgebra $\h$ of type $G_2$. 

Now let $\g$ be of type $E_8$ and $p=7$. Then $\g$ has a Levi factor $\l'$ of type $E_7$ and by Lemma \ref{levinongcr}, the non-$L'$-cr subalgebra of $E_7$ constructed above is therefore a non-$G$-cr subalgebra of $\g$.  
\end{proof}

\subsection{Subalgebras of type $W_1$} \label{sec:w1s}

 Our strategy in this section is slightly different. When $p = 7$ and $\g$ is of type $F_4$ or $p=5,7$ and $\g$ is of type $E_6$, $E_7$ or $E_8$ we construct an example of a non-$G$-cr subalgebra of type $W_1$. In all other cases, we use calculations in GAP to show that each subalgebra of type $W_1$ is $G$-cr. To do this we rely on the following result:

\begin{lemma}[cf. {\cite[Thm.~1.1]{HSMax}}] \label{W1s}
Let $\g$ be a simple classical Lie algebra of exceptional type. Suppose $\h \cong W_1$ is a $p$-subalgebra of $\g$ and $p$ is a good prime for $\g$. Let $\del \in \h$ be represented by the nilpotent element $e \in \g$. Then the following hold: 
\begin{enumerate}
\item $e$ is a regular element in a Levi subalgebra $\l$ of $\g$ and the root system associated to $\l$ is irreducible. 
\item For $h(L)$ the Coxeter number of $\l$, we have either $p = h(L)+1$ or $\l$ is of type $A_n$ and $p=h(L)$. 
\end{enumerate}
\end{lemma}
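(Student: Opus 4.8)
The plan is to read off from the multiplication table of $\h\cong W_1$ enough data about $e$ to pin down its nilpotent orbit, and then to match that data against the known Jordan-block behaviour of regular nilpotent elements. First I would record the internal structure of $W_1$: the elements $e=\del$, $h=-2X\del$, $f=-X^2\del$ form an $\sl_2$-triple in $\h$, the element $h$ is toral (as $(X\del)^{[p]}=X\del$) and lies in $\im\ad e$ (since $h=[\del,-X^2\del]$). Because $\ad(\del)$ sends $X^i\del\mapsto iX^{i-1}\del$ with each coefficient $1,\dots,p-1$ invertible, $\ad_\h(e)$ is a single regular Jordan block of size $p$; as $\h$ is an $\ad(e)$-stable subspace of $\g$, the operator $\ad_\g(e)$ has a Jordan block of size at least $p$. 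On the other hand $\h$ is a $p$-subalgebra and $\del^{[p]}=0$, so $e^{[p]}=0$ and thus $\ad_\g(e)^p=\ad(e^{[p]})=0$. Hence the largest Jordan block of $\ad_\g(e)$ is exactly $p$.

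Next I would locate $e$ inside a Levi. By the associated-cocharacter theory of \cite{Jan04}, $e$ is distinguished in $\l=\Lie(L)$ for some Levi $L$; and when $e$ lies in no orbit with an $A_{p-1}$-factor, Proposition \ref{findingcoch} allows me to take the associated cocharacter $\tau$ with $\Lie(\tau(\Gm))=\langle h\rangle$, so that $t:=X\del=-\tfrac12 h\in\l'$. I would then upgrade ``distinguished'' to ``regular'' and show $\l'$ is simple: the toral element $t$ acts on $\h$ with the $p$ distinct eigenvalues $-1,0,\dots,p-2$ exhausting $\F_p$, and I would check that this forces $t$ to be regular semisimple in $\l'$ (no root of $\l'$ may have $\ad(t)$-eigenvalue $0$, i.e. height divisible by $p$), whence $e$ is the regular nilpotent of $\l'$; simplicity of the root system of $\l'$ I would deduce from the simplicity of $\h$ together with the indecomposability of the $p$-dimensional $\ad(e)$-string. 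The excluded $A_{p-1}$-factor case will reappear as the type-$A_n$ alternative.

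Finally I would pin down the relation between $p$ and $h:=h(\l')$. On one side, $e^{[p]}=0$ forces $p\geq h$, by the description of the Jordan blocks of a regular nilpotent used already in the proof of Lemma \ref{mtfora1a}. On the other side, the $W_1$-string realises the single size-$p$ block of $\ad_\g(e)$ with the (deliberately non-symmetric) $\tau$-weights $2,0,-2,\dots,4-2p$; requiring this to be compatible with the Jordan-block and weight data of \cite{Law95} and \cite{SteComp} for the regular nilpotent of $\l'$ leaves only $p\leq h+1$. Together these give $p\in\{h,h+1\}$, and since the only simple type with odd Coxeter number $p$ is $A_{p-1}$, the boundary value $p=h$ occurs exactly when $\l'=A_{p-1}$, giving the stated dichotomy.

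I expect the sharp upper bound $p\leq h+1$ to be the main obstacle. A naive comparison of the eigenvalues of $t$ on $\h$ with the height grading of $\l'$ (whose eigenvalues are the residues of $-(h-1),\dots,h-1$) only yields the weaker bound $p\leq 2h-1$, and one cannot do better by weights alone: the top elements $X^j\del$ of $\h$ need not lie in $\l$ --- indeed $\h\cap\l$ can be as small as the $\sl_2$-triple --- so the most negative $\tau$-weight $4-2p$ is not bounded below using $\l'$ only. Reaching $p\leq h+1$ therefore seems to need the explicit nilpotent-orbit data for the finitely many Levis $\l'$ of each exceptional $\g$, matching the forced block pattern orbit-by-orbit; this is the bookkeeping the paper carries out in GAP, and it is where, following \cite{HSMax}, I would concentrate the effort.
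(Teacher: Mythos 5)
Your preliminary computations are correct, and they do match the opening moves of the argument this lemma actually rests on: $(\del,-2X\del,-X^2\del)$ is an $\sl_2$-triple in $\h$, the element $-2X\del$ is toral and lies in $\im\ad e$, the coefficients $1,\dots,p-1$ make $\ad_\h(e)$ a single Jordan block of size $p$, and since $W_1$ has trivial centre its $p$-structure is unique, so $e^{[p]}=0$ and the largest Jordan block of $\ad_\g(e)$ is exactly $p$. But note that the paper contains no proof of this lemma at all: the bracket ``cf.~\cite[Thm.~1.1]{HSMax}'' \emph{is} the proof, and the full argument lives in that reference. Measured against it, your proposal establishes neither assertion (i) nor assertion (ii); it stops where the real work begins.

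Two gaps are decisive. First, the step from ``the spectrum of $t=X\del$ on $\h$ is all of $\F_p$'' to ``$t$ is regular semisimple in $\l'$'' is a non sequitur: the action of $\ad t$ on the $p$-dimensional subspace $\h$ gives no control over its kernel on $\l'$. Concretely, if $e$ were distinguished but not regular in $\l$, the weighted diagram of $e$ carries some labels $0$, so $\l(\tau;0)$ properly contains a Cartan subalgebra and $t$ is automatically non-regular; excluding these orbits, excluding reducible Levis (a regular nilpotent in $\l_1\oplus\l_2$ supports every piece of structure you have established so far, so ``simplicity of $\h$ plus indecomposability of the string'' does not touch it), and handling the branch where the orbit has an $A_{p-1}$ factor --- where Proposition \ref{findingcoch} is not even available to you --- is precisely the content of (i). Second, for (ii) you concede that weight comparison only yields $p\le 2h-1$ and defer the sharp bound $p\le h+1$ to ``bookkeeping the paper carries out in GAP''; but the paper performs no such computation for this lemma. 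The GAP work in \S\ref{sec:w1s} \emph{presupposes} Lemma \ref{W1s} and serves the different purpose of proving the surviving $W_1$-subalgebras are $G$-cr (Lemma \ref{mtforw1}); the orbit-by-orbit analysis you are deferring is carried out in \cite{HSMax} itself. As written, then, the proposal is a correct warm-up plus an accurate diagnosis of where the difficulty lies, but not a proof of either statement.
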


This will allow us to construct in GAP a generic subalgebra $\h$ of type $W_1$ for a representative of each possible nilpotent element representing $\del$ and then show that $X^{p-1} \del$ is also contained in $\l$, hence $\h = \la \del, X^{p-1} \del \ra$ is contained in $\l$.   

\begin{lemma}\label{mtforw1}Theorem \ref{maintheorem} holds when $\h$ is of type $W_1$.\end{lemma}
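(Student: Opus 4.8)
The plan is to establish both halves of Theorem~\ref{maintheorem} for $\h\cong W_1$: the existence of a non-$G$-cr subalgebra of type $W_1$ exactly when $p=7$ and $G=F_4$, or $p\in\{5,7\}$ and $G\in\{E_6,E_7,E_8\}$, and the $G$-complete reducibility of every such subalgebra otherwise. First I would reduce to the case that $\h$ is a $p$-subalgebra, so that Lemma~\ref{W1s} is available. Since $\del\in W_1$ satisfies $\del^{[p]}=0$, it is represented by a nilpotent $e\in\g$, and Lemma~\ref{W1s} forces $e$ to be regular in a Levi subalgebra $\l$ with irreducible root system satisfying $p=h(\l)+1$, or else $\l$ of type $A_n$ with $p=h(\l)=n+1$; this leaves only a short list of $\l$ for each of $p=5,7$. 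The key structural point is that $W_1=\la\del,X^{p-1}\del\ra$, since bracketing $X^{p-1}\del$ repeatedly with $\del$ recovers every $X^i\del$ (the scalars $p-1,p-2,\dots,1$ being nonzero mod $p$); hence, with $e$ fixed, the subalgebra is determined by the single element $X^{p-1}\del$.

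For the $G$-cr direction I would fix an admissible pair $(\l,p)$ inside a group $G$ not on the exceptional list, and a representative $e$ of the corresponding regular nilpotent orbit. The toral element $X\del$ yields $h:=-2X\del$ with $[h,e]=2e$ and $h\in\im(\ad e)$; as $\l$ is never of type $A_{p-1}$ off the exceptional list, Proposition~\ref{findingcoch} puts $h$ in $\Lie(\tau(\Gm))$ for $\tau$ associated to $e$, so that $X^{p-1}\del$ must lie in the sum of $\tau$-weight spaces of a single residue class modulo $p$. This reduces the Witt relations to a finite system which I would solve generically in GAP, verifying that every solution has $X^{p-1}\del\in\l$. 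Thus $\h\subseteq\l$; choosing $\l$ minimal among Levi subalgebras containing $\h$, the subalgebra $\h$ is $L$-irreducible, hence $L$-cr, hence $G$-cr by Lemma~\ref{levinongcr}.

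For the exceptional cases I would instead construct non-$G$-cr examples as in Lemma~\ref{mtforg2}. Using the type of $\l$ from Lemma~\ref{W1s} — for instance $A_{p-2}=\sl_{p-1}$ when $G$ is of type $E_6,E_7,E_8$, or $B_3$ (or $C_3$) when $G=F_4$ and $p=7$ — I would embed $\bar\h\cong W_1$ into the corresponding Levi $\l'$ through its action on the natural module, so that $\del$ maps to a regular nilpotent $e$ of $\l'$ (a single Jordan block). Reading off the $\bar\h$-composition factors of $\q$ from \cite[Lem.~3.4]{LS96}, one finds among them the $(p-1)$-dimensional simple $W_1$-module $V$ for which $\opH^1(W_1,V)\neq0$ (by the representation theory of $W_1$ in \cite{BNW09}). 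Then $\opH^1(\bar\h,\q)\neq0$, and Lemma~\ref{hnongcr} yields a complement to $\q$ in $\bar\h+\q$ that is not $Q$-conjugate to $\bar\h$, i.e. a non-$G$-cr subalgebra of type $W_1$.

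The main obstacle is to make the GAP computation a genuine proof rather than a verification on examples: I must ensure that, after using the weight grading together with the residual conjugation centralising $e$ and $h$, only finitely many normalised $X^{p-1}\del$ remain, so that \emph{generic} truly exhausts all $W_1$-subalgebras with the given $e$, and that $X^{p-1}\del\in\l$ persists across the several inequivalent embeddings of a fixed Levi type into each non-exceptional $\g$. Two further points need care: the reduction to $p$-subalgebras at the outset, and the second case of Lemma~\ref{W1s}, where $\l$ is of type $A_{p-1}$, associated cocharacters are non-unique, and Proposition~\ref{findingcoch} is unavailable — but an $A_{p-1}$ subdiagram forces $(G,p)$ onto the exceptional list, so this case arises only where we build non-$G$-cr examples directly and the genericity argument is not needed.
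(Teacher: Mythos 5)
Your treatment of the $G$-cr direction is essentially the paper's own argument: after reducing to $p$-subalgebras via Lemma \ref{psubalgebraornongcr} (with the $A_{p-1}$ degeneracy pushed onto the exceptional list, exactly as in the paper), you invoke Lemma \ref{W1s}, place $h\in\Lie(\tau(\Gm))$ with $\Lie(\tau(\Gm))=\la X\del\ra$, constrain $X^{p-1}\del$ to the $\tau$-weight spaces congruent to $4-2p$ modulo $p$, and verify by a generic GAP computation using the Witt relations (the paper uses $\ad(e)^{p-1}f=-e$ and $[f,\ad(e)^if]=0$) that $X^{p-1}\del\in\l$. That half is fine and matches the paper, which runs this for $(F_4,B_2,5)$, $(E_7,D_6,11)$, $(E_7,E_6,13)$, $(E_8,D_6,11)$, $(E_8,E_6,13)$, $(E_8,D_7,13)$ and $(E_8,E_7,19)$.

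The existence half, however, has a genuine gap. You embed $\bar\h\cong W_1$ $L$-irreducibly in a Levi (of type $A_{p-2}$, or $C_3$ in $F_4$), observe that some composition factor $V$ of $\q$ has $\opH^1(W_1,V)\neq 0$, and conclude via Lemma \ref{hnongcr} that a non-$G$-cr complement exists. But Lemma \ref{hnongcr} requires you to exhibit a subalgebra that is \emph{not} $Q$-conjugate into $\l$, and non-vanishing of $\opH^1$ on a layer of $\q$ does not supply one: Proposition \ref{h1ofqis0} is a one-way implication. When $\q$ is non-abelian --- which it is in all your cases (the $A_{p-2}$-parabolics of the $E$-types are far from maximal, and the $C_3$-parabolic of $F_4$ has a two-level radical since $\alpha_1$ has coefficient $2$ in the highest root) --- a cocycle into a single layer need not lift to a genuine complement to all of $\q$, and even when it does, the $Q$-orbit on cocycles is governed by a non-linear unipotent action that can be strictly larger than translation by coboundaries, so a non-trivial class can still be conjugated away. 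The one place the paper makes your style of argument (Lemma \ref{mtforg2}) is precisely where $\q$ is abelian and irreducible ($\q\cong V_{27}$ for the $E_6$-parabolic of $E_7$), so that Proposition \ref{h1moduleiszero} gives an exact correspondence between $\opH^1$ and complements up to conjugacy. For $W_1$ the paper avoids the issue entirely: for $p=5$ it embeds $W_1$ in an $A_4$-Levi via the indecomposable action $k[X]/X^p\cong L(4)|k$ on the natural module --- immediately non-$L$-cr, hence non-$G$-cr by Lemma \ref{levinongcr}, with $E_7,E_8$ inherited from the $E_6$-Levi --- and for $p=7$ ($F_4$ in a $C_3$-parabolic, $E_6$ in an $A_5$-parabolic) it writes down explicit generators and proves non-conjugacy into the Levi by comparing socle dimensions of $\g$ via the MeatAxe. (A small further slip: $B_3$ cannot host your $F_4$ example, since the $6$-dimensional simple $W_1$-module is symplectic, not orthogonal; only the $C_3$ option is viable, as in the paper. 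Note also that your $L$-irreducible $A_{p-2}$ embedding forces you down the parabolic-complement route, whereas the paper's $A_{p-1}$-Levi embedding is non-$L$-cr on the spot.)
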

\begin{proof}
By Corollary \ref{kwcor}
, we may assume that $\h$ is non-$G$-cr and the projection of $\h$ to $\l'$ is a $p$-subalgebra unless $p=5,7$ and $\g$ has a Levi subalgebra of type $A_{p-1}$.

When $\g$ is of type $G_2$ all Levi factors are of type $A_1$, so this case is immediately discounted. 

Now suppose $\g$ is of type $F_4$. When $p=5$ we show that all subalgebras of type $W_1$ are $G$-cr but we postpone doing this here and give a general method below. When $p=7$ we claim the following  subalgebra is non-$G$-cr: \[\h = \la e_{0100}+e_{0010}+e_{0001},e_{-0122}+e_{-1222}+4\cdot e_{-1231}\ra.\] By checking the commutator relations hold, we see that $\h$ is isomorphic to $W_1$ (with the first generator mapped to $\del$ and the second mapped to $X^{p-1} \del$). Moreover, $\h$ is evidently contained in a $C_3$-parabolic subalgebra. Now, using the MeatAxe in GAP, we calculate that the socle of the adjoint module $\g \downarrow \h$ is $15$-dimensional. On the other hand, any subalgebra of type $W_1$ contained in a Levi subalgebra of type $C_3$ is conjugate to $\bar\h = \la e_{0100}+e_{0010}+e_{0001},e_{-0122} \ra$ but using the MeatAxe, we calculate that the socle of $\g \downarrow \bar\h$ is $24$-dimensional. Therefore $\h$ is not contained in a Levi subalgebra of $C_3$ and is thus non-$G$-cr. When $p \geq 11$, all subalgebras of $\g$ of type $W_1$ are $G$-cr by Lemma \ref{W1s}, since the largest Coxeter number of a proper Levi subalgebra of $\g$ is $6$.

Now let $\g$ be of type $E_6$. We construct an example of a non-$G$-cr subalgebra of type $W_1$ when $p=5,7$. For $p=5$ the subalgebra $\h \cong W_1$ embedded in a Levi subalgebra of type $A_4$ via the representation $k[X]/X^p\cong L(4) | k$ is non-$A_4$-cr and hence non-$G$-cr by Lemma \ref{levinongcr}. For $p=7$, consider the following subalgebra.  
\def\arraystretch{0.5} \arraycolsep=0pt 
\begin{align*}\h=\langle e_{\footnotesize\begin{array}{c c c c c}1&0&0&0&0\\&&0\end{array}}+
e_{\footnotesize\begin{array}{c c c c c}0&1&0&0&0\\&&0\end{array}}+
e_{\footnotesize\begin{array}{c c c c c}0&0&1&0&0\\&&0\end{array}}+
e_{\footnotesize\begin{array}{c c c c c}0&0&0&1&0\\&&0\end{array}}+
e_{\footnotesize\begin{array}{c c c c c}0&0&0&0&1\\&&0\end{array}},\\
e_{\footnotesize\begin{array}{c c c c c}- 1&1&1&1&1\\&&0\end{array}}-
2\cdot e_{\footnotesize\begin{array}{c c c c c}- 1&1&2&1&1\\&&1\end{array}}+
e_{\footnotesize\begin{array}{c c c c c}- 1&2&2&1&0\\&&1\end{array}}+
e_{\footnotesize\begin{array}{c c c c c}- 0&1&2&2&1\\&&1\end{array}}\ra
\end{align*}
Again, one checks that $\h$ is isomorphic to $W_1$ and is evidently contained in an $A_5$-parabolic subalgebra. We then use the MeatAxe to calculate that the socle of $\g \downarrow \h$ is $21$-dimensional, whereas any subalgebra of type $W_1$ contained in a Levi subalgebra of type $A_5$ acts on $\g$ with a $43$-dimensional socle. Therefore $\h$ is non-$G$-cr. When $p \geq 11$, all subalgebras of type $W_1$ are $G$-cr by Lemma \ref{W1s}, since the largest Coxeter number of a proper Levi subalgebra of $\g$ is $8$.

Finally, suppose $\g$ is of type $E_7$ or $E_8$. Both contain an $E_6$-Levi subalgebra and therefore contain a non-$G$-cr subalgebra of type $W_1$ when $p=5,7$ by Lemma \ref{levinongcr}. We now consider the case $p \geq 11$. Therefore we have that $\bar\h$ is a $p$-subalgebra. By Lemma \ref{W1s}, it follows that $(\g,\l',p) = (E_7,D_6,11)$, $(E_7,E_6,13)$, $(E_8,D_6,11)$, $(E_8,E_6,13)$, $(E_8,D_7,13)$ or $(E_8,E_7,19)$ and that $\del=e$ is regular in $\l$. We rule out each possibility, as well as $(F_4,B_2,5)$, using calculation in GAP. All of the cases are similar and we give the general method.  

Let $e$ be the regular nilpotent element of $\l$. Then following the proof of \cite[Lem.~3.11]{HSMax} we have an associated cocharacter $\tau$ with $\Lie(\tau(\Gm)) = \la X\del \ra$. This cocharacter is explicitly given in \cite{LT11}. Now suppose $X^{p-1} \del$ is represented by the nilpotent element $f$. Then as $[X\del,X^{p-1} \del]=(p-1)X^{p-1} \del$, one calculates that $f$ is in the direct sum of the $\tau$ weight spaces congruent to $-2p+4$ modulo $p$. As in \cite[\S A.2]{HSMax} we use GAP to construct a generic nilpotent element $f_1$ in such weight spaces. Using the commutator relations in $W_1$, for example $\ad(e)^{p-1} f = -e$ and $[f,\ad(e)^i(f)] = 0$ for all $1\leq i\leq p-3$, we then find that $f_1$, and hence $f$, is contained in $\l$ and thus $\h = \la e, f \ra$ is contained in $\l$. Thus all subalgebras of type $W_1$ are $G$-cr in each possibility.   
\end{proof}

\subsection{Proofs of Theorems \ref{ssgcrcorollary} and \ref{maintheorem}.}
Of importance to us will be the following theorem:

\begin{theorem}[{\cite[Thm.~1.3]{HSMax}}]\label{hsw1orclassical}Let $\g$ be a simple classical Lie algebra of exceptional type.
Suppose $p$ is a good prime for $\g$ and let $\h$ be a simple subalgebra of $\g$. Then $\h$ is either isomorphic to $W_1$ or it is of classical type.
\end{theorem}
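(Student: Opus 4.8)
The plan is to marry the Block--Wilson--Strade--Premet classification of finite-dimensional simple Lie algebras with a small package of numerical invariants that any embedding $\h\hookrightarrow\g$ must respect. As $p$ is good for $\g$ we have $p\geq 5$, so the classification applies and forces $\h$ to be of classical type, of Cartan type (Witt $W(m;\underline{n})$, special $S(m;\underline{n})$, Hamiltonian $H(m;\underline{n})$ or contact $K(m;\underline{n})$), or---only when $p=5$---of Melikian type $\mathcal{M}(n_1,n_2)$. The classical case is the desired conclusion, so the whole content is to prove that no Cartan-type algebra other than $W_1 = W(1;\underline{1})$, and no Melikian algebra, embeds in an exceptional $\g$.

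First I would harvest the crude constraints. Since $\g$ has trivial centre in good characteristic, the adjoint action exhibits $\h$ as a faithful $\h$-submodule of $\g$, whence $\dim\h\leq\dim\g\leq 248$; this alone cuts the Cartan and Melikian candidates down to a short explicit list for each exceptional type and each good prime (typically $W(1;\underline{2})$, $W(2;\underline{1})$, $S(2;\underline{1})^{(1)}$, $H(2;\underline{1})^{(2)}$, $K(3;\underline{1})$ and $\mathcal{M}(1,1)$, plus a handful of two-parameter variants, all at small $p$). The sharpest pruning tool is nilpotency index: if $x\in\h$ is nilpotent then $\ad_\g x$ restricted to the submodule $\h$ equals $\ad_\h x$, so the nilpotency index of $\ad_\h x$ is a lower bound for the largest Jordan block of $\ad_\g x$ on the adjoint module, which by \cite{Law95} (and its nilpotent analogue \cite{PreSteNilp}) is at most $2h(G)-1$; finer block data on the minimal modules is available from \cite{SteComp}. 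Taking $x=\partial$, which is regular in $W(1;\underline{2})$ and so has index $p^2$, immediately kills $W(1;\underline{2})$ in $G_2, F_4$ and $E_6$ because $p^2 > 2h(G)-1$ for all $p\geq 5$; toral-rank considerations (the dimension of a torus being bounded by $\rk(G)$) trim a few further cases.

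After this, only a few borderline triples survive: the large algebras $E_7$ and $E_8$ paired with the smallest good primes, where dimension, nilpotency index and toral rank are all simultaneously compatible---for instance $W(1;\underline{2})$ and $\mathcal{M}(1,1)$ in $E_7$ at $p=5$, $W(1;\underline{2})$ in $E_8$ at $p=7$, and $W(2;\underline{1})$ (whose nilpotent elements have small index, so the block bound gives nothing) in $E_7$ and $E_8$. I expect these to be the genuine obstacle. For each I would locate the nilpotent orbit of $\g$ containing a regular nilpotent $e$ of $\h$, fix an associated cocharacter and the grading it induces (Proposition \ref{findingcoch} together with the tables of \cite{LT11}), and then test whether the centraliser $\g_e$ and its grading can support the remaining generators and relations of $\h$. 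As in the rest of this paper, this reduces to a finite, if delicate, calculation with explicit root elements (and the MeatAxe) showing that the bracket structure of $\h$ cannot be realised; the sole survivor throughout is $W_1$, which yields the stated dichotomy.
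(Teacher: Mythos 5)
First, a point of order: the paper does not prove this statement at all --- it is imported wholesale as \cite[Thm.~1.3]{HSMax}, so there is no internal proof to measure you against; the relevant comparison is with the proof in that cited paper, which does indeed run through the Block--Wilson--Strade--Premet classification and nilpotent-element analysis in the spirit you propose. Your sound steps: since $p$ is good we have $p\geq 5$ (and $p\geq 7$ for $E_8$), so the classification applies; faithfulness of the adjoint restriction is automatic for simple $\h$; and the sieve via the $\ad$-nilpotency index of $\partial\in W(1;\underline{2})$ (a single Jordan block of size $p^2$ on $\h$) against the bound $(\ad e)^{2h(G)-1}=0$ is legitimate, with one repair needed --- $\ad_\h x$ nilpotent does not make $x$ nilpotent in $\g$, so you must first pass to the Jordan decomposition $x=x_s+x_n$ and observe that the size-$p^2$ block lives in the $0$-generalised eigenspace $\g^{x_s}$, on which $\ad x$ acts as $\ad x_n$, before quoting \cite{Law95} or the cocharacter-grading bound.

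The genuine gap is that the elimination of the surviving candidates --- which is the entire content of the theorem --- is not carried out but only predicted (``I expect these to be the genuine obstacle,'' ``the sole survivor throughout is $W_1$''), and worse, your intermediate pruning claim is false as stated. You assert that after the dimension/Jordan-block/toral-rank sieves only triples involving $E_7$ and $E_8$ at the smallest good primes survive; but, for example, $H(2;\underline{1})^{(2)}$ with $p=7$ inside $F_4$ passes every sieve you name (dimension $p^2-2=47\leq 52$; it is restricted, so its nilpotent elements have small $\ad$-index and the block bound $2h-1=23$ gives nothing; its toral rank is well below $\rk(F_4)$), yet it does not appear in your survivor list --- so some sieve you have not articulated is being silently invoked. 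Similarly $K(3;\underline{1})$ (dimension $p^3=125\leq 133$) and $H(2;(2,1))^{(2)}$ (dimension $p^3-2=123$) fit inside $E_7$ at $p=5$ and are untouched by your tools. For all of these cases the numerical invariants are simply too coarse, and the published proof in \cite{HSMax} needs substantially heavier machinery to kill them: the analysis behind its Theorem 1.1 (quoted in this paper as Lemma \ref{W1s}) pinning the nilpotent element representing $\partial$ to a regular nilpotent of a Levi $\l$ with $p\in\{h(\l),h(\l)+1\}$, together with explicit centraliser and normaliser computations in GAP. Gesturing at ``testing whether $\g_e$ and its grading can support the remaining generators and relations'' names the right arena but does no work; until those cases are actually excluded, the proposal proves nothing beyond the easy dimension and index reductions.
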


\begin{proof}[Proof of Theorem \ref{maintheorem}]
As $\h$ must project to an isomorphic subalgebra of a proper Levi subalgebra in good characteristic, the theorem now follows from Theorem \ref{hsw1orclassical} and Lemmas \ref{rankatleast3}, \ref{mtfora1a}, \ref{mtfora1b}, \ref{mtfora2}, \ref{mtforb2}, \ref{mtforg2} and \ref{mtforw1} above.
\end{proof}

\begin{proof}[Proof of Theorem \ref{ssgcrcorollary}]
Suppose $G$ is connected reductive with $\g$ its Lie algebra and $\h$ some semisimple subalgebra. Lemma \ref{a1nongcr} provides the forward implication and so it remains to prove the reverse one. Since we are assuming that $p>h(G)$, it is in particular a very good prime, and so we have $\g\cong\g_1\times\g_2\times\dots\times\g_r\times \z$ where each $\g_i$ is simple and $\z$ is a central torus of $\g$. The parabolic subalgebras of $\g$ are the direct products of parabolic subalgebras of the simple factors, and similarly for the corresponding Levi factors. Hence if $\h$ is in a parabolic subalgebra $\p$ of $\g$, then it is in a Levi subalgebra $\l$ of $\p$ if and only if the projection of $\h$ to each simple factor $\g_i$ of $\g$ also has this property. Thus we reduce the proof of the theorem to the case $G$ is simple. Now if $G$ is classical, the result is supplied by Proposition \ref{thm:gcrclassical}. Thus we may assume that $G$ is exceptional. It will be shown in a forthcoming paper \cite{PSMax} by A.~Premet and the first author, that all semisimple subalgebras are direct sums of simple Lie algebras when $p > h(G)$. Putting this together with Theorem \ref{hsw1orclassical} and Lemma \ref{W1s}, we have that $\h=\h_1\times\cdots\times\h_r$ for each $\h_i$ a simple classical Lie algebra. 

Assume $\h$ is a subalgebra of a parabolic subalgebra $\p=\l+\q$ of $\g$. We will be done by Proposition \ref{h1ofqis0} if we can show that any simple $\h$-composition factor $V\cong V_1\otimes \dots\otimes V_r$ with $V_i$ a simple $\h_i$-module satisfies $\opH^1(\h,V)=0$. By the K\"unneth formula, we are done if we can show that $\opH^1(\h_i,V_i)=0$ for each $i$. This has already been shown to be impossible in the proof of Theorem \ref{maintheorem}: in the context of statement (\ref{VinTable}) we assumed the existence of such a module $V_i$, the contradiction of which proved Theorem \ref{maintheorem}.\end{proof}

\section{Unique embeddings of nilpotent elements into $\sl_2$-subalgebras. Proofs of Theorems \ref{jmthm}  and \ref{gooda1s}}\label{sec:jmthmprf}

\begin{proof}[Proof of Theorem \ref{jmthm}]
First suppose $p>h(G)$. We wish to show that the bijection (*) from the introduction holds.  To start with, \cite{Pom80} provides the surjectivity. It remains to prove that the map is injective.

Let $(e,h,f)$ be an $\sl_2$-triple of $\g$. By Theorem \ref{ssgcrcorollary} we have that $\h=\la e,h,f\ra$ is $G$-cr and by Lemma \ref{psubalgebraornongcr}, we have that $\h$ is a $p$-subalgebra. In particular, the element $e$ is nilpotent with $e^{[p]}=0$. We will show, under our hypotheses, that $\h$ is $L$-irreducible in a Levi subalgebra $\l=\Lie(L)$ of $G$ if and only if the element $e$ of $\h$ is distinguished in $\l$. One way round is easy: If $\h$ is contained in a Levi subalgebra $\l$ and $e$ is a distinguished element, then $\h$ cannot be in a proper parabolic subalgebra of $\l$, since if it did, then by Theorem \ref{ssgcrcorollary}, $\h$, hence also $e$, would be in a proper Levi subalgebra of $\l$. This is a contradiction as $e$ is assumed distinguished.

For the other direction, assume $\h$ is $L$-irreducible in some Levi subalgebra $\l=\Lie(L)$ of $\g$ and assume, looking for a contradiction, that $e$ is not distinguished. Let us see that this implies $\h$ is in a proper Levi subalgebra of $\l$. To do this, note first that $\h$ is a subalgebra of the Lie algebra $\l'=[\l,\l]$; we have $\l'$ is semisimple, due to our hypothesis that $p>h(G)$. We wish to show that $\h$ centralises a vector $w\in\l'$, since then $\h\subseteq(\l')_w$ will be in a proper parabolic subalgebra of $\l'$, hence by Theorem \ref{ssgcrcorollary}, in a proper Levi subalgebra of $\l'$. To see that $\h$ does indeed fix a vector, let us start by noting that $e$ has at least one Jordan block of size $1$ on the adjoint module of $\l'$. And for this, let $e\in\k\subseteq\l$ be a Levi subalgebra of $\l$ in which $e$ is distinguished and note that $p>h(G)$ implies that $\k$ contains no factors of type $A_{p-1}$ so that $\k=\k'\oplus\z(\k)$. That $\z(\k)\neq 0$ provides the existence of the requisite Jordan block. Moreover, \cite[Prop.~3.3]{HSMax} implies that $h\in\k$ also, so that $\z(\k)$ is also centralised by $h$. Let $0\neq v\in\z(\k)$. Then $[e,v]=0$ and $v\not\in\im \ad e$. If $[f,v]=0$ then $\h$ centralises $v$, so we are done. Otherwise, consider the $\h$-submodule $W:=\la v, \ad(f)v,\dots, \ad(f)^{p-1}v\ra$. (This is a submodule since each $\ad(f)^i(v)$ is an $\ad h$-eigenvector and so $W$ is $\ad e$-stable; additionally it is $\ad(f)$ stable since the fact that $\h$ is a $p$-subalgebra implies that $\ad(f)^p=0$.) Since $W':=\la \ad(f)v,\dots, \ad(f)^{p-1}v\ra$ is both $\ad e$- and $\ad f$-stable, we have that the $\h$-submodule $W$ is a non-trivial extension of $W'$ by $k$. But $\dim W'\leq p-1$ and the only simple $\h$-module which extends the trivial is the module $L(p-2)$ of dimension $p-1$. It follows that $W\cong k|L(p-2)$. Let $\widetilde{W}$ be an indecomposable summand of $\l$ containing $W$ as a submodule. We cannot have $\widetilde{W}$ projective since then its restriction to the subalgebra $ke\subset\h$ would give Jordan blocks of size $p$, which is not possible by the choice of $v$. Hence $\widetilde{W}$ is indecomposable and reducible. The structure of such modules was determined in \cite{Pre91} (see \cite[\S4.1]{Far09} for a more recent account): they have Loewy length $2$ with isotypic socle and head. Thus the head of $\widetilde{W}$ consists of trivial modules. But then the socle of $\widetilde{W}^*\subseteq (\l')^*\cong (\l')$ consists of trivial submodules. This implies that $\h$ fixes a $1$-space on $\l'$. This justifies the claim that $\h$ is in a proper Levi subalgebra of $\l$, which is the contradiction sought.

We now wish to show that if $\h$ is an $L$-irreducible subalgebra containing the nilpotent element $e$ distinguished in $\l$, then it is unique up to conjugacy in $L$. For this, recall that for any nilpotent element $e$ there is, by \cite{Pre95}, an associated cocharacter $\tau:\Gm\to L$ which gives a grading $\l=\bigoplus_{i\in\Z}\l(\tau;i)$. Moreover, the images of any two such cocharacters are conjugate by $L_e$. By Proposition \ref{findingcoch}, we may assume that if $(e,h,f)$ is an $\sl_2$-triple, that the element $h$ is contained in $\Lie(\tau(\Gm))$ and thus is unique up to conjugacy by $L_e$, contained in the graded piece $\l(\tau;0)$. Now suppose $e$ is distinguished and $(e,h,f)$ and $(e,h,f')$ are two $\sl_2$-triples. Then $f-f'\in\l_e=\l_e(\geq 0)$. But the weight of $h$ on $f$ and $f'$ is $-2$, so that $f-f'$ is an element of $\sum_{i>0}\l_e(-2+ip)$. Since $p>h(G)$ and the largest $j$ such that $\l(j)\neq 0$ is $2h-2$ (this follows from \cite[Prop.~30]{McN05}), we have that $\sum_{i>0}\l_e(-2+ip)=\l_e(-2+p)$. But since $e$ is distinguished in $\l$ we have that $\l_e(i)=0$ for all odd $i$, hence that $f-f'=0$ as required. This proves Theorem \ref{jmthm} for $p>h(G)$.

For $p\leq h(G)$ we appeal to Lemma \ref{a1nongcr}. If $(e,h,f)$ is an $\sl_2$-triple as described in the statement of the lemma, then $f$ is a regular nilpotent in a proper Levi subalgebra $\l$, say. Thus by \cite{Pom80} (or just another application of Lemma \ref{a1nongcr}) it can be embedded into an $\sl_2$-triple $(e',h',f)$ inside $\l$. Since $(e,h,f)$ is not contained in $\l$ we have that $(f,-h,e)$ and $(f,-h',e')$ are non-conjugate $\sl_2$-triples containing the common nilpotent element $f$ as required.\end{proof}

\begin{proof}[Proof of Theorem \ref{gooda1s}]
We wish to see that $\h=\Lie(H)$ for $H$ a good $A_1$-subgroup of $G$. Fix a pair $e\in\h$. The assumption $p>h(G)$ implies that all unipotent elements of $G$ are of order $p$. Let $u$ be one corresponding to $e$ under a Springer isomorphism. Now \cite[Props.~4.1 \& 4.2]{Sei00} furnish us with a good $A_1$-overgroup of any unipotent element $u$. Since all unipotent elements of $H$ are conjugate, the Lie algebra of a root group of $H$ will contain $e$. Thus $e\in\h=\Lie(H)$ as required.
\end{proof}

\section{Complete reducibility of $p$-subalgebras and bijections of conjugacy classes of $\sl_2$-subalgebras with nilpotent orbits. Proofs of Theorems \ref{sl2subalgebrabij} and \ref{psubalgebrathm}}\label{sec:psubs}

In this section we prove that there is a bijection
\[\{\text{conjugacy classes of }\sl_2\text{ subalgebras}\}\to\{\text{nilpotent orbits}\}\]
if and only if $p>b(G)$, where $b(G)$ is defined in the introduction. Here, the bijection is realised by sending a conjugacy class of $\sl_2$-subalgebras to the nilpotent orbit of largest dimension meeting it. It is not {\it a priori} clear that this would be well defined (as one conjugacy class of $\sl_2$ subalgebras could contain two non-conjugate nilpotent orbits of the same dimension) but we show that this never happens. 

We will need two lemmas.

\begin{lemma}\label{historal} If $p>b(G)$ then for any $\sl_2$-triple $(e,h,f)$ the elements $e$ and $f$ are nilpotent and the element $h$ is toral.\end{lemma}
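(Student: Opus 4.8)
The plan is to treat the two assertions separately: first that $e$ and $f$ are nilpotent, then that $h$ is toral. Throughout I use that $p>b(G)$ forces $p$ to be good for $G$ and, crucially, that no Levi (indeed no reductive subgroup of maximal rank) of $G$ has a factor of type $A_{rp-1}$, so that Proposition~\ref{findingcoch} applies to \emph{every} nilpotent element of $\g$. I also use that the abstract Jordan decomposition in $\g$ is compatible under $\ad$ with the one in $\gl(\g)$ (valid for $p$ good), together with the identity $D(x^{[p]})=(\ad x)^{p-1}(Dx)$ for a derivation $D$ of a restricted Lie algebra. Write $\h=\la e,h,f\ra$.

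For nilpotency, write $e=e_s+e_n$ for the Jordan decomposition in $\g$. Applying the derivation identity with $D=\ad h$ gives $(\ad h)(e^{[p]})=(\ad e)^{p-1}(2e)=0$, and with $D=\ad f$ gives $(\ad f)(e^{[p]})=(\ad e)^{p-1}(-h)=0$, since $(\ad e)^2 h=0$ and $p\geq 3$; trivially $(\ad e)(e^{[p]})=0$. Iterating, $e^{[p^i]}\in\c_\g(\h)$ for all $i\geq 1$. As $e_s$ lies in $\sum_{i\geq 1}k\,e^{[p^i]}$, it follows that $e_s\in\c_\g(\h)$, so $e_s$ centralises the whole triple. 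Setting $\m=\c_\g(e_s)=\Lie(M)$ with $M=C_G(e_s)^\circ$ reductive, we get $e,h,f\in\m$ and $e_s\in\z(\m)$. Since $p$ is good for $M$, we have $\m=\m'\oplus\z(\m)$, and the projection $\pi\colon\m\to\z(\m)$ is $\ad h$-equivariant because $h\in\m$. As $e_n\in\m'$, we have $\pi(e)=e_s$, and applying $\pi$ to $(\ad h)(e)=2e$ gives $0=(\ad h)(e_s)=\pi\big((\ad h)e\big)=2e_s$, whence $e_s=0$. Thus $e$ is nilpotent, and the same argument applied to the $\sl_2$-triple $(f,-h,e)$ shows $f$ is nilpotent.

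For torality, decompose $h=h_s+h_n$. Since $e$ is an eigenvector of the operator $\ad h$ with eigenvalue $2$, it lies in the corresponding generalised eigenspace, so $(\ad h_s)(e)=2e$ and $(\ad h_n)(e)=0$; likewise $(\ad h_s)(f)=-2f$ and $h_n$ kills $e,h,f$, i.e.\ $h_n\in\c_\g(\h)$. I therefore aim to prove $(\ad h)^p=\ad h$ on all of $\g$. Granting this, $\ad h$ is a semisimple operator with eigenvalues in $\F_p$, so $\ad(h^{[p]}-h)=0$ and $h^{[p]}-h\in\z(\g)$; as $p>b(G)$ rules out the non-smooth centres that produce nilpotents in $\z(\g)$, this forces $h_n=0$, and since $h=[e,f]\in[\g,\g]$ has integral weights under all roots, $h$ is then toral. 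Now $\ad h$ vanishes on $\c_\g(\h)$, and on $\h$ itself one checks directly $(\ad h)^p=\ad h$ (the weights $2,0,-2$ are fixed by $x\mapsto x^p$); so the claim reduces to the nontrivial $\h$-composition factors of the adjoint module, which I must show are \emph{restricted} $\sl_2$-modules — equivalently, that the nilpotent operator $\ad e$ has no Jordan block carrying a non-integral $h$-weight.

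This last point is the main obstacle, and is exactly where $p>b(G)$ is needed rather than merely $p$ good. The route I would take is to produce, via Premet's theory (\cite{Pre95},\cite{Jan04}), an associated cocharacter $\tau$ for $e$ and let $h_0$ span the toral line $\Lie(\tau(\Gm))$, so that $[h_0,e]=2e$ with integral weights and $h-h_0\in\c_\g(e)\subseteq\g(\tau;\geq 0)$. A non-integral $h$-weight on some $\ad e$-block would force the semisimple part of $h-h_0$ (which commutes with $\ad e$, hence acts as a scalar on each block) to act on that block by a non-$\F_p$ scalar, producing a non-restricted irreducible constituent for $\la e\ra$. By the Kac--Weisfeiler input used in Corollary~\ref{kwcor}, such a constituent has dimension exactly $p$ and realises $e$ as a regular nilpotent of an $\sl_p$, i.e.\ exhibits a factor of type $A_{p-1}$, contradicting $p>b(G)$. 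Once every nontrivial constituent is seen to be restricted we obtain $(\ad h)^p=\ad h$ on $\g$, completing the proof that $h$ is toral.
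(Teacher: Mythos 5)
Your first half (nilpotency of $e$ and $f$) is correct and genuinely different from the paper's route: the paper deduces it by showing a non-$p$-subalgebra $\sl_2$ must be non-$G$-cr (Lemma \ref{psubalgebraornongcr}), placing $\h$ in a minimal parabolic whose Levi projection $\bar\h$ is a $p$-subalgebra, and reading off $p$-nilpotency of $e,f$ from $e\in\la\bar e\ra+\q$; your argument via $e^{[p^i]}\in\c_\g(\h)$, the Strade--Farnsteiner fact $e_s\in\sum_{i\geq 1}k\,e^{[p^i]}$, and the central projection in $\m=\c_\g(e_s)$ is a nice self-contained alternative. Two repairable caveats there: the decomposition $\m=\m'\oplus\z(\m)$ needs $p$ \emph{very good} for $M=C_G(e_s)^\circ$, not merely good (cf.\ $\gl_p$, where $\z\subseteq\m'$), and since $M$ is a pseudo-Levi rather than a Levi, the absence of $A_{rp-1}$ factors does not follow formally from the definition of $b(G)$ (which refers to subdiagrams of the Dynkin diagram); it is true, but requires a check on closed subsystems that you assert without proof.

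The torality half, however, has two genuine gaps, and they sit exactly where the paper does its real work. First, your reduction of $(\ad h)^p=\ad h$ to the $\h$-composition factors of $\g$ being restricted is invalid: restricted factors only give that $(\ad h)^p-\ad h$ is \emph{nilpotent} (it kills the associated graded), i.e.\ that $\ad h_s$ has eigenvalues in $\F_p$. Semisimplicity of $\ad h$ -- equivalently $h_n\in\z(\g)$ -- can a priori fail through non-split extensions between factors, and all you have established about $h_n$ is $h_n\in\c_\g(\h)$, which is far from $\z(\g)$; ruling out such extension behaviour is precisely the content of the lemma, so this step is circular in effect. Second, the Kac--Weisfeiler step is a non sequitur: a non-restricted $p$-dimensional constituent of the \emph{adjoint} module gives a homomorphism $\h\to\gl_p$ of an abstract subquotient, but this $\gl_p$ is not a subalgebra of $\g$ and "exhibits" no $A_{p-1}$ Levi subgroup of $G$. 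Contrast Corollary \ref{kwcor}, where the $A_{rp-1}$ Levi arises structurally, from the central part of the $p$-envelope of $\h$ forcing a non-very-good factor of a minimal Levi \emph{containing} $\h$ -- there is no analogous mechanism for a composition factor of $\ad$. (Also, your parenthetical claim that the semisimple part of $\ad(h-h_0)$ acts as a scalar on each Jordan block of $\ad e$ is false in general: the commutant of a nilpotent operator with repeated block sizes is not blockwise scalar.) The paper instead proves torality by a completely different device: having placed $\h$ as a complement to $\q$ over the $p$-subalgebra $\bar\h$, it computes $\opH^1(\sl_2,L(p-2))\cong k^2$ explicitly and observes that a basis of cocycle classes vanishes on $h$; feeding this into the filtration argument of Proposition \ref{h1ofqis0} shows $\h$ is $Q$-conjugate to a complement in which $h$ equals its (toral) projection to $\l$, and torality is conjugation-invariant. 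Your proposal has no substitute for this cocycle normalisation, so the torality assertion remains unproved.
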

\begin{proof}Suppose the statement is false. Then we may assume that $\h=\la e,h,f\ra\cong \sl_2$ is a non-$p$-subalgebra. As there are no Levi subalgebras of type $A_{rp-1}$, we have that $\h$ is non-$G$-cr by Lemma \ref{psubalgebraornongcr}. Thus $\h$ lives in a parabolic subalgebra $\p=\l+\q$ which may be chosen minimally subject to containing $\h$ such that the projection $\bar \h$ of $\h$ to a Levi subalgebra $\l$ is $G$-irreducible. By Lemma \ref{psubalgebraornongcr} again, it follows that $\bar\h$ is a $p$-subalgebra. In particular, the images $\bar e$ and $\bar f$ of $e$ and $f$ respectively are $p$-nilpotent. But as $e$ and $f$ are contained in the $p$-nilpotent spaces $\la\bar e\ra+\q$ and $\la\bar f\ra+\q$, respectively, they are also $p$-nilpotent. 

Now  $\h$ is a complement to $\q$ in the semidirect product $\bar\h+\q$ and $\q$ has a filtration by restricted $\l$-modules, thus a filtration by restricted $\bar\h$-modules. By (\ref{VinTable}), it follows that one of the $\bar\h$-composition factors of $\q$ is isomorphic to the $\bar\h$-module $L(p-2)$. We have $\opH^1(\sl_2,L(p-2))\cong k^2$. Let us describe a set of cocycle classes explicitly. Define $\gamma_{a,b}:\sl_2\to L(p-2)$ on a basis $e,h,f\in\sl_2$ via $\gamma_{a,b}(h)=0$, $\gamma_{a,b}(e)=av_{-p+2}$, $\gamma_{a,b}(f)=bv_{p-2}$ where $v_{p-2}$ and $v_{-p+2}$ are a chosen pair of highest and lowest weight vectors in $L(p-2)$. Then one checks that the $\gamma_{a,b}$ satisfy the cocycle condition (\ref{cocyclecondition}), so for instance
\[0=\gamma_{a,b}(h)=\gamma_{a,b}([e,f])=e\gamma_{a,b}(f)-f\gamma_{a,b}(e)=0-0.\] Further, if for some $v\in L(p-2)$, we have $\gamma_{a,b}(x)=\gamma_{a',b'}(x)+x(v)$ for all $x\in\sl_2$, then applying to $h$, we see that $h(v)=0$, so that $0$ is a weight of $L(p-2)$. This happens if and only if $p=2$, which is excluded from our analysis. Now, since $\opH^1(\sl_2,L(p-2))\cong k^2$, we see that the classes $[\gamma_{a,b}]$ are a basis for $\opH^1(\sl_2,L(p-2))$. In particular, in any equivalence class of cocycles in $\opH^1(\sl_2,L(p-2))$, there is a cocycle which vanishes on $h$. In the present situation, this means, following the argument in \ref{h1ofqis0}, (or simply by observing that the restriction map $\opH^1(\bar\h,\Lie(Q_i/Q_{i+1}))\to\opH^1(\pi(h),\Lie(Q_i/Q_{i+1}))$ is zero) that $\h$ can be replaced by a conjugate in which the element $h\in\h$ satisfies $\pi(h)=h$, so that in particular, $h$ is toral. 
\end{proof}

\begin{lemma}\label{uniquerep}Up to conjugacy by $G=\SO(V)$ or $\Sp(V)$, there is precisely one self-dual representation $V$ of $\h:=\la e,h,f\ra \cong \sl_2$ of dimension $r$ with $p<r<2p$. We may construct $V$ in such a way that $e\in\h$ acts with a single Jordan block of size $r$ and such that $f$ acts with Jordan blocks of sizes $(p-1-i),(i+1)^2$. Moreover, $V$ is uniserial with structure $L(i)|L(p-2-i)|L(i)$ where $r=p+i+1$. Conversely, if $V$ is uniserial with structure $L(i)|L(p-2-i)|L(i)$ then up to swapping $e$ and $f$, we may assume $e$ acts with a single Jordan block and $V$ is self-dual.\end{lemma}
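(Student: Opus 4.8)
The plan is to build the module by hand, since imposing that $e$ act as a single Jordan block rigidifies everything. Writing $r=p+i+1$ with $0\le i\le p-2$, on a basis $v_0,\dots,v_{r-1}$ I would set $e\,v_j=v_{j-1}$, $h\,v_j=(r-1-2j)v_j$ (the symmetric weight grading making $e$ raise weights by $2$), and $f\,v_j=(j+1)(r-1-j)\,v_{j+1}$. The bracket relations $[h,e]=2e$, $[h,f]=-2f$ are immediate, and $[e,f]=h$ holds because the scalars $d_j:=(j+1)(r-1-j)$ telescope: $d_j-d_{j-1}=r-1-2j$. By fiat $e$ is a single block of size $r$; and since $d_j$ vanishes on $0\le j\le r-2$ exactly at $j=i$ and $j=p-1$, the map $f$ has Jordan blocks of sizes $(i+1),(p-1-i),(i+1)$, i.e. $(p-1-i),(i+1)^2$.

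Next I would read off the socle series, bearing in mind that $h$-weights live in $k$, so $v_j$ and $v_{j+p}$ share a weight and $e$-chains longer than $p$ are allowed. One checks that $\la v_0,\dots,v_i\ra\cong L(i)$ and $\la v_0,\dots,v_{p-1}\ra$ are submodules, the latter of dimension $p$ with $\la v_0,\dots,v_{p-1}\ra/\la v_0,\dots,v_i\ra\cong L(p-2-i)$ and $V/\la v_0,\dots,v_{p-1}\ra\cong L(i)$; as $\ker e=\la v_0\ra$ is one-dimensional, every simple submodule contains $v_0$, so $\soc V=L(i)$ is simple and $V$ is uniserial with layers $L(i)\,|\,L(p-2-i)\,|\,L(i)$. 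Both indecomposability and self-duality I would extract from the central element $e^p\in U(\sl_2)$: it is nonzero (as $e$ has a block of length $r>p$), squares to $e^{2p}=0$ (as $r<2p$) and commutes with $f$, so $\End_{\sl_2}(V)=\la 1,e^p\ra\cong k[\nu]/(\nu^2)$ is local. The $e$- and $h$-invariance confine any invariant form to the anti-diagonals $a+b\equiv r-1\pmod p$, and $B(v_a,v_{r-1-a})=(-1)^a$ is readily checked to be $\sl_2$-invariant and nondegenerate; it is symmetric for $r$ odd and alternating for $r$ even, realising $V$ as the natural module of $\SO(V)$ or $\Sp(V)$.

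For uniqueness and the converse I would lean on the same rigidity. A self-dual module of dimension $r$ on which $e$ is a single block has symmetric weights (duality negates weights), so after the normalisation above the only freedom left in $f$ is the weight-$(-2)$ part of the centraliser of $\{e,h\}$, i.e. $f\mapsto f+\mu e^{p-1}$; testing invariance of a nondegenerate form against this gives the relation $2\mu\,B(v_0,v_{r-1})=0$ on the long anti-diagonal, and nondegeneracy forces $B(v_0,v_{r-1})\neq0$, hence $\mu=0$. Thus the self-dual $V$ with $e$ regular is unique (the remaining self-dual modules of dimension $r$ split off the projective Steinberg module $L(p-1)$, on which $e$ is not regular), its form is unique up to scalar, and two embeddings differ by an isometry taken in $\SO(V)$ or $\Sp(V)$. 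For the converse I would first note that the restricted projective cover of $L(i)$ has semisimple heart $L(p-2-i)\oplus L(p-2-i)$, so no restricted module is uniserial with layers $L(i)\,|\,L(p-2-i)\,|\,L(i)$; any such $M$ is therefore non-restricted, whence $e^p\neq0$ or $f^p\neq0$. Since each is central and kills both composition factor types, the nonzero one is the head-to-socle map of $\End_{\sl_2}(M)\cong k[\nu]/(\nu^2)$, of rank $i+1$; as $\sum_b\max(0,b-p)=i+1$ while $r<2p$ allows at most one block of size $>p$, that operator is a single block of size $r$. Swapping $e$ and $f$ by the Chevalley involution if necessary we take it to be $e$, and then self-duality (part of the standing hypothesis that $V$ is the natural module) forces $\mu=0$ as above.

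The construction and block counts are routine; the real content is uniqueness. The clean engine is that $e^p$ is at once central in $U(\sl_2)$ and square-zero on $V$, which simultaneously yields indecomposability, computes $\End_{\sl_2}(V)$, and --- through its rank --- forces the single-block shape in the converse. The main obstacle, and the place where self-duality is genuinely needed, is ruling out the one-parameter family $f+\mu e^{p-1}$ of uniserial modules with the same layers: all of these have $e$ a single block, yet only the self-dual member $\mu=0$ has $f$ with the smaller blocks $(p-1-i),(i+1)^2$, so without the nondegenerate form one cannot separate $V$ from its non-self-dual companions. A lesser point is checking that the realising isometry has determinant one in the orthogonal case, which is automatic since there $-1$ is central.
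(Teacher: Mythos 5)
Your construction half is correct, and in fact more self-contained than the paper's own treatment: the paper gets existence by citing Pommerening for the regular orbits in types $B_n$ and $C_n$ and derives the uniserial structure from $\Ext^1$-vanishing and self-duality, whereas you exhibit the module, the invariant form and the Jordan blocks of $e$ and $f$ explicitly; your use of the central element $e^p$ to compute $\End_{\sl_2}(V)=k\oplus ke^p$ and deduce indecomposability is also sound. Moreover, \emph{granting} the normal form $e\,v_j=v_{j-1}$, $h\,v_j=(r-1-2j)v_j$, your observation that the residual freedom is $f\mapsto f+\mu e^{p-1}$ and that invariance of the nondegenerate form kills $\mu$ is a legitimate variant of what the paper does (the paper instead absorbs its residual parameter by conjugating by $1+e^p$, having checked $1+e^p$ lies in $G$).

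The genuine gap is in the phrase ``after the normalisation above'', at exactly the point where the paper invokes its Lemma \ref{historal}. You assume without proof that $h$ acts semisimply with the symmetric weights $r-1-2j$, i.e.\ that every self-dual triple with $e$ a single block can be conjugated into the pair $(e,h_0)$. But $[h,e]=2e$ only forces $h=h_0+a(e)$ with $a(e)\in e\,k[e]$ nilpotent, and such terms genuinely occur: define $X\in\gl(V)$ by $Xv_j=\bigl(\tfrac{p-r}{2}+j-p+1\bigr)v_{j-p+1}$ for $j\geq p-1$ and $Xv_j=0$ otherwise. One checks directly that $[e,X]=e^p$, $[h_0,X]=-2X$, $[e^p,X]=0=[e^p,f]$ on $V$ (the last two because $r<2p$ and $(\ad e)^3f=0$), and that $X$ is skew-adjoint for your form $B$. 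Hence $(e,\,h_0+\beta e^p,\,f+\beta X)$ is, for every $\beta\in k$, an $\sl_2$-triple lying inside the same $\so(V)$ or $\Sp(V)$, with $e$ still a single Jordan block, $V$ still uniserial of shape $L(i)|L(p-2-i)|L(i)$, but $h$ \emph{not} semisimple when $\beta\neq0$ (indeed $h^p-h=-\beta e^p$). Your parametrisation of triples only sees those passing through the fixed pair $(e,h_0)$ and misses this whole family, so ``precisely one up to conjugacy'' does not follow from the argument as written. The paper excludes non-toral $h$ \emph{before} choosing weight vectors, via the separate cohomological argument of Lemma \ref{historal} ($h$ is toral whenever $p>b(G)$, and here $p>b(\SO(V))$, $b(\Sp(V))$ since $r<2p$); any complete blind proof must either reprove that input or supply a substitute, and nothing in your write-up does. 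The same tacit torality assumption infects your converse: ``non-restricted, whence $e^p\neq0$ or $f^p\neq0$'' overlooks the third $p$-central element $h^p-h$, and the subsequent normalisation by weight vectors again presumes $h$ diagonalisable.
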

\begin{proof}Concerning the first sentence of the lemma, the existence of a representation $V$ is implied by \cite{Pom80} applied to the regular nilpotent orbits in type $B_n$ and $C_n$ depending on the parity of $r$. Suppose $V$ did not consist of restricted composition factors. Then as $p<r<2p$, there would be precisely one which is not restricted, with at least one restricted factor. Thus $V$ would be decomposable and $e$ would certainly not act with a single Jordan block. Hence the composition factors of $V$ are restricted. 
If the socle were not simple, then there would be two linearly independent vectors $u,v$ for which $e\cdot u=e\cdot v=0$. But then the rank of $e$ cannot be $r-1$, contradicting the hypothesis that it acts with a single Jordan block.

Thus the socle is simple, isomorphic to $L(i)$ say. Then all the composition factors of $V$ are $L(i)$ or $L(p-2-i)$ since otherwise the vanishing of $\Ext^1$ between either of these and any other composition factor (by Lemma \ref{extA1}) would force a non-trivial direct summand. Also $V/\soc V$ must contain a submodule $L(p-2-i)$, or the socle would split off as a direct summand. Since $V$ contains the submodule $L(p-2-i)|L(i)$, self-duality forces it to contain a quotient $L(i)|L(p-2-i)$. As the simple socle is isomorphic to $L(i)$, the head is also simple and isomorphic to $L(i)$. Now, if there were two composition factors isomorphic to $L(p-2-i)$ then the dimension of $V$ would be at least $2p$, a contradiction. Thus the structure of $V$ is precisely $L(i)|L(p-2-i)|L(i)$, which has dimension $p+i+1$. Thus $r = p+i+1$, as required.

We must now show that there is just one such representation up to conjugacy. As $e$ has rank $r-1$, and $h$ is toral by \ref{historal}, there must be a vector of weight $-i$, say $w$, generating $V$ under $\h$. We have $\la e^{r-i-1}\cdot w,\dots, e^{r-1}\cdot w\ra$ spans $\soc V$, and if $r-p\leq j\leq r-1$ then $f\cdot e^j\cdot w\in\la e^{j-1}\cdot w\ra$, by uniqueness of the weights in the submodule $L(p-2-i)|L(i)$. Also by uniqueness of the weights of the quotient $L(p-2-i)|L(i)$ we must have $f\cdot e^i\cdot w \in\la e^{i-1}\cdot w, e^{i+p-1}\cdot w\ra$. As the endomorphism $e^p$ commutes with $e,f$ and $h$, the automorphism $v\mapsto v+e^p\cdot v$ of $V$ is an $\sl_2$-module homomorphism. Moreover, as $e$ preserves the form on $V$, so does $e^p$ and thus conjugating by an element $1+e^p$ of $G$ we may assume that $f\cdot e^j\cdot w \in\la e^{j-1}\cdot w\ra$ for some $0\leq j\leq i$. It is easy to check that this determines the action of $f$ completely, hence the action of $\h$.

Finally suppose $V$ is of the form $L(i)|L(p-2-i)|L(i)$. Consider the submodule $U$ with two composition factors. Then it is clear that either $e$ or $f$ has a Jordan block of size at least $p$ and by applying an automorphism if necessary, we may assume the former. Thus there is a vector $v_{-\bar\imath}$ of $h$-weight $-\bar\imath$ where $\bar\imath=p-2-i$ under which $e$ generates $U$. Let $v_{-\bar\imath+2s}:=e^s \cdot v_{-\bar\imath}$ for $0\leq s\leq p-1$ so that $U$ is spanned by the $v_j$. Inductively this determines the action of $f$ except for the action of $f$ on $v_{-\bar\imath}$ itself. Since $f$ sends this vector into the $-\bar\imath-2=i$th weight space of $U$ which is spanned by $v_i$, we have $f \cdot v_{-\bar\imath}=\lambda v_i$, for some $\lambda\in k$. Now take an $-i$th weight vector $w_{-i}\in V\setminus U$. Define $w_{-i+2s}=e^s \cdot w_{-i}$ for $0\leq s\leq i$. Then $V$ is spanned by the $w_j$ and $v_j$. Furthermore the action of $f$ is determined except on $w_{-i}$ itself. Since the $\bar\imath=p-2-i$ weight space is $1$-dimensional we must have $f \cdot w_{-i}=\mu v_{\bar \imath}$. But calculating $h \cdot w_{-i}=e \cdot f \cdot w_{-i} - f \cdot e \cdot w_{-i}$ we get $-iw_{-i}=\mu v_{-i} - iw_{-i}$. Thus $\mu=0$. Now the $w_j$ do not generate a submodule, so we must have $e \cdot w_i=\nu v_{-\bar\imath}$ for some $\nu\neq 0$. But then $h \cdot w_i = e \cdot f \cdot w_i - f \cdot e \cdot w_i$ gives us $iw_i=iw_i-\nu\lambda v_{-\bar\imath}$ so that $\lambda=0$. Replacing the $v_{j}$ by their division by $\nu$ we see that the action of $e,f$ and $h$ are now completely determined. It is easy to check directly that the module is self-dual and the Jordan blocks of $f$ are as claimed.\end{proof}

\begin{lemma}An indecomposable representation $V$ of $\h:=\la e,h,f\ra \cong \sl_2$ of dimension $r$ with $p<r<2p$ is either a quotient or submodule of a projective restricted module, or self-dual of the type described in Lemma \ref{uniquerep}.\label{2types}\end{lemma}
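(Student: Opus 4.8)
The plan is to separate cases according to whether $V$ is a restricted module, i.e.\ whether $e$ and $f$ act with $e^p=f^p=0$ as operators on $V$. First I would pin down the composition factors. Since $e,f$ act nilpotently and $h$ is toral (Lemma \ref{historal}), on any composition factor $S$ the $p$-central operators $e^p,f^p$ act both as scalars and nilpotently, hence as $0$, while $h^p=h$; thus the $p$-character of $S$ vanishes and $S$ is a restricted simple $L(0),\dots,L(p-1)$. By Lemma \ref{extA1} two such factors of a single indecomposable must have high weights $i$ and $\mu:=p-2-i$, so all factors of $V$ lie in the block of $u(\sl_2)$ with simples $L(i),L(\mu)$; here $\dim L(i)+\dim L(\mu)=p$ and, as $h$ is toral, the $h$-weights of $L(i)$ and of $L(\mu)$ partition $\F_p$. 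Hence if $L(i)$ occurs $a$ times and $L(\mu)$ occurs $b$ times in $V$, every $h$-weight space has dimension $a$ or $b$ and $\dim V=a\dim L(i)+b\dim L(\mu)$. Both $a,b\ge 1$ (an indecomposable with only one isotype would be simple of dimension $<p$), and $p<\dim V<2p$ forces $a\ne b$ and not both $a,b\ge 2$.

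Suppose first that $V$ is restricted. Then $V$ is a reducible indecomposable $u(\sl_2)$-module, non-projective since $\dim V<2p=\dim P(i)=\dim P(\mu)$. By \cite{Pre91} (see \cite[\S4.1]{Far09}) such a module has Loewy length $2$ with isotypic head and socle; together with the constraints above this leaves exactly $\mathrm{rad}\,P(i)$, $P(i)/\soc P(i)$, $\mathrm{rad}\,P(\mu)$ and $P(\mu)/\soc P(\mu)$ (the cases $(a,b)=(2,1)$ and $(1,2)$, each with a single source or a single sink). Each of these has a simple socle or a simple head, so, $u(\sl_2)$ being a Frobenius algebra and hence self-injective, each is a submodule or a quotient of an indecomposable projective restricted module. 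This is the first alternative.

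Now suppose $V$ is not restricted. Applying if necessary the automorphism of $\sl_2$ interchanging $e$ and $f$, I may assume $e^p\ne 0$. As $\dim V<2p$, the operator $e$ has exactly one Jordan block of size $>p$ and every other block has size $<p$. The crux is to show there is no other block, so that $e$ acts with a single Jordan block. Granting this, $\ker e$ is a line and every non-zero submodule of $V$ contains $\im(e^{\dim V-1})$; indeed the $e$-stable subspaces, a fortiori the $\sl_2$-submodules, are totally ordered, so $V$ is uniserial. Its consecutive factors then alternate between $L(i)$ and $L(\mu)$ (as $\Ext^1(L(i),L(i))=\Ext^1(L(\mu),L(\mu))=0$), and the dimension bound leaves only the structure $L(i)|L(\mu)|L(i)$ (or $L(\mu)|L(i)|L(\mu)$). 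Lemma \ref{uniquerep} then identifies $V$, up to interchanging $e$ and $f$, as the self-dual module of that type: the second alternative.

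The hard part is the single-block claim, which I would prove by contradiction from indecomposability. If $e$ had a further Jordan block then $\dim\ker e\ge 2$, giving two independent $e$-highest-weight vectors. Tracking $h$-weights, the unique large block already makes each weight multiplicity equal to $1$ except along one arc where it is $2$; since one of $a,b$ equals $1$, no extra block can meet the corresponding ``minority'' arc, so all extra blocks lie over the other arc. I would then show that a suitable two-dimensional subspace of $\ker e$ is mapped by $f$ into a strictly smaller weight space, yielding a non-zero vector annihilated by both $e$ and $f$; after correcting by a central $e^p$-term as in the proof of Lemma \ref{uniquerep}, such a vector generates a direct summand, contradicting indecomposability. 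Carrying out this collision argument uniformly in $i$—rather than only in the transparent case $i=0$, where the summand is a trivial module—is the sole delicate point, and it is precisely here that the toral hypothesis on $h$ and the bound $\dim V<2p$ are essential.
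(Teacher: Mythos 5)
Your overall architecture (reduce to restricted composition factors via the vanishing $p$-character, then split on whether $V$ itself is restricted, classifying the restricted case via \cite{Pre91} and \cite[\S4.1]{Far09}) is sound, and the restricted half is essentially correct. One small slip there: your numerical constraints do not exclude $(a,b)=(3,1)$, since $3(i+1)+(p-1-i)=p+2i+2<2p$ for small $i$; you need one further line (a Loewy-length-two module with simple socle $L(\mu)$ embeds in the injective hull $P(\mu)$, whose composition factors cannot accommodate $L(i)^3$, and dually for simple head) before ``this leaves exactly'' the four sub/quotient-of-projective modules. The genuine gap, however, is the crux of your non-restricted case: the claim that $e^p\neq 0$ forces $e$ to act with a single Jordan block is never proved, only sketched, and the sketch's punchline is invalid as stated. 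Producing a non-zero vector annihilated by both $e$ and $f$ yields a trivial submodule, and trivial submodules do not contradict indecomposability: $L(p-2)|k$ is an indecomposable module whose socle vector is killed by $e$, $f$ and $h$. To reach a contradiction you would have to construct a complement to the submodule your vector generates, and that is exactly the content you defer (``I would then show\dots''); the ``correction by a central $e^p$-term'' does not supply it, since in Lemma \ref{uniquerep} that trick normalises the action of $f$ inside an already-determined structure rather than splitting off summands. So the proposal is incomplete at its decisive step.

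It is worth noting that the paper sidesteps the Jordan-form dichotomy entirely, and reverses your logical direction. It first bounds the multiplicity of each simple factor by $2$, via an explicit weight-vector computation showing that no module has socle series $L(i)|L(\bar\imath)|(L(i)+L(i))$; it then enumerates the only three indecomposable shapes with factors $L(i)^2,L(\bar\imath)$ --- the two Loewy-length-two modules, which are precisely the quotient and submodule of $P(\bar\imath)$, and the uniserial $L(i)|L(\bar\imath)|L(i)$ --- and finally invokes the \emph{converse} clause of Lemma \ref{uniquerep} to identify the uniserial module as the self-dual one on which $e$ (or $f$) acts with a single Jordan block. In other words, the Jordan block structure is deduced from the module structure, not the other way around. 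If you bound the multiplicities first (your block and $h$-weight bookkeeping, with $h$ toral by Lemma \ref{historal} and the Ext-linkage of Lemma \ref{extA1}, nearly does this), your restricted/non-restricted case split becomes unnecessary and the lemma follows from the enumeration together with Lemma \ref{uniquerep}; as written, your proof stands or falls on the unproven single-block claim.
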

\begin{proof}A similar argument as used in the previous proof reduces us to the case that $V$ consists of restricted composition factors. We may assume $V$ contains at least one composition factor each of types $L(i),L(\bar \imath)=L(p-2-i)$. We will show that the multiplicity of $L(i)$ is $1$ or $2$. Assume $V$ has three composition factors isomorphic to $L(i)$. It suffices to show there is no representation $V$ with composition series $L(i)|L(\bar\imath)|(L(i)+L(i))$. Let $U$ be a module with composition series $L(\bar\imath)|(L(i)+L(i))$. Then it is easy to see that $e$ and $f$ satisfy $e^p=f^p=0$ as endomorphisms of $U$. Thus $U$ is a restricted representation. As it has simple head it is a quotient of the projective cover $P(\bar\imath)$ of $L(\bar\imath)$ with composition series $L(\bar\imath)|(L(i)+L(i))|L(\bar\imath)$. Let $v_{\bar\imath},\dots,v_{-\bar\imath}$ be a set of weight vectors for the $1$-dimensional weight spaces of $U$ coming from the composition factor $L(\bar \imath)$. Then one can check that an action of $\h$ on $U$ is given (up to isomorphism) by $e \cdot v_{\bar\imath}=u_{-i}$ and $f \cdot v_{-\bar\imath}=w_{i}$ where $\{u_{i},\dots,u_{-i}\}$ and $\{w_i,\dots,w_{-i}\}$ are a basis of weight vectors for the socle of $U$.

Now if $V$ exists of this form, we may take a weight vector $y_{-i}$ of weight $-i$ in $V\setminus U$. Then $y_{-i+2s}:=e^s \cdot y_i$ for $0\leq s\leq i-1$ together with $u,v$ and $w$ spans $V$. If $e \cdot y_i\neq 0$ then it is a non-zero multiple of $v_{\bar\imath}$. Now $h \cdot y_i=e \cdot f \cdot y_i - f \cdot e \cdot y_i$ leads to a contradiction. Similarly, we see that $f \cdot y_{-i}=0$. But this implies that $\la y_i,\dots,y_{-i}\ra$ is a submodule, which is a contradiction. 

Thus $V$ contains at most two composition factors isomorphic to $L(i)$. If it contains two, then it contains at most one of type $L(\bar\imath)$. It is now clear that there are only three possible structures for $V$, namely $L(\bar\imath)|(L(i)+L(i))$ or its dual, or $L(i)|L(\bar\imath)|L(i)$, which is unique up to isomorphism by Lemma \ref{uniquerep}.
\end{proof}

\begin{proof}[Proof of Theorem \ref{sl2subalgebrabij}]The reduction to the case where $G$ is simple is easy. If $G$ is classical, we may argue using the natural module $V$ for $G$. If $G$ is of type $A_n$, then whenever $p>b(A_n)=n+1$ we may appeal to Theorem \ref{jmthm}. On the other hand, whenever $p\leq b(A_n)$ there is an $A_{p-1}$-Levi subgroup $L$ of $G$, for which $\h$ acts on the natural module for $L'$ as an indecomposable module $k|L(p-2)$. Now there is a bijection between nilpotent orbits of $A_{p-1}$ and isomorphism classes of completely reducible $\sl_2$-representations via partitions of $p$, so that $\h$ is in an extra conjugacy class of $\sl_2$-subalgebras, showing that there is no bijection between the sets in (**). It remains to consider the cases where $G$ is of type $B_n,C_n$ or $D_n$. Note that $p>b(G)$ implies $2p>\dim V$ for $V$ the natural module for $G$. First note that completely reducible restricted actions of $\h$ are in $1$-$1$ correspondence with nilpotent orbits of $G$, which have associated partitions of $\dim V$ of size at most $p$, and these account for all completely reducible actions by Lemma \ref{psubalgebraornongcr}. Moreover, this bijection is realised by sending $\h$ to any of the ($G$-conjugate) nilpotent elements it contains.
If $V\downarrow\h$ is not completely reducible, we have that $V\downarrow\h$ contains an indecomposable summand $W$ which is not irreducible. Suppose $L(i)$ is a submodule of $W$. We have $0\leq i<p-1$ since $L(p-1)$ is in its own block and we can have at most one factor of this type in $V$ by dimensions. Then $U\cong L(p-2-i)|L(i)$ must be a submodule of $W$ also. If $U$ were a direct summand then $U^*\cong L(i)|L(p-2-i)$ is another submodule of $V$. But there can be no intersection between $U$ and $U^*$. This implies that $\dim V\geq 2p$, a contradiction. Hence $U$ is not a direct summand. 
By dimensions there is at most one indecomposable summand $W$, taking one of the forms discussed in Lemma \ref{2types}. If it is the quotient or submodule of a projective then a similar argument shows it has no intersection with its dual, contradicting the dimension of $W$. Hence $U$ lies in an indecomposable direct summand of the type discussed in Lemma \ref{uniquerep}. Being the unique such in $V$, it must be non-degenerate. Thus $\h$ lives in $\Lie(X\times Y)$ where $X$ is the stabiliser of $W^\perp$ in $G$, of type $\Sp(W^\perp)$, $\SO(W^\perp)$ or $\mathrm{O}(W^\perp)$ (as the case may be) with $Y$ a similar stabiliser of $W$ in $G$. Now, the necessarily restricted completely reducible image of $\h$ in $\Lie(X)$ is determined up to $X$-conjugacy by the image of any nilpotent element in $\h$. 
By Lemma \ref{uniquerep} the projection of $\h$ to $\Lie(Y)$ is determined uniquely up to conjugacy in $Y$, with an element of largest orbit dimension acting with a full Jordan block on $W$. In particular, a nilpotent element of $\h$ of largest orbit dimension always determines $\h$ up to conjugacy.

In case $G$ is exceptional, the analysis here is very case-by-case. Firstly, let $e$ be any nilpotent element of $\h$. By Lemma \ref{historal}, there is a toral element $h\in\h$ such that $[h,e]=2e$ and $h\in\im\ad e$. By Proposition \ref{findingcoch} we have $h\in\Lie(\tau(\Gm))$ for $\tau $ a cocharacter associated to $e$. Now in the grading of $\g$ associated with $\tau$, we have $e\in\g(2)$, $h\in\g(0)$ and since $[e,f]=h$, projecting $f$ to its component $\bar f\in\g(-2)$, we must have $(e,h,\bar f)$ an $\sl_2$-triple, with $f-\bar f\in\g_e=\g_e(\geq 0)$. As also $[h,f-\bar f]=-2(f-\bar f)$, we have $f-\bar f\in\bigoplus_{r>0}\g_{e}(-2+rp)$. If the subspace $\bigoplus_{r>0}\g_{e}(-2+rp)$ is trivial, we are automatically done. Looking at the tables in \cite{LT11}\footnote{In \cite{LT11}, the values of $r$ such that $\g_e(r)\neq 0$ are listed in the columns marked $m$.}, this already rules out $111$ of the $152$ orbits.

The strategy employed in the remaining cases is more subtle. The idea is to work  inductively through the remaining nilpotent orbits from largest dimension downwards, proving that for a given nilpotent element $e$ of orbit dimension $d$ there is just one conjugacy class of nilpotent elements $f$ whose orbit dimension is $d$ or lower and such that $(e,h,f)$ is an $\sl_2$-triple. That is, we will show that whenever $f$ is not conjugate to $\bar f$ by an element in $G_e\cap G_h$, then $f$ has higher orbit dimension than that of $e$. To show this, we will effectively find all possible $f$ such that $(e,h,f)$ is an $\sl_2$-triple and check each case.

To progress further, recall that $G_e$ is the semidirect product $C_eR_e$ of its reductive part $C_e$ and unipotent radical $R_e$. Since $p$ is a good prime at this stage, one has $\Lie(C_e)=\g_e(0)$ and $\Lie(R_e)=\g_e(>0)$. We also have $\Lie(G_e\cap G_h)\subseteq \bigoplus_{r\geq 0}\g_e(rp)$. We present two tools, which together deal with the remaining cases. For the first, henceforth Tool (a), suppose $C_e$ acts with finitely many orbits on the subspace $\bigoplus_{r>0}\g_e(-2+rp)$. In these cases one can write down all possibilities for the element $f$ up to conjugacy by $G_e\cap G_h$. Then it is a simple matter to check the Jordan blocks of the element $f$ on the adjoint module for $\g$ in GAP and observe, by comparing with \cite{Law95}, that the orbit dimension is larger than that of $e$. For example, if $(\g, \OO, p)$ is $(E_7, (A_5)', 11)$ then we may take \def\arraystretch{0.5} \arraycolsep=0pt
\begin{align*}e&=e_{\Small\begin{array}{c c c c c c c}1&0&0&0&0&0\\&&0\end{array}}+
e_{\Small\begin{array}{c c c c c c c}0&1&0&0&0&0\\&&0\end{array}}+
e_{\Small\begin{array}{c c c c c c c}0&0&1&0&0&0\\&&0\end{array}}+
e_{\Small\begin{array}{c c c c c c c}0&0&0&1&0&0\\&&0\end{array}}+
e_{\Small\begin{array}{c c c c c c c}0&0&0&0&1&0\\&&0\end{array}},\\
h&=5\cdot h_{\alpha_1}+
8\cdot h_{\alpha_3}+
9\cdot h_{\alpha_4}+
8\cdot h_{\alpha_5}+
5\cdot h_{\alpha_6},\\
\bar f&=5\cdot e_{\Small\begin{array}{c c c c c c c}-1&0&0&0&0&0\\&&0\end{array}}+
8\cdot e_{\Small\begin{array}{c c c c c c c}-0&1&0&0&0&0\\&&0\end{array}}+
9\cdot e_{\Small\begin{array}{c c c c c c c}-0&0&1&0&0&0\\&&0\end{array}}+
8\cdot e_{\Small\begin{array}{c c c c c c c}-0&0&0&1&0&0\\&&0\end{array}}+
5\cdot e_{\Small\begin{array}{c c c c c c c}-0&0&0&0&1&0\\&&0\end{array}}.\end{align*}
We have $C:=C_e^\circ$ of type $A_1^2$ and $\g_e(-2+p)$ is a module for $C$ of high weight $\varpi_1$ for the first factor say. Thus $C$ has two orbits on $\g_e(-2+p)$, namely the zero orbit and the non-zero orbit. The element $\bar f$ itself corresponds to the zero orbit, whereas if $f=\bar f+f_1$ for $0\neq f_1\in\g_e(-2+p)$ then one checks that the Jordan blocks of the action of $f$ on $\g$ are $23+17^3+15+11+9^3+3+1^3$, whereas those of $\bar f$ are $11+10^2+9^3+7+6^6+5^3+4^2+3+1^6$. Comparing with \cite{Law95} one sees that $f$ is in the orbit $E_6$ whereas $\bar f$ is in the orbit $(A_5)'$.

The remaining cases all have the property that $\g_e(p)\neq 0$ with $\g_e(p)$ having a basis of commuting sums of root vectors. To describe Tool (b), suppose $x\in\g_e(p)$ is a sum of commuting root vectors. Then as $p\neq 2$, one may form the endomorphism $\delta_x:=1+\ad x+\frac{1}{2}(\ad x)^2$. Since $x\in \g_e(p)$, it follows that $x$ commutes with both $e$ and $h$, hence $\delta_x\in G_e\cap G_h$.  Thus $\la e,h,\delta_x(\bar f)\ra$ is an $\sl_2$-triple. For any $y\in \g_e(p)$, we have $[\delta_x(\bar f),\delta_y(\bar f)]=0$ modulo $\g_e(>-2+p)$ and so we get a linear map $\delta_\bullet(\bar f):\g_e(p)\to\g_e(-2+p)$ by $x\mapsto \delta_x(\bar f)$. Now if $z\in\g_e(0)$ then as $\bar f\in\g_e(-2)$, we have $[\bar f,z] \in \g(-2)$ and since $[e,[\bar f,z]]=[h,z]=0$, we have $[\bar f,z]\in\g_e(-2)=0$. Thus $\bar f$ is in the centraliser of $\g_e(0)$ so that $C_e^\circ$ also commutes with $\delta_\bullet(\bar f)$; this means that $\delta_\bullet(\bar f)$ is a $C_e^\circ$-module map from $\g_e(p)\to\g_e(-2+p)$. Thus one may assume, replacing $f$ by a conjugate, that the projection of $f$ to the image of $\delta_\bullet(f)$ in $\g_e(-p+2)$ is zero. In particular, if this map is an isomorphism, one concludes that any $\sl_2$-triple $(e,h,f)$ is conjugate to another $(e,h,f')$ such that the projection of $f'$ to $\g_e(-2+p)$ is zero. If $\bigoplus_{r>0}\g_e(-2+rp)=\g_e(-2+p)$ (which it almost always is) this shows that $f$ is unique up to conjugacy. When using the fact that $\delta_\bullet(\bar f)$ is a $C_e^\circ$-module map, to check the isomorphism, one finds that it always suffices to check that $\delta_\bullet(\bar f)$ is non-zero on restriction to high weights of the $C_e^\circ$-modules $\g_e(p)$, since these modules are always semisimple. 

For example, suppose $(\g,\OO,p)$ is $(E_6,D_5(a_1),7)$. Checking \cite{LT11}, one may choose \[e=e_{\Small\begin{array}{c c c c c c c}1&0&0&0&0\\&&0\end{array}}+
e_{\Small\begin{array}{c c c c c c c}0&0&0&0&0\\&&1\end{array}}+
e_{\Small\begin{array}{c c c c c c c}0&1&0&0&0\\&&0\end{array}}+
e_{\Small\begin{array}{c c c c c c c}0&0&0&1&0\\&&0\end{array}}+
e_{\Small\begin{array}{c c c c c c c}0&0&1&0&0\\&&1\end{array}}+
e_{\Small\begin{array}{c c c c c c c}0&0&1&1&0\\&&0\end{array}}\] 
and
\[h=6\cdot h_{\alpha_1}+
7\cdot h_{\alpha_2}+
10\cdot h_{\alpha_3}+
12\cdot h_{\alpha_4}+
7\cdot h_{\alpha_5}.\] Under these circumstances, one may calculate that the component of $f$ in $\g(-2)$ is 
\[\bar f:=6\cdot e_{\Small\begin{array}{c c c c c c c}-1&0&0&0&0\\&&0\end{array}}+
e_{\Small\begin{array}{c c c c c c c}-0&0&0&0&0\\&&1\end{array}}+
10\cdot e_{\Small\begin{array}{c c c c c c c}-0&1&0&0&0\\&&0\end{array}}+
e_{\Small\begin{array}{c c c c c c c}-0&0&0&1&0\\&&0\end{array}}+
6\cdot e_{\Small\begin{array}{c c c c c c c}-0&0&1&0&0\\&&1\end{array}}+
6\cdot e_{\Small\begin{array}{c c c c c c c}-0&0&1&1&0\\&&0\end{array}}.\] 
From \cite{LT11}, one has $\bigoplus_{r>0}\g_e(-2+rp)=\g_e(5)$ of dimension $2$, while $\g_e(p)$ is generated over $k$ by the (commuting) root vectors $x_1:=e_{\Small\begin{array}{ccccc}-0&0&0&0&1\\&&0&&\end{array}}$ and $x_2:=e_{\Small\begin{array}{ccccc}1&2&3&2&1\\&&2&&\end{array}}$. One checks that the images of $x_1$ and $x_2$ under $\delta_\bullet(f)$ are linearly independent. Thus $\delta_\bullet(f)$ induces an isomorphism $\g_e(p)\to\g_e(-2+p)$ as required, showing the uniqueness of $f$ in this case.

Lastly, using Tool (b) to assume that $f$ projects trivially to the image of $\delta_\bullet(f)$ and then applying Tool (a) finishes the analysis in any remaining cases where $\g_e(-2+p)\neq 0$. For example, if $(\g,\OO,p)=(E_8,E_7(a_4),11)$, then $C_e^\circ$ is of type $A_1$ and we may take \[e=e_{\Small\begin{array}{c c c c c c c}1&0&0&0&0&0&0\\&&0\end{array}}+
e_{\Small\begin{array}{c c c c c c c}0&0&1&0&0&0&0\\&&0\end{array}}+
e_{\Small\begin{array}{c c c c c c c}0&0&0&0&0&1&0\\&&0\end{array}}+
e_{\Small\begin{array}{c c c c c c c}0&0&0&0&1&1&0\\&&0\end{array}}+
e_{\Small\begin{array}{c c c c c c c}0&1&1&0&0&0&0\\&&1\end{array}}+
e_{\Small\begin{array}{c c c c c c c}0&0&1&1&0&0&0\\&&1\end{array}}+
e_{\Small\begin{array}{c c c c c c c}0&1&1&1&0&0&0\\&&0\end{array}}+
e_{\Small\begin{array}{c c c c c c c}0&0&1&1&1&0&0\\&&1\end{array}}.\] 
Then $\delta_\bullet(f)$ acts non-trivially on the element $x=e_{\Small\begin{array}{ccccccc}2&4&6&5&4&3&1\\&&3&&\end{array}}\in\g_e(p)$ inducing a $C_e^\circ$-isomorphism $\g_e(p)\to kC_e^\circ\cdot(e_{\Small\begin{array}{ccccccc}2&4&6&5&4&2&1\\&&3&&\end{array}}+e_{\Small\begin{array}{ccccccc}2&4&6&5&3&2&1\\&&3&&\end{array}})\subseteq\g_e(-2+p)$ and so one may assume \[f\in\bar f+kC_e^\circ\cdot (e_{\Small\begin{array}{ccccccc}2&4&6&5&4&3&1\\&&3&&\end{array}}),\] with $kC_e^\circ\cdot (e_{\Small\begin{array}{ccccccc}2&4&6&5&4&3&1\\&&3&&\end{array}})\cong L(1)$ as a $C_e^\circ$-module. Now, we use Tool (a). As $C_e^\circ$ has just one non-zero orbit on the representation $L(1)$, we may assume that $f=\bar f+e_{\Small\begin{array}{ccccccc}2&4&6&5&4&3&1\\&&3&&\end{array}}$. But computing the Jordan blocks of $f$ on the adjoint representation, one finds that $f$ has a higher orbit dimension than that of $\bar f$.

There remain some cases where $\g_e(-2+2p)\neq 0$. These are $(E_8,E_8(a_5),11)$, $(E_8,E_8(b_4),11)$ and $(F_4,F_4(a_2),5)$. Since these cases are distinguished orbits, $\g_e(-2+p)=0$. Precisely the same analysis as used in Tool (b) will work here, replacing $(\g_e(-2+p),\g_e(p))$ with $(\g_e(-2+2p),\g_e(2p))$.

It remains to show that no bijection exists when $2 < p \leq b(G)$. It is well known that the number of nilpotent orbits of $\g$ is finite and so it suffices to show there are infinitely many classes of $\sl_2$-subalgebras. Suppose $\l$ is a Lie subalgebra of type $A_{p-1}$ and let $e = \sum_{i=1}^{p-1} e_{\alpha_i}$ and $f = \sum_{i=0}^{p-1} -i^2 e_{-\alpha_i}$. Then one checks that $\la e, f \ra$ is an $\sl_2$-subalgebra with $[e,f] = \text{diag}({p-1}, {p-3}, \dots,$ $-p+3, -p+1)$. Further, let $f_0 = \sum_{i=0}^{p-1} i e_{-\alpha_i}$ and $\lambda \in k$ with $\lambda^p \neq \lambda$. Then again one checks that $\la e, (f+ \lambda f_0) \ra$ is an $\sl_2$-subalgebra, this time with $[e,f] = h + \lambda I$. We therefore have infinitely many $\sl_2$-subalgebras with pairwise non-isomorphic representations on the restriction of the natural representation of $\l$.    
The condition $2 < p \leq b(G)$ implies that if $G$ is not of type $G_2$ then $\g$ has a Levi subalgebra of type $A_{p-1}$ and when $G$ is of type $G_2$ then $p=3$ and $\g$ has a pseudo-Levi subalgebra of type $A_2$. In all cases we therefore have a subalgebra $\l$ of type $A_{p-1}$ and moreover, the restriction of the adjoint representation of $\g$ to $\l$ contains a copy of the natural representation of $\l$. Thus we have infinitely many $GL(\g)$-conjugacy classes of $\sl_2$-subalgebras of $\g$ (all with pairwise non-isomorphic representations) and so we certainly have infinitely many $G$-conjugacy classes of $\sl_2$-subalgebras.  \end{proof}

With Theorem \ref{sl2subalgebrabij} in hand, we turn our attention to $p$-subalgebras and prove the last result, Theorem \ref{psubalgebrathm}. 

\begin{proof}[Proof of Theorem \ref{psubalgebrathm}]
In light of Theorem \ref{maintheorem}, it suffices to prove the following three claims: All $p$-subalgebras of type $A_1$ are $G$-cr when $b(G) < p \leq h(G)$; there exists a non-$G$-cr $p$-subalgebra of type $A_1$ when $5 \leq p \leq b(G)$; and the examples of non-$G$-cr subalgebras of type $B_2$, $G_2$ and $W_1$ given in Section \ref{sec:proofofmain} are all $p$-subalgebras. 

First, let $b(G) < p \leq h(G)$ and $\h = \la e,h,f \ra$ be a $p$-subalgebra of $\g$, with $e$ belonging to the largest nilpotent orbit meeting $\h$. Since $\h$ is a $p$-subalgebra we have $e^{[p]}=0$ and the restriction on $p$ implies that $p$ is a good prime for $G$. As in the proof of Theorem \ref{gooda1s}, we are therefore furnished with a good $A_1$-subgroup $H$ of $G$ such that $e \in \Lie(H)$, by \cite[Props.~4.1 \& 4.2]{Sei00}. But now Theorem \ref{maintheorem} implies that $\Lie(H)$ is conjugate to $\h$, since both contain $e$ and all nilpotent elements of $\Lie(H)$ are conjugate (since all unipotent elements of $H$ are conjugate) so $e$ belongs to the largest dimensional nilpotent orbit meeting $\Lie(H)$. By \cite[Prop.~7.2]{Sei00}, good $A_1$-subgroups are $G$-cr. Therefore $\Lie(H)$ is $G$-cr by \cite[Thm.~1]{McN07} and hence $\h$ is $G$-cr, as required.  

Now suppose $5 \leq p \leq b(G)$. This implies that either $G$ is of type $E_6$ and $p=5$ or $G$ is of type $E_7, E_8$ and $p=5,7$. In each case we present a non-$G$-cr $p$-subalgebra of type $A_1$: By definition of $b(G)$, we have a Levi subgroup of type $A_{p-1}$. We know $\l = \Lie(L)$ has a $p$-subalgebra of type $\sl_2$ acting as $L(p-2) | k$ on the natural module for $\l$ (see the proof of Lemma \ref{mtfora1a}). This subalgebra is therefore non-$L$-cr and hence non-$G$-cr by Lemma \ref{levinongcr}.

Finally, we consider the subalgebras of type $B_2$, $G_2$ and $W_1$ from Section \ref{sec:proofofmain}. The claim is clear for the subalgebras of type $B_2$ and $G_2$, since the given examples are $\Lie(H)$ for $H$ a subgroup of $G$, hence $p$-subalgebras. So it remains to consider the subalgebras of type $W_1$ constructed in Section \ref{sec:w1s}. Firstly, the subalgebra of type $W_1$ contained in $A_4$ when $p=5$ is a $p$-subalgebra since it acts via its canonical representation $k[X]/X^p \cong L(4)|k$. For $p=7$ we have two explicit constructions of subalgebras of type $W_1$, one contained in a $C_3$-parabolic of $F_4$ and the other contained in an $A_5$-parabolic of $E_6$ (and $E_7$, $E_8$). One checks in GAP that the elements representing $\del$ and $X^{p-1} \del$ are both sent to $0$ by the $[p]$-map in both cases and hence the subalgebras are $p$-subalgebras. 
\end{proof}

\begin{remark}
Suppose $G$ is simple and of exceptional type with Lie algebra $\g$. Then we can extend Theorem \ref{psubalgebrathm} to the case $p=3$ when $\h$ is of type $A_1$. We have checked computationally that all $p$-subalgebras of type $A_1$ are $G$-cr when $G$ is of type $G_2$. If $G$ is not of type $G_2$ then it has a Levi subgroup $L$ of type $A_{2}$. We know $\l = \Lie(L)$ has a $p$-subalgebra of type $\sl_2$ acting as $L(1) | k$ on the natural $3$-dimensional module. This subalgebra is therefore non-$L$-cr and hence non-$G$-cr by Lemma \ref{levinongcr}. 
\end{remark}

\begin{remark}The proofs and statements of the main theorems  show that whenever $p>b(G)$, there is in fact a bijection of the form (*) between conjugacy classes of $p$-subalgebras of type $\sl_2$ and nilpotent elements $e\in\g=\Lie(G)$ such that $e^{[p]}=0$. \end{remark}

Finally, we turn our attention to the proof of Theorem \ref{jm}. 

\begin{proof}[Proof of Theorem \ref{jm}]
If the nilpotent element $e$ belongs to an orbit whose label is defined over $\C$ then we will see that $e$ is contained in a $\Z$-defined $\sl_2$-subalgebra of $\g_\C$ such that the image of $e$ in $\g_\Z\otimes_\Z\F_p=\g_{\F_p}$ is of the same type as $e$. In particular $e$ is always contained in an $\sl_2$-subalgebra of $\g$. If $G$ is of classical type, this is straightforward as the natural representation is defined over $\Z$ such that $e$ is a sum of Jordan blocks with $1$ on the super-diagonal, $h$ is diagonal and $f$ is determined by a combinatorial formula given in \cite[Proof of Prop.~5.3.1]{Car93}. More carefully, for sufficiently large $p$, there is a $1$-$1$ correspondence between partitions of $n$ of an appropriate sort, depending on the root system of $G$, and direct sums of irreducible $\sl_2$-representations of dimensions corresponding to the partition which give an embedding into $G$. The irreducible representations satisfy the condition of being defined over $\Z$, with $e$ represented by a sum of Jordan blocks with $1$s on the superdiagonal. Thus we may assume that $G$ is exceptional. Now \cite[Lem.~2.1,~2.4]{Tes95} gives a $\Z$-defined $\sl_2$-triple containing $e$ in the cases that $e$ is not contained in a maximal rank subalgebra. (That the representatives for the distinguished nilpotent elements in \cite{Tes95} have the same label over all primes follows from the results of \cite{LS12}.) Of course, if $e$ is in a maximal rank subalgebra then we are done by induction. 


We are left with the distinguished elements that belong to orbits whose labels are not defined over $\C$. There are just two of these for $p\geq 3$: one is the exceptional nilpotent orbit in $G_2$ with label $A_1^{(3)}$. The other is the exceptional nilpotent orbit in $E_8$ with label $A_7^{(3)}$. For the latter we simply exhibit an example cooked up with GAP. From \cite{LS12} one may take\def\arraystretch{0.5} \arraycolsep=0pt
\[e:=e_{\Small\begin{array}{c c c c c c c c}0&0&0&1&1&1&0\\&&0\end{array}}+
e_{\Small\begin{array}{c c c c c c c c}0&0&0&0&1&1&1\\&&0\end{array}}+
e_{\Small\begin{array}{c c c c c c c c}1&1&1&0&0&0&0\\&&1\end{array}}+
e_{\Small\begin{array}{c c c c c c c c}1&1&1&1&0&0&0\\&&0\end{array}}+
e_{\Small\begin{array}{c c c c c c c c}0&0&1&1&1&0&0\\&&1\end{array}}+
e_{\Small\begin{array}{c c c c c c c c}0&1&1&1&1&0&0\\&&0\end{array}}+
e_{\Small\begin{array}{c c c c c c c c}0&1&2&1&0&0&0\\&&1\end{array}}+
e_{\Small\begin{array}{c c c c c c c c}0&0&1&1&1&1&1\\&&0\end{array}}\]
and this is filled out to an $\sl_2$-triple with
\begin{align*} h&=h_{\alpha_4}+
h_{\alpha_5}+
2\cdot h_{\alpha_6}+
h_{\alpha_8}\\
f&=2\cdot e_{\Small\begin{array}{c c c c c c c c}-0&0&0&1&1&1&0\\&&0\end{array}}\
+
e_{\Small\begin{array}{c c c c c c c c}-0&0&1&1&1&0&0\\&&1\end{array}}+
e_{\Small\begin{array}{c c c c c c c c}-0&1&1&1&1&0&0\\&&0\end{array}}+
e_{\Small\begin{array}{c c c c c c c c}-1&1&1&1&1&0&0\\&&0\end{array}}+
2\cdot e_{\Small\begin{array}{c c c c c c c c}-0&1&2&1&0&0&0\\&&1\end{array}}\\&+
2\cdot e_{\Small\begin{array}{c c c c c c c c}-0&0&1&1&1&1&0\\&&1\end{array}}+
e_{\Small\begin{array}{c c c c c c c c}-0&1&1&1&1&1&0\\&&0\end{array}}+
e_{\Small\begin{array}{c c c c c c c c}-0&0&1&1&1&1&1\\&&0\end{array}}+
2\cdot e_{\Small\begin{array}{c c c c c c c c}-1&1&2&1&0&0&0\\&&1\end{array}}+
2\cdot e_{\Small\begin{array}{c c c c c c c c}-0&1&2&1&1&0&0\\&&1\end{array}}.\end{align*}

Finally, if $e$ is of type $A_1^{(3)}$ in $G_2$ then $e$ can be taken to be $e_{2\alpha_1+\alpha_2}+e_{3\alpha_{1}+2\alpha_2}$. Now it is straightforward to check that the image of $(\ad e)^2$ does not contain $e$. (One can even do this by hand.)
\end{proof}

{\footnotesize
\bibliographystyle{amsalpha}
\bibliography{bib}}

\providecommand{\bysame}{\leavevmode\hbox to3em{\hrulefill}\thinspace}
\providecommand{\MR}{\relax\ifhmode\unskip\space\fi MR }
\providecommand{\MRhref}[2]{%
  \href{http://www.ams.org/mathscinet-getitem?mr=#1}{#2}
}
\providecommand{\href}[2]{#2}
\begin{thebibliography}{BMRT13}

\bibitem[ABS90]{ABS90}
H.~Azad, M.~Barry, and G.~Seitz, \emph{On the structure of parabolic
  subgroups}, Comm. Algebra \textbf{18} (1990), no.~2, 551--562. \MR{MR1047327
  (91d:20048)}

\bibitem[BMRT13]{BMRT13}
M.~Bate, B.~Martin, G.~R{{\"o}}hrle, and R.~Tange, \emph{Closed orbits and
  uniform {$S$}-instability in geometric invariant theory}, Trans. Amer. Math.
  Soc. \textbf{365} (2013), no.~7, 3643--3673. \MR{3042598}

\bibitem[BNP02]{BNP02}
C.~P. Bendel, D.~K. Nakano, and C.~Pillen, \emph{Extensions for finite
  {C}hevalley groups. {II}}, Trans. Amer. Math. Soc. \textbf{354} (2002),
  no.~11, 4421--4454 (electronic). \MR{1926882 (2003k:20063)}

\bibitem[BNP04]{BNP04-Frob}
\bysame, \emph{Extensions for {F}robenius kernels}, J. Algebra \textbf{272}
  (2004), no.~2, 476--511. \MR{2028069 (2004m:20089)}

\bibitem[BNW09]{BNW09}
B.~D. Boe, D.~K. Nakano, and E.~Wiesner, \emph{{$\rm Ext^1$}-quivers for the
  {W}itt algebra {$W(1,1)$}}, J. Algebra \textbf{322} (2009), no.~5,
  1548--1564. \MR{2543622 (2011b:17027)}

\bibitem[Car72]{Car72}
R.~W. Carter, \emph{Conjugacy classes in the {W}eyl group}, Compositio Math.
  \textbf{25} (1972), 1--59. \MR{0318337 (47 \#6884)}

\bibitem[Car93]{Car93}
Roger~W. Carter, \emph{Finite groups of {L}ie type}, Wiley Classics Library,
  John Wiley \& Sons Ltd., Chichester, 1993, Conjugacy classes and complex
  characters, Reprint of the 1985 original, A Wiley-Interscience Publication.
  \MR{MR1266626 (94k:20020)}

\bibitem[DeB02]{DeB02}
Stephen DeBacker, \emph{Parametrizing nilpotent orbits via {B}ruhat-{T}its
  theory}, Ann. of Math. (2) \textbf{156} (2002), no.~1, 295--332. \MR{1935848}

\bibitem[Far09]{Far09}
R.~Farnsteiner, \emph{Group-graded algebras, extensions of infinitesimal
  groups, and applications}, Transform. Groups \textbf{14} (2009), no.~1,
  127--162. \MR{2480855}

\bibitem[HS16a]{HSMax}
Sebastian Herpel and David~I. Stewart, \emph{Maximal subalgebras of {C}artan
  type in the exceptional {L}ie algebras}, Selecta Math. (N.S.) \textbf{22}
  (2016), no.~2, 765--799. \MR{3477335}

\bibitem[HS16b]{HS14}
Sebastian Herpel and David~I. Stewart, \emph{On the smoothness of normalisers,
  the subalgebra structure of modular {L}ie algebras and the cohomology of
  small representations}, Doc. Math. (2016), to appear.

\bibitem[Jan03]{Jan03}
J.~C. Jantzen, \emph{Representations of algebraic groups}, second ed.,
  Mathematical Surveys and Monographs, vol. 107, American Mathematical Society,
  Providence, RI, 2003. \MR{MR2015057 (2004h:20061)}

\bibitem[Jan04]{Jan04}
\bysame, \emph{Nilpotent orbits in representation theory}, Lie theory, Progr.
  Math., vol. 228, Birkh{\"a}user Boston, Boston, MA, 2004, pp.~1--211.
  \MR{2042689 (2005c:14055)}

\bibitem[Kos59]{Kos59}
Bertram Kostant, \emph{The principal three-dimensional subgroup and the {B}etti
  numbers of a complex simple {L}ie group}, Amer. J. Math. \textbf{81} (1959),
  973--1032. \MR{0114875 (22 \#5693)}

\bibitem[Law95]{Law95}
R.~Lawther, \emph{Jordan block sizes of unipotent elements in exceptional
  algebraic groups}, Comm. Algebra \textbf{23} (1995), no.~11, 4125--4156.
  \MR{MR1351124 (96h:20084)}

\bibitem[LS96]{LS96}
M.~W. Liebeck and G.~M. Seitz, \emph{Reductive subgroups of exceptional
  algebraic groups}, Mem. Amer. Math. Soc. \textbf{121} (1996), no.~580,
  vi+111. \MR{MR1329942 (96i:20059)}

\bibitem[LS04]{LS04}
\bysame, \emph{The maximal subgroups of positive dimension in exceptional
  algebraic groups}, Mem. Amer. Math. Soc. \textbf{169} (2004), no.~802,
  vi+227. \MR{MR2044850 (2005b:20082)}

\bibitem[LS12]{LS12}
Martin~W. Liebeck and Gary~M. Seitz, \emph{Unipotent and nilpotent classes in
  simple algebraic groups and {L}ie algebras}, Mathematical Surveys and
  Monographs, vol. 180, American Mathematical Society, Providence, RI, 2012.
  \MR{2883501}

\bibitem[LT04]{LT04}
Martin~W. Liebeck and Donna~M. Testerman, \emph{Irreducible subgroups of
  algebraic groups}, Q. J. Math. \textbf{55} (2004), no.~1, 47--55. \MR{2043006
  (2005b:20087)}

\bibitem[LT11]{LT11}
R.~Lawther and D.~M. Testerman, \emph{Centres of centralizers of unipotent
  elements in simple algebraic groups}, Mem. Amer. Math. Soc. \textbf{210}
  (2011), no.~988, vi+188. \MR{2780340 (2012c:20127)}

\bibitem[L{\"u}b01]{Lub01}
Frank L{\"u}beck, \emph{Small degree representations of finite {C}hevalley
  groups in defining characteristic}, LMS J. Comput. Math. \textbf{4} (2001),
  135--169 (electronic). \MR{1901354 (2003e:20013)}

\bibitem[LY93]{LY93}
Jia~Chun Liu and Jia~Chen Ye, \emph{Extensions of simple modules for the
  algebraic group of type {$G_2$}}, Comm. Algebra \textbf{21} (1993), no.~6,
  1909--1946. \MR{1215553 (94h:20051)}

\bibitem[McN05]{McN05}
G.~J. McNinch, \emph{Optimal {${\rm SL}(2)$}-homomorphisms}, Comment. Math.
  Helv. \textbf{80} (2005), no.~2, 391--426. \MR{2142248 (2006f:20055)}

\bibitem[McN07]{McN07}
George McNinch, \emph{Completely reducible {L}ie subalgebras}, Transformation
  Groups \textbf{12} (2007), no.~1, 127--135.

\bibitem[MT11]{MT}
Gunter Malle and Donna Testerman, \emph{Linear algebraic groups and finite
  groups of {L}ie type}, Cambridge Studies in Advanced Mathematics, vol. 133,
  Cambridge University Press, Cambridge, 2011. \MR{2850737 (2012i:20058)}

\bibitem[Pom80]{Pom80}
Klaus Pommerening, \emph{\"{U}ber die unipotenten {K}lassen reduktiver
  {G}ruppen. {II}}, J. Algebra \textbf{65} (1980), no.~2, 373--398. \MR{585729
  (83d:20031)}

\bibitem[Pre91]{Pre91}
A.~A. Premet, \emph{The {G}reen ring of a simple three-dimensional {L}ie
  {$p$}-algebra}, Izv. Vyssh. Uchebn. Zaved. Mat. (1991), no.~10, 56--67.
  \MR{1179217}

\bibitem[Pre95a]{Pre95}
Alexander Premet, \emph{An analogue of the {J}acobson-{M}orozov theorem for
  {L}ie algebras of reductive groups of good characteristics}, Trans. Amer.
  Math. Soc. \textbf{347} (1995), no.~8, 2961--2988. \MR{1290730 (95k:17012)}

\bibitem[Pre95b]{Pre95KW}
\bysame, \emph{Irreducible representations of {L}ie algebras of reductive
  groups and the {K}ac-{W}eisfeiler conjecture}, Invent. Math. \textbf{121}
  (1995), no.~1, 79--117. \MR{1345285 (96g:17007)}

\bibitem[Pre03]{Pre03}
\bysame, \emph{Nilpotent orbits in good characteristic and the
  {K}empf-{R}ousseau theory}, J. Algebra \textbf{260} (2003), no.~1, 338--366,
  Special issue celebrating the 80th birthday of Robert Steinberg. \MR{1976699}

\bibitem[PS]{PSMax}
Alexander Premet and David~I. Stewart, \emph{Classification of the maximal
  subalgebras of exceptional lie algebras over fields of good characteristic},
  in preparation.

\bibitem[PS16]{PreSteNilp}
\bysame, \emph{Rigid orbits and sheets in reductive lie algebras over fields of
  prime characteristic}, J. Inst. Math. Jussieu (2016), to appear.

\bibitem[Sei00]{Sei00}
Gary~M. Seitz, \emph{Unipotent elements, tilting modules, and saturation},
  Invent. Math. \textbf{141} (2000), no.~3, 467--502. \MR{1779618
  (2001j:20074)}

\bibitem[Ser05]{Ser05}
J-P. Serre, \emph{Compl{\`e}te r{\'e}ductibilit{\'e}}, Ast{\'e}risque (2005),
  no.~299, Exp. No. 932, viii, 195--217, S{{\'e}}minaire Bourbaki. Vol.
  2003/2004. \MR{2167207 (2006d:20084)}

\bibitem[SF88]{SF88}
H.~Strade and R.~Farnsteiner, \emph{Modular {L}ie algebras and their
  representations}, Monographs and Textbooks in Pure and Applied Mathematics,
  vol. 116, Marcel Dekker Inc., New York, 1988. \MR{929682 (89h:17021)}

\bibitem[SS70]{SpSt70}
T.~A. Springer and R.~Steinberg, \emph{Conjugacy classes}, Seminar on
  {A}lgebraic {G}roups and {R}elated {F}inite {G}roups ({T}he {I}nstitute for
  {A}dvanced {S}tudy, {P}rinceton, {N}.{J}., 1968/69), Lecture Notes in
  Mathematics, Vol. 131, Springer, Berlin, 1970, pp.~167--266. \MR{0268192 (42
  \#3091)}

\bibitem[Ste10]{SteSL2}
David~I. Stewart, \emph{The second cohomology of simple {${\rm
  SL}_2$}-modules}, Proc. Amer. Math. Soc. \textbf{138} (2010), no.~2,
  427--434. \MR{2557160 (2011b:20134)}

\bibitem[Ste12]{SteSL3}
D.~I. Stewart, \emph{The second cohomology of simple {$SL_3$}-modules}, Comm.
  Algebra \textbf{40} (2012), no.~12, 4702--4716. \MR{2989676}

\bibitem[Ste16]{SteComp}
David~I. Stewart, \emph{On the minimal modules for exceptional {L}ie algebras:
  {J}ordan blocks and stabilizers}, LMS Journal of Computation and Mathematics
  \textbf{19} (2016), no.~1, 235--258.

\bibitem[Str73]{Str73}
H.~Strade, \emph{Lie algebra representations of dimension {$p-1$}}, Proc. Amer.
  Math. Soc. \textbf{41} (1973), 419--424. \MR{0330247 (48 \#8585)}

\bibitem[Tes92]{Tes92}
D.~M. Testerman, \emph{The construction of the maximal {$A_1$}'s in the
  exceptional algebraic groups}, Proc. Amer. Math. Soc. \textbf{116} (1992),
  no.~3, 635--644. \MR{1100666 (93a:20073)}

\bibitem[Tes95]{Tes95}
Donna~M. Testerman, \emph{{$A_1$}-type overgroups of elements of order {$p$} in
  semisimple algebraic groups and the associated finite groups}, J. Algebra
  \textbf{177} (1995), no.~1, 34--76. \MR{1356359 (96j:20067)}

\bibitem[Wei94]{Wei94}
Charles~A. Weibel, \emph{An introduction to homological algebra}, Cambridge
  Studies in Advanced Mathematics, vol.~38, Cambridge University Press,
  Cambridge, 1994. \MR{1269324 (95f:18001)}

\bibitem[Ye90]{Ye90}
Jia~Chen Ye, \emph{Extensions of simple modules for the group {${\rm
  Sp}(4,K)$}}, J. London Math. Soc. (2) \textbf{41} (1990), no.~1, 51--62.
  \MR{1063542 (91j:20105a)}

\bibitem[Yeh82]{Yeh82}
S.~el~B. Yehia, \emph{Extensions of simple modules for the {C}hevalley groups
  and its parabolic subgroups}, Ph.D. thesis, University of Warwick, 1982.

\end{thebibliography}

\end{document}